\documentclass[reqno]{amsart}
\usepackage{mathtools}
\usepackage{tkz-euclide}
\usepackage{xfrac}
\usepackage{hyperref}
\usepackage{amsmath}  
\usepackage{amsfonts}
\usepackage{amssymb} 
\usepackage{mathrsfs} 
\usepackage{graphicx}  
\usepackage{bm}
\usepackage{tikz} 
\usetikzlibrary{svg.path} 
\usepackage{amssymb}
\usepackage{pgfplots}
\usepackage{amsmath}
\usepackage[toc]{appendix}
\usetikzlibrary{calc} 
\usepackage{xcolor}  
\usetikzlibrary{arrows.meta} 
\usepackage{amsthm} 
\usepackage{float}
\usepackage{enumitem}
\usepackage{comment}

\usepackage[english]{babel}
\usepackage[font=footnotesize, labelfont=bf]{ caption}
\usepackage{ esint }
\usepackage{stackengine,scalerel,graphicx}
\stackMath

\definecolor{trueblue}{rgb}{0.0, 0.45, 0.81}
\definecolor{truegreen}{rgb}{0.13, 0.55, 0.13}

\newcommand{\eop}{\nopagebreak\hspace*{\fill}$\Box$\smallskip}
\newcommand{\NNN}{\color{black}} 
\newcommand{\MMM}{\color{black}}

\newcommand{\EEE}{\color{black}}

\newcommand{\QQQ}{\color{black}}

\newcommand{\eps}{\varepsilon}

\theoremstyle{plain}
\begingroup
\newtheorem{theorem}{Theorem}[section]
\newtheorem{lemma}[theorem]{Lemma}

\newtheorem{remark}[theorem]{Remark}
\newtheorem{proposition}[theorem]{Proposition}
\newtheorem{corollary}[theorem]{Corollary}
\endgroup

\usepackage{todonotes}

\theoremstyle{definition}
\begingroup
\newtheorem{definition}[theorem]{Definition}
\endgroup

\renewcommand{\tilde}{\widetilde}

\usepackage{tikz} 

\usepackage{adjustbox}

\definecolor{color0}{HTML}{000000}
\definecolor{color1}{HTML}{000000}
\definecolor{color2}{HTML}{f80707}
\definecolor{bgColor}{HTML}{ffffff}
\usetikzlibrary {arrows.meta,bending}

\numberwithin{equation}{section}
\newcommand{\N}{\mathbb{N}}
\newcommand{\Z}{\mathbb{Z}}
\newcommand{\R}{\mathbb{R}}

\newcommand{\defas}{:=}

\newcommand{\Q}{\mathcal{Q}}

\usepackage[normalem]{ulem}

\begin{document}
\author[S. Almi]{Stefano Almi}
\address[Stefano Almi]{Dipartimento di Matematica e Applicazioni “R. Caccioppoli”, Università
di Napoli Federico II, via Cintia, 80126 Napoli, Italy}
\email{stefano.almi@unina.it}

\author[A. Donnarumma]{Antonio Flavio Donnarumma}
\address[Antonio Flavio Donnarumma]{Department of Mathematics, Friedrich-Alexander Universit\"at Erlangen-N\"urnberg. Cauerstr.~11,
D-91058 Erlangen, Germany}
\email{antonio.flavio.donnarumma@fau.de}

\author[M. Friedrich]{Manuel Friedrich}
\address[Manuel Friedrich]{Department of Mathematics, Johannes Kepler Universität Linz. Altenbergerstrasse 66, 4040 Linz, Austria}
\email{manuel.friedrich@jku.at}

\title[Asymptotic analysis  for heterogeneous elastic energies   with material voids]{Asymptotic analysis  for heterogeneous \\ elastic energies   with material voids}
\subjclass[2020]{26A45, 49J45, 49Q20, 70G75, 74Q05, 74R10.}
\keywords{$\Gamma$-convergence, material voids, linear elasticity, integral representation, homogenization, relaxation}
\maketitle
\begin{abstract}

We study the effective behavior of heterogeneous energies arising in the modeling  of material voids in geometrically linear elastic materials. Specifically, we consider functionals featuring bulk terms depending  on the symmetrized gradient of the displacement and terms comparable to the surface area of the material voids inside the material. Under suitable growth conditions for the bulk and surface densities we prove that, as the microscale $\varepsilon$ tends to zero,  the  $\Gamma$-limit  admits an integral representation that contains an additional surface term expressed by  jump discontinuities of the displacement  outside of the void region. This term  is related to the phenomenon of  collapsing of voids in the effective limit. Under a continuity assumption of the surface density at the $\varepsilon$-scale, we show that the limiting density related to jumps is  twice the  energy density for voids. 
 
\end{abstract}

\EEE
 
\section{Introduction}
The derivation of homogenization and general $\Gamma$-convergence results for integral functionals has been a thriving field over the last decades. We refer to \cite{BraidesDefranceschi1998} for an overview in the Sobolev setting and mention, without claiming exhaustiveness, various extensions to $BV$-type frameworks featuring bulk and surface terms \cite{BLZ,BraDefVit96,caterina-bv, cagnetti2018gammaconvergence, cagnetti2017stochastic, cagnettiscardia, donati,  dalmaso-toader, DonnarummaFriedrich,  FriPerSol20a, giapon04a, Marziani_2023}. The analysis generally relies on the localization method for $\Gamma$-convergence \cite{DalMaso:93}, combined with a global method \cite{Bouchitt2002AGM,Bouchitt1998} that allows   to represent limits as integral functionals. The goal of the present article is to extend this  approach to energies in elasticity with material voids.

The formation of material voids inside elastically stressed solids can be formulated in the context of \emph{stress-driven rearrangement instabilities} (SDRI), see \cite{AsaroTiller1972, Grin86, Grinfeld1993, SuoWang1994}. Energy functionals describing SDRI are characterized by the competition between stored elastic bulk energy and surface contributions. Problems of this type have been intensively studied in recent years using variational methods,  including results on existence, regularity, and relaxation \cite{brachasol05, Crismale_2020,fonfusleo11a_VOID, KholmatovPiovano2020, Siegel2004Evolution_VOID}, as well as on linearization \cite{KFZ:2021,  KFZ:2022}, and dimension reduction \cite{ FKZ_elasticrodswithvoids, friedrich2025derivationkirchhofftypeplatetheories, SantilliSchmidt2023BZK, SantilliSchmidt2023}. Despite this recent progress, homogenization in this setting remains largely unexplored and appears to be limited to a periodic homogenization result for a scalar model \cite{solci}.

\textbf{Setting and background:}
We now describe in more detail the energy functionals modeling material voids in elastically stressed solids. In the setting of linearized elasticity, \NNN a prototypical energy \EEE takes the form
\begin{equation}
\label{simplified functional}
\mathcal{I}(u,E)=\int_{\Omega \setminus E}\mathbb{C}e(u)\colon e(u) \, \mathrm{d}x+\int_{\Omega \cap \partial E}g(\nu_E) \, \mathrm{d}\mathcal{H}^{d-1},
\end{equation}
where $\Omega \subset \mathbb{R}^d$ denotes the reference domain, $E$ is the (sufficiently smooth) void set, and $u \in H^1(\Omega \setminus \overline{E}; \mathbb{R}^d)$ denotes the displacement field. The first term corresponds to the elastic bulk energy, which is quadratic in the symmetrized gradient $e(u)\defas\frac{\nabla u + \nabla u^T}{2}$ and depends on the fourth-order, positive semi-definite elasticity tensor $\mathbb{C}$. The surface term accounts for the presence of the void set $E$, where $g$ is a norm and $\nu_E$ denotes the outer unit normal to $\partial E$. If $u$ is of the form $u=(0,\ldots,0,v)$ for a scalar function $v$, then \eqref{simplified functional} reduces to a scalar problem.

Functionals of the type \eqref{simplified functional} are separately lower semicontinuous: for fixed $E$ with Lipschitz boundary, the map $u \mapsto \mathcal{I}(u,E)$ is weakly lower semicontinuous in $H^1$, while for fixed $u$ the map $E \mapsto \mathcal{I}(u,E)$ can be regarded as a lower semicontinuous functional on sets of finite perimeter. However, as pointed out by {\sc Braides, Chambolle, and Solci}    \cite{brachasol05}, the functional \eqref{simplified functional} is not lower semicontinuous with respect to the pair $(u,E)$. More precisely, {\sc Crismale} and the third author showed in \cite{Crismale_2020} that the lower semicontinuous envelope of \eqref{simplified functional} coincides with
\begin{equation}
\label{simplified relaxed functional}
\overline{\mathcal{I}}(u,E)= \int_{\Omega \setminus E}\mathbb{C}e(u)\colon e(u) \, \mathrm{d}x+\int_{\Omega \cap \partial^* E}g(\nu_E) \, \mathrm{d}\mathcal{H}^{d-1}+\int_{J_u \setminus \partial^* E}2g(\nu_u) \, \mathrm{d}\mathcal{H}^{d-1}.
\end{equation}
We refer to \cite{brachasol05} for analogous results in the scalar case and in nonlinear elasticity. In the relaxed formulation, regular void sets are replaced by sets of finite perimeter with essential boundary $\partial^*E$. The interaction between the displacement field and the void set gives rise to an additional surface term involving the jump set $J_u$ with normal $\nu_u$. Indeed, during relaxation, voids may collapse into discontinuities of $u$, and such collapsed interfaces contribute twice to the surface energy, yielding the density $2g$. Note that \eqref{simplified relaxed functional} is technically more involved than its scalar or nonlinear counterparts, since $u$ is no longer a $(G)SBV$ function \cite{ambrosio2000fbv}, but rather belongs to the space of generalized special functions of bounded deformation ($GSBD$) \cite{DALMASO_GBD}. A key achievement of \cite{Crismale_2020} lies in overcoming the lack of Korn's inequality and dealing with the resulting analytical difficulties in this setting.

In the scalar framework, the result of \cite{brachasol05} was extended by {\sc Solci} \cite{solci} to heterogeneous materials with periodic microstructure. Specifically, energies of the form
\begin{equation}
\label{solci non homo}
\int_{\Omega \setminus E}f\Big(\frac{x}{\varepsilon},\nabla y\Big) \, \mathrm{d}x+\int_{\Omega \cap \partial E}g\Big(\frac{x}{\varepsilon},\nu_E \Big) \, \mathrm{d}\mathcal{H}^{d-1}
\end{equation}
are considered, where $\varepsilon>0$  represents the size of the microstructure, and $f$ and $g$ are densities periodic in $x$ and satisfying suitable growth conditions.  Under the additional assumption that $g$ is a norm in the $\nu$-variable, the energies \eqref{solci non homo} $\Gamma$-converge to
\begin{equation*}
\int_{\Omega \setminus E}f_\mathrm{hom}(\nabla y) \, \mathrm{d}x+\int_{\Omega \cap \partial^* E}g_\mathrm{hom}(\nu_E) \, \mathrm{d}\mathcal{H}^{d-1} + \int_{J_u \setminus \partial^*E}2g_\mathrm{hom}(\nu_u) \, \mathrm{d}\mathcal{H}^{d-1},
\end{equation*}
for suitable homogenized densities $f_{\rm hom}$ and $g_{\rm hom}$ which are independent of the $x$-variable.

The aim of this work is to derive effective models for heterogeneous materials with material voids in the setting of linearized elasticity. In this way, we generalize \cite{solci} to (a) the vectorial, geometrically linear setting and to (b) materials with possibly nonperiodic microstructures. In this sense, our work can be understood as a counterpart of \cite{cagnetti2018gammaconvergence, FriPerSol20a} where we consider material voids in place of fracture sets. The main novelty of our contribution lies in the analysis of collapsing material voids along the homogenization process. 
\NNN Specifically, \EEE we study the asymptotic behavior of energy functionals of the form
\begin{equation}
\label{non homo energies linear elasticity}
\mathcal{E}_\varepsilon(u,E)=\int_{\Omega \setminus E}f_\varepsilon(x,e(u))\, \mathrm{d}x+\int_{\partial^* E \cap \Omega}g_\varepsilon(x,\nu_E)\, \mathrm{d}\mathcal{H}^{d-1},
\end{equation}
for suitable bulk and surface densities $f_\varepsilon$ and $g_\varepsilon$, see $(f_1)$–$(f_3)$ and $(g_1)$–$(g_3)$ in Section \ref{setting and main results} for the precise assumptions. We emphasize that \eqref{non homo energies linear elasticity} allows for  more general microstructures than the purely periodic case, which is recovered by the specific choice $f_\varepsilon(x,\xi)=f(\frac{x}{\varepsilon},\xi)$ and $g_\varepsilon(x,\nu)=g(\frac{x}{\varepsilon},\nu)$, for functions $f$ and $g$ being periodic in $x$. Further examples of energies involving the competition between bulk and surface terms in nonperiodic settings can be found, for instance, in \cite{ caterina-bv, cagnetti2018gammaconvergence, cagnetti2017stochastic} within a generalized $BV$ framework and in \cite{DonnarummaFriedrich, FriPerSol20a, Marziani_2023} for brittle fracture in linear elasticity.

For reasons discussed below, we adopt a modeling framework in which voids are sets of finite perimeter, in contrast to the setting of \eqref{simplified relaxed functional}. Correspondingly, the displacement field $u$ is regarded as an $SBV$ function defined on $\Omega$, satisfying $u=0$ on $E$ and $J_u \subset \partial^* E$, see \NNN \eqref{newenergies}--\eqref{SBVdefW} \EEE for details.

\textbf{Main results:}
 In our first main result (Theorem \ref{first gamma convergence result}), we  show that  functionals of the form \eqref{non homo energies linear elasticity} $\Gamma$-converge, up to subsequences, to a limiting functional $\mathcal{E}_0$ of the form
\begin{equation}
\label{non homo limit functional}
\mathcal{E}_0(u,E)=\int_{\Omega \setminus E}f_0(x,e(u))\, \mathrm{d}x+\int_{\partial^* E \cap \Omega}g_0(x,\nu_E)\, \mathrm{d}\mathcal{H}^{d-1}+\int_{J_u \setminus \partial^*E}h_0(x,[u],\nu_u)\, \mathrm{d}\mathcal{H}^{d-1},
\end{equation}
where the densities $f_0$, $g_0$, and $h_0$ are defined via suitable asymptotic cell formulas, see \eqref{def f_0}–\eqref{def h_0}, and   $[u]$ denotes the jump height. Here,   $\Gamma$-convergence is understood with respect to convergence in measure for $u$ and $L^1$-convergence of the characteristic function of $E$. In contrast to \eqref{simplified relaxed functional}, the surface densities satisfy only the inequality $h_0 \ge 2g_0$, and equality does not hold in general, see Remark~\ref{counterexample}. To the best of our knowledge, this phenomenon, namely starting from a single surface density $g_\varepsilon$ and obtaining two independent surface densities in the limit, has not previously been observed in the literature. However, if $g_\varepsilon$ is continuous with respect to the spatial variable, we can indeed prove that $h_0=2g_0$, see Proposition~\ref{limsupliminf}. \NNN The idea is that, in \EEE a blow-up, collapsing voids correspond to two parallel and infinitesimally close hypersurfaces, whose energies are comparable only under suitable continuity assumptions.

Our second main result (Theorem \ref{Identification of Gamma-limit}) concerns the identification of the $\Gamma$-limit $\mathcal{E}_0$ by analyzing the relation between the sequences $(f_\varepsilon)_\varepsilon$, $(g_\varepsilon)_\varepsilon$ and the limiting densities $f_0$, $g_0$, and $h_0$. In line with other $\Gamma$-convergence results for free-discontinuity problems (see, e.g., \cite{cagnetti2018gammaconvergence, FriPerSol20a}), we show that bulk and surface densities do not interact in the limit. More precisely, the bulk density $f_0$ is obtained as the limit of infimum problems defined on Sobolev functions without voids, namely
\begin{equation*}
    f_0(x, \xi) \EEE =\limsup_{\rho \to 0}\frac{1}{\gamma_d \rho^d}\limsup\limits_{\varepsilon \to 0}\inf_{v}\int_{B_{\rho}(x)}f_{\varepsilon}(y,e(v))\, \mathrm{d}y
\end{equation*}
for $x \in \Omega$ and $\xi \in \R^{d \times d}$, where $B_{\rho}(x)$ denotes the ball of radius $\rho$ centered in $x$, $\gamma_d$ is the volume of the $d$-dimensional unit ball, and the infimum is taken on Sobolev functions $v$ satisfying  $v(x) =\xi x$ near $\partial B_{\rho}(x)$.  Instead, the surface density $g_0$ is given by
\begin{equation}\label{g-density}
      g_0(x,\nu)=\limsup_{\rho \to 0}\frac{1}{\gamma_{d-1} \rho^{d-1}}\limsup\limits_{\varepsilon \to 0}\inf_{F}\int_{B_{\rho}(x)\cap \partial^* F}g_{\varepsilon}(y,\nu_F)\, \mathrm{d}y
\end{equation}
for $x \in \Omega$ and $\nu \in \mathbb{S}^{d-1}$, where $\gamma_{d-1}$ is the volume of the $(d-1)$-dimensional unit ball and the infimum problem is defined on sets of finite perimeter coinciding with the half space  $\lbrace y \in \R^d\colon   (y-x) \cdot \nu \le 0 \rbrace $ near $\partial B_{\rho}(x)$.  These formulas coincide with those in \cite{cagnetti2018gammaconvergence, FriPerSol20a}. In particular, the densities decouple in the sense that $f_0$ is not influenced by $(g_\eps)_\eps$ and   $g_0$ is not influenced by $(f_\eps)_\eps$.

The characterization of $h_0$ is more subtle. Restricting to the planar case, we show that  $h_0(x,\nu)$ \NNN is independent of $[u]$ and \EEE admits a representation similar to \eqref{g-density},  but in place of a half plane as boundary condition, we assume that the competitor set $F$  is close to the strip  $\lbrace y \in \R^d\colon   |(y-x) \cdot \nu| \le \eps \rbrace $ near $\partial B_{\rho}(x)$, and needs to `separate' $B_\rho(x)$ in the sense that $B_\rho(x) \setminus F$ consists of two connected components. This formulation is tailor-made to capture the collapse of voids into displacement discontinuities. The restriction to $d=2$ is due to the availability of approximation results for $GSBD$ functions by piecewise rigid motions only in the planar setting.

As a corollary, again in dimension $d=2$, we obtain a (periodic) homogenization result using techniques analogous to those in \cite{BraDefVit96, cagnetti2018gammaconvergence, cagnetti2017stochastic}, see Corollary \ref{homogenization}. In  Remark \ref{periodic-homo-remark},  we observe that in the periodic case the identity $h_0=2g_0$ holds even without assuming continuity of the surface density. \NNN Finally, in any space dimension, we consider the special case $f_\eps = f$ and $g_\eps = g$ for all $\eps>0$, and show that the relaxation can be represented as an integral of the form \eqref{simplified relaxed functional}, see Theorem \ref{relaxation}. \EEE

\textbf{Proof strategy:}
Due to the possible presence of nonperiodic microstructures, our proof strategy follows the localization method for $\Gamma$-convergence \cite[Chapters 14–20]{DalMaso:93}, combined with an integral representation result. We  comment on some of the main technical challenges.

(a) The identification of $f_0$ and $g_0$ follows the approach of \cite[Theorem 2.4]{FriPerSol20a}, which consists in restricting the asymptotic minimization problems for pairs of function-set $(u,E)$  either to Sobolev functions without voids or a problem exclusively defined on sets. Korn's inequality for functions with small jump sets \cite{CagChaSca20} plays a crucial role for the bulk density. The identification of $h_0$, by contrast, requires a different Korn-type inequality available only in the planar setting, allowing the approximation by piecewise rigid motions \cite[Theorem 4.1]{FriedrichSolombrino2018}. Following \cite[Lemma 5.2]{FriPerSol20a}, we first construct one-dimensional separating sets, which are then replaced via covering arguments by boundaries of two-dimensional voids,   still separating $B_\rho(x)$ and thus being admissible competitors for the asymptotic cell formula of $h_0$. 

(b) A key ingredient both for modifying boundary data in the cell formulas and for the localization argument is the construction of suitable interpolations between pairs $(u,E)$ and $(v,F)$, commonly referred to as the \emph{fundamental estimate}. In our setting (Proposition \ref{fundamentalestimate}), this is achieved by a careful combination of corresponding results \EEE for functionals defined on $GSBD$-functions (see \cite[Proposition~4.1]{FriPerSol20a}) and  Caccioppoli partitions  (see \cite[Lemma 4.4]{AmbBra90}) which in turn crucially hinge on Korn inequalities in $SBD$ to interpolate two functions and the  Fleming-Rishel-formula to join two sets.  Throughout the construction, it is essential to preserve the constraints $J_u\subset\partial^*E$ and $u=0$ on $E$. Moreover, for the application to the asymptotic cell formula for $h_0$, it is essential that the fundamental estimate preserves the separation property if two sets $E$ and $F$ separate $B_\rho(x)$, \NNN cf.\ Corollary \ref{set.corollary}. \EEE

In this regard, let us we highlight that it seems out of reach to obtain a   fundamental estimate for functionals defined on partitions of Lipschitz sets. This observation is the reason why  we relax the setting of   \eqref{simplified relaxed functional} and consider a model where voids are sets of finite perimeter rather than Lipschitz sets. For relaxation or $\Gamma$-convergence results, this different modeling assumption is irrelevant since effective limits necessarily involve relaxation to sets of finite perimeter.

%
%
%
%

(c) The proof of the identity $h_0=2g_0$ under continuity assumptions on $g_\varepsilon$ is based on replacing the boundary $\Gamma = \partial^* F \cap B_\rho(x)$ of an admissible competitors $F$ for $g_0$ by a separating void set whose boundary essentially consists of two closely shifted copies of $\Gamma$. Continuity allows  to control the associated surface energy, while additional approximation by polyhedral sets \cite{BraConGar16} is used to estimate the remaining boundary contributions related to the shifting of $\Gamma$.

(d) Finally, the integral representation result relies on the \emph{global method for relaxation} developed in \cite{Bouchitt2002AGM,Bouchitt1998}, following closely the strategy for functionals on $GSBD$ spaces, see \cite{SolFriCri20} for details. To our knowledge, this provides the first general integral representation result for functionals defined on pairs of function-set. The main necessary adaptations    consist in showing that, in the equivalence of asymptotic cell formulas with different boundary conditions, see \cite[Sections 5 and 6]{SolFriCri20}, the  presence of the void set can be  handled. This is essentially achieved by  considering $(u,E)$ as an $GSBD$-function with $J_u \subset \partial^* E$.

%

\textbf{Further perspectives:}
Similar results could be obtained in the context of nonlinear elasticity, where several technical difficulties could be avoided, such as the lack of control on the skew-symmetric part of $\nabla u$ and the use of $GSBD$. In that setting, the identification of the $\Gamma$-limit is expected to hold in arbitrary space dimensions. In the present work, the restriction to $d=2$ arises solely from the approximation result \cite[Lemma 5.2]{FriPerSol20a}. In a $GSBV$-framework, analogous approximations can be obtained in any dimension using the coarea formula, see, e.g., \cite{BraDefVit96, cagnetti2018gammaconvergence, Ambrosio94}.

 In this work, we do not consider Dirichlet boundary conditions, but note that they could in principle be incorporated, leading to convergence results for (almost) minimizers with prescribed boundary data. We refer to \cite{Fri19;08} and \cite{FriPerSol20a} for corresponding results in the geometrically nonlinear and linearized setting, respectively,  dealing with surface discontinuities in place of voids.

\textbf{Outline:}
The paper is organized as follows. In Section \ref{setting and main results}, we introduce the setting and state the main results. Section \ref{section proof 1} is devoted to compactness of $\Gamma$-convergence and integral representation of the resulting limiting functional. Sections \ref{section proof of proposition limsupliminf}–\ref{section proof 2} address the identification of the $\Gamma$-limit. As some proofs on the bulk densities are very similar to the ones in \cite{SolFriCri20, FriPerSol20a}, they are omitted, but for the convenience of the reader we include the essential steps in Appendix \ref{otherproof-sec}. Eventually,  in Appendix \ref{section preliminaries}, we  collect basic properties on the function space $GSBD$ used throughout  the paper.

%
%

\NNN \textbf{Notation:} \EEE We close the introduction by introducing    notation which we use  throughout the paper: \EEE

\begin{itemize}
    \item[(i)] We denote by $\mathcal{A}$   the family of all open, bounded subsets of $\mathbb{R}^d$, \MMM and let  $\mathcal{A}_0 \subset \mathcal{A}$  be the bounded sets \EEE   with Lipschitz boundary. Given $\Omega \in \mathcal{A}_0$, $\mathcal{A}_0(\Omega)$ denotes the family of sets $A \in \mathcal{A}_0$ satisfying $A \subset \Omega$. \MMM Similarly, by $\mathcal{B}(\Omega)$ we denote the Borel sets contained in $\Omega$. For $A_1$, $A_2$, $A_3 \in \mathcal{A}$, we denote by $A_1 \triangle A_2$ the symmetric difference, and we say that `$A_1 = A_2$ on $A_3$' if $A_1 \cap A_3  = A_2 \cap A_3 $. \EEE   
\item[(ii)] \MMM Given \EEE two vectors $x$, $y \in \mathbb{R}^d$ we  denote by $x \cdot y$ their inner product. \MMM We set $\mathbb{S}^{d-1} := \lbrace x \in \R^d \colon |x| =1 \rbrace$. By $\R^{d\times d}_{\rm sym}$ and $\R^{d\times d}_{\rm skew}$ we denote the set of symmetric and skew-symmetric matrices, respectively. \EEE 
     \item[(iii)] Given $x \in \mathbb{R}^d$, $\rho>0$, we denote by $Q_{\rho}(x)$ the $d$-dimensional open cube with center in $x$ and sidelength $\rho$, oriented according to the canonical  basis.  \MMM We   often drop the dependence on $x$ if $x=0$. \EEE   
    Moreover,  we denote by $B_\rho(x)$ the ball centered in $x$   with radius $\rho$.  If $x=0$, we just write $B_\rho$. 
     \item[(iv)] By $\mathcal{L}^k$ and $\mathcal{H}^k$ we denote the $k$-dimensional Lebesgue and Hausdorff measure, \MMM respectively. We  \EEE  set $\gamma_k\defas \mathcal{L}^k(B^k_1)$, where $B^k_1$ denotes the $k$-th dimensional ball of radius one and center in zero. 
    \item[(v)] \MMM  Given $\Omega \in \mathcal{A}_0$, we denote by $L^0(\Omega;\R^d)$ the space of $\mathcal{L}^d$-measurable functions on $\Omega$ with values in $\R^d$, endowed with the topology of  convergence in measure.  Moreover, \EEE   we let  \EEE $\mathcal{M}(\Omega)\defas \{ A \subset \Omega \colon A\:\: \text{\NNN Lebesgue \EEE measurable}\}$. We equip  \MMM $\mathcal{M}(\Omega)$ \EEE with the metric given by the convergence in measure (or analogously by the convergence in $L^1$ of the characteristic functions),  \MMM  i.e., we say  $A_n$ \NNN converges \EEE  to $A$ if $\Vert \chi_{A_n}-\chi_A\Vert_{L^1(\Omega)}\to 0$.    In particular,  $A$, $B \in \mathcal{M}(\Omega)$ \MMM are identified \EEE  if $\mathcal{L}^d(A \triangle B)=0.$

     \item[(vi)]    Let $E \in \mathcal{M}(\Omega)$. For every $t \in [0,1]$, we denote by $E^t$ the \MMM set \EEE of points where $E$ has density $t$ i.e.,
    \begin{equation*}
        E^t \defas \Big \{ x \in \Omega \colon \lim\limits_{\rho \to 0}\frac{\mathcal{L}^d(B_\rho(x)\cap E)}{\gamma_d\rho^d}=t \Big\}.
    \end{equation*}
    \item[(vii)]  \MMM By $\mathcal{P}(\Omega)$ we denote all sets in $\mathcal{M}(\Omega)$ with  finite perimeter. For such sets,   \EEE we denote by $\partial^* E$ the essential boundary and by $\nu_E$ the approximate \MMM outer unit \EEE normal vector to $\partial^* E$. 
    \item[(viii)] Given $x \in \mathbb{R}^d$, $\varepsilon>0$, and a Borel set $F \subset \mathbb{R}^d$, we define $F_{\varepsilon,x}=x+\varepsilon(F-x)$.  
    \item[(ix)] Given $x \in \mathbb{R}^d$ and $\nu \in \mathbb{S}^{d-1}$, we \MMM define the hyperplane and halfspaces \EEE
    \begin{equation*}
        \Pi_{x}^\nu\defas \{ y\in \mathbb{R}^d \colon (y-x)\cdot \nu =0\}, \quad \quad  \Pi_{x}^{\nu, \pm}\defas \{ y\in \mathbb{R}^d \colon \pm (y-x)\cdot \nu >0\},
    \end{equation*}
\MMM where here and in the sequel $\pm$ stands as a placeholder for $+$ and $-$.  Moreover, \EEE given $\nu \in \mathbb{S}^{d-1}$ and a ball $B_\rho(x)$, we denote by $B_{\rho}^{\nu,\pm}(x)\defas \{ y \in B_\rho(x) \colon \pm(y-x)\cdot \nu >0\}$.
      \item[(x)] \MMM We use standard notation for $SBV$-functions. In particular, for $u \in SBV(\Omega)$,  we denote by $\nabla u$ the approximate gradient  and by  $J_u$ the set of jump points of $u$ in $\Omega$. \EEE By $\nu_u$ we denote the normal vector to $J_u$, and by $u^+$, $u^-$ the traces of $u$ on $J_u$ according to the orientation induced by $\nu_u$.  \MMM We say $u \in SBV^p(\Omega)$ if $\nabla u \in L^p(\NNN \Omega, \EEE \R^d)$ and $\mathcal{H}^{d-1}(J_u) < \infty$.
    \item[(xi)]  We use similar notation for  $BD$ and $GSBD$, see \cite{DALMASO_GBD}, \cite{Temam1980FunctionsOB}. \MMM In particular, we define $e(u) = \frac{1}{2}(\nabla u^T + \nabla u)$. \EEE  The main properties of these spaces are recalled in Appendix \ref{section preliminaries}. \EEE 
    \end{itemize}

\section{Setting and main results}
\label{setting and main results}
\MMM In this section we introduce the setting and present our main results. \EEE Let $\Omega \in \mathcal{A}_0$. Consider the family of energies $\mathcal{E}_{\varepsilon}\colon L^0(\Omega;\mathbb{R}^d)\times  \mathcal{M}(\Omega)\times \mathcal{A}(\Omega)\to [0,\infty]$ defined by
\begin{equation}
\label{newenergies}
    \mathcal{E}_{\varepsilon}(u,E,A)\defas \begin{cases}
        \int_{A \setminus {E}}f_{\varepsilon}(x,e(u))\, \mathrm{d}x+\int_{\partial^* E \cap A}g_{\varepsilon}(x,\nu_{ E})\, \mathrm{d}\mathcal{H}^{d-1}\:\: \text{if}\:\: \MMM (u,E) \in \mathcal{W}^{1,p}(A), \EEE \\
        +\infty \:\: \text{otherwise},
    \end{cases}
\end{equation}
where  \MMM
\begin{equation}\label{SBVdefW}
    \mathcal{W}^{1,p}(A)\defas \big\{(u, E) \colon \,   u \in SBV^p(A), \,  E \in \mathcal{P}(A) \text{ with }   J_u \subset \partial^* E\cap A  \:\: \text{and}\:\: u=0 \MMM \text{ on } E \EEE \big\}.
\end{equation}
The energy is determined by the void set  $E$ and the values of the displacement  $u$ on $A \setminus E$. The condition
$u = 0$ on $E$ is for definiteness only, in order to define functions on the entire set $A$. In particular, we use the notation $   \mathcal{W}^{1,p}$ similar to the one of the Sobolev space since, in the case that $E$ is smooth, the corresponding functions in $  \mathcal{W}^{1,p}(A)$ correspond to \NNN $  W^{1,p}(A \setminus \overline{E} ;\R^d)$-functions extended by \EEE $u = 0$ on $E$. \EEE

We assume that the functions $f_{\varepsilon}\colon \mathbb{R}^d \times \mathbb{R}^{d \times d}\to [0,\infty)$ and $g_{\varepsilon}\colon \mathbb{R}^d \times \mathbb{S}^{d-1}\to (0,\infty)$ are Borel measurable and satisfy
\begin{itemize}
    \item[$(f_1)$] $f_{\varepsilon}(x,0)=0$ for every $x \in \mathbb{R}^d$,
    \item[$(f_2)$] $\alpha\vert \frac{\xi+\xi^T}{2}\vert^p\leq f_{\varepsilon}(x,\xi)$ for every $x \in \mathbb{R}^d$ and $\xi \in \mathbb{R}^{d \times d}$,
    \item[$(f_3)$] $f_{\varepsilon}(x,\xi)\leq \beta(1+\vert \frac{\xi+\xi^T}{2}\vert^p)$ for every $x \in \mathbb{R}^d$ and $\xi \in \mathbb{R}^{d \times d}$,
    \item[$(g_1)$] \NNN $g_{\varepsilon}(x,\nu) = g_{\varepsilon}(x,-\nu)$ for every $x \in \mathbb{R}^d$ and $\nu \in \mathbb{S}^{d-1}$, \EEE
     \item[$(g_2)$] $\alpha\leq g_{\varepsilon}(x,\nu)$ for every $x \in \mathbb{R}^d$ and $\nu \in \mathbb{S}^{d-1}$,
     \item[$(g_3)$] $g_{\varepsilon}(x,\nu)\leq \beta$ for every $x \in \mathbb{R}^d$ and $\nu \in \mathbb{S}^{d-1}$,
\end{itemize}
with $\alpha \in (0,1)$, $\beta>\alpha$ \MMM and  $1<p<\infty$. Without further notice, we mention that the bulk integral of \eqref{newenergies} can also be taken over the set $A$, see \eqref{SBVdefW} and $(f_1)$. \EEE 
 
  Our goal is to identify the $\Gamma$-limit of $(\mathcal{E}_\varepsilon)_\eps$ and to show \MMM that \EEE it admits an integral representation. \EEE Before doing so, we introduce the definitions and the notation for the limit functional.  Due to \MMM the growth conditions \EEE $(f_2)$, $(g_2)$, and Theorem \ref{compactness in GSBD^p}, we expect that, whenever a sequence $(u_{\varepsilon},E_{\varepsilon})_{\varepsilon}$ with $\sup_{\varepsilon>0}\mathcal{E}_{\varepsilon}(u_{\varepsilon},E_{\varepsilon},\Omega)<\infty$ converges in $L^0(\Omega;\mathbb{R}^d)\times L^1(\Omega)$ to a pair $(u,E)$, then \EEE $u \in GSBD^p(\Omega)$ and $E \in \MMM\mathcal{P}\EEE(\Omega)$. Motivated also by \cite[Theorem 5.1 and Proposition 5.4]{Crismale_2020},   we define \MMM
\begin{equation}
\label{newspace}
\mathcal{G}^{p}(\Omega)\defas \big\{ (u, E)\colon \, u \in GSBD^p(\Omega), \ E \in \mathcal{P}(\Omega) \text{ with }  u=u\chi_{\NNN E^0}\big\}.
\end{equation} \EEE
For every $x \in \mathbb{R}^d$, $\zeta \in \mathbb{R}^d$, \MMM and \EEE $\xi \in \mathbb{R}^{d \times d}$ we denote by $\overline{l}_{x,\zeta,\xi}$ the affine function $\overline{l}_{x,\zeta,\xi}(y)\defas \zeta+\xi(y-x)$. To simplify the notation, we write ${l}_{\xi}$ instead of $\overline{l}_{0,0,\xi}$. For every $x \in \mathbb{R}^d$, $\zeta^+, \zeta^- \in \mathbb{R}^d$, \MMM and \EEE $\nu \in \mathbb{S}^{d-1}$ we denote by \NNN $\overline{u}_{x,\zeta^-,\zeta^+}^\nu$ \EEE the piecewise constant function defined by
\begin{equation*}
    \overline{u}^\nu_{x,\zeta^-,\zeta^+}(y)\defas \begin{cases}
        \zeta^+ \:\: \text{if}\:\: (y-x)\cdot \nu >0, \\
        \zeta^- \:\: \text{if}\:\: (y-x)\cdot \nu<0.
    \end{cases}
\end{equation*}
To shorten the notation, \MMM we let \EEE ${u}_{x,\zeta}^\nu\defas \overline{u}^\nu_{x,0,\zeta}$.

Given a functional $\mathcal{E}_0 \colon L^0(\Omega;\mathbb{R}^d)\times \mathcal{M}(\Omega)\times \mathcal{A}(\Omega)\to [0,\infty]$, \EEE we define the infimum problem
\begin{equation}
\label{infimumproblem1}
m_{\mathcal{E}_0}(u,E,A)\defas \inf \big\{ \mathcal{E}_0(v,F,A)\colon   \MMM (v,F) \in  \EEE \mathcal{G}^{p}(A)\:\: \text{such that} \:\:  (v,F)=(u,E)  \:\:  \text{near}\:\: \partial A\big\},
\end{equation}
\MMM where `near' refers to a neighborhood of $\partial A$ inside $A$. \EEE
In addition, for every $x\in \Omega$, $\xi \in \MMM \mathbb{R}^{d\times d} \EEE$, $\zeta \in \mathbb{R}^d \setminus \{0\}$, and $\nu \in \mathbb{S}^{d-1}$, we consider the limits
\begin{align}
 \label{def f_0}
f_0(x,\xi) &\defas \limsup\limits_{\rho \to 0}\frac{m_{\mathcal{E}_0}({l}_{\xi},\emptyset, B_{\rho}(x))}{\gamma_d \rho^d},   
\\ \label{def g_0}
   \MMM g_0(x,\nu) \EEE &\defas \limsup\limits_{\rho \to 0} \frac{m_{\mathcal{E}_0}( \MMM 0 , \EEE \Pi^{\nu,-}_{x},B_{\rho}(x))}{\gamma_{d-1} \rho^{d-1}},
\\ \label{def h_0}
  h_0(x,\zeta,\nu) &\defas \limsup\limits_{\rho \to 0} \frac{m_{\mathcal{E}_0}({u}_{x,\zeta}^{\nu},\emptyset,B_{\rho}\MMM (x) \EEE )}{\gamma_{d-1} \rho^{d-1}}.
\end{align}
\MMM Concerning the boundary conditions in  \eqref{def f_0}--\eqref{def h_0}, we observe that the void is only present in $g_0$. The formulation of $f_0$ and $h_0$ features affine or piecewise constant boundary conditions for $u$, as costumary in free-discontinuity problems. \EEE  
We \EEE are ready to present the first main result of this work.
\begin{theorem}[Compactness and Integral representation]
\label{first gamma convergence result}
Let $\Omega \in \mathcal{A}_0$. Let $(f_\varepsilon)_\varepsilon$, $(g_\varepsilon)_\varepsilon$ be \NNN collections \EEE of functions satisfying $(f_1)$--$(f_3)$ \MMM and \EEE $(g_1)$--$(g_3)$, respectively. Let $\mathcal{E}_\varepsilon \colon L^0(\Omega;\mathbb{R}^d)\times \mathcal{M}(\Omega)\times \mathcal{A}(\Omega)\to [0,\infty]$ be the corresponding sequence of functionals given in \eqref{newenergies}. Then, there exist $\mathcal{E}_0\colon L^0(\Omega;\mathbb{R}^d)\times \mathcal{M}(\Omega)\times \mathcal{A}(\Omega)\to [0,\infty]$ and a subsequence (not relabeled) such that
\begin{align}\label{htegammalimit}
    \mathcal{E}_0(\cdot,\cdot,A)=\Gamma-\lim\limits_{\varepsilon \to 0} \mathcal{E}_\varepsilon(\cdot,\cdot,A)\:\: \text{with respect to the}\:\: L^0(\Omega;\mathbb{R}^d)\times L^1(\Omega)\:\: \text{convergence}  
\end{align}
for all $A \in \mathcal{A}(\Omega)$. Moreover, \MMM for all $A \in \mathcal{A}(\Omega)$ and \EEE for every $(u,E) \in  \mathcal{G}^p(\Omega)$ we have that
\begin{equation}\label{the e-0 energy}
    \mathcal{E}_0(u,E,A)=\int_{A}f_0(x,e( u))\, \mathrm{d}x+\int_{\partial^* E\cap A} \MMM g_0(x,\nu_{E}) \EEE \, \mathrm{d}\mathcal{H}^{d-1}+\int_{(J_u \cap E^0)\cap A}h_0(x,[u],\nu_u)\, \mathrm{d}\mathcal{H}^{d-1},
\end{equation}
and $\mathcal{E}_0(u,E,A)=\infty$ otherwise in $L^0(\Omega;\mathbb{R}^d)\times \mathcal{M}(\Omega)$, where $f_0 \colon \Omega \times \mathbb{R}^{d \times d}\to [0,\infty)$, $g_0 \colon \Omega \times \mathbb{S}^{d-1}\to (0,\infty)$, and $h_0 \colon \Omega \times (\mathbb{R}^d \setminus \{0\})  \times \mathbb{S}^{d-1}\to (0,\infty)$ are given by \eqref{def f_0}--\eqref{def h_0}, respectively. 
\end{theorem}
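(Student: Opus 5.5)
We follow the localization method for $\Gamma$-convergence \cite[Chapters 14--20]{DalMaso:93}, and then represent the limit by an integral via the global method \cite{Bouchitt2002AGM,Bouchitt1998}, adapted to $GSBD$ as in \cite{SolFriCri20}.

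\textbf{Step 1 (compactness).} By the compactness of $\Gamma$-convergence with respect to the metrizable convergence $L^0\times L^1$, and a diagonal argument over a countable dense family $\mathcal{D}\subset\mathcal{A}(\Omega)$ (for instance finite unions of cubes with rational vertices), we extract a subsequence along which $\Gamma$-$\lim\mathcal{E}_\varepsilon(\cdot,\cdot,A)$ exists for every $A\in\mathcal{D}$. We then set
\[
\mathcal{E}_0(u,E,A):=\sup\{\mathcal{E}'(u,E,A')\colon A'\subset\subset A,\ A'\in\mathcal{D}\},
\]
the inner regular envelope, where $\mathcal{E}'$ denotes the $\Gamma$-liminf.

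\textbf{Step 2 (De Giorgi--Letta criterion).} To show that $\mathcal{E}_0(u,E,\cdot)$ is the restriction of a Borel measure, and that the $\Gamma$-limit exists for every $A\in\mathcal{A}(\Omega)$, we need the following.
\begin{itemize}
\item[(a)] Increasingness and inner regularity. These are immediate from the definition.
\item[(b)] Superadditivity on disjoint sets. This is also immediate.
\item[(c)] Subadditivity $\mathcal{E}''(u,E,A)\le\mathcal{E}''(u,E,A')+\mathcal{E}''(u,E,B)$ for $A\subset\subset A'$ and $B$ with $A\subset A'\cup B$. This is the crucial step and follows from the fundamental estimate, Proposition \ref{fundamentalestimate}. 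Given recovery pairs on $A'$ and on $B$, it interpolates between them on a layer $A'\setminus\overline{A}$. It uses Korn-type cut-offs on the displacements and a Fleming--Rishel slicing of the sets, and it preserves $J_u\subset\partial^*E$ and $u=0$ on $E$. The resulting error is controlled through $(f_3)$ and $(g_3)$ and vanishes when averaging over many layers.
\end{itemize}
By the standard argument of \cite[Theorem 16.9, Prop.~18.6]{DalMaso:93}, these properties together give that the $\Gamma$-limit exists for all $A$ and coincides with $\mathcal{E}_0$.

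\textbf{Step 3 (domain and growth).} Since finite energy forces $E$ to have finite perimeter, we use the $GSBD^p$ compactness, Theorem \ref{compactness in GSBD^p}. It applies to the pairs $(u_\varepsilon,E_\varepsilon)$ viewed as $GSBD$ functions with $J_{u_\varepsilon}\subset\partial^*E_\varepsilon$. Lower semicontinuity of the perimeter together with $u_\varepsilon=0$ on $E_\varepsilon$ then yields:
\begin{itemize}
\item $\mathcal{E}_0<\infty$ only on $\mathcal{G}^p$;
\item the lower bound $\alpha\big(\|e(u)\|_p^p+\mathcal{H}^{d-1}(\partial^*E\cup J_u)\big)$.
\end{itemize}
The upper bound $\le c\big(\|e(u)\|_p^p+\mathcal{L}^d(A)+\mathcal{H}^{d-1}(\partial^*E\cap A)+\mathcal{H}^{d-1}(J_u\cap E^0\cap A)\big)$ is obtained by density of regular pairs. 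Approximate $u$ by piecewise smooth $GSBD$ functions and let each jump of $u$ outside $E$ be fattened into a thin void strip. This costs at most $2\beta\mathcal{H}^{d-1}$ per unit area, and it is consistent with the space $\mathcal{W}^{1,p}$.

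\textbf{Step 4 (integral representation).} The main obstacle is adapting the global method. We set up the functional $m_{\mathcal{E}_0}$ of \eqref{infimumproblem1}, using the measure property, the lower semicontinuity, the translation-free bounds from Step 3, and locality. Following \cite[Sections 5--6]{SolFriCri20}, we show:
\begin{itemize}
\item For $\mathcal{L}^d$-a.e.\ $x$, $\frac{d\mathcal{E}_0}{d\mathcal{L}^d}(x)=\lim_\rho m_{\mathcal{E}_0}(u,E,B_\rho(x))/(\gamma_d\rho^d)$.
\item For $\mathcal{H}^{d-1}$-a.e.\ $x\in\partial^*E\cup J_u$, the analogous identity holds with $\gamma_{d-1}\rho^{d-1}$.
\end{itemize}
One then replaces $(u,E)$ by its blow-up:
\begin{itemize}
\item at points of $E^0$ with approximate differentiability, by $(l_{e(u)(x)},\emptyset)$;
\item at $\partial^*E$, by $(0,\Pi^{\nu,-}_x)$, because $u=0$ on the $E$-side and the trace from $E^0$ is irrelevant thanks to the bound $\le c\,\mathcal{H}^{d-1}$;
\item at $J_u\cap E^0$, by $(u^\nu_{x,\zeta},\emptyset)$, after subtracting the constant $u^-$.
\end{itemize}
The delicate part is comparing infima with close boundary data. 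For this we use the fundamental estimate again, now on annuli, together with the Korn/Poincaré inequality for functions with small jump set \cite{CagChaSca20}. The latter handles the lack of control on skew parts, and we treat $E$ simply as an extra jump component in the $GSBD$ viewpoint. Finally, we check that the result is invariant under adding constants and infinitesimal rigid motions. This yields the densities \eqref{def f_0}--\eqref{def h_0} and hence \eqref{the e-0 energy}.
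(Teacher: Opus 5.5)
Your plan follows the paper's route: localization through the fundamental estimate of Proposition \ref{fundamentalestimate} (packaged in the paper as Lemma \ref{propertiesofgammaliminflimsup}), then the global method of \cite{SolFriCri20} via a fundamental estimate for $\mathcal{E}_0$ and blow-ups at Lebesgue points, at $J_u\cap E^0$, and at $\partial^*E$. One step fails as you justify it. At $x\in\partial^*E\cap\Omega$ the blow-up of $(u,E)$ is $(\overline{u}^{\nu_E(x)}_{x,0,u^+(x)},\Pi^{\nu_E(x),-}_x)$, not $(0,\Pi^{\nu_E(x),-}_x)$. The bound $\mathcal{E}_0\le c\,\mathcal{H}^{d-1}$ says nothing about whether the cell problem depends on the trace $u^+(x)$. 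Indeed, for a functional satisfying only $(\mathrm{H1})$--$(\mathrm{H4})$ the void density is in general of the form $g_0(x,u^+(x),\nu_E)$, cf.\ Remark \ref{REMARK invariance under rigid motions}. What removes the trace is the invariance
\[
\mathcal{E}_0(u+a\chi_{\Omega\setminus E},E,A)=\mathcal{E}_0(u,E,A)\qquad\text{for affine } a \text{ with } e(a)=0,
\]
which is $(\mathrm{H5})$. It holds for each $\mathcal{E}_\varepsilon$ because jumps on $\partial^*E$ cost nothing beyond the perimeter term. It passes to the $\Gamma$-limit since $\chi_{E_\varepsilon}\to\chi_E$ in $L^1$. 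With $a=-u^+(x)$ it maps competitors for $m_{\mathcal{E}_0}(\overline{u}^{\nu}_{x,0,u^+(x)},\Pi^{\nu,-}_x,B_\rho(x))$ bijectively, and with the same energy, onto competitors for $m_{\mathcal{E}_0}(0,\Pi^{\nu,-}_x,B_\rho(x))$. Your closing remark about ``adding constants'' must be read in exactly this form, since adding a constant on all of $\Omega$ violates $u=0$ on $E$. The same invariance is what justifies subtracting $u^-(x)$ at $J_u\cap E^0$ and makes $h_0$ depend on $[u]$ only.

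Two smaller corrections. In Step 2(c), the inequality as you state it is trivial by monotonicity, since $A\subset A'$. What you need is $\mathcal{E}''(u,E,A''\cup B)\le\mathcal{E}''(u,E,A')+\mathcal{E}''(u,E,B)$ for $A''\subset\subset A'$. There the terms $\Lambda(u_\varepsilon,v_\varepsilon)$ and $C\Vert\chi_{E_\varepsilon}-\chi_{F_\varepsilon}\Vert_{L^1}$ in \eqref{eqfundamentalestimate-1} vanish because both recovery sequences converge to the same pair $(u,E)$. In Step 3, separate lower semicontinuity of $\mathcal{H}^{d-1}(\partial^*E_\varepsilon)$ and of $\mathcal{H}^{d-1}(J_{u_\varepsilon})$, together with $J_{u_\varepsilon}\subset\partial^*E_\varepsilon$, only gives $\liminf_\varepsilon\mathcal{H}^{d-1}(\partial^*E_\varepsilon\cap A)\ge\max\{\mathcal{H}^{d-1}(\partial^*E\cap A),\mathcal{H}^{d-1}(J_u\cap A)\}$. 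That yields your lower bound with $\alpha/2$ in place of $\alpha$, which is enough for the representation. The sharp bound, with $\mathcal{H}^{d-1}(\partial^*E\cap A)+2\mathcal{H}^{d-1}(J_u\cap E^0\cap A)$, is the lower semicontinuity result \cite[Proposition 2.1]{Crismale_2020} that the paper invokes.
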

The proof of Theorem \ref{first gamma convergence result}  \MMM  relies on the localization method of $\Gamma$-convergence and  will be given in \EEE  Section \ref{section proof 1}. \MMM  An important ingredient   is a \EEE  fundamental estimate for the energies \eqref{newenergies}, \MMM see \EEE Proposition \ref{fundamentalestimate}.

We continue by introducing definitions that we \MMM will \EEE use for the identification of the $\Gamma$-limit. We consider the following \MMM cell formulas \EEE for the energies \eqref{newenergies}:
\begin{align}
\label{simplified_cellformula_bulk}
    m^{1,p}_{\mathcal{E}_\varepsilon}(u,A)&\defas \inf \{ \mathcal{E}_\varepsilon(v,\emptyset, A)\colon v \in W^{1,p}(A;\mathbb{R}^d) \:\: \text{and}\:\: v=u \:\: \text{near}\:\: \partial A\}, \\ 
\label{simplified_cellformula_voids}
m_{\mathcal{E}_{\varepsilon}}^{\mathrm{voids}}(E,A) &\defas \inf\{ \mathcal{E}_{\varepsilon}( \MMM 0 , \EEE F,A)\colon F \in \NNN \mathcal{P}(A) \EEE \colon \MMM F= E\EEE \:\: \text{near}\:\: \partial A\}.    
\end{align}
(We will use this only for $u = \ell_\xi$, $E = \Pi^{\nu,-}_x$, and $A= B_\rho(x)$.) In addition, we introduce the Lipschitz set $E^{x,\nu}_{\varepsilon}\subset B_{1}(x)$ defined as
\begin{equation}
\label{thin layer}
    E_{\varepsilon}^{x,\nu} \defas \{ y \in B_1(x)\colon \mathrm{dist}(y,\Pi^\nu_x\NNN ) \EEE <\varepsilon\},
\end{equation}
 \MMM and  for \EEE every ball $B_{\rho}(x)$ we define
\begin{align}
\label{set of competitors}
   \MMM  \mathcal{D}^\nu_{\varepsilon}(B_{\rho}(x)) \EEE \defas \Big\{ &(u,E)\in L^0(B_{\rho}(x);\mathbb{R}^d)\times\mathcal{M}(B_{\rho}(x))\colon \exists\:\: S^\pm \in \mathcal{M}(B_{\rho}(x))\:\: \text{such that}\nonumber
    \\&  S^+\cap S^-=\emptyset,\:\: \mathcal{L}^d((S^+\cup S^-\cup E)\triangle B_{\rho}(x))=0,\:\: \mathcal{H}^{d-1} \MMM ( \partial^* S^+\cap \partial^* S^- ) \EEE =0 \nonumber
    \\& u= \MMM e_1 \EEE \chi_{S^+}\:\: \text{in}\:\: B_\rho(x), \:\:    {S^\pm}= \MMM B^{\nu,\pm}_\rho(x)\setminus  E^{x,\nu}_{\varepsilon} \EEE  \:\: \text{near}\:\: \partial B_\rho(x)\Big\},
\end{align}
\MMM where $e_1 =(1,0,\ldots,0)$.   \EEE 
\begin{center}
\begin{figure}[htp]
\begin{tikzpicture}[scale=0.8, line cap=round, line join=round]

\def\R{3}

\draw[line width=1.2pt] (0,0) circle (\R);

\draw[dashed, line width=0.9pt] (-\R,0) -- (\R,0);

\draw[line width=1.2pt]
(-\R,0.6)
-- (-1.4,0.6)
.. controls (-0.9,0.6) and (-0.6,1.4) ..
(-0.1,1.25)
.. controls (0.6,1.1) and (0.9,0.7) ..
(1.4,0.6)
-- (\R,0.6);

\draw[line width=1.2pt]
(-\R,-0.55)
-- (-1.3,-0.55)
.. controls (-0.8,-0.55) and (-0.6,-1.0) ..
(-0.1,-1.15)
.. controls (0.7,-1.4) and (1.0,-0.9) ..
(1.5,-0.6)
-- (\R,-0.6);

\draw[<->, line width=1pt]
(-3.4,0.55) -- (-3.4,-0.55)
node[midway,left] {$2\varepsilon$};

\node at (0.5,0.28) {$E$};
\node at (1.0,1.68) {$S^+$};
\node at (1.0,-1.48) {$S^-$};
\node at (-1,1.9) {$u = e_1$};
\node at (-1,-2.0) {$u = 0$};

\end{tikzpicture}
 \captionof{figure}{An example of \NNN a \EEE competitor in $\mathcal{D}^{e_2}_{\varepsilon}(B_1)$.}
    \label{figure1}
   \end{figure}
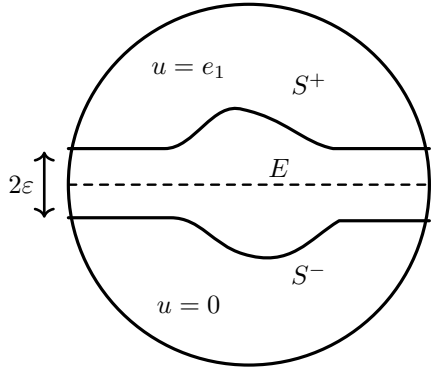
\end{center}

We highlight that the conditions $S^+\cap S^-=\emptyset$ and $\mathcal{H}^{d-1}\MMM (\partial^* S^+\cap \partial^* S^- ) \EEE =0$ means that the void set $E$ must disconnect $S^+$ and $S^-$, \MMM see Figure \ref{figure1}. \EEE

\NNN Based on this definition, \EEE we introduce   the cell formula for the jump part \NNN by \EEE
\begin{align}
\label{simpilfied_cellformula_jump}
m^{\text{jump}}_{\mathcal{E}_{\varepsilon}}(u_{x,e_1}^{\nu},B_{\rho}(x))\QQQ \defas \EEE \inf\big\{& \mathcal{E}_{\varepsilon}(u,E,B_{\rho}(x))\colon  (u,E)\in \mathcal{D}^{\nu}_{\varepsilon}(B_{\rho}(x))\big\}.   
\end{align}

\begin{proposition}
\label{limsupliminf}
Let $\Omega \in \mathcal{A}_0$.
Let $(f_{\varepsilon})_\varepsilon$, $(g_\varepsilon)_\varepsilon$ be collections of functions satisfying $(f_1)$--$(f_3)$ and $(g_1)$--$(g_3)$, respectively. Correspondingly, define $\mathcal{E}_{\varepsilon}$ as in \eqref{newenergies}. Then, by passing to a subsequence (not relabeled) the following holds: 

\noindent 
\MMM  {\rm (i)} \EEE For \QQQ every \EEE $x \in \Omega$ and $\xi \in \mathbb{R}^{d \times d}$ we have
\begin{equation}
\label{f1 = f2}
\MMM \hat{f} \EEE (x,\xi) \defas \limsup\limits_{\rho \to 0^+}\liminf\limits_{\varepsilon \to 0}\frac{m_{\mathcal{E}_{\varepsilon}}^{1,p}({l_{\xi}}, B_{\rho}(x))}{\gamma_d\rho^d}=\limsup\limits_{\rho \to 0^+}\limsup\limits_{\varepsilon \to 0}\frac{m_{\mathcal{E}_{\varepsilon}}^{1,p}({l_{\xi}}, B_{\rho}(x))}{\gamma_d\rho^d},
\end{equation}
and for every $x \in \NNN \Omega \EEE$ and $\nu \in \mathbb{S}^{d-1}$ we have
\begin{equation}
\label{g1 = g2}
\MMM \hat{g} \EEE (x,\nu)\defas \limsup\limits_{\rho \to 0^+}\liminf\limits_{\varepsilon \to 0}\frac{m_{\mathcal{E}_{\varepsilon}}^{\mathrm{voids}}(\Pi^{\nu,-}_x, B_{\rho}(x))}{\gamma_{d-1}\rho^{d-1}}=\limsup\limits_{\rho \to 0^+}\limsup\limits_{\varepsilon \to 0}\frac{m_{\mathcal{E}_{\varepsilon}}^{\mathrm{voids}}(\Pi_x^{\nu,-}, B_{\rho}(x))}{\gamma_{d-1}\rho^{d-1}} .
\end{equation}
\MMM {\rm (ii)}  If $g_{\varepsilon}$ is continuous \NNN for all $\eps>0$, \EEE  then for \MMM every $x \in \NNN \Omega \EEE $ and $\nu \in \mathbb{S}^{d-1}$, \QQQ it also holds that \EEE
\begin{equation}
\label{h1 = h2}
\MMM\hat{h} \EEE(x,\nu)\defas \limsup\limits_{\rho \to 0}\liminf\limits_{\varepsilon \to 0}\frac{m^{\mathrm{jump}}_{\mathcal{E}_{\varepsilon}}(u^\nu_{x,e_1},B_{\rho}(x))}{\gamma_{d-1}\rho^{d-1}}=\limsup\limits_{\rho \to 0}\limsup\limits_{\varepsilon \to 0}\frac{m^{\mathrm{jump}}_{\mathcal{E}_{\varepsilon}}(u^\nu_{x,e_1},B_{\rho}(x))}{\gamma_{d-1}\rho^{d-1}} ,
\end{equation}
and 
\begin{equation}
\label{2g=h}
\MMM \hat{h}(x,\nu)= 2\hat{g}(x,\nu). \EEE 
\end{equation}
\end{proposition}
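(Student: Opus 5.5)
The plan is to follow the standard scheme for asymptotic cell formulas of free-discontinuity functionals (as in \cite{cagnetti2018gammaconvergence, FriPerSol20a}), combined with a direct comparison between the jump and the void cell problems.

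\textbf{Part (i): choice of a subsequence.} By $(f_3)$, $(g_3)$, the quantities $m^{1,p}_{\mathcal{E}_\varepsilon}(l_\xi,B_\rho(x))/\rho^d$ and $m^{\mathrm{voids}}_{\mathcal{E}_\varepsilon}(\Pi^{\nu,-}_x,B_\rho(x))/\rho^{d-1}$ are bounded uniformly in $\varepsilon$. A diagonal argument then gives a subsequence along which they converge for all $(x,\xi,\nu,\rho)$ in a countable dense subset of $\Omega\times\mathbb{R}^{d\times d}\times\mathbb{S}^{d-1}\times(0,\infty)$. Next I extend convergence to all parameters up to $o(1)$ errors, using the fundamental estimate (Proposition~\ref{fundamentalestimate}) to compare cell problems on nested balls $B_{\rho'}(x')\subset B_\rho(x)\subset B_{\rho''}(x'')$ with nearby data.
\begin{itemize}
\item For the bulk problem, an almost minimizer on the small ball is glued to the affine map $l_\xi$ (respectively $l_{\xi'}$) in the annulus. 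The annulus costs at most $\beta(1+|\xi|^p)\mathcal{L}^d(\text{annulus})+C|\xi-\xi'|^p\rho^d$ by $(f_3)$.
\item For the void problem, a half-space with nearby centre or normal is matched in the annulus. The cost is at most $\beta\,\mathcal{H}^{d-1}$ of the mismatch, which is $O(\rho^{d-1}(|\nu-\nu'|+|x-x'|/\rho+(\rho''-\rho')/\rho))$.
\end{itemize}
These estimates show that $\liminf_\varepsilon$ and $\limsup_\varepsilon$ differ by an error that vanishes after $\limsup_{\rho\to0}$, which gives \eqref{f1 = f2} and \eqref{g1 = g2}.

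\textbf{Part (ii), lower bound $\liminf_\varepsilon m^{\rm jump}\ge 2\liminf_\varepsilon m^{\rm voids}$ (up to errors vanishing as $\rho\to0$).} Take $(u,E)\in\mathcal{D}^\nu_\varepsilon(B_\rho(x))$ with sets $S^\pm$. Since $S^+\cap S^-=\emptyset$ and $\mathcal{H}^{d-1}(\partial^*S^+\cap\partial^*S^-)=0$, the sets $\partial^*S^+$ and $\partial^*S^-$ are, up to $\mathcal{H}^{d-1}$-null sets, disjoint subsets of $\partial^*E$ with the same normals up to sign. Then $F^+:=B_\rho(x)\setminus S^+$ coincides near $\partial B_\rho(x)$ with the half-space $\Pi^{\nu,-}_{x+\varepsilon\nu}$. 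By $(g_1)$, the set $F^-:=B_\rho(x)\setminus S^-$ (with reversed normal) is a competitor for the void problem with half-space $\Pi^{-\nu,-}_{x-\varepsilon\nu}$. Hence
\[
\mathcal{E}_\varepsilon(u,E,B_\rho(x))\ge m^{\rm voids}_{\mathcal{E}_\varepsilon}(\Pi^{\nu,-}_{x+\varepsilon\nu},B_\rho(x))+m^{\rm voids}_{\mathcal{E}_\varepsilon}(\Pi^{-\nu,-}_{x-\varepsilon\nu},B_\rho(x)).
\]
The shift by $\varepsilon$ of the boundary datum is removed by the same annulus comparison as in (i), at cost $O(\varepsilon\rho^{d-2})+O(\delta\rho^{d-1})$. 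By $(g_1)$, the $-\nu$ problem equals the $\nu$ problem. Together with (i), this yields $\limsup_\rho\liminf_\varepsilon m^{\rm jump}/(\gamma_{d-1}\rho^{d-1})\ge 2\hat g$. This direction does not need continuity.

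\textbf{Part (ii), upper bound $\limsup_\varepsilon m^{\rm jump}\le 2\limsup_\varepsilon m^{\rm voids}+o(\rho^{d-1})$.} This is where continuity of $g_\varepsilon$ enters, and I expect it to be the main obstacle.
\begin{enumerate}
\item Fix $\varepsilon$ and take an almost optimal $F$ for $m^{\rm voids}_{\mathcal{E}_\varepsilon}(\Pi^{\nu,-}_x,B_{\rho}(x))$.
\item Since $g_\varepsilon(\cdot,\cdot)$ is continuous and bounded, Reshetnyak continuity together with the polyhedral approximation of \cite{BraConGar16} lets me replace $F$ by a polyhedral set $P$. It has almost the same energy and still equals the half-space near $\partial B_\rho(x)$ (after an annulus correction).
\item For small $\eta\ll\varepsilon$, define the void $E_\eta:=(P+\eta\nu)\setminus(P-\eta\nu)$, and set $S^+:=B_\rho(x)\setminus(P+\eta\nu)$, $S^-:=(P-\eta\nu)\cap B_\rho(x)$, $u:=e_1\chi_{S^+}$. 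These sets separate $B_\rho(x)$.
\item The boundary of $E_\eta$ consists of the translates $\Gamma\pm\eta\nu$ of $\Gamma=\partial P\cap B_\rho(x)$, up to facets of total area $O(\eta\,\mathcal{H}^{d-2}(\text{skeleton}))$ coming from faces of $P$ parallel to $\nu$.
\item By uniform continuity of $g_\varepsilon$ on compacts, $\int_{\Gamma\pm\eta\nu}g_\varepsilon\to\int_\Gamma g_\varepsilon$ as $\eta\to0$. The surface energy is therefore at most $2\int_\Gamma g_\varepsilon+o_\eta(1)$, while the bulk term vanishes since $e(u)=0$.
\end{enumerate}
It remains to enforce the exact boundary datum: a strip of half-width $\varepsilon$ near $\partial B_\rho(x)$, instead of $\eta$. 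I would do this in a thin annulus by joining the $\eta$-strip to the $\varepsilon$-strip with two caps of area $O(\varepsilon\rho^{d-2})$, or alternatively by invoking Corollary~\ref{set.corollary}, which preserves the separation property. Letting $\eta\to0$, then $\varepsilon\to0$, then $\rho\to0$ gives $\limsup_\rho\limsup_\varepsilon m^{\rm jump}/(\gamma_{d-1}\rho^{d-1})\le 2\hat g$.

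\textbf{Conclusion.} Combining the two bounds with \eqref{g1 = g2} forces the $\liminf$ and $\limsup$ versions of $\hat h$ to coincide, which gives \eqref{h1 = h2}, and both equal $2\hat g$, which gives \eqref{2g=h}.
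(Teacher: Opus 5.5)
Your part (i) and your lower bound $h'\ge 2\hat g$ are sound. For (i) you take a more elementary route than the paper, which simply cites \cite{FriPerSol20a}. What actually closes your argument is the chain $B_s(x)\subset\subset B_{\rho'}(x')\subset\subset B_\rho(x)$ with $(x',\xi',\rho')$ (resp.\ $(x',\nu',\rho')$) in the countable set. This gives $\limsup_\eps m_\eps(B_\rho(x))\le\liminf_\eps m_\eps(B_s(x))$ up to an error that is $O(1-s/\rho)$ after division by $\rho^d$ (resp.\ $\rho^{d-1}$). Comparing with one smaller and one larger ball does not suffice. Your lower bound uses both $B_\rho(x)\setminus S^+$ and $B_\rho(x)\setminus S^-$ as void competitors and removes the $\eps$-shift by direct gluing. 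It is a clean variant of the paper's Step~1, which keeps the cheaper of the two sets and uses the fundamental estimate.

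The genuine gap is in Step~3 of the upper bound. The sets $E_\eta=(P+\eta\nu)\setminus(P-\eta\nu)$, $S^+=B_\rho(x)\setminus(P+\eta\nu)$, $S^-=(P-\eta\nu)\cap B_\rho(x)$ do not separate the ball in general, for two reasons.
\begin{itemize}
\item At every facet of $\Gamma$ where $P$ lies on the $+\nu$ side (outer normal $\nu_P$ with $\nu_P\cdot\nu<0$), $E_\eta$ is locally empty. There $S^+\cap S^-=(P-\eta\nu)\setminus(P+\eta\nu)$ has positive measure, so the definition of $\mathcal{D}^\nu_\eps$ is violated and the facet is covered by no void.
\item At every facet parallel to $\nu$, the translates $P\pm\eta\nu$ coincide locally. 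So $E_\eta$ is empty there and $S^+$, $S^-$ share the facet. Hence $\mathcal{H}^{d-1}(\partial^*S^+\cap\partial^*S^-)>0$, $J_u\not\subset\partial^*E_\eta$, and $\mathcal{E}_\eps(u,E_\eta,B_\rho(x))=+\infty$.
\end{itemize}
So such facets are not a harmless $O(\eta\,\mathcal{H}^{d-2})$ area correction, as you suggest in Step~4; they destroy admissibility. Neither configuration can be excluded for an almost minimizer of the void problem; staircase-like interfaces, for instance, have facets parallel to $\nu$.

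Two ingredients are missing. First, before thickening you must perturb the polyhedral set so that $\nu_\Gamma\cdot\nu\neq 0$ at $\mathcal{H}^{d-1}$-a.e.\ point of $\Gamma$, at arbitrarily small energy cost; the paper does exactly this. Second, the void must contain $\Gamma$ itself in its interior, whichever side $P$ lies on. The paper takes $F_\eps=\{y+t\nu\colon y\in\Gamma_\eps,\ |t|<\theta_\eps\}$, with $T^-=P_\eps\setminus F_\eps$ and $T^+=B_{\rho+\delta}(x)\setminus(P_\eps\cup F_\eps)$.
\begin{itemize}
\item These sets are disjoint, since one lies inside $P_\eps$ and the other outside.
\item They share no boundary: a common boundary would lie in $\Gamma_\eps$, which by transversality is contained, up to the $(d-2)$-skeleton, in the interior of $F_\eps$.
\item $\partial F_\eps$ lies in $(\Gamma_\eps+\theta_\eps\nu)\cup(\Gamma_\eps-\theta_\eps\nu)$ plus lateral pieces at the $(d-2)$-edges of total area at most $CM_\eps\theta_\eps$. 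This is why $\theta_\eps$ is chosen with $\sigma_\eps(\theta_\eps)\to0$ and $M_\eps\theta_\eps\to0$.
\end{itemize}
Alternatively, once transversality holds, you can replace your set difference by the symmetric difference $(P+\eta\nu)\triangle(P-\eta\nu)$, with $S^-=(P+\eta\nu)\cap(P-\eta\nu)$ and $S^+$ the complement of $(P+\eta\nu)\cup(P-\eta\nu)$. This also gives a separating void for all but finitely many small $\eta$. With either fix, the rest of your upper bound goes through, including matching the thin strip to the $\eps$-strip near the boundary.
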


The proof of Proposition \ref{limsupliminf} \MMM will be given in \EEE Section \ref{section proof of proposition limsupliminf}. \QQQ Let us briefly comment on \eqref{h1 = h2} and~\eqref{2g=h}. \EEE

\begin{remark}\label{h-remark}
Considering a subsequence as in Proposition \ref{limsupliminf} and defining
\begin{align}\label{h-remark3}
h^\prime(x,\nu)\defas \limsup\limits_{\rho \to 0}\liminf\limits_{\varepsilon \to 0}\frac{m^{\mathrm{jump}}_{\mathcal{E}_{\varepsilon}}(u^\nu_{x,e_1},B_{\rho}(x))}{\gamma_{d-1}\rho^{d-1}}, \quad \quad h^{\prime \prime}(x,\nu) \defas \limsup\limits_{\rho \to 0}\limsup\limits_{\varepsilon \to 0}\frac{m^{\mathrm{jump}}_{\mathcal{E}_{\varepsilon}}(u^\nu_{x,e_1},B_{\rho}(x))}{\gamma_{d-1}\rho^{d-1}}, 
\end{align}
the following holds: without assuming   continuity \NNN of $g_\eps$, \EEE  we always  have $h' \ge 2\hat{g}$, i.e., the surface density related to jumps is always at least twice the one related to voids. The continuity condition is only needed to show $ 2\hat{g} \ge h''$ which then gives \eqref{h1 = h2}--\eqref{2g=h} see Section \ref{section proof of proposition limsupliminf} for details. Recall that the relation between $\hat{g}$ and $\hat{h}$ is due to the collapsing of voids, as explained in the introduction. 

Note that in general \eqref{2g=h} is false, as we exemplify below in Remark \ref{counterexample}. Instead, we expect that, along a suitable subsequence, \eqref{h1 = h2} also holds without the continuity condition, but this is beyond our scope. In our setting, \eqref{h1 = h2} simply follows by combining  the two inequality $h' \ge 2\hat{g}$ and $ 2\hat{g} \ge h''$, i.e., there is no need to show  \eqref{h1 = h2} independently. 
\end{remark}
We proceed with the identification of the $\Gamma$-limit.
\begin{theorem}[Identification of the $\Gamma$-limit]
\label{Identification of Gamma-limit}
Let $\Omega \in \mathcal{A}_0$.
Let $(f_{\varepsilon})_\varepsilon$ and \EEE $(g_\varepsilon)_\varepsilon$ be collections of functions satisfying $(f_1)$--$(f_3)$ and $(g_1)$--$(g_3)$ respectively.  Assume that  \eqref{htegammalimit}--\eqref{the e-0 energy}  and   \eqref{f1 = f2}--\eqref{h1 = h2} \NNN  hold for a suitably chosen subsequence, \EEE where  $f_0$, $g_0$, $h_0$ are defined as in  \eqref{def f_0}--\eqref{def h_0}. \EEE Then the following holds \MMM for all  $(u,E) \in \mathcal{G}^p(\Omega)$: \EEE
\begin{itemize}
    \item[{\rm (i)}] \MMM We \EEE  have that
    \begin{equation}
    \label{identification gamma limit volume}
        f_0(x,e(u)(x))=\hat{f}(x,e(u)(x))\:\: \text{for}\:\: \mathcal{L}^d\text{-a.e.}\:\: x \in \Omega,
    \end{equation}
    \item[{\rm (ii)}]  \MMM It holds \EEE that
    \begin{equation}
\label{identification gamma limit voids}
    \MMM  g_0(x,\nu_{ E}(x)) \EEE =\hat{g}(x,\nu_{E}(x))\:\: \text{for}\:\: \mathcal{H}^{d-1}\text{-a.e.}\:\: x \in  \partial^*E\cap \Omega,
\end{equation} 
    \item[{\rm (iii)}] If $d=2$, $p \geq 2$, we have  that
\begin{equation}
\label{h0 pc}
    h_0(x,[u](x),\nu_u  (x) \EEE )=\hat{h}(x,\nu_u (x) \EEE )\:\: \text{for}\:\: \mathcal{H}^{1}\text{-a.e.}\:\: x \in J_u \NNN \cap E^0. \EEE
\end{equation}

\item[{\rm (iv)}] If $d=2$, $p\geq 2$,  and  \NNN all  $g_\eps$ are continuous for $\eps >0$, \EEE  it holds that  
    \begin{equation}
    \label{identification gamma limit jump parts}
        h_0(x,[u](x),\nu_{u}(x))=2\hat{g}(x,\nu_u(x))\:\: \text{for all}\:\: \mathcal{H}^{1}\text{-a.e.}\:\: x \in J_u \cap \NNN E^0. \EEE
    \end{equation}
\end{itemize}
In particular, \NNN under the assumption of  {\rm (iv)}, \EEE  for all $A \in \mathcal{A}(\Omega)$ the functionals  $\mathcal{E}_{\varepsilon} ( \cdot, \cdot, A)$, \EEE defined as in \eqref{newenergies}, $\Gamma$-converge with respect to the $L^0(\Omega;\mathbb{R}^d)\times L^1(\Omega)$-convergence to the functional $\QQQ \hat{\mathcal{E}} (\cdot, \cdot, A) \EEE \colon L^0(\Omega;\mathbb{R}^d)\times \mathcal{M}(\Omega)\to [0,\infty]$, where 
\begin{equation*}
    \hat{\mathcal{E}}(u,E,A)\defas \int_{A\setminus E}\hat{f}(x,\QQQ e(u) \EEE ) \, \mathrm{d}x +\int_{A \cap \partial^* E}\hat{g}(x, \QQQ \nu_{E})\, \mathrm{d}\mathcal{H}^{d-1}  \EEE +\int_{J_u \cap \NNN E^0 \cap A}2\hat{g}(x,\QQQ \nu_u )\, \mathrm{d}\mathcal{H}^{d-1}  
\end{equation*}
for $(u,E) \in \mathcal{G}^p(A)$, and $\hat{\mathcal{E}}(u,E,A)\QQQ \defas \EEE \infty$ otherwise in $L^0( \QQQ \Omega \EEE ;\mathbb{R}^d)\times \mathcal{M}(\QQQ \Omega \EEE)$.
\end{theorem}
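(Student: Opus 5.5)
The proof of Theorem~\ref{Identification of Gamma-limit} compares the cell formulas \eqref{def f_0}--\eqref{def h_0} for $\mathcal{E}_0$ with the $\varepsilon$-level cell formulas \eqref{simplified_cellformula_bulk}, \eqref{simplified_cellformula_voids}, \eqref{simpilfied_cellformula_jump}. For each of (i)--(iii) we prove two inequalities at every point $x$ (or at a.e.\ relevant point).

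\textbf{Upper bounds.} These use recovery sequences together with the $\Gamma$-liminf inequality on the localized energies. For the bound $f_0\le\hat f$, fix $\rho$ and take almost minimizers $v_\varepsilon$ of $m^{1,p}_{\mathcal{E}_\varepsilon}(l_\xi,B_\rho(x))$ with $v_\varepsilon=l_\xi$ near $\partial B_\rho(x)$. By Korn and Poincar\'e on the ball with affine boundary data, a subsequence converges to some $v$ with $v=l_\xi$ near the boundary. A small perturbation of the boundary layer (via the fundamental estimate, Proposition~\ref{fundamentalestimate}) gives
\[
m_{\mathcal{E}_0}(l_\xi,\emptyset,B_\rho(x))\le \liminf_\varepsilon m^{1,p}_{\mathcal{E}_\varepsilon}+o(\rho^d),
\]
and dividing by $\gamma_d\rho^d$ and using \eqref{f1 = f2} yields the bound. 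The inequality $g_0\le\hat g$ is proved in the same way with competitors $(0,F_\varepsilon)$: these are compact in $L^1$ by $(g_2)$, and the fundamental estimate restores the half-space boundary condition. For $h_0\le \hat h$ we take competitors $(u_\varepsilon,E_\varepsilon)\in\mathcal{D}^\nu_\varepsilon$. Since $u_\varepsilon=e_1\chi_{S^+_\varepsilon}$, the bulk energy vanishes. The voids $E_\varepsilon$ have vanishing volume near the boundary and converge to a set $E$ with $u=e_1\chi_{S^+}$, so the limit pair is admissible for $m_{\mathcal{E}_0}(u^\nu_{x,e_1},\emptyset,B_\rho(x))$ after a fundamental-estimate correction near $\partial B_\rho$. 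This bound only covers $[u]=e_1$. The extension to arbitrary $\zeta$ is explained below.

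\textbf{Lower bounds.} We take $\Gamma$-recovery sequences for the limiting optimal pair on $B_\rho(x)$ and modify them into $\varepsilon$-competitors. For $f_0\ge\hat f$ this is exactly the argument of \cite[Theorem~2.4]{FriPerSol20a}. We first remove the voids of small perimeter: the energy bound gives $\mathcal{H}^{d-1}(\partial^*E_\varepsilon)\lesssim \rho^d$, which is negligible relative to $\rho^{d-1}$ only after blow-up, so we use Korn for functions with small jump set \cite{CagChaSca20} to replace $u_\varepsilon$ by a $W^{1,p}$ function away from an exceptional set of small measure. Here $(f_3)$ controls the error and $(f_1)$ justifies setting $u=0$ on $E$. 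For $g_0\ge\hat g$ we simply discard $u$ and keep $(0,E_\varepsilon)$, since $\mathcal{E}_\varepsilon(u,E)\ge\mathcal{E}_\varepsilon(0,E)$ by $f_\varepsilon\ge0$. The boundary condition is again fixed by the fundamental estimate for sets (the Fleming--Rishel/Caccioppoli part of Proposition~\ref{fundamentalestimate}).

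\textbf{Main obstacle: (iii) for $d=2$, $p\ge2$.} Starting from a recovery sequence $(u_\varepsilon,E_\varepsilon)$ for $u^\nu_{x,\zeta}$ on $B_\rho(x)$, we must produce a separating void set. We apply the piecewise rigid approximation \cite[Theorem~4.1]{FriedrichSolombrino2018} in the form of \cite[Lemma~5.2]{FriPerSol20a}, which is available only for $d=2$ and $p\ge2$. This yields a Caccioppoli partition whose boundary, together with $J_{u_\varepsilon}\subset\partial^*E_\varepsilon$, contains a curve separating the upper and lower halves of the ball up to small energy. The jump set lies inside $\partial^*E_\varepsilon$, and the components of the partition where the rigid motions differ must be separated by boundaries of voids. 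We cover the one-dimensional separating set by thin neighborhoods, choosing radii via a Vitali/Besicovitch argument so that the added perimeter is comparable to the length, and turn them into two-dimensional voids. This produces sets $S^\pm$ as in \eqref{set of competitors}, and Corollary~\ref{set.corollary} shows that the fundamental estimate preserves the separation. Redefining $u=e_1\chi_{S^+}$ costs no bulk energy. The resulting energy bound is independent of $\zeta$, which gives both $h_0(x,\zeta,\nu)\ge\hat h(x,\nu)$ and the independence of $[u]$, and this also settles the upper bound for general $\zeta$ by symmetry of the construction.

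\textbf{Conclusion.} Part (iv) follows by combining (iii) with \eqref{2g=h} from Proposition~\ref{limsupliminf}(ii). The final $\Gamma$-convergence statement then follows from Theorem~\ref{first gamma convergence result}, since by (i)--(iv) the densities $f_0$, $g_0$, $h_0$ coincide with $\hat f$, $\hat g$, $2\hat g$ a.e. It is independent of the subsequence, because these formulas are identified.
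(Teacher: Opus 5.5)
\textbf{The gap: the lower bound $\hat h(x,\nu)\le h_0(x,\zeta,\nu)$ in (iii).} Your construction starts from ``a recovery sequence for $u^\nu_{x,\zeta}$ on $B_\rho(x)$'', and the object you recover is the wrong one, whichever of two readings is intended.

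\emph{Reading 1: a recovery sequence for the pair $(u^\nu_{x,\zeta},\emptyset)$.} Such a sequence only controls
\[
\mathcal{E}_0(u^\nu_{x,\zeta},\emptyset,B_\rho(x))=\int_{\Pi^\nu_x\cap B_\rho(x)}h_0(y,\zeta,\nu)\,\mathrm{d}\mathcal{H}^1(y)\;\ge\; m_{\mathcal{E}_0}(u^\nu_{x,\zeta},\emptyset,B_\rho(x)).
\]
This sits on the wrong side of the cell formula \eqref{def h_0}. The density of this measure equals $h_0(\cdot,\zeta,\nu)$ only $\mathcal{H}^1$-a.e.\ on the line $\Pi^\nu_x$, with no guarantee at the point $x$ itself. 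Along a general (curved) $J_u$ both $\zeta=[u](x)$ and $\nu=\nu_u(x)$ vary with $x$. Without continuity of $h_0$ in $x$, this cannot be upgraded to $\mathcal{H}^1$-a.e.\ $x\in J_u\cap E^0$.

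\emph{Reading 2: a recovery sequence for an almost minimizer $(v,F)$ of $m_{\mathcal{E}_0}(u^\nu_{x,\zeta},\emptyset,B_\rho(x))$.} This is what your general description of the lower bounds suggests. After rescaling, the functions coincide with $u^\nu_{0,\zeta}$ only near $\partial B_1$, and the voids need not vanish in the interior. So hypothesis \eqref{hp-Lemma 5.1-3} of Lemma~\ref{Lemma 5.2} fails. Concretely, the sets on which $u_n$ is close to $\zeta$ or to $0$ no longer fill the half-balls (cf.\ \eqref{small volume halfplane}). This breaks \eqref{separation of components} and the cutting argument in Step~1 of that proof, and these are exactly what produce the separating curve.

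\textbf{The missing idea} is to blow up, at a point of $J_u$, a recovery sequence for the \emph{given} pair $(u,\emptyset)$.
\begin{itemize}
\item By \eqref{the e-0 energy} and Radon--Nikod\'ym, $h_0(x,[u](x),\nu_u(x))=\lim_{\rho\to0}\mathcal{E}_0(u,\emptyset,B_\rho(x))/(\gamma_1\rho)$ for $\mathcal{H}^1$-a.e.\ $x\in J_u\cap E^0$.
\item Localize the recovery sequence on balls $B_{\rho_n}(x)$ with $\mathcal{E}_0(u,\emptyset,\partial B_{\rho_n}(x))=0$ (Remark~\ref{remark recovery sequences}), and choose $\varepsilon_n$ diagonally.
\item Rescale and subtract $u^-(x)$ off the void. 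This gives $\|e(\tilde v_n)\|^p_{L^p(B_1)}\lesssim\rho_n^{p-1}\to0$, uniformly bounded perimeter, $\tilde v_n\to u^\nu_{0,\zeta}$ in measure, and $\chi_{F_n}\to0$.
\end{itemize}
Only then do Lemma~\ref{Lemma 5.2}, the fundamental estimate and Corollary~\ref{set.corollary} apply.

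\textbf{Two smaller points.}
\begin{itemize}
\item The covering by small balls must be applied only to the part of the separating interface \emph{not} contained in $\partial^*E_n$, which has length $O(\theta)$. Covering the whole curve adds perimeter of the order of the full energy.
\item The upper bound $h_0(x,\zeta,\nu)\le\hat h(x,\nu)$ for general $\zeta$ does not follow from ``symmetry'' combined with the lower bound. It follows directly by replacing $e_1\chi_{S^+_\varepsilon}$ with $\zeta\chi_{S^+_\varepsilon}$ in the $\varepsilon$-competitor. This leaves the energy unchanged, since the bulk term vanishes and the void is the same.
\end{itemize}
Your treatment of (ii) is fine. Discarding $u$ and repairing the boundary with the set version of the fundamental estimate works there, because no separation property is needed.
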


\NNN Note that in the statement we \emph{assumed} \eqref{f1 = f2}--\eqref{h1 = h2}. For \eqref{f1 = f2}--\eqref{g1 = g2} this can always be guaranteed by Proposition \ref{limsupliminf}, see Remark \ref{h-remark} for a discussion on \eqref{h1 = h2}. The restriction  to $d =2$, $p \ge 2$ in (iii)-(iv) is due to the approximation result in Lemma \ref{Lemma 5.2} below.  \EEE   The proof of Theorem \ref{Identification of Gamma-limit} will be given in Section \ref{section proof 2}.
\EEE

\begin{remark}
\label{counterexample}
The assumption of continuity \QQQ of \EEE $g_\varepsilon$ in Theorem \ref{Identification of Gamma-limit}{\rm (iv)} is important. Indeed, in general the identity  $h_0=2g_0$ is not valid which we show by \EEE the following example: let $\Omega=(0,1)\times (0,1)$  and \EEE $g_{\varepsilon}(x,\nu)=g(x)$ \MMM be \EEE defined as 
\begin{equation*}
    g(x)\defas \begin{cases}
        1 \:\: \text{if}\:\: x \in \{\frac{1}{2}\}\times (0,1), \\
        2 \:\: \text{otherwise}.
    \end{cases}
\end{equation*}
Assume by contradiction that \MMM $h_0=2g_0$. \EEE Let $\MMM \bar{x} \EEE =(\frac{1}{2},0)$ and  $\MMM \bar{u}  =u_{\bar{x},e_1}^{e_1}\EEE$. Let $(u_{\varepsilon},E_{\varepsilon})_{\varepsilon}$ be a recovery sequence for $(\bar{u},\emptyset)$. \MMM It is elementary to check that, up to some asymptotically vanishing perturbation,  $\partial^* E_\eps \cap \Omega$  consists of two vertical lines, one of them being $\{\frac{1}{2}\}\times (0,1)$. We thus get \EEE  $\MMM \lim_{\eps\to 0} \EEE \mathcal{E}_{\varepsilon}(u_{\varepsilon},E_{\varepsilon},\Omega)=3$. By virtue of the $\Gamma$-$\liminf$ inequality, we have
\begin{align*}
\mathcal{E}_0(\bar{u},\emptyset,\Omega)=3> 2\liminf\limits_{\varepsilon\to 0}\int_{\Pi_{\bar{x}}^{e_1} \cap \Omega} \MMM g_{\varepsilon}(x,e_1) \EEE \, \mathrm{d}\mathcal{H}^{1}\geq 2\int_{\Pi_{\bar{x}}^{e_1} \cap \Omega }g_0(x,e_1)\, \mathrm{d}\mathcal{H}^1=\int_{\Pi_{\bar{x}}^{e_1} \cap \Omega }h_0(x,e_1)\, \mathrm{d}\mathcal{H}^1,
\end{align*}    
which leads to $\mathcal{E}_0(\bar{u},\emptyset,\Omega)>\mathcal{E}_0(\bar{u},\emptyset,\Omega)$, and hence to a contradiction. 
\end{remark}
\MMM As a \NNN first \EEE  application \EEE of Theorem \ref{first gamma convergence result} and Theorem \ref{Identification of Gamma-limit}, \MMM we present \EEE  the following homogenization result, which is stated for $d=2$ and $p\geq 2$ by virtue of Theorem \ref{Identification of Gamma-limit}{\rm (iii)}--{\rm (iv)}.
\begin{theorem}[Homogenization]
\label{homogenization}
Let $d=2$ and $p\geq 2$. Let $f$ and  $g$ satisfy {\rm $(f_1)$}--{\rm $(f_3)$} \MMM and {\rm $(g_1)$}--{\rm $(g_3)$}, \EEE respectively. In addition, let \MMM $g$ be continuous.    Let $f_\varepsilon(x,\xi)\defas f(\frac{x}{\varepsilon},\xi)$, $g_\varepsilon(x,\nu)\defas g(\frac{x}{\varepsilon},\nu)$, and let $\mathcal{E}_\varepsilon$ be the corresponding energy defined as in \eqref{newenergies}. 
Assume that for every $x \in   \mathbb{R}^2 $, $\xi \in \MMM \mathbb{R}^{2\times 2} \EEE $, and $\nu \in \mathbb{S}^{1}$ the limits
\begin{align}
 \label{homogenization volume}
\lim\limits_{r \to \infty}\frac{m^{1,p}_{\mathcal{E}}(l_\xi,Q_r(rx))}{r^2}=: & \ f_{\mathrm{hom}}(\xi),  
\\ \label{homogenization voids}
\lim\limits_{r \to \infty}\frac{m^{\mathrm{voids}}_\mathcal{E}(\Pi^{\nu,-}_{rx},Q_{rx}^\nu(rx))}{r}=: & \ g_{\mathrm{hom}}(\nu)
\end{align}
exist and are independent of $x$, \MMM where $\mathcal{E}$ denotes the functional \eqref{newenergies} with densities $f$ and $g$. \EEE  
Let $\mathcal{E}_{\mathrm{hom}}\colon L^0(\Omega;\mathbb{R}^d)\times \mathcal{M}(\Omega)\times \mathcal{A}(\Omega)\to [0,\infty]$ be defined as
\begin{equation*}
     \mathcal{E}_{\mathrm{hom}}(u,E,A) \QQQ \defas \EEE
   \int_{A \setminus E}f_\mathrm{hom}(\QQQ e(u) \EEE )\, \mathrm{d}x+\int_{\partial^* E \cap A}g_\mathrm{hom}( \QQQ \nu_E \EEE)\,\mathrm{d}\mathcal{H}^1  \EEE+\int_{J_u \cap \NNN E^0 \cap A}2g_\mathrm{hom}(\QQQ \nu_u \EEE )\, \mathrm{d}\mathcal{H}^1 \EEE
\end{equation*}
if \MMM  $(u,E) \in   \mathcal{G}^p(A)$, \EEE  and $\mathcal{E}_{\mathrm{hom}}(u,E,A)\QQQ \defas \EEE \infty$ otherwise. Then, 
\begin{equation*}
    \mathcal{E}_\varepsilon(\cdot,\cdot,A)\:\: \Gamma\text{-converges to}\:\: \mathcal{E}_{\mathrm{hom}}(\cdot,\cdot,A)\:\: \text{with respect to the}\:\: L^0(A;\mathbb{R}^2)\times L^1(A)\text{-convergence}.
\end{equation*}
\end{theorem}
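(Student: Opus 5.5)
Our plan is to deduce the statement from Theorem \ref{first gamma convergence result}, Proposition \ref{limsupliminf} and Theorem \ref{Identification of Gamma-limit}. We will also use a standard periodic-homogenization argument showing that the limiting densities are independent of the subsequence and of $x$. Fix an arbitrary subsequence $\varepsilon_k \to 0$. By Theorem \ref{first gamma convergence result} and Proposition \ref{limsupliminf} we extract a further subsequence along which \eqref{htegammalimit}--\eqref{the e-0 energy} and \eqref{f1 = f2}--\eqref{g1 = g2} hold. Since $g$ is continuous, every $g_\varepsilon$ is continuous, so Proposition \ref{limsupliminf}(ii) also gives \eqref{h1 = h2} and $\hat h = 2\hat g$. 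Theorem \ref{Identification of Gamma-limit}(i),(ii),(iv) then yields, for this subsequence, that the $\Gamma$-limit is $\hat{\mathcal{E}}$ with densities $\hat f$, $\hat g$, $2\hat g$. By the Urysohn property of $\Gamma$-convergence, the claim follows once we show $\hat f(x,\xi)=f_{\mathrm{hom}}(\xi)$ and $\hat g(x,\nu)=g_{\mathrm{hom}}(\nu)$ for all $x\in\Omega$, independently of the chosen subsequence.

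For the identification we rescale. Set $r=\rho/\varepsilon$ and $y=z/\varepsilon$. For the bulk term,
\begin{equation*}
m^{1,p}_{\mathcal{E}_\varepsilon}(l_\xi,B_\rho(x)) = \varepsilon^2\, m^{1,p}_{\mathcal{E}}(l_\xi, B_{r}(rx/\rho)),
\end{equation*}
using $v_\varepsilon(z)=\varepsilon v(z/\varepsilon)$, which maps $l_\xi$ to $l_\xi$ and preserves $e(v)$. For the void term,
\begin{equation*}
m^{\rm voids}_{\mathcal{E}_\varepsilon}(\Pi^{\nu,-}_x,B_\rho(x))=\varepsilon\, m^{\rm voids}_{\mathcal{E}}(\Pi^{\nu,-}_{x/\varepsilon},B_{r}(x/\varepsilon)).
\end{equation*}
Hence $\hat f$ and $\hat g$ are limits as $\varepsilon\to0$ of the rescaled cell problems at centres $x/\varepsilon = r\,(x/\rho)$, with radius $r\to\infty$. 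The hypotheses \eqref{homogenization volume}--\eqref{homogenization voids} concern cubes $Q_r(rx)$ and rotated cubes $Q^\nu_r(rx)$ at centres of the form $r\tilde x$, uniformly in the independence of $\tilde x$. The limit in $r$ is taken for fixed $\tilde x = x/\rho$, so we obtain
\begin{equation*}
\lim_{\varepsilon\to 0}\frac{m^{1,p}_{\mathcal{E}_\varepsilon}(l_\xi,Q_\rho(x))}{\rho^2}=f_{\mathrm{hom}}(\xi)
\end{equation*}
for every fixed $\rho$, and similarly for $g$ on cubes. The $\rho$-limit is then trivial, and $\liminf=\limsup$ in $\varepsilon$ holds on the full sequence.

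The main obstacle is passing from the cubes in \eqref{homogenization volume}--\eqref{homogenization voids} to the balls in \eqref{f1 = f2}--\eqref{g1 = g2}, with the matching normalizations $\gamma_2\rho^2$ and $\gamma_1\rho = 2\rho$. We will follow the standard route of \cite{cagnetti2018gammaconvergence, BraDefVit96}. For the bulk term, we cover $B_\rho(x)$ up to a small boundary layer by disjoint small cubes $Q_\delta(x_i)$. On each cube we place near-optimal competitors with affine data $l_\xi$; these glue continuously, since all share the same boundary datum $l_\xi$. In the boundary layer we set $v=l_\xi$, whose energy is $O(\beta(1+|\xi|^p)\cdot$ area of the layer$)$ by $(f_3)$. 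This gives $\limsup\le f_{\mathrm{hom}}(\xi)\gamma_2\rho^2$. The reverse inequality uses the fundamental estimate (Proposition \ref{fundamentalestimate}, in its Sobolev version) on a large cube containing the ball.

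For the void term, we cover the diameter segment $\Pi^\nu_x\cap B_\rho(x)$ by small rotated squares $Q^\nu_\delta(x_i)$ centred on the line. We insert near-optimal void competitors in each square and use the flat interface elsewhere. Gluing is exact because the boundary data all coincide with the half-plane $\Pi^{\nu,-}_x$. The leftover pieces have length $O(\delta)$, and their energy is at most $\beta\cdot O(\delta)$ by $(g_3)$. This gives the upper bound $2\rho\, g_{\mathrm{hom}}(\nu)$. The lower bound follows by comparing with a square containing the ball and using the set version of the fundamental estimate (Corollary \ref{set.corollary} / \cite[Lemma 4.4]{AmbBra90}) to adjust boundary values at negligible cost. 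Letting $\delta\to0$ gives $\hat f=f_{\mathrm{hom}}$ and $\hat g = g_{\mathrm{hom}}$ independently of $x$ and of the subsequence, which concludes the proof.
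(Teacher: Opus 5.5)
Your route is the paper's. You identify $\hat f,\hat g$ with $f_{\mathrm{hom}},g_{\mathrm{hom}}$ along the full sequence, so the identification does not depend on the subsequence taken in Proposition \ref{limsupliminf}. You get $\hat h=2\hat g$ from Proposition \ref{limsupliminf}(ii) via continuity of $g$, and you conclude with Theorems \ref{first gamma convergence result}, \ref{Identification of Gamma-limit} and the Urysohn property. Your rescaling identities and both upper bounds (tiling the ball from inside, respectively tiling the diameter by squares) are correct. The paper leaves the passage from cubes to balls implicit, so spelling it out is worthwhile.

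However, the bulk lower bound $\liminf_{\varepsilon\to0} m^{1,p}_{\mathcal{E}_\varepsilon}(l_\xi,B_\rho(x))\ge\gamma_2\rho^2 f_{\mathrm{hom}}(\xi)$ does not follow as you describe it. Comparing with a cube $Q\supset B_\rho(x)$ and setting $v=l_\xi$ on $Q\setminus B_\rho(x)$ only gives $m(Q)\le m(B_\rho(x))+\int_{Q\setminus B_\rho(x)}f(y/\varepsilon,\xi)\,\mathrm{d}y$. The last term does not tend to $f_{\mathrm{hom}}(\xi)\,\mathcal{L}^2(Q\setminus B_\rho(x))$; in the periodic case it tends to the mean of $f(\cdot,\xi)$ times that area, which is generally strictly larger. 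Since $\mathcal{L}^2(Q\setminus B_\rho(x))\ge(4-\pi)\rho^2$, this error survives division by $\gamma_2\rho^2$. The fundamental estimate cannot repair this, because there is no boundary mismatch to fix.

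The remedy is the tiling you already use for the upper bound. Cover $Q_{2\rho}(x)\setminus\overline{B_\rho(x)}$, up to a set of area $O(\delta\rho)$, by finitely many disjoint cubes $Q_\delta(y_j)$. Put near-minimizers of $m^{1,p}_{\mathcal{E}_\varepsilon}(l_\xi,Q_\delta(y_j))$ on them and $l_\xi$ on the remainder. All pieces equal $l_\xi$ near their boundaries, so gluing is exact and
\[
4\rho^2 f_{\mathrm{hom}}(\xi)\le\liminf_{\varepsilon\to0}m^{1,p}_{\mathcal{E}_\varepsilon}(l_\xi,B_\rho(x))+(4-\pi)\rho^2 f_{\mathrm{hom}}(\xi)+C\beta\delta\rho\,(1+|\xi|^p).
\]
Letting $\delta\to0$ gives the claim. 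For the voids, your comparison with a square is in fact exact and needs no fundamental estimate. Since $\Pi^\nu_x\cap Q^\nu_{2\rho}(x)\subset B_\rho(x)$, extending a competitor by $\Pi^{\nu,-}_x$ adds no interface, so $m^{\mathrm{voids}}_{\mathcal{E}_\varepsilon}(\Pi^{\nu,-}_x,Q^\nu_{2\rho}(x))\le m^{\mathrm{voids}}_{\mathcal{E}_\varepsilon}(\Pi^{\nu,-}_x,B_\rho(x))$.
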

\begin{proof}
The proof is standard and follows the same lines of \cite[Theorem 3.11]{cagnetti2018gammaconvergence}. \MMM Let us just mention the essential idea. The \EEE proof consists in showing that the limits \eqref{homogenization volume} and \eqref{homogenization voids} coincide with $\hat{f}$ and $\hat{g}$, respectively. \MMM One essential point here is that in the definition of  $\hat{f}$ and $\hat{g}$ in \eqref{f1 = f2}--\eqref{g1 = g2} we \emph{do not} pass to a subsequence. \EEE Notice that from this, the fact \MMM that $g$ is continuous, \EEE and \eqref{2g=h}, we also have $h_\mathrm{hom}=2g_\mathrm{hom}$. Finally, we apply Theorem \ref{first gamma convergence result} and Theorem \ref{Identification of Gamma-limit} together with the Urysohn property of $\Gamma$-convergence (see \cite[Proposition 8.3]{DalMaso:93}). \EEE
\end{proof}
\begin{remark}[\MMM Periodic homogenization\EEE]

With a slight adaptation of \cite[Proposition 2.1 and Proposition 2.2]{BraDefVit96}, one can show that \eqref{homogenization volume}--\eqref{homogenization voids} hold, for instance, when $f$ and $g$ are periodic of period $1$ with respect to the coordinates $e_1$ and $e_2$. Similarly, one can show the same \MMM property \EEE under the assumption of stationarity, \MMM in the setting of \EEE stochastic homogenization, see \cite[Proposition 4.1 and Theorem 6.1]{cagnetti2017stochastic}. \EEE

In the case that $g$ is periodic with period $1$, the assumption that $g$ is continuous in the $x$-variable can be dropped, see  Remark \ref{periodic-homo-remark} \EEE below for details.  Note that, in this particular case, our result corresponds to the analog of \cite{solci} in linearized elasticity. 

\end{remark}

 \NNN  As a second application, we give a relaxation result which holds without restriction on the dimension.    \EEE
\begin{theorem}[Relaxation]
\label{relaxation}
Let $\Omega \in \mathcal{A}_0$.
Let $f$ and $g$ be densities satisfying $(f_1)$--$(f_3)$ and $(g_1)$--$(g_3)$ respectively. In addition, assume $g$ to be continuous on $\Omega \times \mathbb{S}^{d-1}$. Let $\mathcal{E}\colon L^0(\Omega;\mathbb{R}^d)\times \mathcal{M}(\Omega)\times \mathcal{A}(\Omega)\to [0,\infty]$ be the functional given in \eqref{newenergies} \NNN with $f$ and $g$. \EEE For all $A \in \mathcal{A}(\Omega)$ and for every $(u,E)\in L^0(A;\mathbb{R}^d)\times \mathcal{M}(\Omega) $ let
\begin{equation}\label{recovii}
    \overline{\mathcal{E}}(u,E,A)\defas \inf\Big \{\liminf\limits_{n \to \infty}\mathcal{E}(u_n,E_n,A)\colon u_n \to u \:\: \text{in}\:\: L^0(A;\mathbb{R}^d)\:\: \text{and}\:\: \chi_{E_n}\to \chi_E \:\: \text{in}\:\: L^1(A)\Big\}.
\end{equation}
Then we have
\begin{equation*}
 \overline{\mathcal{E}}(u,E,A)=\int_A \NNN f^{\rm qc} \EEE (x,e(u))\, \mathrm{d}x+\int_{\partial^* E \cap A}  \NNN g^{\rm BV} \EEE (x,\nu_E)\,\mathrm{d}\mathcal{H}^{d-1}+2\int_{J_u \cap \NNN E^0 \cap A} \NNN g^{\rm BV} \EEE (x,\nu_u)\, \mathrm{d} \mathcal{H}^{d-1} 
\end{equation*}
for every $(u,E)\in \mathcal{G}^p(A)$ and $+\infty$ otherwise, \NNN where  for  $x \in \Omega$,  $\xi \in \mathbb{R}^{d \times d}$, and $\nu \in \mathbb{S}^{d-1}$ we set 
\begin{equation}
\label{relaxationxxxxxxxx000}
   f^{\rm qc}(x,\xi)=\inf\Big\{ \frac{1}{\gamma_d}\int_{B_1}f(x,\nabla u(y))\, \mathrm{d}y \colon u \in W^{1,p}(B_1;\R^d)\:\: \text{with}\:\:u=l_\xi\:\: \,  \text{near} \, \:\: \partial B_1\Big\}  
\end{equation}
and  \EEE
\begin{equation}
\label{relaxationxxxxxxxx}
 g^{BV}(x,\nu)=\inf\Big\{ \frac{1}{\NNN \gamma_{d-1}}\int_{B_1}g(x,\nu_E(y))\, \mathrm{d}\mathcal{H}^{d-1}(y)\colon E \in \mathcal{P}(B_1)\:\: \text{with}\:\:E=\Pi^{\nu,-}_{x}\:\: \text{near}\:\: \partial B_1\Big\}.
\end{equation}

\end{theorem}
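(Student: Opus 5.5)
The plan is to treat Theorem \ref{relaxation} as the constant-sequence case $f_\varepsilon = f$, $g_\varepsilon = g$ of Theorem \ref{first gamma convergence result}. For a constant sequence the $\Gamma$-limit exists without passing to a subsequence and equals the lower semicontinuous envelope $\overline{\mathcal{E}}$ in \eqref{recovii}. Theorem \ref{first gamma convergence result} then gives
\[
\overline{\mathcal{E}}(u,E,A)=\int_A f_0(x,e(u))\,\mathrm{d}x+\int_{\partial^*E\cap A}g_0(x,\nu_E)\,\mathrm{d}\mathcal{H}^{d-1}+\int_{J_u\cap E^0\cap A}h_0(x,[u],\nu_u)\,\mathrm{d}\mathcal{H}^{d-1},
\]
with $f_0,g_0,h_0$ given by \eqref{def f_0}--\eqref{def h_0}. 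It remains to show three identities: $f_0=f^{\rm qc}$, $g_0=g^{\rm BV}$, and $h_0=2g^{\rm BV}$. Theorem \ref{Identification of Gamma-limit} cannot be used directly for the last one, since it needs $d=2$. I will instead work with cell formulas directly, using that $\varepsilon$ plays no role.

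\textbf{Step 1: $f_0$ and $g_0$.} Here I would repeat the arguments of Theorem \ref{Identification of Gamma-limit}(i)--(ii), which hold in any dimension; only (iii)--(iv) needed $d=2$. This gives $f_0=\hat f$ and $g_0=\hat g$. Since the sequence is constant, the inner $\limsup_\varepsilon$ in \eqref{f1 = f2}--\eqref{g1 = g2} drops, so $\hat f(x,\xi)=\limsup_\rho m^{1,p}_{\mathcal{E}}(l_\xi,B_\rho(x))/(\gamma_d\rho^d)$, and similarly for $\hat g$.

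To identify this with the frozen formulas, I would rescale $B_\rho(x)$ to $B_1$ and compare $f(x+\rho y,\cdot)$ with $f(x,\cdot)$.
\begin{itemize}
\item For $g$ this is straightforward. Continuity of $g$ on compact subsets of $\Omega\times\mathbb{S}^{d-1}$ gives uniform continuity. Competitors can be restricted to perimeter at most $C\rho^{d-1}$ by comparison with the flat half-space, using $(g_2)$--$(g_3)$. Hence the error is $o(\rho^{d-1})$, and $\hat g(x,\nu)=g^{\rm BV}(x,\nu)$.
\item For $f$ no continuity in $x$ is assumed. There $\hat f(x,\cdot)$ equals $f^{\rm qc}(x,\cdot)$ only at Lebesgue points. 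I would handle this via the standard Scorza-Dragoni/Lebesgue-point argument used in relaxation of integral functionals, as in \cite{Bouchitt2002AGM}. This suffices because the identity is only needed $\mathcal{L}^d$-a.e.
\end{itemize}

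\textbf{Step 2: $h_0=2g^{\rm BV}$.} For the inequality $h_0\ge 2g_0$, I would take a recovery sequence $(u_n,E_n)$ for the jump datum $u^\nu_{x,\zeta}$ on $B_\rho(x)$. Since $u_n$ has no jumps outside $\partial^*E_n$, each $E_n$ must asymptotically separate the two sides of the ball. By the coarea/slicing argument behind Remark \ref{h-remark} ($h'\ge 2\hat g$), $\partial^*E_n$ contains two disjoint pieces, each essentially a competitor for $g_0$. This yields $h_0(x,\zeta,\nu)\ge 2g_0(x,\nu)$ independently of $\zeta$.

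For the reverse inequality I would follow Proposition \ref{limsupliminf}(ii), i.e.\ the inequality $2\hat g\ge h''$, which uses continuity of $g$ and holds in any dimension:
\begin{itemize}
\item Take an almost optimal polyhedral competitor $F$ for $g^{\rm BV}(x,\nu)$, obtained via \cite{BraConGar16}.
\item Build the void as the thin region between $\partial^*F$ and its translate by $\delta\nu$.
\item Set $u=\zeta$ above the region and $u=0$ below it.
\end{itemize}
Continuity of $g$ bounds the cost by $2\int g(x,\nu_F)\,\mathrm{d}\mathcal{H}^{d-1}+o(1)$ as $\delta\to0$. As $\delta\to 0$ the pair converges to $(u^\nu_{x,\zeta},\emptyset)$ near $\partial B_\rho$. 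The boundary datum is adjusted by the fundamental estimate, Proposition \ref{fundamentalestimate}, at negligible cost. Lower semicontinuity of $\overline{\mathcal{E}}$ then gives $m_{\mathcal{E}_0}(u^\nu_{x,\zeta},\emptyset,B_\rho(x))\le 2\gamma_{d-1}\rho^{d-1}g^{\rm BV}(x,\nu)+o(\rho^{d-1})$.

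\textbf{Main obstacle.} I expect the hardest part to be the lower bound $h_0\ge 2g_0$ in general dimension. The separation argument must work for $GSBD$ limits with arbitrary $\zeta$, without the planar piecewise-rigid approximation. My first attempt would use only that $J_{u_n}\subset\partial^*E_n$ together with convergence in measure to $u^\nu_{x,\zeta}$. For a.e.\ line parallel to $\nu$, the set $E_n$ must contain a segment separating the regions where $u_n\approx 0$ and $u_n\approx\zeta$. Projecting the upper and lower boundaries of these segments then gives two sets of finite perimeter. Each of these matches $\Pi^{\nu,-}_x$ near the boundary after the fundamental estimate and is admissible for $g_0$.
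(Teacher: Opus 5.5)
Your reduction to the constant sequence, the identifications $f_0=\hat f$ and $g_0=\hat g=g^{\rm BV}$, and the upper bound $h_0\le 2g^{\rm BV}$ via the doubled polyhedral competitor of Proposition \ref{limsupliminf}(ii) all agree with the paper. The paper obtains that upper bound as $h_0\le\hat h=2\hat g$, using Step 1 of the proof of Theorem \ref{Identification of Gamma-limit}(iii), which is valid in any $d$.

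The gap is the lower bound $h_0\ge 2g_0$ in general dimension, and your sketch for it does not go through.
\begin{itemize}
\item The argument behind $h'\ge 2\hat g$ is not a slicing argument. It splits $\partial^*E_\eps$ into $\partial^*S^+_\eps$ and $\partial^*S^-_\eps$, which is possible only because separation is built into $\mathcal{D}^\nu_\eps$. A general sequence converging to a jump carries no such partition. Manufacturing one is precisely what Lemma \ref{Lemma 5.2} does, and only for $d=2$.
\item Slicing along lines parallel to $\nu$ sees only $u_n\cdot\nu$ in $GSBD$. So for a shear jump ($\zeta\cdot\nu=0$) nothing forces $E_n$ to meet a.e.\ such line.
\item Slicing along a direction $\xi$ with $\xi\cdot\zeta\neq0$ does force at least two points of $\partial^*E_n$ on a.e.\ line, since every jump of $u_n$ has zero trace on the side of $E_n$. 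But this only gives $\liminf_n\int_{\partial^*E_n}|\nu_{E_n}\cdot\xi|\,\mathrm{d}\mathcal{H}^{d-1}\ge 2\int_{\Pi^\nu_x\cap B_\rho(x)}|\nu\cdot\xi|\,\mathrm{d}\mathcal{H}^{d-1}$. The regions cut off by the first and last crossings acquire walls parallel to $\xi$ wherever the crossing height jumps. These walls are not contained in $\partial^*E_n$ and their $g$-energy is not controlled, so these regions are not shown to be $g_0$-competitors carrying half of the energy.
\item Bounds along a recovery sequence for $(u^\nu_{x,\zeta},\emptyset)$ estimate $\mathcal{E}_0(u^\nu_{x,\zeta},\emptyset,B_\rho(x))$. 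This quantity lies above $m_{\mathcal{E}_0}(u^\nu_{x,\zeta},\emptyset,B_\rho(x))$, so such bounds cannot bound $h_0$ from below. You need instead to blow up an optimal sequence for $(u,\emptyset)$ at $\mathcal{H}^{d-1}$-a.e.\ $x\in J_u\cap E^0$, where $h_0(x,[u](x),\nu_u(x))$ is the Radon--Nikod\'ym derivative, as the paper does.
\end{itemize}

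The missing idea is to freeze the density \emph{before} addressing separation. In the blow-up, continuity of $g$ replaces $g(x+\rho_n y,\cdot)$ by $g(x,\cdot)$ up to $o(1)$. Testing \eqref{relaxationxxxxxxxx} with the half-space gives $g\ge g^{\rm BV}$. Moreover, $g^{\rm BV}(x,\cdot)$ is a norm by $BV$-ellipticity \cite[Theorem 5.11]{ambrosio2000fbv} together with $(g_1)$. For an $x$-independent norm, the factor $2$ on $J_u\cap E^0$ holds in every dimension by the lower semicontinuity result \cite[Theorem 5.1]{Crismale_2020}. The paper applies this to the blown-up pairs $(v_n,F_n)$ to get $h_0(x,\zeta,\nu)\ge 2g^{\rm BV}(x,\nu)$.

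Your crossing count can in fact prove this frozen statement directly. If $K$ is the dual unit ball, then $g^{\rm BV}(x,\eta)\ge|\eta\cdot\xi|$ for all $\xi\in K$. Since the limiting interface is flat, it suffices to choose $\xi\in K$ with $\nu\cdot\xi$ close to $g^{\rm BV}(x,\nu)$ and $\xi\cdot\zeta\neq0$. What the count cannot do is produce the $x$-dependent competitors for $g_0$ that your sketch relies on.
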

\NNN Here,   $f^{\rm qc}$ is the quasiconvex envelope (with respect to
the second variable) of $f$, see \cite[Theorem~9.8]{Dacorogna2008}, and $g^{\rm BV} $ is the $BV$-elliptic envelope of $g$, see \cite[Theorem 3.1]{AmbrosioBraides1990b}. \EEE  The proof of Theorem \ref{relaxation} will be given in Section \ref{section proof 2}.
\EEE

\section{Integral representation and compactness of $\Gamma$-convergence: \MMM Proof of Theorem \ref{first gamma convergence result}\EEE}
\label{section proof 1}

This section is devoted to the proof of Theorem \ref{first gamma convergence result}. The proof is divided \MMM into \EEE two parts. We first \NNN show \EEE an abstract \EEE integral representation for functionals \MMM defined on pairs of function-set, see  Subsections~\ref{sec3.1}--\ref{sec3.2}.  \MMM Afterwards, in Subsection \ref{sec3.3} \EEE  we prove that, up to subsequences, $\mathcal{E}_\varepsilon(\cdot,\cdot,A)$ \MMM given in \eqref{newenergies} \EEE  $\Gamma$-converges to a functional $\mathcal{E}_0(\cdot,\cdot,A)$ satisfying  \MMM all assumptions of the integral representation  result.  \EEE

\subsection{Integral representation for a class of functionals \MMM defined on pairs of function-set\EEE}\label{sec3.1}
Consider \QQQ a functional \EEE $\mathcal{E}_0 \colon L^0(\Omega;\mathbb{R}^d)\times \mathcal{M}(\Omega)\times \MMM \mathcal{B}(\Omega) \EEE \to [0.\infty]$ satisfying the following properties:
\begin{itemize}
\label{set of hypotheses on functionals}
    \item[$\mathrm{(H1)}$] $\mathcal{E}_0(u,E,\cdot)$ is \MMM  a Borel measure  \EEE for every $(u,E) \in \mathcal{G}^{p}(\Omega)$,
    \item [$\mathrm{(H2)}$] $\mathcal{E}_0(\cdot,\cdot,A)$ is lower semicontinuous with respect to the $L^0(A;\mathbb{R}^d)\times L^1(A)$-convergence \MMM for any $A \in \mathcal{A}(\Omega)$, \EEE
    \item[$(\mathrm{H3})$] $\mathcal{E}_0(\cdot,\cdot,A)$ is local for any $A \in \mathcal{A}(\Omega)$ i.e., if $u=v$ and $E=F$ $\mathcal{L}^d$-a.e.\  in $A$, then $\mathcal{E}_0(u,E,A)=\mathcal{E}_0(v,F,A)$,
    \item[$(\mathrm{H4})$] there \NNN exist \EEE $\alpha$, $\beta>0$ with $\alpha<\beta$ such that
    \begin{equation*}
        \alpha\big( \MMM \Vert e(u)\Vert^p_{L^p(A)} \EEE +\mathcal{H}^{d-1}(\partial^* E \cap A)+2\mathcal{H}^{d-1}(J_u  \cap \NNN E^0 \cap A \EEE )\big)\leq \mathcal{E}_0(u,E,A),
    \end{equation*}
    and 
    \begin{equation*}
       \mathcal{E}_0(u,E,A) \leq \beta\big(\mathcal{L}^d(A)+\Vert e(u)\Vert_{L^p(A)}^p+\mathcal{H}^{d-1}(\partial^* E \cap A)+2\mathcal{H}^{d-1}(J_u \cap \NNN E^0 \cap A \EEE )\big),
    \end{equation*}
    for every \MMM  $A \in \mathcal{A}(\Omega)$ and \EEE $(u,E) \in   \mathcal{G}^{p}(A)$.
        \item[$(\mathrm{H5})$] \MMM $\mathcal{E}_0$ is invariant under rigid motions, i.e., $\mathcal{E}_0(u+a \chi_{\Omega \setminus E},E,A)=\mathcal{E}_0(u,E,A)$ for every   $A \in \mathcal{A}(\Omega),$  $(u,E) \in   \mathcal{G}^{p}(A)$, and  affine function $a \colon \mathbb{R}^d \to \mathbb{R}^d$   with $e(a)=0$. \EEE
\end{itemize}

Our goal is to show the following result.   

\begin{theorem}[Integral representation]
\label{Theorem2.1 CriFrieSol}
Let $\Omega \in \mathcal{A}_0$. Suppose that $\mathcal{E}_0$ satisfies $(\mathrm{H1})$--$(\mathrm{H5})$.  Then, \EEE  for each  $A \in \mathcal{A}(\Omega)$  and  $(u,E) \in \mathcal{G}^p(\Omega)$ it holds that 
\begin{equation}
    \mathcal{E}_0(u,E,B)=\int_{\NNN A}f_0(x,e( u))\, \mathrm{d}x+\int_{\partial^* E\cap \NNN A} \MMM g_0(x, \NNN \nu_E) \EEE \, \mathrm{d}\mathcal{H}^{d-1}+ \NNN \int_{J_u \cap E^0\cap A} \EEE h_0(x,[u],\nu_u)\, \mathrm{d}\mathcal{H}^{d-1},
\end{equation}
where $f_0$, $g_0$, and  $h_0$ \MMM are given in \EEE \eqref{def f_0}--\eqref{def h_0}, respectively. 
\end{theorem}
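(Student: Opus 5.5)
We follow the global method for relaxation of Bouchitté–Fonseca–Leoni–Mascarenhas, in the $GSBD$ form of \cite{SolFriCri20}. The guiding idea is to encode the pair $(u,E)$ as a single $GSBD^p$-type object: since $u=u\chi_{E^0}$, the relevant discontinuity set is $J_u\cup\partial^*E$. On it we distinguish the part $\partial^*E$, which carries the void energy, from the part $J_u\cap E^0$, which carries collapsed voids and hence jumps. Fix $(u,E)\in\mathcal{G}^p(\Omega)$ and set $\mu:=\mathcal{L}^d\llcorner\Omega+\mathcal{H}^{d-1}\llcorner(\partial^*E\cup(J_u\cap E^0))$. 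By (H1) and the upper bound in (H4), $\mathcal{E}_0(u,E,\cdot)$ is a Borel measure absolutely continuous with respect to $\mu$. It therefore suffices to identify its Radon–Nikodym derivative at $\mu$-a.e.\ point in three regimes: Lebesgue points, points of $\partial^*E$, and points of $J_u\cap E^0$. Once the three densities are identified, the formula follows by integration over $A$.

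\textbf{Step 1 (blow-up via cell formulas).} For a ball $B_\rho(x)$ we introduce $m_{\mathcal{E}_0}(u,E,B_\rho(x))$ as in \eqref{infimumproblem1}, and we prove the key lemma
\[
\frac{\mathrm{d}\mathcal{E}_0(u,E,\cdot)}{\mathrm{d}\mu}(x)=\lim_{\rho\to0}\frac{m_{\mathcal{E}_0}(u,E,B_\rho(x))}{\mu(B_\rho(x))}\quad\text{for }\mu\text{-a.e. }x.
\]
The proof follows \cite[Section 4]{SolFriCri20}. We cover almost all of $A$ by small balls and select almost minimizers of the cell problems with boundary datum $(u,E)$. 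By (H2) and (H3), the pieces can be glued: they agree with $(u,E)$ near the boundaries, so no new jump is created and the constraint $u=u\chi_{E^0}$ is preserved. This requires a fundamental-estimate-type joining of competitors, which we take from the structure of $\mathcal{G}^p$. A Vitali covering argument then shows that $\mathcal{E}_0$ and the set function $m_{\mathcal{E}_0}$ have the same density. The lower bound in (H4) keeps the competitors compact in $GSBD^p\times\mathcal{P}$, which we get from Theorem \ref{compactness in GSBD^p} together with compactness of sets of finite perimeter.

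\textbf{Step 2 (replacing $(u,E)$ by its blow-up).} In each regime we show that $m_{\mathcal{E}_0}(u,E,B_\rho(x))$ is asymptotically equal to the cell problem with the blow-up profile as boundary datum:
\begin{itemize}
\item at Lebesgue points of $E^0$ with approximate symmetric gradient, the profile is $(l_{e(u)(x)},\emptyset)$, up to a rigid motion that is removed using (H5);
\item at points of $\partial^*E$, it is $(0,\Pi^{\nu_E,-}_x)$;
\item at points of $J_u\cap E^0$, it is $(u^{\nu_u}_{x,[u]},\emptyset)$, again after translation by a constant via (H5).
\end{itemize}
At points of $\partial^*E$ the function $u$ may be nonzero only on the $E^0$-side. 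We handle this as follows. The blow-up of $u$ converges in measure to a function that is constant on the half-ball outside the void and vanishes on the void. By (H5) we may subtract this constant on $\Omega\setminus E$. The resulting jump lies on $\partial^*E$ and therefore costs nothing extra beyond the void surface. In all three cases the comparison with the blow-up profile is made by interpolating competitors on thin annuli. The errors are controlled by $\mu$-density estimates: the blow-ups converge in measure, and the $\mathcal{L}^d$ and $\mathcal{H}^{d-1}$ contributions on the annuli are negligible relative to $\rho^d$, respectively $\rho^{d-1}$.

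\textbf{Step 3 (conclusion).} Dividing by $\gamma_d\rho^d$ or $\gamma_{d-1}\rho^{d-1}$ in each regime gives $f_0$, $g_0$ and $h_0$ exactly as in \eqref{def f_0}--\eqref{def h_0}.

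\textbf{Main obstacle.} We expect Step 2, and in particular the interpolation on annuli, to be the hardest part. Because Korn's inequality fails in $GSBD$, the blow-ups converge only in measure, and a cut-off interpolation can create new jumps. These jumps must be assigned either to $\partial^*E$ or to $J_u\cap E^0$ without violating $u=u\chi_{E^0}$, and in the $h_0$ case they must not change the void boundary. To handle this we plan to use the $GSBD$ fundamental estimate of \cite[Proposition~4.1]{FriPerSol20a} on the function component. We join the sets via \cite[Lemma 4.4]{AmbBra90}, choosing a level of the cut-off where the perimeter of the interpolated set is controlled. Finally, we treat the pair as a $GSBD$ function whose jump set contains $\partial^*E$, so that the construction of \cite[Sections 5–6]{SolFriCri20} applies verbatim with the extra bookkeeping for $E$.
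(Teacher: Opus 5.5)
Your overall architecture is the paper's. It consists of the density lemma (Lemma \ref{LEMMA 4.1 CRIFRIESOL}, taken from \cite[Lemma 4.1]{SolFriCri20}), the identification of the cell-problem limits at bulk, void and jump points (Lemmas \ref{LEMMA 4.2 CRIFRIESOL}--\ref{LEMMA 4.3 CRIFRIESOL}), the use of (H5) to subtract $u^+(x)\chi_{\Omega\setminus E}$ at points of $\partial^*E$, and the Fleming--Rishel joining of sets from \cite[Lemma 4.4]{AmbBra90}. The gap is in the step you yourself flag as the main obstacle, and your proposed fix does not close it.

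When you interpolate across an annulus of width $\theta\varepsilon$, the cut-off costs bulk energy of order $(\theta\varepsilon)^{-p}\Vert z_\varepsilon-u_\varepsilon\Vert^p_{L^p(\mathrm{annulus})}$; this is the term $\varepsilon^{-p}M^p\Vert\cdot\Vert^p$ in Proposition \ref{fundestE0}. At a surface point this cost must be $o(\varepsilon^{d-1})$. That is a \emph{rate}, $\varepsilon^{-(d-1+p)}\int_{B_{(1-\theta)\varepsilon}(x)}|u_\varepsilon-\overline{u}^{\rm surf}_x|^p\,\mathrm{d}y\to0$, and convergence in measure of the blow-ups does not give it. For $u$ itself the integral need not even be finite, since $GSBD^p$-functions need not lie in $L^p$.

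Switching to the $GSBD$ fundamental estimate \cite[Proposition 4.1]{FriPerSol20a} does not remove the need for this rate, for two reasons:
\begin{itemize}
\item Its error $\Lambda(z_1,z_2)$ tends to zero as $z_1-z_2\to0$ in measure, but with no rate.
\item It is formulated for functionals with fixed growth constants. Rescaling $B_\varepsilon(x)$ to $B_1$ at surface scale multiplies the bulk part by $\varepsilon^{1-p}$, so the rigid motions $a_Q$ produced by Korn's inequality on the cubes would have to be $o(\varepsilon^{(p-1)/p})$. Nothing in your argument provides this.
\end{itemize}

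The missing ingredient is the modified blow-up families of Lemma \ref{LEMMA 6.1 CRI FRIE SOL} (adapted from \cite[Lemma 6.1]{SolFriCri20}) and Lemma \ref{lemma5.1CRIFRIESOL}. These give pairs $(u_\varepsilon,E_\varepsilon)$ with the following properties:
\begin{itemize}
\item they coincide with $(u,E)$ near $\partial B_\varepsilon(x)$ and differ from it only on a set of measure $o(\varepsilon^d)$;
\item their energy on the annulus is $O(\theta\varepsilon^{d-1})$;
\item they satisfy the $L^p$ rate \eqref{blowupatjumppoints-2}, obtained from Sobolev replacements on the two half-balls via the Korn inequality for small jump sets;
\item they satisfy $E_\varepsilon=\overline{E}^{\rm surf}_x$ on $B_{(1-\theta)\varepsilon}(x)$.
\end{itemize}

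The adaptation to voids is where the genuine work lies, and it is more than bookkeeping. At $x\in\partial^*E$ one sets $u_\varepsilon=0$ on the void half-ball and $E_\varepsilon=[E\cap(B_\varepsilon(x)\setminus B_{s_\varepsilon}(x))]\cup[\Pi^{\nu_E(x),-}_x\cap B_{s_\varepsilon}(x)]$. The radius $s_\varepsilon$ is chosen by Fubini so that both the partition and $E$ differ from the half-space profile by $o(\varepsilon^{d-1})$ on $\partial B_{s_\varepsilon}(x)$, see \eqref{fub1}--\eqref{fub2}. At $x\in J_u\cap E^0$ one takes $E_\varepsilon=E\setminus B_{s_\varepsilon}(x)$ together with \eqref{fub3}.

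With these families, the elementary $L^p$ fundamental estimate of Proposition \ref{fundestE0} is enough. Its set-mismatch term even vanishes identically, because both sets equal $\overline{E}^{\rm surf}_x$ on the annulus.

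Two smaller points:
\begin{itemize}
\item In the bulk regime you need the full approximate gradient of Lemma \ref{approximate gradient}, not only the approximate symmetric gradient, to get an affine blow-up.
\item Your case list, like the paper's Lemma \ref{LEMMA 4.2 CRIFRIESOL}, only covers points of $E^0$. On $E^1$, (H1)--(H5) do not force the density to be $f_0(x,0)$. For example, adding $\mathcal{L}^d(A\setminus E)$ to the $\Gamma$-limit of Theorem \ref{first gamma convergence result} preserves (H1)--(H5) and leaves the density zero on $E^1$. Yet for the new functional the isoperimetric inequality gives $f_0(x,0)\ge1$. So the formula with $\int_A$ additionally needs $f_0(\cdot,0)=0$ and a vanishing density of $\mathcal{E}_0(u,E,\cdot)$ on $E^1$; both hold for that $\Gamma$-limit.
\end{itemize}
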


\begin{remark}
\label{REMARK invariance under rigid motions}
Without  details we mention that, in the same spirit of \cite[Theorem 2.1]{SolFriCri20}, one could show an integral representation  when \MMM $(\mathrm{H5})$ \EEE is removed. In that case, $f_0$ can depend on $x$, $u(x)$, and $\nabla u(x)$, while $g_0$ and $h_0$ can both depend on $x$, $u^-(x)$, $u^+(x)$, and $\nu_u(x)$. 
\end{remark}

The proof follows by the global method for relaxation \cite{Bouchitt1998, Bouchitt2002AGM}, in particular \NNN along the lines of \EEE an integral representation result in $GSBD$ \cite{SolFriCri20}. Yet, due to the presence of the void set, some nontrivial adaptations are necessary.   \EEE
We start by stating a fundamental estimate for $\mathcal{E}_0$, \MMM following \EEE  standard arguments (see for example \cite[Proposition 3.1]{BraDefVit96} and \cite[Lemma 3.8]{SolFriCri20}) combined with an adaptation of \cite[Lemma 4.4]{AmbBra90}.
\begin{proposition}[Fundamental estimate for $\mathcal{E}_0$] 
\label{fundestE0}
Let $\eta>0$ and let $A$, $A^\prime$, $B \in \mathcal{A}_0$ with $A^\prime \subset \subset A$. For every functional $\mathcal{E}_0$ satisfying $(\mathrm{H1})$, $(\mathrm{H3})$, and $(\mathrm{H4})$, and for every \MMM $(u,E) \in \mathcal{G}^p(A)$ and  $(v,F) \in \mathcal{G}^p(B)$, \EEE there exists \MMM $(w,D) \MMM \in   \mathcal{G}^{p}(A^\prime \cup B)$ \EEE such that
\begin{align*}
{\rm (i)} & \ \  \mathcal{E}_0(w,D,A^\prime \cup B)\leq (1+\eta) (\mathcal{E}_0(u,E,A)+\mathcal{E}_0(v,F,B))+\MMM M^p \EEE \Vert u -v\Vert^p_{L^p((A \setminus A^\prime)\cap B)}\\& \quad \quad \quad \quad \quad \quad \quad \quad \quad \quad \quad \quad \quad \quad \quad \quad \quad \quad \quad \quad \quad \quad \quad \quad \quad + \ \MMM M \EEE  \Vert \chi_E-\chi_F\Vert_{L^1((A \setminus A^\prime)\cap B)}+\eta,\\
{\rm (ii)}  & \ \   \MMM (w,D)=(u,E) \EEE  \:\: \text{on}\:\: A^\prime \:\: \text{and}\:\: \MMM  (w,D)=(v,F) \EEE \:\: \text{on}\:\: B \setminus  {A}, \EEE   \quad \quad \quad \quad \quad \quad \quad \quad \quad \quad \quad \quad \quad \quad \quad \quad \quad \quad \quad
\end{align*}
where $M=M(A,A^\prime, B, \MMM p, \EEE  \eta)>0$ depends only of $A$, $A^\prime$, $B$, $p$, and $\eta$, but is independent of \NNN $(u,E)$ and  $(v,F)$. \EEE Moreover, if \MMM for \EEE $\varepsilon>0$ and $x \in \mathbb{R}^d$  \MMM we have \EEE $A_{\varepsilon,x}$, $A_{\varepsilon,x}^\prime$ and $B_{\varepsilon,x}\subset \Omega$, then 
\begin{equation*}
M(A_{\varepsilon,x},A_{\varepsilon,x}^\prime, B_{\varepsilon,x}, p ,\eta)= \MMM \varepsilon^{-1} \EEE (A,A^\prime,B,p,\eta).
\end{equation*}
\end{proposition}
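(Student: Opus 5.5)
We construct $(w,D)$ by cutting and pasting along a layer chosen by averaging, treating the function and the set simultaneously. Let $\delta := \mathrm{dist}(A',\partial A)$ and fix $N \in \mathbb{N}$ large, depending on $\eta$. Define the nested open sets $A_i := \{x \in A : \mathrm{dist}(x,A') < i\delta/(N+1)\}$ for $i=1,\dots,N$, and the layers $S_i := A_{i+1}\setminus \overline{A_{i-1}}$. For each $i$ choose a cut-off $\varphi_i \in C_c^\infty(A_i)$ with $\varphi_i = 1$ on $A_{i-1}$ and $|\nabla \varphi_i| \le 2(N+1)/\delta$.

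\textbf{Sets.} To avoid producing new jumps outside $\partial^* D$, we do not interpolate the sets; we cut sharply along a level set, in the spirit of \cite[Lemma 4.4]{AmbBra90}. By the coarea (Fleming–Rishel) formula applied to $\mathrm{dist}(\cdot,A')$, for a.e. $t$ we have
\[
\mathcal{H}^{d-1}\bigl((E^{1}\triangle F^{1})\cap\{\mathrm{dist}(\cdot,A')=t\}\bigr)
\]
integrable in $t$ with integral $\|\chi_E-\chi_F\|_{L^1((A\setminus A')\cap B)}$. Hence in each layer we can select a level $t_i$ with
\[
\mathcal{H}^{d-1}\bigl(\{E\ne F\}\cap\{\mathrm{dist}=t_i\}\bigr)\le \tfrac{C N}{\delta}\,\|\chi_E-\chi_F\|_{L^1(S_i\cap B)},
\]
and such that $\partial^*E$, $\partial^*F$, $J_u$, $J_v$ meet the level set in an $\mathcal{H}^{d-1}$-null set. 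We then set $D := (E \cap U_{t_i}) \cup (F\setminus U_{t_i})$, where $U_{t} := \{\mathrm{dist}(\cdot,A')<t\}$. This gives
\[
\partial^* D \subset (\partial^*E\cap U_{t_i})\cup(\partial^*F\setminus \overline{U_{t_i}})\cup(\{E\ne F\}\cap\partial U_{t_i}).
\]

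\textbf{Functions.} We use the standard $GSBD$ gluing of \cite[Lemma 3.8]{SolFriCri20}. Set $w := \chi_{D^0}\bigl(\varphi u' + (1-\varphi)v'\bigr)$, where $u',v'$ are truncations/extensions as in \cite{SolFriCri20} and $\varphi$ is a cutoff supported in a thin band around the level $t_i$. The jumps of $w$ then lie in $J_u\cup J_v\cup \partial^*D$, and $w = 0$ on $D$, so $(w,D)\in\mathcal{G}^p(A'\cup B)$. Property (ii) holds by construction, and locality (H3) together with the measure property (H1) allow us to split
\[
\mathcal{E}_0(w,D,A'\cup B)\le \mathcal{E}_0(u,E,A)+\mathcal{E}_0(v,F,B)+\mathcal{E}_0(w,D,S_i\cap B).
\]
The last term is bounded via the upper bound in (H4) by
\[
\beta\Bigl(\mathcal{L}^d(S_i)+\|e(u)\|^p_{L^p(S_i)}+\|e(v)\|^p_{L^p(S_i)}+(N/\delta)^p\|u-v\|^p_{L^p(S_i\cap B)}+\mathcal{H}^{d-1}\text{-terms in }S_i\Bigr)
\]
plus the level-set term above.

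\textbf{Averaging and the obstacle.} Summing over $i$, choosing the best layer, and using the lower bound in (H4) to dominate the energies of $u,v$ in $S_i$ by $\alpha^{-1}\mathcal{E}_0$ produces the factor $1+C\beta/(\alpha N)\le 1+\eta$. The term $\beta\mathcal{L}^d(S_i)$ is absorbed into the additive $\eta$ by making the layers thin, i.e.\ by first replacing $A$ with a neighbourhood of $A'$ of small measure. The constant $M$ scales like $N/\delta$, which gives the stated $\varepsilon^{-1}$ scaling under the rescaling $x+\varepsilon(\cdot-x)$.

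The main obstacle is reconciling the smooth interpolation of functions with the sharp cut of sets. The blended function must vanish on $D$ and its new jump set must be charged only to $\partial^*D$ or to the prescribed level-set term. Where $u=0$ on $E$ but $v\neq0$ on $E\setminus F$, multiplying by $\chi_{D^0}$ creates jumps on $\partial^*D$, and these are paid for only because they coincide with set boundary. We therefore have to check carefully that $J_w\cap D^0$ does not acquire extra jumps along the cut. The remedy is to place the cutoff band of $\varphi$ strictly on one side of the level $t_i$, so that across $\partial U_{t_i}$ only one of $u,v$ is active.
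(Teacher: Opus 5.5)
Your argument is correct, and its overall structure is the paper's: layers $S_i$ with averaging in $i$, a sharp cut of the sets along a coarea-selected level in the spirit of \cite[Lemma 4.4]{AmbBra90}, cut-off interpolation of $u$ and $v$ with $w=0$ on $D$, and $(\mathrm{H1})$, $(\mathrm{H3})$, $(\mathrm{H4})$ to split the energy and absorb the error of the best layer.

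What differs is how the sets are glued. You cut once, along a level of $\mathrm{dist}(\cdot,A')$, with $D=E$ inside and $D=F$ outside, and you keep the transition band of a separate cut-off on one side of the cut. Inside the band you interpolate on $E^0$, where $v$ may jump across $\partial^*F$. This creates jumps in $D^0$ that are charged with $2\beta$, which is harmless because they are dominated by $\alpha^{-1}\mathcal{E}_0(v,F,S_i)$.

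The paper instead cuts at two levels $t^1_i<\tfrac14$ and $t^2_i>\tfrac34$ of the \emph{same} cut-off $\varphi_i$, sets $D_i=E\cup F$ between them, and interpolates with $\psi_i=(\varphi_i-t^1_i)/(t^2_i-t^1_i)$ only off $E\cup F$. Since $\psi_i\in\{0,1\}$ exactly on the two cuts, the compatibility issue you identify as the main obstacle is settled by construction. The only new surface is $|\chi_E-\chi_F|$ on the two level sets, and the same construction is reused for $\mathcal{E}_\varepsilon$ in Proposition~\ref{fundamentalestimate} and Corollary~\ref{set.corollary}.

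Your one-sided placement of the band is valid but not actually needed. At $\mathcal{H}^{d-1}$-a.e.\ point of a good cut lying in $D^0$, both $E$ and $F$ have density $0$ and $u,v$ are approximately continuous, so $w$ cannot jump there even if the band straddles the cut.

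Two small fixes are needed. First, with your indexing $A_0=\emptyset$, so $\varphi_1=1$ on $A_0$ does not give (ii); require $\varphi_i=1$ on a neighbourhood of $\overline{A'}$, e.g.\ by starting at $i=2$. Second, drop the ``truncations'' of $u,v$. Truncation does not control $e(\cdot)$ and would destroy $w=u$ on $A'$. No extension is needed either, because on $A'\cup B$ the term $\varphi u$ is only evaluated in $A$ and $(1-\varphi)v$ only in $B$.
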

\MMM The proof is postponed to the next subsection. The fundamental estimate is crucial in the  \EEE   strategy of \cite{SolFriCri20} (see also \cite{ Bouchitt2002AGM,Bouchitt1998}), as it allows to adjust boundary data in \MMM specific  infimum problems defined \EEE on asymptotically small balls. \MMM We now formulate the main ingredients of the global method. To this end, for each $(u,E) \in  \mathcal{G}^{p}(\Omega)$,  \EEE we set $\mu \defas  \mathcal{L}^d + \mathcal{H}^{d-1}\llcorner \partial^* E + 2\mathcal{H}^{d-1}\llcorner (J_u  \NNN \cap E^0 \EEE )$.

\begin{lemma}
\label{LEMMA 4.1 CRIFRIESOL}
Let $\Omega \in \mathcal{A}_0$. Suppose that $\mathcal{E}_0$ satisfies $(\mathrm{H1})$--$(\mathrm{H4})$ \MMM and \EEE let $(u,E) \in \mathcal{G}^p(\Omega)$. Then for $\mu$-a.e.\ $x \in \Omega $ we have
\begin{equation}
    \lim\limits_{\varepsilon \to 0}\frac{\mathcal{E}_0(u,E,B_{\varepsilon}(x))}{\mu(B_{\varepsilon}(x))}=\lim\limits_{\varepsilon \to 0}\frac{m_{\mathcal{E}_0}(u,E,B_{\varepsilon}(x))}{\mu(B_{\varepsilon}(x))}.
\end{equation}
\end{lemma}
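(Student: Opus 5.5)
We follow the strategy of \cite[Lemma 4.1]{SolFriCri20}, itself modelled on \cite{Bouchitt1998}. By (H1) and (H4), $\mathcal{E}_0(u,E,\cdot)$ is a finite Borel measure which is absolutely continuous with respect to $\mu$. Hence, by the Besicovitch differentiation theorem, the limit on the left-hand side exists and is finite for $\mu$-a.e.\ $x$. Since $(u,E)$ is itself admissible in $m_{\mathcal{E}_0}(u,E,B_\varepsilon(x))$, we always have $m_{\mathcal{E}_0}(u,E,B_\varepsilon(x))\le \mathcal{E}_0(u,E,B_\varepsilon(x))$. It therefore suffices to prove
\[
\lim_{\varepsilon\to 0}\frac{\mathcal{E}_0(u,E,B_\varepsilon(x))}{\mu(B_\varepsilon(x))}\le \liminf_{\varepsilon\to 0}\frac{m_{\mathcal{E}_0}(u,E,B_\varepsilon(x))}{\mu(B_\varepsilon(x))}
\]
for $\mu$-a.e.\ $x$.

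We argue by contradiction. Suppose there exist $t>0$ and a set $K\subset\Omega$ with $\mu(K)>0$ such that for every $x\in K$ there are arbitrarily small radii $\varepsilon$ with $m_{\mathcal{E}_0}(u,E,B_\varepsilon(x))\le \mathcal{E}_0(u,E,B_\varepsilon(x))-t\mu(B_\varepsilon(x))$. Fix $\delta>0$ and an open set $U\supset K$ with $\mu(U\setminus K)$ small. By Vitali--Besicovitch, we choose a countable family of disjoint closed balls $\overline B_i=\overline B_{\varepsilon_i}(x_i)\subset U$ of radii $<\delta$ which covers $\mu$-almost all of $K$, and for each ball an almost minimizer $(v_i,F_i)\in\mathcal{G}^p(B_i)$ with $(v_i,F_i)=(u,E)$ near $\partial B_i$ and
\[
\mathcal{E}_0(v_i,F_i,B_i)\le \mathcal{E}_0(u,E,B_i)-t\mu(B_i)+\delta\mu(B_i).
\]
We then define $(w_\delta,D_\delta)$ as $(v_i,F_i)$ on each $B_i$ and $(u,E)$ elsewhere. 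Since the boundary conditions hold in a neighbourhood of $\partial B_i$ inside $B_i$, gluing creates no new jump or void boundary. Consequently $(w_\delta,D_\delta)\in\mathcal{G}^p(\Omega)$, where the constraint $w=w\chi_{D^0}$ is local and $J_w\cap D^0$ is the union of the pieces. Using (H1) and (H3), we obtain
\[
\mathcal{E}_0(w_\delta,D_\delta,\Omega)\le \mathcal{E}_0(u,E,\Omega)-(t-\delta)\mu\Bigl(\bigcup_i B_i\Bigr).
\]

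Next we show $(w_\delta,D_\delta)\to(u,E)$ in $L^0\times L^1$ as $\delta\to0$. For the sets, the discrepancy is confined to $\bigcup_i B_i$, and by the isoperimetric inequality it is controlled by $\sum_i\min\{\mathcal{L}^d(B_i),\mathcal{H}^{d-1}(\partial^*F_i\cap B_i)^{d/(d-1)}\}$. This is small because the perimeters are bounded by (H4) together with the energy bound $\mathcal{E}_0(v_i,F_i,B_i)\le\beta\mu(B_i)$-type estimates, and the radii are small. For the functions, convergence in measure is the delicate point, since on each ball $v_i$ may differ substantially from $u$. Following \cite{SolFriCri20}, we use the rigidity/Poincaré-type estimate in $GSBD^p$ on balls with small jump set (\cite{CagChaSca20}, Appendix~\ref{section preliminaries}): it yields an exceptional set of measure $\lesssim(\mathcal{H}^{d-1}(J_{v_i}\cup\partial^*F_i))^{d/(d-1)}$ outside which $v_i$ is close to $u$ up to an infinitesimal rigid motion, and invariance under rigid motions allows us to subtract it. Alternatively, we can restrict to points of $K$ where $\mu$ has a good blow-up (Lebesgue points of $e(u)$, approximate continuity, or jump/boundary points), so that only the measure of the bad sets needs controlling. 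Applying (H2) then gives
\[
\mathcal{E}_0(u,E,\Omega)\le\liminf_{\delta\to0}\mathcal{E}_0(w_\delta,D_\delta,\Omega)\le \mathcal{E}_0(u,E,\Omega)-t\mu(K)/2,
\]
which is a contradiction since $\mathcal{E}_0(u,E,\Omega)<\infty$.

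The main obstacle is this convergence step in measure. Unlike the $SBV$ case, there is no Poincaré inequality for $u$ itself, so we must either appeal to the $GSBD$ Korn–Poincaré estimates for functions with small jump sets, or replace $(v_i,F_i)$ by truncated competitors. The difficulty is to do this while preserving the void constraints and the energy bound; the natural tool for the latter is the fundamental estimate, Proposition~\ref{fundestE0}.
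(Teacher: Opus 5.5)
\textbf{Verdict: genuine gap.} Your scheme is the right one: Besicovitch differentiation, the trivial bound $m_{\mathcal{E}_0}\le\mathcal{E}_0$, a Vitali covering by balls on which the infimum drops by $t\mu(B_i)$, gluing near-minimizers, and $(\mathrm{H2})$. This is the Bouchitt\'e--Fonseca--Mascarenhas argument the paper defers to via \cite[Lemma~4.1]{SolFriCri20}. However, the one step where the $GSBD$ setting matters, namely $w_\delta\to u$ in measure, is left open, and the mechanisms you suggest do not work.

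First, $(\mathrm{H5})$ is not a hypothesis of this lemma. Even if it were, subtracting a ball-dependent rigid motion $a_i$ from $v_i$ destroys $v_i=u$ near $\partial B_i$. The glued function then jumps on every sphere $\partial B_i$, at a cost of order $\beta\sum_i r_i^{d-1}$, which blows up when balls of radius $<\delta$ cover a set of positive Lebesgue measure.

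Second, Theorem~\ref{Korn inequality for functions with small jump set} applied to $v_i$ compares $v_i$ with a rigid motion, not with $u$. It is also only informative when the jump set is small compared with $r_i^{d-1}$, which fails at points of $J_u\cup\partial^*E$, where $\mu(B_r(x))\sim r^{d-1}$. Restricting to good blow-up points of $u$ does not help, since the $v_i$ are arbitrary near-minimizers. The fundamental estimate presupposes exactly the $L^p$-closeness you are trying to prove.

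The missing idea is to let the boundary datum pin the rigid motion, and to reuse the bound $\min\{r_i^d,\cdot\}\lesssim r_i\,\cdot$ that you already used for the sets.
\begin{itemize}
\item Set $g_i:=(v_i-u)\chi_{B_i}$, extended by $0$ to $B_{2r_i}(x_i)$. It lies in $GSBD^p$ because $v_i=u$ near $\partial B_i$, and $J_{g_i}\subset(J_{v_i}\cup J_u)\cap B_i$ up to $\mathcal{H}^{d-1}$-null sets.
\item If $\mathcal{H}^{d-1}(J_{g_i})\le c_*r_i^{d-1}$ with $c_*$ small, the rescaled Korn inequality gives $\omega_i$ with $\mathcal{L}^d(\omega_i)\le c\,\mathcal{H}^{d-1}(J_{g_i})^{d/(d-1)}\le Cr_i\,\mathcal{H}^{d-1}(J_{g_i})$. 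This is a small fraction of the annulus $B_{2r_i}(x_i)\setminus B_{r_i}(x_i)$, on which $g_i=0$. Lemma~\ref{lemmamatrix} then gives $\|a_i\|_{L^p(B_{2r_i}(x_i))}\le Cr_i\|e(g_i)\|_{L^p}$, hence $\|g_i\|_{L^p(B_i\setminus\omega_i)}\le Cr_i\|e(g_i)\|_{L^p(B_i)}$.
\item Otherwise take $\omega_i=B_i$; again $\mathcal{L}^d(\omega_i)\le Cr_i\,\mathcal{H}^{d-1}(J_{g_i})$.
\item Since $\mathcal{E}_0(v_i,F_i,B_i)\le\mathcal{E}_0(u,E,B_i)$, the lower bound in $(\mathrm{H4})$ controls $\sum_i\big(\mathcal{H}^{d-1}(J_{g_i})+\|e(g_i)\|^p_{L^p}\big)$ uniformly in $\delta$. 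Hence $\mathcal{L}^d(\{|w_\delta-u|>\lambda\})\le C\delta+C\lambda^{-p}\delta^p\to0$, using only $(\mathrm{H1})$--$(\mathrm{H4})$.
\end{itemize}

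Two smaller points also need care.
\begin{itemize}
\item Select radii with $\mu(\partial B_i)=0$. This is possible by slightly shrinking a good radius, at the cost of replacing $t$ by $t/2$, because near-minimizers coincide with $(u,E)$ near $\partial B_\varepsilon(x)$.
\item On $\Omega\setminus\bigcup_iB_i$, locality $(\mathrm{H3})$ does not apply at accumulation points of the balls. The energy there must be estimated with the upper bound in $(\mathrm{H4})$, extended to Borel sets, together with the smallness of $\mu(U\setminus\bigcup_iB_i)$.
\end{itemize}
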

\begin{lemma}
\label{LEMMA 4.2 CRIFRIESOL}
Let $\Omega \in \mathcal{A}_0$. Suppose that $\mathcal{E}_0$ satisfies $(\mathrm{H1})$ and $(\mathrm{H3})$--$(\mathrm{H4})$. Let $(u,E) \in \mathcal{G}^p(\Omega)$. Then for $\mathcal{L}^d$-a.e.\ $x \in \NNN E^0 \EEE$ we have
\begin{equation}
\label{volumeequation}
\lim\limits_{\varepsilon \to 0} \frac{m_{\mathcal{E}_0}(u,E,B_{\varepsilon}(x))}{\gamma_d \varepsilon^d}=\limsup\limits_{\varepsilon \to 0}\frac{m_{\mathcal{E}_0}(\overline{u}_{x}^{\mathrm{bulk}},\emptyset, B_{\varepsilon}(x))}{\gamma_d \varepsilon^d},
\end{equation}
where $\overline{u}^{\mathrm{bulk}}_{x} \defas \QQQ \bar{l}_{x,u(x),\nabla u(x)}\EEE$. 
\end{lemma}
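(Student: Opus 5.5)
We will prove \eqref{volumeequation} by the blow-up strategy of the $GSBD$ integral representation result \cite[Lemma 4.2]{SolFriCri20}, adapted to the presence of the void. First we fix the good points. Let $x \in E^0$ satisfy the following: $x$ is a Lebesgue point of $\chi_E$ with density $0$; $u$ is approximately differentiable at $x$ (true a.e.\ for $GSBD^p$); and
\begin{equation*}
\lim_{\varepsilon\to 0}\varepsilon^{-d}\big(\mathcal{H}^{d-1}(\partial^*E\cap B_\varepsilon(x)) + \mathcal{H}^{d-1}(J_u\cap E^0\cap B_\varepsilon(x))\big)=0 .
\end{equation*}
These hold $\mathcal{L}^d$-a.e.\ by Besicovitch differentiation, since the measures in question are singular with respect to $\mathcal{L}^d$. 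In addition, we require that $x$ is a point where the rescaled functions $u_{\varepsilon}(y):=\varepsilon^{-1}(u(x+\varepsilon y)-u(x))$ satisfy $u_{\varepsilon}\to \nabla u(x)\,y$ in measure on $B_1$, together with $\chi_{E}(x+\varepsilon\cdot)\to 0$ in $L^1(B_1)$. By (H5) we may subtract the constant $u(x)$, which is a rigid motion, so $\overline{u}^{\rm bulk}_x$ may be replaced by $\bar l_{x,0,\nabla u(x)}$.

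The core of the argument is to compare $m_{\mathcal{E}_0}(u,E,B_\varepsilon(x))$ with $m_{\mathcal{E}_0}(\overline{u}^{\rm bulk}_x,\emptyset,B_\varepsilon(x))$ in both directions, using the fundamental estimate, Proposition \ref{fundestE0}. For fixed $\delta\in(0,1)$ we take an almost minimizer $(v,F)$ of one problem on $B_{(1-\delta)\varepsilon}(x)$, or of a slightly smaller ball, and glue it to the boundary datum of the other problem in the annulus $B_\varepsilon\setminus B_{(1-\delta)\varepsilon}$. The scaling $M(A_{\varepsilon,x},\dots)=\varepsilon^{-1}M(A,\dots)$, combined with the convergences above, shows that the error terms $M^p\|u-\overline{u}^{\rm bulk}_x\|^p_{L^p(\text{annulus})}$ and $M\|\chi_E\|_{L^1(\text{annulus})}$ are $o(\varepsilon^d)$. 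The energy of the boundary datum itself on the thin annulus is controlled by the upper bound in (H4): $\|e(u)\|_{L^p}^p$ on the annulus is close to $|e(u)(x)|^p\mathcal{L}^d(\text{annulus})=O(\delta\varepsilon^d)$, and the surface parts are $o(\varepsilon^d)$ by the choice of $x$. Dividing by $\gamma_d\varepsilon^d$, letting $\varepsilon\to0$, and then letting $\delta\to0$ (using a monotonicity-in-radius comparison of $m$ on nested balls) yields the two inequalities between $\limsup/\liminf$ of both sides. The existence of the limit on the left follows from Lemma \ref{LEMMA 4.1 CRIFRIESOL}, since $\mu=\mathcal{L}^d$ near such points and $\mathcal{E}_0(u,E,\cdot)$ has a density.

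The main obstacle is that $u$ lies only in $GSBD^p$, not $L^p$, so the term $\|u-\overline{u}^{\rm bulk}_x\|_{L^p}$ may be infinite and convergence holds only in measure. To handle this, we would follow \cite{SolFriCri20}. By the lower bound in (H4) and the point choice, the jump set of $u_\varepsilon$ in $B_1$ has vanishing measure, so the Korn inequality for functions with small jump set \cite{CagChaSca20} provides exceptional sets $\omega_\varepsilon$ with $\mathcal{L}^d(\omega_\varepsilon)\to0$ and $\mathcal{H}^{d-1}(\partial^*\omega_\varepsilon)\to 0$. Outside these sets $u_\varepsilon$ equals a $W^{1,p}$ function $\tilde u_\varepsilon$ converging in $L^p$ to $\nabla u(x)y$. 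We then replace $(u,E)$ by the modified pair $(\tilde u,E\setminus\omega)$, or alternatively add $\omega$ to the void, setting $u=0$ there. This keeps the pair in $\mathcal{G}^p$ and changes the energy by only $o(\varepsilon^d)$ through (H4). The resulting error terms in the fundamental estimate are then finite and infinitesimal. Care is needed so that the constraint $u=u\chi_{E^0}$ and the boundary condition near $\partial B_\varepsilon(x)$ are preserved when choosing a good radius of the annulus, which we do by an averaging/Fubini selection over $\delta$-layers.
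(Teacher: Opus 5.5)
Your overall plan matches the paper's proof. You modify $(u,E)$ by a Korn-type blow-up that is glued back to $(u,E)$ at a Fubini-selected radius; this is Lemma \ref{lemma5.1CRIFRIESOL}. You then apply Proposition \ref{fundestE0} twice, with almost minimizers on slightly smaller balls, and take the existence of the limit on the left from Lemma \ref{LEMMA 4.1 CRIFRIESOL}.

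\textbf{The gap: the void error term.} Since $M=\varepsilon^{-1}M_0$, the term $M\Vert\chi_E-\chi_F\Vert_{L^1}$ is of order $\varepsilon^{-1}\mathcal{L}^d(E\cap B_\varepsilon(x))$. Density zero, i.e.\ $\chi_E(x+\varepsilon\,\cdot)\to0$ in $L^1(B_1)$, only makes this $o(\varepsilon^{d-1})$. That diverges after division by $\varepsilon^d$, so your claim that this term is $o(\varepsilon^d)$ ``by the convergences above'' fails. What you need is $\mathcal{L}^d(E\cap B_\varepsilon(x))=o(\varepsilon^{d+1})$.

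This does follow from your own point selection. Since $\varepsilon^{-d}\mathcal{H}^{d-1}(\partial^*E\cap B_\varepsilon(x))\to0$, the relative isoperimetric inequality gives $\mathcal{L}^d(E\cap B_\varepsilon(x))=o(\varepsilon^{d^2/(d-1)})$, and $d^2/(d-1)>d+1$. The paper uses exactly this decay to pick $s_\varepsilon$ with $\varepsilon^{-d}\mathcal{H}^{d-1}(E\cap\partial B_{s_\varepsilon}(x))\to0$. It then cuts the void out of $B_{s_\varepsilon}(x)$, so $E_\varepsilon=F_\varepsilon=\emptyset$ on the gluing annulus and the $\chi$-term vanishes identically.

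The same scaling problem affects your Fubini cut of the Korn exceptional set. Knowing $\mathcal{L}^d(\omega_\varepsilon)\to0$ in rescaled variables only gives $\mathcal{H}^{d-1}(\omega\cap\partial B_s)=o(\varepsilon^{d-1})$. You need the Korn volume bound $\mathcal{L}^d(\omega)\le c\,\mathcal{H}^{d-1}(J)^{d/(d-1)}=o(\varepsilon^{d+1})$ in original variables.

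\textbf{Smaller points.} (H5) is not among the hypotheses of the lemma, so you may not normalize $u(x)=0$. This is harmless if you compare directly with $\overline{u}^{\mathrm{bulk}}_x=\overline{l}_{x,u(x),\nabla u(x)}$. However, your variant ``add $\omega$ to the void and set $u=0$ there'' genuinely needs it. Without it, the $L^p$ error on $\omega$ is about $\varepsilon^{-p}|u(x)|^p\mathcal{L}^d(\omega)$, which is $o(\varepsilon^d)$ only when $p\le d/(d-1)$. Use the Sobolev replacement on $\omega$ instead.

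Korn must also be applied to $u-\overline{u}^{\mathrm{bulk}}_x$ at a $p$-Lebesgue point of $e(u)$, not to $u$ itself. The rigid motions are then shown to be negligible via convergence in measure and Lemma \ref{lemmamatrix}. This gives both the $L^p$-convergence to the affine map and $\varepsilon^{-d}\Vert e(\tilde u)-e(u)(x)\Vert^p_{L^p(B_\varepsilon(x))}\to0$. The latter is what makes the (H4) upper bound for the modified pair on the thin annulus $O(\delta\varepsilon^d)$ rather than $O(\varepsilon^d)$.

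Finally, (H4) does not control energy differences between $(u,E)$ and the modified pair. Only (H4) upper bounds on the annulus and on the cutting sphere are needed, as in the paper.
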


\begin{lemma}
\label{LEMMA 4.3 CRIFRIESOL}
Let $\Omega \in \mathcal{A}_0$. Suppose that $\mathcal{E}_0$ satisfies $(\mathrm{H1})$ and $(\mathrm{H3})$--$(\mathrm{H4})$. Let $(u,E) \in  \mathcal{G}^p(\Omega)$. Then, for $\mathcal{H}^{d-1}$-a.e.\ $x\in J_u \cup (\partial^* E\cap \Omega)$, we have
\begin{equation}
\label{equation infimum jump}
 \lim\limits_{\varepsilon \to 0} \frac{m_{\mathcal{E}_0}(u,E,B_{\varepsilon}(x))}{\gamma_{d-1} \varepsilon^{d-1}}=\limsup\limits_{\varepsilon \to 0} \frac{m_{\mathcal{E}_0}(\MMM \overline{u}_{x}^{\mathrm{surf}}, \EEE \emptyset,B_{\varepsilon}(x))}{\gamma_{d-1} \varepsilon^{d-1}}\:\: \text{if}\:\: x \in J_u \cap E^0,
\end{equation}
and
\begin{equation}
\label{equation infimum void}
\lim\limits_{\varepsilon \to 0} \frac{m_{\mathcal{E}_0}(u,E,B_{\varepsilon}(x))}{\gamma_{d-1} \varepsilon^{d-1}}=\limsup\limits_{\varepsilon \to 0} \frac{m_{\mathcal{E}_0}(\MMM \overline{u}_{x}^{\mathrm{surf}}, \EEE \Pi^{\nu_E(x),-}_x,B_{\varepsilon}(x))}{\gamma_{d-1} \varepsilon^{d-1}}\:\: \text{if}\:\: x \in \partial^* E\cap \Omega, 
\end{equation}
where  \MMM
$\overline{u}_{x}^{\mathrm{surf}}\defas\overline{u}^{\nu_u(x)}_{x,u^-(x),u^+(x)}$ if $x \in J_u \cap E^0$ and $ \overline{u}_{x}^{\mathrm{surf}}\defas\overline{u}_{x,0,u^+(x)}^{\nu_{E}(x)}$ if $x \in \partial^* E\cap \Omega$. \EEE
\end{lemma}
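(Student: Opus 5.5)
We follow the blow-up strategy of \cite[Lemma 4.3]{SolFriCri20}, adapted to the presence of the void. Fix $(u,E)\in\mathcal{G}^p(\Omega)$ and let $x$ be a point in the relevant set with the usual fine properties for $\mathcal{H}^{d-1}$-a.e.\ such points. If $x\in J_u\cap E^0$, we take $x$ to be an approximate jump point with one-sided traces $u^\pm(x)$ and normal $\nu_u(x)$, such that $\mathcal{H}^{d-1}\llcorner J_u$ has density one at $x$ with respect to $\gamma_{d-1}\varepsilon^{d-1}$, and such that $\varepsilon^{-(d-1)}\big(\mathcal{L}^d(E\cap B_\varepsilon(x))\varepsilon^{-1}+\mathcal{H}^{d-1}(\partial^*E\cap B_\varepsilon(x))\big)\to0$. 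The last property holds because $\mathcal{H}^{d-1}(\partial^*E\cap J_u\cap E^0)=0$ and $\mathcal{H}^{d-1}\llcorner\partial^*E$ has zero $(d-1)$-density $\mathcal{H}^{d-1}$-a.e.\ outside $\partial^*E$; the volume part follows from the relative isoperimetric inequality. If $x\in\partial^*E$, we take $x$ with $E$ blowing up to $\Pi^{\nu_E(x),-}_x$ in $L^1$, with $\mathcal{H}^{d-1}(J_u\cap E^0\cap B_\varepsilon(x))=o(\varepsilon^{d-1})$, and with $u$ having trace $u^+(x)$ from the side $E^0$. In both cases we also require $\varepsilon^{-d}\int_{B_\varepsilon(x)}|e(u)|^p\to0$, which holds a.e.\ with respect to $\mathcal{H}^{d-1}$. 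Finally, we require the GSBD blow-up of \cite{SolFriCri20}: $u_{x,\varepsilon}(y):=u(x+\varepsilon y)$ converges in measure on $B_1$ to the piecewise constant jump function. This is available because, on the complement of $E$, $u$ is a $GSBD^p$ function whose trace on $\partial^*E$ is defined, and $u=0$ on $E$.

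The core of the argument is a two-sided comparison of $m_{\mathcal{E}_0}(u,E,B_\varepsilon(x))$ with $m_{\mathcal{E}_0}(\overline u^{\rm surf}_x,F_x,B_\varepsilon(x))$, where $F_x=\emptyset$ or $F_x=\Pi^{\nu_E,-}_x$. To prove the inequality ``$\le$'', take a near-optimal competitor $(v,F)$ for the right-hand side on $B_{(1-\theta)\varepsilon}(x)$; we use scaling and a slight shrinking of radii, which is harmless since the $\limsup$ is taken in $\varepsilon$. Glue $(v,F)$ to $(u,E)$ in the annulus $B_\varepsilon\setminus B_{(1-\theta)\varepsilon}$ using the fundamental estimate, Proposition~\ref{fundestE0}, with the scaling $M_\varepsilon=\varepsilon^{-1}M$. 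The error terms are
\[
\varepsilon^{-p}M^p\|u-\overline u^{\rm surf}_x\|^p_{L^p(\text{annulus})}+\varepsilon^{-1}M\|\chi_E-\chi_{F_x}\|_{L^1(\text{annulus})}+\eta.
\]
Divided by $\varepsilon^{d-1}$, the set term vanishes by the chosen density properties. The function term does not vanish directly, since only convergence in measure is available. As in \cite[Lemma 4.3]{SolFriCri20}, we therefore first replace $u$ on the annulus by a truncated or modified version $\tilde u$. Using the GSBD Korn-type inequality for small jump sets \cite{CagChaSca20}, $\tilde u$ agrees with $u$ outside a set of perimeter $o(\varepsilon^{d-1})$ and is $L^p$-close to $\overline u^{\rm surf}_x$ up to an infinitesimal rigid motion, which (H5)-type invariance or a small correction absorbs. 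The small exceptional set is added to the void, or is handled as a jump set, at a cost controlled by (H4). The correction must respect the constraint $u=0$ on the void; this is achieved by setting the function to zero on the added part of the void, the enlarged void being included in $D$. The inequality ``$\ge$'' is obtained symmetrically: glue an almost minimizer for $m_{\mathcal{E}_0}(u,E,B_{(1-\theta)\varepsilon}(x))$ to $(\overline u^{\rm surf}_x,F_x)$ near the boundary. The interpolation error is the same, and the bulk and surface energy of $(u,E)$ in the annulus is $O(\theta)\varepsilon^{d-1}$ by (H4) and the density properties. Letting $\eta,\theta\to0$ gives the equality.

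The existence of the limit on the left-hand side follows from Lemma~\ref{LEMMA 4.1 CRIFRIESOL}, since $\mathcal{E}_0(u,E,B_\varepsilon(x))/\mu(B_\varepsilon(x))$ converges $\mu$-a.e.\ by differentiation of measures. Here $\mu(B_\varepsilon(x))\sim c\gamma_{d-1}\varepsilon^{d-1}$ with $c=1$ on $\partial^*E$ and $c=2$ on $J_u\cap E^0$; $c$ is constant and the normalizations match.

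We expect the main obstacle to be the interpolation error for $u$. Convergence in measure of the blow-ups does not give $\varepsilon^{-p-d+1}\|u-\overline u^{\rm surf}_x\|^p_{L^p}\to0$, so the GSBD piecewise-Korn modification must be carried out on the annulus. This must be done while keeping $J_w\subset\partial^*D\cup(J_w\cap D^0)$ and $w=0$ on $D$, so that the glued pair stays in $\mathcal{G}^p$. It is particularly delicate at $x\in\partial^*E$, where the function is only controlled on the half-ball $B^{\nu_E,+}_\varepsilon(x)$.
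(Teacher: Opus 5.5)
Your overall architecture is the paper's: good points, gluing in both directions with Proposition~\ref{fundestE0} on annuli with constant $\varepsilon^{-1}M$, and Lemma~\ref{LEMMA 4.1 CRIFRIESOL} for existence of the limit. However, your control of the $L^p$ error term has a genuine gap, exactly where the void enters.

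\textbf{The void mismatch in the $L^p$ term.} You keep $E$ in the transition annulus $R_\varepsilon$, even enlarging it by the Korn exceptional sets, and check only that the $L^1$ set term is $o(\varepsilon^{d-1})$.
\begin{itemize}
\item The function $\tilde u$ you feed into Proposition~\ref{fundestE0} vanishes on $\tilde E$. Meanwhile $\overline{u}^{\mathrm{surf}}_x$ vanishes on $\overline{E}^{\rm surf}_x$ and equals $u^\pm(x)$ elsewhere. So on $(\tilde E\triangle\overline{E}^{\rm surf}_x)\cap R_\varepsilon$ one function is zero and the other is not.
\item Hence $\varepsilon^{-(d-1)}\varepsilon^{-p}M^p\|\overline{u}^{\mathrm{surf}}_x-\tilde u\|^p_{L^p(R_\varepsilon)}$ is small only if this mismatch has volume $o(\varepsilon^{d-1+p})$.
\item The density properties you list (De Giorgi's blow-up, $x\in E^0$, relative isoperimetry) give only $o(\varepsilon^{d})$. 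The same holds for the exceptional sets you add to the void.
\item Concretely, let $E$ be a ball and $u=\zeta\chi_{\Omega\setminus E}$ with $\zeta\neq0$. No modification of $u$ is needed, yet $\mathcal{L}^d\big((\Pi^{\nu_E(x),-}_x\setminus E)\cap R_\varepsilon\big)\sim\theta\varepsilon^{d+1}$. The error term is then of order $|\zeta|^pM^p\theta\,\varepsilon^{2-p}$, which does not vanish for $p\ge 2$.
\end{itemize}

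\textbf{The rigid motions.} (H5) is not a hypothesis of the lemma. Even if it were, one global rigid motion could not correct both sides when $x\in J_u\cap E^0$. What is needed is $\|a^\pm_\varepsilon-u^\pm(x)\|^p_{L^p(B_\varepsilon(x))}=o(\varepsilon^{d-1+p})$, not merely $a^\pm_\varepsilon\to u^\pm(x)$. This rate is what the Sobolev replacements $\hat v^\pm_\varepsilon$ of \cite[Lemma~6.1]{SolFriCri20} deliver.

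\textbf{The missing idea: blow up the void together with the function.} This is the new content of Lemma~\ref{LEMMA 6.1 CRI FRIE SOL}.
\begin{itemize}
\item Choose $s_\varepsilon\in((1-\theta)\varepsilon,(1-\theta/2)\varepsilon)$ by Fubini so that, besides the condition of \cite{SolFriCri20}, $\varepsilon^{-(d-1)}\mathcal{H}^{d-1}\big((E\triangle B^{\nu_E(x),-}_\varepsilon(x))\cap\partial B_{s_\varepsilon}(x)\big)\to0$. If $x\in J_u\cap E^0$, require instead $\varepsilon^{-(d-1)}\mathcal{H}^{d-1}(E\cap\partial B_{s_\varepsilon}(x))\to0$.
\item Set $E_\varepsilon=E$ outside $B_{s_\varepsilon}(x)$ and $E_\varepsilon=\overline{E}^{\rm surf}_x$ inside. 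Let $u_\varepsilon$ be the replacement of \cite{SolFriCri20}, with $0$ in place of $\hat v^-_\varepsilon$ in the void case.
\item Flattening the void then costs only perimeter on $\partial B_{s_\varepsilon}(x)$, which is $o(\varepsilon^{d-1})$. It never enters an $L^p$ norm.
\item For ``$\le$'', glue with $(u_\varepsilon,E_\varepsilon)$, not with $(u,E)$, on annuli inside $B_{(1-\theta)\varepsilon}(x)$. There both voids coincide with $\overline{E}^{\rm surf}_x$, so the $L^1$ term vanishes identically and the $L^p$ term is $o(\varepsilon^{d-1})$ by \eqref{blowupatjumppoints-2}.
\item For ``$\ge$'', pick a new Fubini radius $s_\varepsilon\in((1-4\theta)\varepsilon,(1-3\theta)\varepsilon)$. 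Extend the near-minimizer on $B_{s_\varepsilon}(x)$ by $(u_\varepsilon,E_\varepsilon)$, at cost $\mathcal{H}^{d-1}\big((\{u\neq u_\varepsilon\}\cup\{\chi_E\neq\chi_{E_\varepsilon}\})\cap\partial B_{s_\varepsilon}(x)\big)=o(\varepsilon^{d-1})$.
\end{itemize}
An alternative repair would be to sharpen Proposition~\ref{fundestE0}. In its proof $w=0$ on $D=E\cup F$ in the transition layer, so the $L^p$ term is only needed off $E\cup F$. That is not the estimate you invoke, however.

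\textbf{Minor slip.} The condition you need is $\varepsilon^{-(d-1)}\int_{B_\varepsilon(x)}|e(u)|^p\to0$. With $\varepsilon^{-d}$ in place of $\varepsilon^{-(d-1)}$, it is false in general.
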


\MMM The proof of  Lemma \ref{LEMMA 4.1 CRIFRIESOL}  follows almost verbatim as in  \cite[Lemma~4.1]{SolFriCri20}, and is thus omitted here. Lemma \ref{LEMMA 4.2 CRIFRIESOL} is similar to \cite[Lemma~4.2]{SolFriCri20}, essentially by  regarding $u$ in the pair $(u,E)$ as a $GSBD^p$-function. For convenience of the reader, we include the proof in  Appendix \ref{otherproof-sec}. Concerning Lemma \ref{LEMMA 4.3 CRIFRIESOL}, compared to \cite{SolFriCri20} there are some nontrivial adaptations due to the presence of the void sets, and we therefore give the proof in all details below in Subsection \ref{sec3.2}. 

\begin{proof}[Proof of Theorem \ref{Theorem2.1 CriFrieSol}]
\EEE An application of \MMM the \EEE Besicovitch Derivation Theorem together with Lemmas \ref{LEMMA 4.1 CRIFRIESOL}--\ref{LEMMA 4.3 CRIFRIESOL}  gives the  result, see the proof of \cite[Theorem 2.1]{SolFriCri20} for  further details.  \MMM The fact that the bulk density is independent of $u$ and $(\nabla u)^T - \nabla u$, and the fact that the surface density $h_0$ only depends on   $[u]$ has been addressed in \cite[Remark 2.2]{SolFriCri20}. 

We include some details for the density $g_0$ which has not been present in \cite{SolFriCri20}. We need to check that  for $\mathcal{H}^{d-1}$-a.e.\ $x \in \partial^*E\cap \Omega$ one has 
\begin{align}\label{eq: to show2}
\frac{\mathrm{d}\mathcal{E}_0(u,E,\cdot)}{\mathrm{d}\mathcal{H}^{d-1}\lfloor_{\partial^* E}}(x) =  g_0\big(x,\nu_E(x)\big).
\end{align}
 By Lemma \ref{LEMMA 4.1 CRIFRIESOL} and the fact that  $\lim_{\rho \to 0} (\gamma_{d-1}\rho^{d-1})^{-1}\mu(B_\rho(x))=1$ for $\mathcal{H}^{d-1}$-a.e.\ $x \in \partial^* E\cap \Omega$  we deduce 
 $$\frac{\mathrm{d}\mathcal{E}_0(u,E,\cdot)}{\mathrm{d}\mathcal{H}^{d-1}\lfloor_{\partial^*E}}(x)  = \lim_{\rho \to 0}\frac{\mathcal{E}_0(u,E,B_\rho(x))}{\mu(B_\rho(x))} =  \lim_{\rho \to 0}\frac{\QQQ m_{\mathcal{E}_0} \MMM (u,E,B_\rho(x))}{\mu(B_\rho(x))}  = \lim_{\rho \to 0}\frac{ \QQQ m_{\mathcal{E}_0} \MMM (u,E,B_\rho(x))}{\gamma_{d-1}\rho^{d-1}} < \infty$$
 for $\mathcal{H}^{d-1}$-a.e.\ $x \in \partial^* E\cap \Omega$. Now, by $(\mathrm{H5})$ and \eqref{equation infimum void} we get 
 $$\frac{\mathrm{d}\mathcal{E}_0(u,E,\cdot)}{\mathrm{d}\mathcal{H}^{d-1}\lfloor_{\partial^*E}}(x)  = \lim_{\rho \to 0}\frac{\QQQ m_{\mathcal{E}_0} \MMM \big(u - \NNN \chi_{\Omega \setminus E} \EEE u^+(x) , E,B_\rho(x)\big)}{\gamma_{d-1}\rho^{d-1}} =  \limsup\limits_{\NNN \rho \to 0} \frac{m_{\mathcal{E}_0}\big(0,\Pi^{\nu_E(x),-}_{x},B_{\NNN \rho}(x)\big)}{\gamma_{d-1} \NNN \rho^{d-1}},  $$
 where we used that \QQQ $\overline{u}_{x}^{\mathrm{surf}}$ \MMM for the shifted function $u - \NNN \chi_{\Omega \setminus E} \EEE u^+(x)$ satisfies $\QQQ \overline{u}_{x}^{\mathrm{surf}} \MMM = 0 $.  Eventually,   \eqref{eq: to show2} follows by recalling the definition in   \eqref{def g_0}. 
\end{proof}

\subsection{\MMM Proof of Proposition \ref{fundestE0} and  Lemma \ref{LEMMA 4.3 CRIFRIESOL}\EEE}\label{sec3.2}

This subsection is devoted to the proof of the fundamental estimate and the identities \eqref{equation infimum jump}--\eqref{equation infimum void}. \EEE

\begin{proof}[Proof of Proposition \ref{fundestE0}]
Let $\eta>0$ and $k \in \mathbb{N}$. Let $A_1$,...,$A_{k+1}$ be open subsets of $\mathbb{R}^d$ with $A^\prime \subset \subset A_1 \subset \subset ... \subset \subset A_{k+1}\subset \subset A$. For $i=1,...,k$ let $\varphi_i \in C_c^\infty(\mathbb{R}^d)$ be cut-off functions such that $\varphi_i=1$ on $A_i$ \MMM and \EEE $\varphi_i=0$ on $\mathbb{R}^d \setminus A_{i+1}$.  Let $(u,E) \in  \mathcal{G}^p(A)$ and  $(v,F) \in \mathcal{G}^p(B)$. We assume \MMM that \EEE $\Vert u-v\Vert_{L^p((A\setminus A^\prime)\cap B)}<\infty$ \MMM as \EEE otherwise the statement is trivial. For the construction of the set $D$, we follow the strategy of \cite[Lemma 4.4]{AmbBra90}. Due to the Fleming Rishel formula (\cite[(4.5.9)]{alma991013725792303131}), for every $i=1,...,k$ we find $t^1_i$, $t^2_i \in (0,1)$ such that \MMM $t^2_i > \frac{3}{4}$, $t^1_i<\frac{1}{4}$, \EEE $\{ x \in A\colon \varphi_i(x)<t^l_i\}$ is a set of finite perimeter \MMM for $l \in \{1,2\}$, \EEE 
\begin{equation}
\label{fleming rishel 0000}
   \mathcal{H}^{d-1}\big((\partial^* E\cup J_u) \cap \{ x \in A\colon \varphi_i(x)=t^l_i\}\big)=\mathcal{H}^{d-1}\big((\partial^* F\cup J_v) \cap \{ x \in A \MMM\cap B \EEE \colon \varphi_i(x)=t^l_i\}\big)=0,
\end{equation}
and 
\begin{align}
\label{fleming rishel 2}
    \int_{(A_{i+1}\setminus A_i)\cap B\cap \partial^*\{\varphi_i<t^l_i\}}\vert \chi_{E}-\chi_F\vert\, \mathrm{d}\mathcal{H}^{d-1}&\leq \MMM 4 \EEE \int_{(A_{i+1}\setminus A_i)\cap B}\vert \chi_E -\chi_F\vert \vert \nabla \varphi_i \vert\, \mathrm{d}x \nonumber
    \\& \vphantom{\int_{A}} \leq \MMM 4 \EEE \Vert \nabla \varphi_i \Vert_{L^\infty(A)}\Vert \chi_E -\chi_F\Vert_{L^1((A \setminus A^\prime)\cap B)}
\end{align}
\MMM for   $l \in \{1,2\}$. \EEE \QQQ Furthermore, we may assume that $\mathcal{L}^{d} (\{ \varphi_{i} = t^{l}_{i} \} ) = 0$ for $l \in \{1, 2\}$. \EEE
We define $D_i$ as  
\begin{equation}\label{deffi1}
    {D_i}\QQQ \defas \EEE \begin{cases}
        {E} \:\: \:\:\:\: \quad \:\text{if}\:\: \varphi_i > t^2_i, \\
        E\cup F \:\:\:\text{if}\:\: \varphi_i\in [t^1_i,t^2_i],\\
        F \:\: \:\:\:\: \quad \:\text{if}\:\: \varphi_i < t^1_i.
    \end{cases}
\end{equation}
\NNN We define \EEE $w_i \MMM  \colon A' \cup B \to \R^d \EEE$ as \NNN  $w_i(x)  \defas  0$ if $x \in D_i$, and for $x \in (A^\prime \cup B) \setminus {D_i}$ we set \EEE
\begin{equation}\label{deffi2}
  w_i(x)\QQQ \defas \EEE \begin{cases}
   \MMM {u} \EEE (x) & \text{if}\:\: \varphi_i(x)>t^2_i, \\
       \psi_i(x)\, \MMM {u} \EEE (x)+(1-\psi_i(x))\, \MMM {v} \EEE (x) &  \text{if}\:\: \varphi_i(x)\in [t^1_i,t_i^2], \\
      \MMM {v} \EEE (x) & \text{if}\:\: \varphi_i(x)< t^1_i, \\
    \end{cases}    
\end{equation}
where $\psi_i(x)\defas \frac{\varphi_i(x)-t^1_i}{t^2_i-t^1_i}$. We observe that $\mathcal{H}^{d-1}(J_{w_i}\cap \{\varphi_i=t^l_i\})=0$ \MMM for $l \in \lbrace 1, 2 \rbrace$ by \eqref{fleming rishel 0000}. Moreover, \EEE  $(w_i, D_i)=(u, E)$ on $A_i$ (and hence on $A^\prime$) and $(w_i,D_i)=(v,F)$ on $B \setminus A_{i+1}$ (and hence on $B \setminus \NNN {A} \EEE $). To simplify the notation, we set $S_i\defas (A_{i+1}\setminus A_i)\cap B$.
Due to $(\mathrm{H}1)$, $(\mathrm{H}4)$, and \eqref{fleming rishel 0000}, we have
\begin{align}
\label{fund est at 0 level eq1}
    \mathcal{E}_0(w_i,D_i,A^\prime \cup B)\leq & \  \mathcal{E}_0(w_i,D_i,\{\varphi_i>t^2_i\})+\mathcal{E}_0(w_i,D_i,\{\varphi_i<t^1_i\})+\mathcal{E}_0(w_i,D_i,\{\varphi_i\in (t^1_i,t^2_i)\}) \nonumber
    \\& +\beta\mathcal{H}^{d-1}(\partial^* D_i \cap \{ \varphi_i=t^1_i\})+\beta\mathcal{H}^{d-1}(\partial^* D_i \cap \{ \varphi_i=t^2_i\}),
\end{align}
where we also used $\mathcal{L}^d(\{\varphi_i=t^l_i\})+\Vert e(w_i)\Vert_{L^p(\{\varphi_i=t^l_i\})}=0$ and $\mathcal{H}^{d-1}(J_{w_i}\cap \{ \varphi_i=t^l_i\})=0$ for \MMM $l \in \{1,2\}$. \EEE Now we observe that
\begin{equation}
    \mathcal{H}^{d-1}(\partial^* D_i \cap \{ \varphi_i =t^l_i\})=\int_{S_i \cap \partial^*\{\varphi_i<t^l_i\}}\vert \chi_{E}-\chi_F\vert \, \mathrm{d}\mathcal{H}^{d-1}
\end{equation}
for    \MMM $l \in \{1,2\}$. \EEE
In addition, because of $(\mathrm{H}1)$, $(\mathrm{H}3)$ \MMM and \eqref{deffi1}--\eqref{deffi2}, \EEE it holds
\begin{align}
\mathcal{E}_0(w_i,D_i,\{\varphi_i>t^2_i\}) & =\mathcal{E}_0(u,E,\{\varphi_i>t^2_i\})\leq \mathcal{E}_0(u,E,A), \\
 \mathcal{E}_0(w_i,D_i,\{\varphi_i<t^1_i\})&=\mathcal{E}_0(v,F,\{\varphi_i<t^1_i\})\leq \mathcal{E}_0(v,F,B).  
\end{align}
By virtue of $(\mathrm{H}4)$, one also has
\begin{align}
\label{equation to estimate at 0 level}
    \mathcal{E}_0(w_i,D_i,\{\varphi_i \in (t^1_i,t_i^2)\})&\leq \beta (\mathcal{L}^d(S_i)+\Vert e(w_i)\Vert^p_{L^p(\{\varphi_i \in (t^1_i,t_i^2)\})})+2\beta \mathcal{H}^{d-1}(J_{w_i}\cap \{\varphi_i \in (t^1_i,t_i^2)\}) \nonumber
    \\& \ \ \  +\beta \mathcal{H}^{d-1}(\partial^* D_i\cap \{\varphi_i \in (t^1_i,t_i^2)\}).
\end{align}
Now, we estimate the terms \MMM on \EEE the right-hand side of \eqref{equation to estimate at 0 level}. \QQQ Since \EEE $t^2_i-t^1_i >\frac{1}{2}$, \MMM  we have \EEE
\begin{equation}
\Vert e(w_i)\Vert^p_{L^p(\{\varphi_i \in (t^1_i,t_i^2)\})}\leq c_p \big( \MMM \Vert \nabla \varphi_i\Vert^p_{L^{\infty}(S_i)} \EEE \Vert u-v\Vert_{L^p(S_i)}^p+\Vert e(u)\Vert^p_{L^p(S_i)}+\Vert e(v)\Vert^p_{L^p(S_i)}\big),    
\end{equation}
where $c_p>0$ denotes a constant depending only on $p$. \QQQ Moreover, \EEE
\begin{align}
 \mathcal{H}^{d-1}(J_{w_i}\cap \{\varphi_i \in (t^1_i,t_i^2)\}) &  \leq \mathcal{H}^{d-1}(J_u \cap S_i)+\mathcal{H}^{d-1}(J_v\cap S_i),\\
    \mathcal{H}^{d-1}(\partial^* D_i \cap \{\varphi_i \in (t^1_i,t_i^2)\}) & \leq \mathcal{H}^{d-1}(\partial^* E \cap S_i)+\mathcal{H}^{d-1}(\partial^* F \cap S_i).
\end{align}
For every Borel set $O \subset A^\prime \cup B$, let us define 
\begin{align}
\label{error function 2}
\mathrm{err}(u,v,E,F,O)\defas \mathcal{L}^d(O)+c_p(\Vert e(u)\Vert^p_{L^p(O)}+\Vert e(v)&\Vert^p_{L^p(O)})+2\big(\mathcal{H}^{d-1}(J_u\cap O)+\mathcal{H}^{d-1}(J_v\cap O)\big) \nonumber
    \\& +\mathcal{H}^{d-1}(\partial^* E \cap O)+\mathcal{H}^{d-1}(\partial^* F \cap O).
\end{align}
Combining \eqref{fleming rishel 2} with \eqref{fund est at 0 level eq1}--\eqref{error function 2} we get
\begin{align}
\label{right candidate}
\mathcal{E}_0(w_i,D_i,A^\prime \cup B) \leq & \ \mathcal{E}_0(u,E,A)+\mathcal{E}_0(v,F,B)+\beta c_p\MMM \Vert \nabla \varphi_i\Vert^p_{L^\infty(S_i)} \EEE \Vert u-v\Vert^p_{L^p(S_i)} \nonumber
\\& + \MMM 8 \EEE \beta \Vert \nabla  \MMM \varphi_i \EEE \Vert_{L^\infty(\NNN A)}\Vert \chi_{E}-\chi_F\Vert_{L^1(\NNN (A\setminus A') \cap B)} +\beta\, \mathrm{err}(u,v,E,F,S_i).
\end{align}
\MMM  Then, \EEE we can find an $i_0 \in \{1,...,k\}$ such that
\begin{align}
\label{final eq fund est at level 0}
    \mathcal{E}_0(w_{i_0},D_{i_0},A^\prime \cup B) \leq & \ \mathcal{E}_0(u,E,A)+\mathcal{E}_0(v,F,B)+ \beta c_p \NNN  \max_{i=1,\ldots,k}\ \EEE \Vert \nabla \varphi_i\Vert^p_{L^\infty } \EEE\Vert u-v\Vert^p_{L^p((A\setminus A^\prime)\cap B)} \nonumber
    \\& + \MMM 8    \beta  \NNN \max_{i=1,\ldots,k} \EEE \Vert \nabla \varphi_i\Vert_{L^\infty } \EEE \Vert \chi_E-\chi_F\Vert_{L^1((A\setminus A^\prime)\cap B)}+\frac{\NNN \beta}{k}\sum^k_{i=1}\mathrm{err}(u,v,E,F,S_i).
\end{align}
\NNN Using that \EEE  $(S_i)^k_{i=1}$ are disjoint and $\bigcup^k_{i=1} S_i \subset (A \setminus A^\prime)\cap B$ \NNN and \EEE  $(\mathrm{H}4)$, we have
\begin{align}
 \label{controll error at level 0}
 \frac{1}{k}\sum^k_{i=1}\mathrm{err}(u,v,E,F,S_i)&\leq  \frac{1}{k}\mathrm{err}(u,v,E,F,(A \setminus A^\prime)\cap B) \nonumber
 \\& \leq \frac{1}{k}\mathcal{L}^d((A\setminus A^\prime)\cap B)+\frac{(c_p+1)}{k \alpha}(\mathcal{E}_0(u,E,A)+\mathcal{E}_0(v,F,B)).    
\end{align}
Hence, we conclude the proof by choosing $k$ large enough so that $\frac{1}{k}\mathcal{L}^d((A\setminus A^\prime)\cap B)+\frac{(c_p+1)}{k \alpha} \leq \NNN \eta/ \beta \EEE$ and by setting $(w,D)=(w_{i_0},D_{i_0})$, $M=  \MMM (8\beta + (\beta c_p)^{1/p})  \max_{i=1,\ldots,k}\Vert \nabla \varphi_i\Vert_{L^\infty} \EEE $ and combining \eqref{final eq fund est at level 0}--\eqref{controll error at level 0}. \MMM Note that by \NNN the construction in \eqref{deffi1}--\eqref{deffi2} \EEE we clearly have $(D,w) \in   \mathcal{G}^{p}(A^\prime \cup B)$. \EEE
\end{proof}

\MMM We proceed with the  proof of Lemma \ref{LEMMA 4.3 CRIFRIESOL}. As a preparation, we state \EEE an adaptation of \cite[Lemma~6.1]{SolFriCri20}. \NNN Recall the definition of $\overline{u}_{x}^\mathrm{surf}$ below \eqref{equation infimum void}. \EEE
\begin{lemma}[Blow up at jump points]
\label{LEMMA 6.1 CRI FRIE SOL} 
Let $(u,E) \in \mathcal{G}^p(\Omega)$ and $\theta \in (0,1)$. For  $\mathcal{H}^{d-1}$-a.e.\ $\MMM x\in  (J_u \cup \partial^* E)\cap \Omega \EEE $ there exist families \MMM  $(u_\varepsilon,E_{\varepsilon}) \in \mathcal{G}^p(B_\varepsilon(x))$ \EEE  such that
\begin{subequations}
\label{blowupatjumppoints}
\begin{align}
    & \ u_\varepsilon =u \:\:  \text{near}\:\: \partial B_\varepsilon(x), \quad \lim\limits_{\varepsilon \to 0} \varepsilon^{-d}\mathcal{L}^d(\{ u_\varepsilon \neq u\})=0, \label{blowupatjumppoints-1} \\
    & \lim\limits_{\varepsilon \to 0}\varepsilon^{-(d-1+p)}\int_{B_{(1-\theta)\varepsilon}(x)}\vert u_\varepsilon(y)-\overline{u}_{x}^\mathrm{surf}(y)\vert^p \, \mathrm{d}y=0, \label{blowupatjumppoints-2}
    \\ & \lim\limits_{\varepsilon \to 0}\varepsilon^{-(d-1)}\mathcal{H}^{d-1}(J_{u_\varepsilon}\cap F_{\varepsilon,x})=\mathcal{H}^{d-1}\big( \MMM J_{\overline{u}_{x}^{\mathrm{surf}}} \EEE \cap F\big) \:\: \text{for all Borel sets}\:\: F \subset B_1(x), \label{blowupatjumppoints-3}
    \\ & \lim\limits_{\varepsilon \to 0} \varepsilon^{-(d-1)}\int_{B_\varepsilon(x)}\vert e(u_\varepsilon)(y)\vert^p \, \mathrm{d}y=0, \label{blowupatjumppoints-4}
    \\ & E_\varepsilon= E \:\: \text{near}\:\: \partial B_\varepsilon(x),  \quad \MMM \lim_{\varepsilon \to 0} \varepsilon^{-d}\mathcal{L}^d(\{ \chi_{E_\varepsilon} \neq \chi_E\})=0, \EEE    \label{blowupatjumppoints-5}
    \\ & E_{\varepsilon}= \MMM \overline{E}^{\rm surf}_x \EEE \:\: \text{on}\:\: B_{(1-\theta)\varepsilon}(x), \label{blowupatjumppoints-6}
    \\ & \lim\limits_{\varepsilon \to 0}\varepsilon^{-(d-1)}\mathcal{H}^{d-1}(\partial^* E_\varepsilon\cap F_{\varepsilon,x})=\mathcal{H}^{d-1}\big(\MMM  \partial \overline{E}^{\rm surf}_x \EEE \cap F\big) \:\: \text{for all Borel sets}\:\: F \subset B_1(x), \label{blowupatjumppoints-7}
\end{align}
\end{subequations}
\MMM
where  $ \overline{E}^{\rm surf}_x = \Pi^{\nu_E(x),-}_x$ if $x \in \partial^* E\cap \Omega$ and $ \overline{E}^{\rm surf}_x =\emptyset$ if $x \in J_u \NNN \cap E^0 \EEE $.  \EEE
\end{lemma}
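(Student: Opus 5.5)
We would follow the proof of \cite[Lemma~6.1]{SolFriCri20}, treating the function part and the set part separately and gluing them with the cut-off/Fleming--Rishel construction from the proof of Proposition~\ref{fundestE0}. Fix $\theta$ and restrict to points $x$ with the following properties. If $x \in \partial^*E\cap\Omega$, then $E$ blows up to $\Pi^{\nu_E(x),-}_x$ in $L^1$, and $\mathcal{H}^{d-1}\llcorner(\partial^*E)_{\varepsilon}$ rescaled converges weakly* to $\mathcal{H}^{d-1}\llcorner\Pi^{\nu_E(x)}_x$ (De Giorgi). If $x\in J_u\cap E^0$, then $x$ is a point of density zero for $E$ and $\varepsilon^{-(d-1)}\mathcal{H}^{d-1}(\partial^*E\cap B_\varepsilon(x))\to 0$; this holds $\mathcal{H}^{d-1}$-a.e.\ on $E^0$ by the density theorem for $\mathcal{H}^{d-1}\llcorner\partial^*E$ off $\partial^*E$. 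In both cases we also require that $x$ is a jump point of the $GSBD^p$ function $u$ with one-sided traces, that $\varepsilon^{-(d-1)}\int_{B_\varepsilon(x)}|e(u)|^p\to 0$, and that the rescaled jump measures converge to $\mathcal{H}^{d-1}\llcorner J_{\overline{u}^{\rm surf}_x}$. At $\partial^*E$ points we use $u=u\chi_{E^0}$, so $u^-=0$ on the $E$ side and $J_u\cap\partial^*E$ is accounted for by the jump of $\overline{u}^{\rm surf}_x$ with traces $0$ and $u^+(x)$.

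For the function we invoke \cite[Lemma~6.1]{SolFriCri20} directly, regarding $u\in GSBD^p(\Omega)$. This gives $\tilde u_\varepsilon$ with \eqref{blowupatjumppoints-1}--\eqref{blowupatjumppoints-4}; the construction there uses a piecewise Korn approximation on the two half balls together with cut-offs. For the set at $x\in\partial^*E$, we define $E_\varepsilon$ as $\overline{E}^{\rm surf}_x$ on $B_{(1-\theta)\varepsilon}(x)$ and as $E$ elsewhere in $B_\varepsilon(x)$, choosing a radius $r\in((1-\theta)\varepsilon,\varepsilon)$ by the coarea/Fleming--Rishel argument. This bounds the added interface $\mathcal{H}^{d-1}(\partial B_r\cap(E\triangle\Pi^-))$ by $C\theta^{-1}\varepsilon^{-1}\mathcal{L}^d((E\triangle\Pi^-)\cap B_\varepsilon)=o(\varepsilon^{d-1})$, which yields \eqref{blowupatjumppoints-5}--\eqref{blowupatjumppoints-7}. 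At $x\in J_u\cap E^0$ we take $E_\varepsilon=E\setminus B_r(x)$ with the analogous choice of $r$, so the interface cost is again $o(\varepsilon^{d-1})$.

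The main obstacle is compatibility: we need $(u_\varepsilon,E_\varepsilon)\in\mathcal{G}^p$, i.e.\ $u_\varepsilon=0$ on $E_\varepsilon$ (more precisely, $u_\varepsilon=u_\varepsilon\chi_{E_\varepsilon^0}$). Setting $u_\varepsilon\defas\tilde u_\varepsilon\chi_{\Omega\setminus E_\varepsilon}$ handles this. It creates extra jump only on $\partial^*E_\varepsilon$, which in $\mathcal{G}^p$ is not counted in $J_u\cap E^0$. It also changes $u_\varepsilon$ only on the set where $E_\varepsilon$ differs from the region where $\tilde u_\varepsilon=0$, and that set has measure $o(\varepsilon^d)$. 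Near the boundary we have $E_\varepsilon=E$ and $\tilde u_\varepsilon=u=0$ on $E$, so the boundary conditions are preserved. For \eqref{blowupatjumppoints-2} at $\partial^*E$ points, the modification on $B_{(1-\theta)\varepsilon}\cap\Pi^-$ sets $u_\varepsilon=0=\overline{u}^{\rm surf}_x$, which only improves the estimate. For \eqref{blowupatjumppoints-3} we must check that the additional jumps lie on $\partial^*E_\varepsilon$ and are therefore excluded, or that they coincide with $J_{\overline{u}^{\rm surf}}$ on $B_{(1-\theta)\varepsilon}$. This bookkeeping, including choosing the radius $r$ common to both the function and set cut-offs so that no spurious jump appears on $\partial B_r$, is where most of the care is needed.
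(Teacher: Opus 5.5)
Your construction is the paper's. Once your set radius $r$ equals the Fubini radius $s_\varepsilon$ of \cite[Lemma~6.1]{SolFriCri20}, the function $\tilde u_\varepsilon\chi_{\Omega\setminus E_\varepsilon}$ takes exactly the paper's form. At points of $J_u\cap E^0$ it is $\hat u_\varepsilon$ itself, because $\hat u_\varepsilon=u$ already vanishes on $E$ outside $B_{s_\varepsilon}(x)$. At points of $\partial^*E$ it is $\hat u_\varepsilon$ with $\hat v^-_\varepsilon$ replaced by $0$. Your sets $E_\varepsilon$ and your averaging bound for the added interface on $\partial B_{s_\varepsilon}(x)$ also coincide with the paper's.

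The step you leave open is exactly the adaptation the paper makes. You cannot get a common radius while using \cite[Lemma~6.1]{SolFriCri20} as a black box. You need the structure of that construction: $\hat v^\pm_\varepsilon$ on the half balls $B^{\nu_0(x),\pm}_{s_\varepsilon}(x)$, $u$ outside, and $s_\varepsilon\in(1-\theta,1-\frac{\theta}{2})\varepsilon$ chosen by Fubini. You then choose $s_\varepsilon$ so that, besides the condition used there, one also has $\varepsilon^{-(d-1)}\mathcal{H}^{d-1}\big((E\triangle\Pi^{\nu_E(x),-}_x)\cap\partial B_{s_\varepsilon}(x)\big)\to0$, respectively $\varepsilon^{-(d-1)}\mathcal{H}^{d-1}(E\cap\partial B_{s_\varepsilon}(x))\to0$. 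By Markov's inequality both conditions hold on a set of radii of positive measure.

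The common radius is also not optional. \eqref{blowupatjumppoints-3} counts all of $J_{u_\varepsilon}$, including jumps on $\partial^*E_\varepsilon$, so your alternative of `excluding' those jumps is not available. If, say, $r<s_\varepsilon$, then $u_\varepsilon=\hat v^\pm_\varepsilon\chi_{\Omega\setminus E}$ on $B_{s_\varepsilon}(x)\setminus B_r(x)$. This function can jump both across $\Pi^{\nu_E(x)}_x\cap E^0$ and across $\partial^*E$, which can produce an excess of order $\theta\varepsilon^{d-1}$ in the jump measure. With the common radius, the jumps inside $B_{s_\varepsilon}(x)$ lie on $\Pi^{\nu_E(x)}_x$. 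The jumps on $\partial B_{s_\varepsilon}(x)$ are $o(\varepsilon^{d-1})$ by the two Fubini conditions, and the rest of your verification goes through as in the paper.
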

\begin{proof}
We provide  a sketch of the  proof \MMM highlighting \EEE the adaptations needed. \MMM The construction of the sequence of functions \EEE is given in   \cite[Lemma 6.1]{SolFriCri20}: we  find a sequence $\MMM \hat{u}_{\varepsilon}  \EEE \in GSBD^p(B_{\varepsilon}(x))$  satisfying \eqref{blowupatjumppoints-1}--\eqref{blowupatjumppoints-4} which is of the  form
\begin{equation}\label{hatu}
   \hat{u}_{\varepsilon}(y)=\begin{cases}
        \hat{v}^+_{\varepsilon}(\QQQ y \EEE ) &  \text{if}\:\: y \in B^{\nu_0(x),+}_{s_{\varepsilon}}(x),\\
        \hat{v}^-_{\varepsilon}(y) &\text{if}\:\: y \in B^{\nu_0(x),-}_{s_{\varepsilon}}(x), \\
        u(y) &  \text{otherwise in}\:\: B_{\varepsilon}(x).
    \end{cases}
\end{equation}
Here, $\nu_0(x)=\nu_E(x)$ if $x \in \partial^* E\cap \Omega$ and  $\nu_0(x)=\nu_u(x)$ if $x \in J_u \NNN \cap E^0 \EEE$,  $\hat{v}^\pm_{\varepsilon}$ are suitable Sobolev replacements of $u$, \MMM and \EEE $s_{\varepsilon}\in (1-\theta,1-\frac{\theta}{2})\varepsilon$ \MMM is chosen by Fubini's theorem such that 
\begin{align}\label{fub1}
\lim_{\eps \to 0}   \eps^{-(d-1)}\mathcal{L}^d \big(  \big(B_\eps^{\Gamma,\pm}(x) \triangle B_\eps^{\nu_0(x),\pm}(x) \big) \cap \partial B_{s_\eps}(x) \big) = 0, 
\end{align}
where $B_\eps^{\Gamma,\pm}$  denotes a suitable partition of $B_\eps(x) $ satisfying $\lim_{\eps \to 0}  \eps^{-d}\mathcal{L}^d (  (B_\eps^{\Gamma,\pm}(x) \triangle B_\eps^{\nu_0(x),\pm}(x)  ) = 0$.

In the present setting, we additionally \EEE   need to construct \NNN the \EEE sets $E_\eps$. \MMM To this end, it is not restrictive to assume that    for $\mathcal{H}^{d-1}$-a.e.\ $x\in  \partial^* E \cap \Omega$  it holds that 
\begin{align}\label{88x}
\lim_{\eps \to 0}  \eps^{-d}\mathcal{L}^d \big(  \NNN (E \cap B_\eps(x))  \triangle B_\eps^{\nu_0(x),-}(x) \EEE  \big) = 0,
\end{align} and  for  $\mathcal{H}^{d-1}$-a.e.\ $x\in  J_u   \NNN \cap E^0 \EEE $  it holds that $\lim_{\eps \to 0}  \eps^{-d}\mathcal{L}^d (  E \NNN \cap \EEE B_\eps(x)  ) = 0$. In these cases,  we can select $s_{\varepsilon}\in (1-\theta,1-\frac{\theta}{2})\varepsilon$ above such that not only \eqref{fub1} holds, but also 
\begin{align}\label{fub2}
\lim_{\eps \to 0}   \eps^{-(d-1)} \NNN \mathcal{H}^{d-1} \EEE \big(  \big (E \triangle B_\eps^{\nu_0(x),-}(x)  \big) \cap \partial B_{s_\eps}(x) \big) = 0 \quad \text{ for }  x \in   \partial^*E\cap \Omega
\end{align}
or    
\begin{align}\label{fub3}
\lim_{\eps \to 0}   \eps^{-(d-1)} \NNN \mathcal{H}^{d-1} \EEE \big(  \NNN  E \EEE  \cap \partial B_{s_\eps}(x) \big) = 0  \quad \text{ for }  x \in J_u  \NNN \cap E^0, \EEE
\end{align}
respectively, are satisfied. 

\MMM We now treat the cases $x \in J_u \NNN \cap E^0\EEE$ and  $x \in \partial^* E\cap \Omega$ separately.  \EEE If $x\in J_u \NNN \cap E^0\EEE$, we \MMM set $u_\eps = \hat{u}_\eps$ and $E_{\varepsilon}= E \cap (B_{\varepsilon}(x)\setminus B_{s_{\varepsilon}}(x))$. We can check that  $(u_{\varepsilon},E_\eps) \in \mathcal{G}^p(B_{\varepsilon}(x))$ and that  \eqref{blowupatjumppoints-1}--\eqref{blowupatjumppoints-7} are satisfied, where we use that 
$\hat{u}_\eps$ satisfies \eqref{blowupatjumppoints-1}--\eqref{blowupatjumppoints-4}, for \eqref{blowupatjumppoints-5} we use $\lim_{\eps\to0}\eps^{-d}\mathcal{L}^d(E \cap B_\varepsilon(x) )=0$, and \eqref{blowupatjumppoints-7} follows from \eqref{fub3} and the fact that without restriction  $x\in J_u \NNN \cap E^0\EEE$ can be chosen such that $\lim_{\eps\to0}\eps^{-(d-1)}\mathcal{H}^{d-1}(\partial^* E \cap B_\varepsilon(x) )=0$.  

If $x \in \partial^* E\cap \Omega$, we define $u_\eps$ as in \eqref{hatu} with $0$ in place of  $\hat{v}^-_{\varepsilon}$ \EEE
and $E_{\varepsilon}= [E \cap (B_{\varepsilon}(x) \setminus B_{s_{\varepsilon}}(x))]\cup [\Pi^{\nu_E(x),-}_x\cap B_{s_{\varepsilon}}(x)]$. \MMM As before we can check that $(u_{\varepsilon},E_\eps) \in \mathcal{G}^p(B_{\varepsilon}(x))$ and that  \eqref{blowupatjumppoints-1}--\eqref{blowupatjumppoints-7} hold. \NNN Indeed, the proof of the properties \eqref{blowupatjumppoints-2}, \eqref{blowupatjumppoints-4}, \eqref{blowupatjumppoints-5}, and \eqref{blowupatjumppoints-6}   is analogous to the case $x \in J_u \cap E^0$, where in place of $\lim_{\eps \to 0}  \eps^{-d}\mathcal{L}^d (  E  \cap   B_\eps(x)  ) = 0$  we use \eqref{88x}. Due to the different definition of $u_\eps$ on $\Pi^{\nu_E(x),-}_x\cap B_{s_{\varepsilon}}(x)$ compared to the case $x \in J_u \cap E^0$, for \eqref{blowupatjumppoints-1} and \eqref{blowupatjumppoints-3} we additionally need to show that  
\begin{equation*}
    \lim_{\varepsilon \to 0}\frac{1}{\varepsilon^d}\mathcal{L}^d\big( (E \triangle \Pi^{\nu_E(x),-}_x) \cap B_{s_{\varepsilon}}(x) \big)=0, \quad     \lim_{\varepsilon \to 0}\frac{1}{\varepsilon^{d-1}}\mathcal{H}^{d-1}\big(  \big (E \triangle B_\eps^{\nu_0(x),-}(x)  \big) \cap \partial B_{s_\eps}(x) \big)=0,
\end{equation*}
where we recall that $u = 0$ on $E$. This is indeed true by  \eqref{88x} and \eqref{fub2}. \EEE   Finally, arguing like in \cite[(6.1)(iii)]{SolFriCri20}, \MMM in particular taking \eqref{fub2} into account,  one can show \eqref{blowupatjumppoints-7}. 
\end{proof}
We   now come to the proof of Lemma \ref{LEMMA 4.3 CRIFRIESOL}. The proof follows the same strategy of \cite[Lemma 4.3]{SolFriCri20}, up to taking into account \MMM the   void set. In particular, we use \EEE our fundamental estimate Proposition \ref{fundestE0}. 
\begin{proof}[Proof of Lemma \ref{LEMMA 4.3 CRIFRIESOL}]
We fix $x\in (J_u  \MMM \cup \partial^* E)\cap \Omega \EEE$ such that the statement of Lemma \ref{LEMMA 6.1 CRI FRIE SOL} holds at $x$ and \MMM we have \EEE $\lim_{\varepsilon \to 0}\varepsilon^{-1-d}\mu(B_{\varepsilon}(x))=\gamma_{d-1}$ if $x \in \partial^* E\cap \Omega$ and $\lim_{\varepsilon \to 0}\varepsilon^{-1-d}\mu(B_{\varepsilon}(x))=2\gamma_{d-1}$ if $x \in J_u \cap E^0 $. This is possible for $\mathcal{H}^{d-1}$-a.e.\ $x \in (J_u \cup \partial^* E)\cap \Omega$. Then also $\lim_{\varepsilon\to 0}\varepsilon^{-d-1}m_{\mathcal{E}_0}(u,E,B_{\varepsilon}(x))\in \mathbb{R}$ \MMM exists, \EEE see Lemma~\ref{LEMMA 4.1 CRIFRIESOL}.   

\emph{Step 1 (Inequality ``$\leq$" in \eqref{equation infimum jump}--\eqref{equation infimum void}):}  We fix $\eta>0$, $\theta>0$, and consider $(z_{\varepsilon },F_{\varepsilon})\in  \mathcal{G}^p(B_{(1-3\theta)\varepsilon}(x))$ with \MMM $(z_{\varepsilon}, F_{\varepsilon})=(\overline{u}_{x}^{\mathrm{surf}}, \overline{E}^{\rm surf}_x)$ \EEE near $\partial B_{(1-3\theta)\varepsilon}(x)$ and
\begin{equation}
\label{infimum problem blow up jump points}
    \mathcal{E}_0(z_{\varepsilon}, F_{\varepsilon},B_{(1-3\theta)\varepsilon}(x))\leq m_{\mathcal{E}_0}\big(\overline{u}_{x}^\mathrm{surf},\overline{E}^{\rm surf}_x,B_{(1-3\theta)\varepsilon}(x)\big)+\varepsilon^d.
\end{equation}
We extend $(z_{\varepsilon},F_{\varepsilon})$ \MMM onto \EEE $B_{\varepsilon}(x)$ by setting $(z_{\varepsilon},F_{\varepsilon})=(\overline{u}_{x}^\mathrm{surf},\overline{E}^{\rm surf}_x)$ on $B_{\varepsilon}(x)\setminus B_{(1-3\theta)\varepsilon}(x)$. Let $(u_{\varepsilon},E_{\varepsilon})_{\varepsilon}$ be the family \MMM given by \EEE Lemma \ref{LEMMA 6.1 CRI FRIE SOL}. \MMM We \EEE apply Proposition \ref{fundestE0} on $(z_{\varepsilon},F_{\varepsilon})$ (in place of $(u,E)$) and  on $(u_{\varepsilon},E_{\varepsilon})$ (in place of $(v,F)$), for $\eta$ fixed above and the sets 
\begin{equation}\label{sets fund est}
    A^\prime=B_{1-2\theta}(x),\:\: A=B_{1-\theta}(x), \:\: B= \QQQ B_{1} \EEE (x) \setminus \overline{B_{1-4\theta}(x)} 
\end{equation}
\MMM or, more precisely, on their rescalings   $A_{\varepsilon,x}$, $A_{\varepsilon,x}^\prime$, and $B_{\varepsilon,x}$. \EEE  In this way, we find   $(w_{\varepsilon}, D_\eps)\in \mathcal{G}^p(B_{\varepsilon}(x))$ such that $(w_{\varepsilon},{D_{\varepsilon}})=(u_{\varepsilon},{E_{\varepsilon}})$ on $B_{\varepsilon}(x)\setminus B_{(1-\theta)\varepsilon}(x)$ and
\begin{align}
\label{fund est blow up jump points}
    \mathcal{E}_0(w_{\varepsilon},D_{\varepsilon},B_{\varepsilon}(x))\leq ( 1 & +\eta)\big(\mathcal{E}_0(z_{\varepsilon},F_{\varepsilon},A_{\varepsilon,x})+\mathcal{E}_0(u_{\varepsilon},E_{\varepsilon}, B_{\varepsilon,x})\big) 
    \\
    &
    \notag +\frac{M^p}{\varepsilon^p}\Vert z_{\varepsilon}-u_{\varepsilon}\Vert^p_{L^p(A_{\varepsilon,x}\setminus A_{\varepsilon,x}')}+\MMM \eps^d \EEE \eta,
\end{align}
where we used $F_{\varepsilon}= E_{\varepsilon}= \MMM \overline{E}^{\rm surf}_x \EEE $ on $A_{\varepsilon,x}\setminus A_{\varepsilon,x}'$ because of \eqref{blowupatjumppoints-6}. \NNN Note \EEE that $M>0$ is independent of $\varepsilon$ and $\theta$, \NNN and \EEE that we have $w_{\varepsilon}=u_{\varepsilon}=u$ in a neighborhood of $\partial B_{\varepsilon}(x)$ by \eqref{blowupatjumppoints-1}. By \eqref{blowupatjumppoints-2} and the fact that $z_{\varepsilon}=\overline{u}^\mathrm{surf}_{x}$ outside $B_{(1-3\theta)\varepsilon}(x)$ we find
\begin{equation*}
    \lim\limits_{\varepsilon \to 0}\varepsilon^{-(d-1+p)}\Vert z_{\varepsilon}-u_{\varepsilon}\Vert^p_{L^p(A_{\varepsilon,x}\setminus A_{\varepsilon,x}')}=\lim\limits_{\varepsilon \to 0}\varepsilon^{-(d-1+p)}\Vert u_{\varepsilon}-\overline{u}_{x}^\mathrm{surf}\Vert_{L^p(A_{\varepsilon,x}\setminus A_{\varepsilon,x}')}^p=0.
\end{equation*}
Inserting this in \eqref{fund est blow up jump points}, we find that there exists a sequence $(\rho_{\varepsilon})_{\varepsilon}\subset (0,\infty)$ with $\rho_{\varepsilon}\to 0$  such that
\begin{align}
\label{simplified inequality in fund est}
\mathcal{E}_{\varepsilon}(w_{\varepsilon},D_{\varepsilon},B_{\varepsilon}(x))\leq &(1+\eta)\big(\mathcal{E}_0(z_{\varepsilon},F_{\varepsilon},A_{\varepsilon,x})+\mathcal{E}_0(u_{\varepsilon},E_{\varepsilon}, B_{\varepsilon,x})\big) + \varepsilon^{d-1}\rho_{\varepsilon}+\MMM \varepsilon^{d} \EEE \eta.
\end{align}
We now evaluate  the terms \MMM on \EEE  the right-hand side of \eqref{simplified inequality in fund est}. Since $(z_\varepsilon,F_{\varepsilon})=(\overline{u}^\mathrm{surf}_{x}, \overline{E}^{\rm surf}_x)$  on $B_\varepsilon(x)\setminus B_{(1-3\theta)\varepsilon}(x)$, by $\mathrm{(H1)}$, \MMM  $\mathrm{(H4)}$, \EEE  and \eqref{infimum problem blow up jump points} we have
\begin{align}
\label{first equation collect}
    \limsup\limits_{\varepsilon \to 0}\frac{\mathcal{E}_0(z_\varepsilon,F_\varepsilon, \QQQ A_{\varepsilon, x} \EEE )}{\gamma_{d-1}\varepsilon^{d-1}} & \leq \limsup\limits_{\varepsilon \to 0}\frac{\mathcal{E}_0(z_\varepsilon,F_\varepsilon, B_{(1-3\theta)\varepsilon}(x))}{\gamma_{d-1}\varepsilon^{d-1}}+\limsup\limits_{\varepsilon \to 0}\frac{\mathcal{E}_0(\overline{u}^\mathrm{surf}_{x},\overline{E}^{\rm surf}_x, B_{\varepsilon,x})}{\gamma_{d-1}\varepsilon^{d-1}} 
    \\& \leq \limsup\limits_{\varepsilon \to 0}\frac{m_{\mathcal{E}_0}\big(\overline{u}_{x}^\mathrm{surf}, \overline{E}^{\rm surf}_x, B_{(1-3\theta)\varepsilon}(x)\big)}{\gamma_{d-1}\varepsilon^{d-1}}+\MMM \frac{ \NNN 3 \EEE  \beta}{\gamma_{d-1}} \EEE \mathcal{H}^{d-1}\big(\partial \overline{E}^{\rm surf}_x\cap \MMM B \EEE \big) \nonumber
    \\& \leq (1-3\theta)^{d-1}\limsup\limits_{\varepsilon' \to 0}\frac{m_{\mathcal{E}_0}\big(\overline{u}_{x}^\mathrm{surf}, \overline{E}^{\rm surf}_x, B_{\NNN \varepsilon'}(x)\big)}{\gamma_{d-1}\MMM (\varepsilon')^{d-1} \EEE }+ \NNN 3 \EEE\beta (\MMM 1 \EEE -(1-\MMM 4 \EEE \theta)^{d-1}),\nonumber
\end{align}
\MMM where in the last step we substituted $(1-3\theta)\varepsilon$ with $\varepsilon^\prime$. \EEE
Regarding the other term \MMM in \eqref{simplified inequality in fund est}, \EEE by $\mathrm{(H4)}$ and \eqref{blowupatjumppoints-3}, \eqref{blowupatjumppoints-4}, \eqref{blowupatjumppoints-7} we get
\begin{align}
\label{second equation collect}
\limsup\limits_{\varepsilon \to  0}\frac{\mathcal{E}_0(u_\varepsilon,E_\varepsilon, B_{\varepsilon,x})}{\gamma_{d-1}\varepsilon^{d-1}}  &\leq \limsup\limits_{\varepsilon \to 0}\frac{\QQQ \beta \EEE }{\gamma_{d-1}\varepsilon^{d-1}}\int_{B_\varepsilon(x)} \MMM \big( 1 + \EEE \vert e(u_\varepsilon)\vert^p\big)\, \mathrm{d}x \nonumber
\\&  \ \ \ +\QQQ \beta \EEE  \ \limsup\limits_{\varepsilon \to 0}\frac{\mathcal{H}^{d-1}(\partial^* E_\varepsilon \cap B_{\varepsilon,x})}{\gamma_{d-1}\varepsilon^{d-1}}+2\QQQ \beta \EEE  \ \limsup\limits_{\varepsilon \to 0}\frac{\mathcal{H}^{d-1}(J_{u_\varepsilon} \cap B_{\varepsilon,x})}{\gamma_{d-1}\varepsilon^{d-1}} \nonumber
\\& \leq 3\QQQ \beta \EEE \ (1-(1-4\theta)^{d-1}).
\end{align}
Collecting \eqref{simplified inequality in fund est}, \eqref{first equation collect}, \eqref{second equation collect}, recalling $\rho_\varepsilon \to 0$ as well as the fact that $(w_\varepsilon,D_\varepsilon)=(u,E)$ in a neighborhood of $\partial B_\varepsilon \MMM (x) \EEE$, \NNN see \eqref{blowupatjumppoints-5}, \EEE we obtain
\begin{align*}
\lim\limits_{\varepsilon \to 0}\frac{m_{\mathcal{E}_0}(u,E,B_\varepsilon(x))}{\gamma_{d-1}\varepsilon^{d-1}}&\leq \MMM \limsup_{\varepsilon \to 0}\EEE \frac{\mathcal{E}_0(w_\varepsilon,D_\varepsilon,B_\varepsilon(x))}{\gamma_{d-1}\varepsilon^{d-1}} 
\\& \leq (1+\eta)(1-3\theta)^{d-1}\limsup\limits_{\varepsilon \to 0}\frac{m_{\mathcal{E}_0}(\overline{u}_{x}^\mathrm{surf}, \overline{E}^{\rm surf}_x, B_{\varepsilon}(x))}{\gamma_{d-1}\varepsilon^{d-1}}+\NNN 6 \EEE (1\MMM + \EEE \eta)\beta(1-(1-4\theta)^{d-1}).
\end{align*}
Passing to the limits $\eta\to 0$, $\theta \to 0$ we obtain  ``$\leq$" in \eqref{equation infimum jump}--\eqref{equation infimum void}.  

{\emph{Step 2 (Inequality ``$\geq$" in \eqref{equation infimum jump}--\eqref{equation infimum void}):}} We fix $\eta$, $\theta>0$ and let, as in {Step 1}, $(u_\varepsilon,E_\varepsilon)_\varepsilon$ be the family \MMM given in \EEE Lemma \ref{LEMMA 6.1 CRI FRIE SOL}. By \eqref{blowupatjumppoints-1}, \eqref{blowupatjumppoints-5}, and Fubini's Theorem, for each $\varepsilon>0$, we can find $s_\varepsilon \in (1-4\theta,1-3\theta)\varepsilon$ such that
\begin{align}
\label{conditions on seps}
\mathrm{(i)}&\:\: \lim\limits_{\varepsilon \to 0}\varepsilon^{-(d-1)}\mathcal{H}^{d-1}\big((\{ u_\varepsilon \neq u\} \MMM \cup \{\chi_E \neq \chi_{E_\varepsilon}\} ) \EEE \cap \partial B_{s_\varepsilon}(x)\big)=0, \nonumber
\\ \mathrm{(ii)}&\:\: \mathcal{H}^{d-1}\big((J_u \cup J_{u_\varepsilon}\cup \partial^* E \cup \partial^* E_\varepsilon)\cap \partial B_{s_\varepsilon}(x)\big)=0\:\: \text{for all}\:\: \varepsilon>0.
\end{align}
Let  $(z_\varepsilon,F_\varepsilon) \in \mathcal{G}^p(B_{s_\varepsilon}(x))$ \MMM be \EEE  such that $(z_\varepsilon,{F_\varepsilon})=(u,E)$ near $\partial B_{s_\varepsilon}\MMM (x) \EEE $ and
\begin{equation}
\label{zeps competitor}
    \mathcal{E}_0(z_\varepsilon, F_\varepsilon, B_{s_\varepsilon}(x))\leq m_{\mathcal{E}_0}(u,E,B_{s_\varepsilon}(x))+\varepsilon^d.
\end{equation}
We extend $(z_\varepsilon,{F_\varepsilon})$ to the whole $B_\varepsilon(x)$ by setting 
\begin{equation}
\label{extension zeps}
 (z_\varepsilon,{F_\varepsilon})=(u_\varepsilon,E_\varepsilon)\:\: \text{on}\:\: B_\varepsilon(x)\setminus B_{s_\varepsilon}(x).   
\end{equation}
We apply \MMM Proposition \EEE \ref{fundestE0} for $(z_\varepsilon,F_\varepsilon)$ (in place of $(u,E)$), $(\overline{u}_{x}^\mathrm{surf}, \overline{E}^{\rm surf}_x)$ (in place of $(v,F)$), and for the sets $A$, $A^\prime$ and $B$ of {Step 1}. Thus, we find $(w_\varepsilon,D_\varepsilon) \in \mathcal{G}^p(B_\varepsilon(x))$ such that $(w_\varepsilon,{D_\varepsilon})=(\overline{u}_{x}^\mathrm{surf}, \overline{E}^{\rm surf}_x)$ on $B_\varepsilon(x) \setminus B_{(1-\theta)\varepsilon}(x)$ and
\begin{align*}
    \mathcal{E}_0(w_{\varepsilon},D_{\varepsilon},B_{\varepsilon}(x))\leq (1 & +\eta) \big(\mathcal{E}_0(z_{\varepsilon},F_{\varepsilon},A_{\varepsilon,x})+\mathcal{E}_0\big(\overline{u}_{x}^\mathrm{surf},\overline{E}^{\rm surf}_x,B_{\varepsilon,x}\big) \big)   
    \\
    &
    +\frac{M^p}{\varepsilon^p}\Vert z_{\varepsilon}-\overline{u}_{x}^\mathrm{surf}\Vert^p_{L^p(A_{\varepsilon,x}\setminus A_{\varepsilon,x}')}+\MMM \eps^d \EEE \eta,
\end{align*}
where we used ${F_\varepsilon}={E_\varepsilon}={\overline{E}^{\rm surf}_x}$ on $A_{\varepsilon,x}\setminus A_{\varepsilon,x}'$, \MMM see \eqref{blowupatjumppoints-6}. \EEE In addition, we observe that by construction $z_\varepsilon=u_\varepsilon$ outside \MMM of \EEE $B_{(1-3\theta)\varepsilon}(x)$, \MMM see \eqref{extension zeps}. \EEE Then, as in {Step 1} we apply \eqref{blowupatjumppoints-2} \MMM to get \EEE a sequence $(\rho_\varepsilon)_\varepsilon$ with $\rho_\varepsilon \to 0$ such that
\begin{equation}
\label{simplified fund est jump points 2}
    \mathcal{E}_0(w_\varepsilon,D_\varepsilon,B_\varepsilon(x))\leq (1+\eta) \big(\mathcal{E}_0(z_{\varepsilon},F_{\varepsilon},A_{\varepsilon,x})+\mathcal{E}_0\big(\overline{u}_{x}^\mathrm{surf},\overline{E}^{\rm surf}_x,B_{\varepsilon,x}\big) \big)+\varepsilon^{d-1}\rho_\varepsilon+\NNN \varepsilon^d\eta. \EEE
    \end{equation}
We estimate the first term on the right-hand side of \eqref{simplified fund est jump points 2}. \QQQ By \EEE $\mathrm{(H1)}$, $\mathrm{(H4)}$, \eqref{zeps competitor}, \eqref{extension zeps}, and the choice of $s_\varepsilon$ \QQQ we get \EEE that
\begin{align*}
 \mathcal{E}_0(z_\varepsilon,F_\varepsilon, A_{\varepsilon,x} )&\leq m_{\mathcal{E}_0}(u,E,B_{s_\varepsilon}(x))+\varepsilon^d+\beta \mathcal{H}^{d-1}\big((\{\chi_E \neq \chi_{E_\varepsilon}\}\cup \partial^* E_\varepsilon \cup \partial^* E)\cap \partial B_{s_\varepsilon}(x)\big) \nonumber
 \\&  \ \ \ + 2\beta \mathcal{H}^{d-1}\big((\{u \neq u_\varepsilon\}\cup J_{u_\varepsilon} \cup J_u)\cap \partial B_{s_\varepsilon}(x)\big)+\mathcal{E}_0(u_\varepsilon,E_\varepsilon,B_{\varepsilon,x}).
\end{align*}
By \eqref{second equation collect} \NNN and \EEE \eqref{conditions on seps} we deduce that  
\begin{align}
\label{equation to collect in second part 1}
\limsup\limits_{\varepsilon \to 0}\frac{\mathcal{E}_0(z_\varepsilon,F_\varepsilon,A_{\varepsilon,x})}{\gamma_{d-1}\varepsilon^{d-1}}&\leq \limsup\limits_{\varepsilon \to 0}\frac{m_{\mathcal{E}_0}(u,E,B_{s_\varepsilon}(x))}{\gamma_{d-1}\varepsilon^{d-1}}  + 3\beta (1-(1-4\theta)^{d-1}) \nonumber
\\& \leq (1-3\theta)^{d-1}\limsup\limits_{\varepsilon \to 0}\frac{m_{\mathcal{E}_0}(u,E,B_{\varepsilon}(x))}{\gamma_{d-1}\varepsilon^{d-1}}  + 3\beta (1-(1-4\theta)^{d-1}),
\end{align}
\MMM where in the last step we used   the fact that $s_\varepsilon \leq (1-3\theta)\varepsilon$.  Similarly to \eqref{first equation collect}, \EEE it also holds that
\begin{equation}
\label{equation to collect in second part 2}
\limsup\limits_{\varepsilon \to 0}\frac{\mathcal{E}_0(\overline{u}_{x}^\mathrm{surf},\overline{E}^{\rm surf}_x,B_{\varepsilon,x})}{\gamma_{d-1}\varepsilon^{d-1}}\leq \NNN 3 \EEE \beta (1-(1-4\theta)^{d-1}).
\end{equation}
Collecting \eqref{simplified fund est jump points 2}, \eqref{equation to collect in second part 1}, \eqref{equation to collect in second part 2}, and using $\rho_\varepsilon \to 0$   we get \MMM 
\begin{align*}
 \limsup_{\varepsilon \to 0}\frac{\mathcal{E}_0(w_\varepsilon,D_\varepsilon,B_\varepsilon(x))}{\gamma_{d-1}\varepsilon^{d-1}} 
\leq (1+\eta)(1-3\theta)^{d-1}\limsup\limits_{\varepsilon \to 0}\frac{m_{\mathcal{E}_0}(u,E, B_{\varepsilon}(x))}{\gamma_{d-1}\varepsilon^{d-1}}+6(1   +   \eta)\beta(1-(1-4\theta)^{d-1}).
\end{align*}
Passing to the limits $\eta\to 0$, $\theta \to 0$  and recalling that  $(w_\varepsilon,D_\varepsilon)= (\overline{u}_{x}^\mathrm{surf}, \overline{E}^{\rm surf}_x)$ in a neighborhood of $\partial B_\varepsilon (x)$, we conclude 
$$\limsup_{\varepsilon \to 0}\frac{m_{\mathcal{E}_0}(\overline{u}_{x}^\mathrm{surf},\overline{E}^{\rm surf}_x),B_\varepsilon(x))}{\gamma_{d-1}\varepsilon^{d-1}}\leq
\limsup\limits_{\varepsilon \to 0}\frac{m_{\mathcal{E}_0}(u,E, B_{\varepsilon}(x))}{\gamma_{d-1}\varepsilon^{d-1}} = \lim\limits_{\varepsilon \to 0}\frac{m_{\mathcal{E}_0}(u,E, B_{\varepsilon}(x))}{\gamma_{d-1}\varepsilon^{d-1}}.
 $$
\EEE This  shows  ``$\geq$" in \eqref{equation infimum jump}--\eqref{equation infimum void}.  
\end{proof}

\subsection{Fundamental estimate and compactness for the sequence $\mathcal{E}_{\varepsilon}$}\label{sec3.3}
 We now  proceed \MMM by \EEE  showing that $(\mathcal{E}_\varepsilon)_\eps$ admits a $\Gamma$-converging subsequence and that its limit   satisfies the assumptions of Theorem~\ref{Theorem2.1 CriFrieSol}. \MMM To this end, \EEE we first  prove  a fundamental estimate. The proof \EEE is based on the strategy of \cite[Proposition~4.1]{FriPerSol20a}, up to taking into account the \MMM presence of the void set, in particular the \EEE constraint that the jump set of competitors needs to be contained in the essential boundary of the voids. \MMM Recall the definition in  \eqref{SBVdefW}.  \NNN By convention, given $(u,E) \in \mathcal{W}^{1,p}(A)$, we may regard $u$ as a measurable function on $\R^d$, extended by $u = 0$ on $\R^d \setminus A$. \EEE  We further define \EEE $\psi \colon [0,\infty)\to [0,1)$ by $\psi(t)=\frac{t}{1+t}$ for $t\geq 0$. \MMM We observe that $u_n \to u$ in measure on $U$ as $n \to \infty$ \MMM if and only if
$\NNN \int_{U}\psi(\vert u - v\vert)\, \mathrm{d}x    \to 0$  \EEE   as $n \to \infty$. 
\begin{proposition}[Fundamental estimate for \MMM energies \eqref{newenergies}\EEE]
\label{fundamentalestimate}
 Let $\eta \in (0,1)$ and let $A^\prime$, $A$, $B \in \mathcal{A}$.  Assume that \MMM $A' \subset \subset A$ and  \EEE $A\setminus  \MMM \overline{A'} \EEE  \subset B$. Then, there exists a function $\Lambda \colon \MMM L^0(A;\R^d) \times L^0(B;\R^d) \EEE \to [0,\infty]$ which is lower semicontinuous with respect to the convergence in measure and satisfies
\begin{equation}
\label{eq4.25} 
\Lambda(z_1,z_2)\to 0\:\: \text{whenever}\:\: z_1-z_2 \to 0\:\: \text{in measure on}\:\: A\setminus \overline{A^\prime} 
\end{equation}
such that the following holds: for   \MMM every functional $\mathcal{E}$ of the form \eqref{newenergies} (omitting index $\eps$), \EEE   every $(u,E) \in \mathcal{W}^{1,p}(A)$, and  $(v,F) \in \mathcal{W}^{1,p}(B)$ \EEE there exists \MMM $(w,D)  \in \mathcal{W}^{1,p}(A^\prime\cup B)$ \EEE such that
\begin{subequations}
    \label{eqfundamentalestimate}
\begin{align}
 & \MMM \mathcal{E} \EEE(w,D,A'\cup B)\leq (1+\eta)(\MMM \mathcal{E} \EEE(u,E,A)+\MMM \mathcal{E} \EEE(v,F,B))+\Lambda(u,v)+ C\Vert \chi_{E}-\chi_{F}\Vert_{L^1(A \setminus A^\prime)}+ \eta, \EEE   \label{eqfundamentalestimate-1}
 \\ &   \NNN \Vert\min \{ \psi(|w-u|), \psi(|w-v|) \} \Vert_{L^1(A' \cup B)} \EEE \leq (1+ \mathcal{E}(u,E,A)+\mathcal{E}(v,F,B))\eta,   \label{eqfundamentalestimate-2}
 \\ &\NNN \Vert\min \{ |\chi_D-\chi_E|, |\chi_D-\chi_F| \} \Vert_{L^1(A' \cup B)} \EEE\leq (1+ \mathcal{E}(u,E,A)+\mathcal{E}(v,F,B))\eta,   \label{eqfundamentalestimate-3}
 \\  & (w,D)=(u,E) \:\: \text{on}\:\: A^\prime, \:\: (w,D)=(v,F) \:\: \text{on}\:\: B \setminus\overline{A},     \label{eqfundamentalestimate-4}
\end{align}
\end{subequations}\
 where \MMM $C>0$ denotes a constant which only depends on  $A'$,  $A$, $p$, $\alpha$, $\beta$,  $\eta$, \NNN and $d$. \EEE 
  
\end{proposition}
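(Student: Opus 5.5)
We follow the strategy of the proof of Proposition \ref{fundestE0}, replacing the affine-in-$\varphi$ interpolation of $u$ and $v$ by the $GSBD$-type interpolation of \cite[Proposition~4.1]{FriPerSol20a}. The reason is that in $\mathcal{W}^{1,p}$ we must preserve the constraints $J_w\subset\partial^*D$ and $w=0$ on $D$, and $\|u-v\|_{L^p}$ need not be finite.

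\emph{Setup.} Fix $k\in\mathbb N$ and layers $A'\subset\subset A_1\subset\subset\dots\subset\subset A_{k+1}\subset\subset A$ with cut-offs $\varphi_i$. By the Fleming--Rishel formula and Fubini, choose levels $t^1_i<\frac14$ and $t^2_i>\frac34$ as in \eqref{fleming rishel 0000}--\eqref{fleming rishel 2}. This guarantees that $\{\varphi_i=t^l_i\}$ carries no $\mathcal H^{d-1}$-mass of $\partial^*E\cup\partial^*F\cup J_u\cup J_v$, and that the perimeter created on these level sets is bounded by $4\|\nabla\varphi_i\|_\infty\|\chi_E-\chi_F\|_{L^1(A\setminus A')}$.

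\emph{Construction of the void.} Define $D_i$ exactly as in \eqref{deffi1}: $E$ where $\varphi_i>t^2_i$, $E\cup F$ in the transition layer, and $F$ where $\varphi_i<t^1_i$. Taking the union in the layer is convenient because $w_i$ is then forced to vanish wherever either $u$ or $v$ vanishes. The surface energy of $D_i$ in the layer is at most $\beta\,\mathcal H^{d-1}((\partial^*E\cup\partial^*F)\cap S_i)$, plus the level-set contribution already estimated.

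\emph{Construction of the displacement.} On $\{\varphi_i\in[t^1_i,t^2_i]\}\setminus D_i$ set $w_i=\psi_iu+(1-\psi_i)v$, with $w_i=u$ above the layer, $w_i=v$ below it, and $w_i=0$ on $D_i$. The gluing across $\partial^*D_i$ is admissible, since jumps there are allowed. Outside $D_i$ we have $J_u\subset\partial^*E$ and $J_v\subset\partial^*F$, and these boundaries lie in $\partial^*(E\cup F)$ up to points of density one of $E\cup F$. Hence one must check carefully that $J_{w_i}\subset\partial^*D_i$ up to null sets. This is the constraint-preservation step.

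\emph{Bulk estimate without $L^p$ control.} The bulk term $\|e(w_i)\|_p^p$ contains $\|\nabla\varphi_i\|^p_\infty|u-v|^p$, which need not be integrable. Following \cite{FriPerSol20a}, we truncate: let $\Lambda(u,v)$ be built from $\int_{A\setminus\overline{A'}}\psi(|u-v|)$. On the set $\{|u-v|>\lambda\}$, where $\lambda$ is a small threshold chosen depending on $\eta$, we do not interpolate. Instead we jump and include that region's boundary in $D_i$, that is, we enlarge the void by the superlevel set $\{|u-v|>\lambda\}\cap$ layer, chosen via coarea so that its perimeter is controlled.

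This enlargement is where I expect the main difficulty. In \cite{FriPerSol20a} one simply creates a jump, but here every jump must lie on a void boundary. A new jump surface therefore has to be realized as the reduced boundary of a thin added void, which costs twice its area. That factor of two is harmless because it can be absorbed into $\Lambda$. The delicate part is a Korn/coarea-type bound showing that this perimeter tends to $0$ as $u-v\to0$ in measure, which is needed for \eqref{eq4.25}. I would try a coarea argument on $|u-v|$ restricted to $A\setminus\overline{A'}$, choosing $\lambda$ with small level-set perimeter, and then define $\Lambda$ as the resulting infimum. This infimum is lower semicontinuous by construction.

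\emph{Conclusion.} Averaging over $i$ as in \eqref{final eq fund est at level 0}--\eqref{controll error at level 0} and choosing $k$ large gives \eqref{eqfundamentalestimate-1} with $C=8\beta\max_i\|\nabla\varphi_i\|_\infty$. Properties \eqref{eqfundamentalestimate-2}--\eqref{eqfundamentalestimate-3} hold because $w$ and $D$ differ from both $(u,E)$ and $(v,F)$ only inside the single layer $S_{i_0}$, whose energy share is $O(1/k)$ by \eqref{controll error at level 0}. Finally, \eqref{eqfundamentalestimate-4} holds by construction.
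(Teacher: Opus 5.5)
Your construction of the void in the transition layer (the union $E\cup F$, the Fleming--Rishel levels, the check $J_{w_i}\subset\partial^*D_i$) matches the paper. The step you flag as delicate, however, is a genuine gap, and the mechanism you propose for it cannot work.

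\emph{Why $\Lambda$ cannot absorb the new perimeter.} By \eqref{eq4.25}, $\Lambda(z_1,z_2)$ must tend to $0$ whenever $z_1-z_2\to0$ in measure. Convergence in measure gives no control on the perimeter of $\{|z_1-z_2|>\lambda\}$. For instance, take $z_1-z_2$ equal to the characteristic function of many tiny balls with vanishing total volume and diverging total perimeter. Then every superlevel set with $\lambda\in(0,1)$ has diverging perimeter. So no bound can make this perimeter vanish as $u-v\to0$ in measure, and a $\Lambda$ defined as an infimum of level-set perimeters violates \eqref{eq4.25}.

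\emph{Why coarea cannot charge it to the energy.} The added perimeter must instead be bounded by $C(\mathcal{E}(u,E,S_i)+\mathcal{E}(v,F,S_i))$, so that it disappears when you average over $i$. Coarea does not give this. The jump part of $\mathcal{H}^{d-1}(\partial^*\{|u-v|>t\}\cap S_i)$ is indeed controlled by $\mathcal{H}^{d-1}((\partial^*E\cup\partial^*F)\cap S_i)$. The remaining part, however, is governed by the full gradient $|\nabla(u-v)|$, while the energy only controls $e(u)$ and $e(v)$. This is exactly where the linearized-elastic setting departs from truncation-based $GSBV$ fundamental estimates.

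\emph{What the paper does instead.} The missing ingredient is the Korn inequality for functions with small jump set, Theorem~\ref{Korn inequality for functions with small jump set}, applied on a grid of cubes.
\begin{itemize}
\item The paper covers the inner layer $T_i$ by disjoint cubes of small sidelength $\rho$.
\item On \emph{good} cubes, $\mathcal{H}^{d-1}((\partial^*E\cup\partial^*F)\cap Q)$ and $\|e(u)\|^p_{L^p(Q)}+\|e(v)\|^p_{L^p(Q)}$ are small compared to $\rho^{d-1}$. There the theorem is applied to $u-v$, using $J_{u-v}\subset\partial^*E\cup\partial^*F$. This gives a set $\omega_Q$ with $\mathcal{H}^{d-1}(\partial^*\omega_Q)\le\overline{c}\,\mathcal{H}^{d-1}((\partial^*E\cup\partial^*F)\cap Q)$ and $\mathcal{L}^d(\omega_Q)\le\frac12\mathcal{L}^d(Q)$. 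It also gives a rigid motion $a_Q$ with $\|u-v-a_Q\|^p_{L^p(Q\setminus\omega_Q)}\lesssim\rho^p(\|e(u)\|^p_{L^p(Q)}+\|e(v)\|^p_{L^p(Q)})$.
\item On bad cubes one takes $\omega_Q=Q$. Their number is at most $\rho^{1-d}$ times a multiple of the energy in $S_i$.
\item The union $U^i$ of all $\omega_Q$ is added to both voids, $E_i=E\cup U^i$ and $F_i=F\cup U^i$, and $u,v$ are set to zero there. The void $D_i$ is then built from $E_i,F_i$ as you do.
\end{itemize}
Hence the added perimeter is at most $C(\mathcal{E}(u,E,S_i)+\mathcal{E}(v,F,S_i))$ and is removed by the choice of $i_0$. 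Since the exceptional set itself becomes void, no doubled jump surface appears.

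\emph{Where $\Lambda$ enters.} Only after this step is $\Lambda$ used. Since $\mathcal{L}^d(Q\setminus\omega_Q)\ge\frac12\mathcal{L}^d(Q)$, Lemma~\ref{lemmamatrix} bounds $\|a_Q\|^p_{L^p(Q)}$ by $\rho^d$ times an increasing continuous function of $\rho^{-d}\int_{A\setminus A'}\psi(|u-v|)\,\mathrm{d}x$, up to an arbitrarily small shift. This controls the interpolation term $\|\nabla\varphi_i\|^p_\infty\int|u_i-v_i|^p$ on good cubes. It also yields a $\Lambda$ that satisfies \eqref{eq4.25} and is lower semicontinuous by Fatou's lemma.
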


\begin{proof}
\MMM We proceed in several steps. \EEE

\noindent \emph{Step 1 (Preliminaries)}. We let $k \in \mathbb{N}$. \MMM Below in \eqref{first equation of this page-we} (see also \eqref{error}), $k$ will be chosen large enough depending on $p$,$d$,$\alpha$,$\beta$, $\eta$, $A$, and $A'$. \EEE Let $A_1,...,A_{k+1}$ be in $\mathcal{A}$ with 
\begin{equation}
    A^\prime  \subset \subset A_1 \subset \subset ... \subset \subset A_{k+1}\subset \subset A.
\end{equation}
We also define further open sets $    A_i \subset \subset A_i^+ \subset \subset A^-_{i+1} \subset \subset A_{i+1}$, $i=1,\ldots,k$, 
and
\begin{align}
\label{TiSi}
 S_i \defas A_{i+1}\setminus \overline{A_i},  \quad \quad \quad T_i \defas A^-_{i+1}\setminus \overline{A^+_i}.
\end{align}
 Let $\varphi_i \in C_c^\infty(\mathbb{R}^d)$ be a cut off function with $\varphi_i =1$ on $A^+_i$ and $0$ in $\mathbb{R}^d \setminus A^-_{i+1}$ i.e.,
\begin{equation*}
\{ 0<\varphi_i<1\}\subset T_i.   
\end{equation*}
We apply Lemma \ref{lemmamatrix} for $\delta=1/2$, $R=\sqrt{d}$, and let $\tau_{\psi}(0)=0$ be the continuous, strictly increasing function with $\tau_{\psi}(0)=0$. As $\tau_{\psi}$ is uniformly continuous on $[0,\frac{1}{2}]$ we can choose $\lambda \in (0,\infty)$ such that
\begin{subequations}
    \label{lambdatrick}
\begin{align}
   & \ \ 2\lambda<\frac{1}{2},  \label{lambdatrick-1}\\
    & \ \  \NNN 32^p\beta \EEE \big(1+\max_{i=1,\ldots,k}\Vert \nabla \varphi_i \Vert^p_{L^{\infty}(\mathbb{R}^d)}\big) \, \mathcal{L}^d(A \setminus A^\prime)\max_{t \in [0,\frac{1}{2}]}\vert \tau^p_{\psi}(t+2\lambda)-\tau_{\psi}^p(t)\vert \leq \frac{\eta}{2}.  \label{lambdatrick-2}
\end{align}
\end{subequations}
The parameter $\lambda$ will be important for the definition of the function $\Lambda$ of the statement. We highlight that, since $A \setminus \overline{A^\prime} \subset B$, we have $T_i \subset \subset S_i \subset A \setminus \MMM \overline{A^\prime}\EEE$. We pick $\rho \in (0,\eta)$ sufficiently small such that
\begin{equation}
\label{defrho}
{\rm (i)}\:\:   2^p   \lambda^{-p}\overline{c}\rho^{p-1}\leq \lambda ,\quad\quad {\rm (ii)}\:\: \rho \MMM \le   \EEE  \min_{i=1,\ldots,k}\{ \mathrm{dist}(T_i,\mathbb{R}^d \setminus S_i), \Vert \nabla \varphi_i \Vert^{-1}_{L^{\infty}(\mathbb{R}^d)}\},
\end{equation}
\QQQ where $\overline{c}$ is the constant appearing in Theorem~\ref{Korn inequality for functions with small jump set} and Remark~\ref{scalinginvarianceoncubes}. \EEE We now fix \MMM $(u,E) \in \mathcal{W}^{1,p}(A)$ \EEE and $(v,F)  \in \mathcal{W}^{1,p}(B)$. Without loss of generality \MMM we \EEE assume that
\begin{equation}
\label{uclosetov}
2\rho^{-d}\int_{\MMM A \setminus A^\prime \EEE }\psi(\vert u-v\vert)\, \mathrm{d}x \leq \frac{1}{2}.    
\end{equation}
Indeed, if \eqref{uclosetov} does not hold, we simply set $\Lambda(u,v)=\infty$,  $D=E\cup \MMM (F\setminus A') \EEE $,   and
\begin{equation}
    w(x)\defas \begin{cases}
        \varphi_1(x) u(x) +(1-\varphi_1(x))v(x) \:\: \text{if}\:\: x \in (A^\prime \cup B) \setminus D, \\
        0 \quad \quad \quad \quad \quad \quad \quad \quad \quad \quad \quad \quad\: \, \text{if}\:\: x \in D.
    \end{cases}
\end{equation}
\MMM Based on this, for each $z_1 \in \MMM L^0(A;\R^d)   \EEE$ and $z_2 \in \MMM  L^0(B;\R^d) \EEE$ we define
\begin{equation}
\label{Lambda*}
\Lambda^*(z_1,z_2)\defas  2^{p} \EEE\tau^p_\psi\Big(2\rho^{-d}\int_{A\setminus A^\prime}\psi(\vert z_1-z_2\vert)\, \mathrm{d}x+2\lambda\Big),
\end{equation}
whenever $2\rho^{-d}\int_{(A\setminus A^\prime)\cap B}\psi(\vert z_1-z_2\vert)\, \mathrm{d}x\leq \frac{1}{2}$ and $\Lambda^*(z_1,z_2)=\infty$ else, \MMM where \EEE $\tau_{\psi}\colon [0,1)\to [0,\infty)$ is the function obtained by Lemma \ref{lemmamatrix} for $\delta=\frac{1}{2}$ and $R=\sqrt{d}$. \MMM Note that this is well defined by \eqref{lambdatrick-1}.

\emph{Step 2 (Covering with cubes \MMM and Korn's inequality)\EEE}. For each $i=1,\ldots,k$, we cover $T_i$ up to a set of $\mathcal{L}^d$-negligible measure with a finite number of pairwise disjoint open cubes $\mathcal{Q}^i=(Q)_{Q \in Q^i}$ with centers $(x_Q)_Q \subset \rho \mathbb{Z}^d\cap T_i$ and sidelength $\rho$. \EEE Then $A \setminus \overline{A^\prime} \subset B$, \EEE \eqref{TiSi}, and \eqref{defrho} imply
\begin{equation*}
 T_i \subset \bigcup_{Q \in \MMM \mathcal{Q}^i\EEE}\overline{Q} \subset \subset S_i \subset A \cap B.  
\end{equation*}
Let $\overline{c}$ be the constant of Remark \ref{scalinginvarianceoncubes}. We say that $Q \MMM \in \mathcal{Q}^i \EEE $ is a \emph{bad cube} if \EEE 
\begin{equation}
\label{korninequalityapplied}
 (2\overline{c})^{\frac{d-1}{d}}\mathcal{H}^{d-1}\big((\partial^* E \cup \partial^* F)\cap Q\big)+ \Vert e(u)\Vert_{L^p(Q)}^p  + \Vert e(v)\Vert_{L^p(Q)}^p \geq \rho^{d-1},
\end{equation}
otherwise we say it is a \emph{good cube}. In particular, we denote the family of all good cubes in $\mathcal{Q}^i$ \MMM by \EEE $\mathcal{Q}_{\mathrm{good}}^i$ and the family of all bad cubes $\mathcal{Q}^i$ \MMM by \EEE $\mathcal{Q}_{\mathrm{bad}}^i$.  

 \MMM Regarding $u-v$ as a function in $GSBD^p(A\cap B)$ with $J_{u-v} \subset \partial^* E\cup \partial^* F$ and applying \EEE  Theorem~\ref{Korn inequality for functions with small jump set} and Remark \ref{scalinginvarianceoncubes} \MMM on $u-v$, \EEE for every cube $Q \in \MMM \mathcal{Q}^i_{\rm good}\EEE$ there exists a measurable set ${\omega}_Q \subset Q$ and a rigid motion ${a}_Q$ such that  
\begin{align}
\label{eq1.15}
\Vert u-v-{a}_{Q}\Vert^p_{L^p(Q \setminus {\omega}_Q)}&\leq \overline{c} \NNN 2^{p-1} \EEE\rho^p(\Vert e(u)\Vert_{L^p(Q)}^p  +\Vert e(v)\Vert_{L^p(Q)}^p),\\
\label{measuresingularset}
  \mathcal{L}^d({\omega}_Q)&\leq \frac{1}{2}\mathcal{L}^d(Q),\\
\label{claimofstep2}    
\Vert {a}_{Q}\Vert^p_{L^p( Q)}&\leq \rho^d\Lambda^*(u,v).
\end{align}
\MMM Indeed, \eqref{eq1.15} follows from the statement and the subsequent remark, and for \eqref{measuresingularset} we apply \EEE \eqref{korninequalityapplied} and again Remark \ref{scalinginvarianceoncubes}. Using \eqref{eq1.15}, \eqref{measuresingularset}, and arguing like in \cite[Proof of Proposition 4.1, {Step~4}]{FriPerSol20a}, one can show \eqref{claimofstep2}. \MMM (At this point, we also use property \eqref{defrho}(i).)

\noindent\emph{Step 3 (Construction of \MMM interpolations and \EEE void sets).}  In this step we construct pairs  \MMM $(w_i,D_i)$ for $i=1,\ldots,k$. In Step  5 below, we then pick a specific pair $(w_{i_0},D_{i_0})$. \EEE For every $Q \in \mathcal{Q}_{\mathrm{good}}^i$ we let $\omega_Q$  be \MMM the \EEE set satisfying \eqref{eq1.15} and \eqref{measuresingularset}, while for every $Q \in \mathcal{Q}_{\mathrm{bad}}^i$ we let $\omega_Q=Q$. \MMM We \EEE define $U^i \defas \bigcup_{Q \in \mathcal{Q}^i}\omega_Q$ and  set
\begin{equation}
\label{ui and vi}
    u_i \defas u(1-\chi_{U^i})\quad \text{and}\:\: v_i \defas v(1-\chi_{U^i}).
\end{equation}
\MMM We \EEE notice that $(u_i,E_i) \in \mathcal{W}^{1,p}(A)$ and $(v_i,F_i) \in \mathcal{W}^{1,p}(B)$, where $E_i\defas E\cup U^i$ and $F_i\defas F \cup U^i$.

To construct \MMM $D_i$, \EEE we follow an argument similar to the one of \cite[Lemma 4.4]{AmbBra90}. 
 By \MMM the \EEE Fleming-Rishel formula (\cite[(4.5.9)]{alma991013725792303131}, see also \cite[(2.8)]{AmbBra90}), we know that there exist $t_i^1$ $t^2_i \in (0,1)$, with $t_i^2-t^1_i>\frac{1}{2}$ such that  $\{ \varphi_i> \MMM t^l_i \EEE \}$ has finite perimeter in $A$, $\mathcal{H}^{d-1}(\partial^* E_i\cap \{ \varphi_i=t^l_i\})=\mathcal{H}^{d-1}(\MMM \partial^* F_i \EEE \cap \{ \MMM \varphi_i=t^l_i \EEE \})=0$, \NNN  $\mathcal{L}^{d} (\{ \varphi_{i} = t^{l}_{i} \} ) = 0$ for $l \in \{1, 2\}$, \EEE and
\begin{align}
\label{fleming rishel}
 \int_{S_i \cap \partial^* \{\varphi_i<t^l_i\}} \MMM |\chi_{E_i}-\chi_{F_i}| \EEE \,\mathrm{d}\mathcal{H}^{d-1}&\leq \MMM 4 \EEE \int_{S_i}  \MMM |\chi_{E_i}-\chi_{F_i}| \EEE \vert \nabla \varphi_i \vert \mathrm{d}x   \leq  \MMM 4 \EEE \Vert \nabla\varphi_i\Vert_{L^\infty} \Vert \chi_{E}-\chi_{F}\Vert_{L^1(S_i)}
\end{align}
\MMM for  $l \in \{1,2\}$, cf.\ \eqref{fleming rishel 2} for a similar argument. \EEE We define  
\begin{equation}
\label{def Di}
    {D_i} \QQQ \defas \EEE \begin{cases}
        {E_i} \:\: \:\:\:\: \quad \:\text{if}\:\: \varphi_i > t^2_i, \\
        E_i\cup F_i \:\:\text{if}\:\: \varphi_i\in [t^1_i,t^2_i],\\
        F_i \:\: \:\:\:\: \quad \:\text{if}\:\: \varphi_i < t^1_i.
    \end{cases}
\end{equation}
Notice that ${D_i}={E}$ on $A^\prime$ since $U^i\subset S_i \subset A \setminus A^\prime$. \NNN In a similar fashion,  ${D_i}={F}$ on $B \setminus \overline{A}$. \EEE 

\MMM Next, we  \EEE  define $w_i \in SBV^{p}(A^\prime \cup B)$ as $w_i(x) \QQQ \defas \EEE 0$ if $x \in D_i$, and for $x \in (A^\prime \cup B) \setminus {D_i}$ we set
\begin{equation}
\label{def wi}
    w_i(x) \QQQ \defas \EEE \begin{cases}
   {u}_i(x)\quad \quad \quad \quad \quad \quad \quad \quad \quad \quad \:\:\:\:\:\: \text{if}\:\: \varphi_i(x)>t^2_i, \\
       \psi_i(x)\,{u}_i(x)+(1-\psi_i(x))\,{v}_i(x)\:\: \text{if}\:\: \varphi_i(x)\in [t^1_i,t_i^2], \\
      {v}_i(x)\quad \quad \quad \quad \quad \quad \quad \quad \quad \quad \:\:\:\:\:\: \text{if}\:\: \varphi_i(x)< t^1_i, \\
    \end{cases}
\end{equation}
where $\psi_i(x)\defas \frac{\varphi_i(x)-t^1_i}{t^2_i-t^1_i}$. \MMM In view of \eqref{ui and vi}, \eqref{def Di}, and the fact that  $(u_i,E_i) \in \mathcal{W}^{1,p}(A)$,   $(v_i,F_i) \in \mathcal{W}^{1,p}(B)$, we note that $J_{w_i} \cap (A' \cup B) \subset \partial^* D_i$ and thus $(w_i,D_i) \in  \mathcal{W}^{1,p}(A' \cup B)$.  Moreover, by \eqref{ui and vi} and $U^i \subset A \setminus A^\prime$ we have $w_i = u$ on $A^\prime$ and $w_i = v$ on $B \setminus \overline{A}$.  \EEE

\noindent \emph{Step 4 \MMM (Energy estimate for $(w_i,D_i)$)\EEE}. \MMM In this step we estimate $\mathcal{E}(w_i,D_i, A^\prime \cup B)$. \MMM For notational convenience, we write the functional $\mathcal{E}$ as $\mathcal{E}(u,E,A)  = \mathcal{F}(u,E,A) + \QQQ \mathcal{J} \MMM (E,A) $, where \EEE
\begin{equation}
\label{energywithonlythesurface}
  \mathcal{F}(u,E,A)\defas  
        \int_{A \setminus \MMM E \EEE }f(x,e(u))\, \mathrm{d}x , \quad \quad \quad   \QQQ \mathcal{J} \EEE (E,A)\defas  
        \int_{\partial^* E\cap A }g(x,\nu_E)\, \MMM {\rm d}\mathcal{H}^{d-1}, \EEE 
\end{equation}
with $f$ and $g$ being the densities of the functional $\mathcal{E}$. \MMM We use similar notation for $\mathcal{E}(v,F,B)$. We estimate bulk and surface contributions separately. We start with the surface term. \EEE   For this purpose, \MMM recalling the definition of $U^i$ preceding \eqref{ui and vi}, \EEE we observe that
\begin{equation}\label{twotwoterms}
    \mathcal{H}^{d-1}(\partial^*U^i)\leq \sum_{Q\in \mathcal{Q}_{\mathrm{good}}^i}\mathcal{H}^{d-1}(\partial^* \omega_Q)+\sum_{Q^\prime\in \mathcal{Q}_{\mathrm{bad}}^i}\mathcal{H}^{d-1}(\partial^* \omega_{Q^\prime}).
\end{equation}
Regarding the first term on the right-hand side, because of $ J_{u-v} \cap Q  \subset \partial^* E \cup \partial^* F \cap Q$ for every $Q\in \mathcal{Q}^i$, \EEE Theorem \ref{Korn inequality for functions with small jump set} and Remark~\ref{scalinginvarianceoncubes} \MMM yield \EEE
\begin{equation}
\label{perimeter good cubes}
\sum_{Q\in \mathcal{Q}_{\mathrm{good}}^i}\mathcal{H}^{d-1}(\partial^* \omega_Q) \leq \overline{c}\sum_{Q\in \mathcal{Q}_{\mathrm{good}}^i}\mathcal{H}^{d-1}\big((\partial^* E \cup \partial^* F)\cap Q\big)\leq \frac{\overline{c}}{\alpha}\big( \mathcal{E}(u,E,S_i)+\mathcal{E}(v,F,S_i)\big),
\end{equation}
where we used that the cubes $Q \in \mathcal{Q}^i$ are all disjoint and contained in $S_i$, and then we applied $(g_2)$. Concerning the second term,  by \eqref{korninequalityapplied} and $(g_2)$ we get \EEE 
\begin{align}
\label{perimeter of bad cubes}
\NNN \frac{1}{2d} \EEE \sum_{Q^\prime\in \mathcal{Q}_{\mathrm{bad}}^i}\mathcal{H}^{d-1}(\partial^* \omega_{Q^\prime})&=    \# Q_{\mathrm{bad}}^i \rho^{d-1}\leq (2\overline{c})^{\frac{d-1}{d}}\mathcal{H}^{d-1}\big((\partial^* E \cup \partial^* F)\cap S_i\big)+ \Vert e(u)\Vert_{L^p(S_i)}^p  + \Vert e(v)\Vert_{L^p(S_i)}^p \nonumber
\\& \leq \frac{1}{\alpha} (2\overline{c})^{\frac{d-1}{d}}\big( \mathcal{E}(u,E,S_i)+\mathcal{E}(v,F,S_i)\big),
\end{align}
where we used $\overline{c}>1$ and again that cubes \MMM in \EEE $\mathcal{Q}^i$ are disjoint and contained in $S_i$. Thus, \MMM \eqref{twotwoterms}, \EEE \eqref{perimeter good cubes}, \eqref{perimeter of bad cubes}, and $(g_3)$ give \NNN
\begin{align}
\label{estimate on the energies}
 \mathcal{J}   (E_i, A )+  \mathcal{J}   (F_i, B  )\leq      \mathcal{J}  (E,A) +    \mathcal{J}   (F,B)     +  \EEE  C_1\big(\mathcal{E}(u,E,\MMM S_i \EEE )+\mathcal{E}(v,F,\MMM S_i \EEE )\big) \EEE
\end{align}
\NNN for some $C_1 \ge 1$ depending on $\alpha$, $\beta$, and $d$. \EEE   Let us now estimate the surface energy of $D_i$. \EEE By $(g_3)$ and the choice of $t^1_i$ and $t^2_i$, we have 
\begin{align*}
\QQQ \mathcal{J} \MMM (D_i,A' \cup B) & \leq   \mathcal{J} \MMM (\NNN E_i, \EEE A) +  \QQQ \mathcal{J} \MMM ( \NNN F_i, \EEE B) \EEE   +\int_{\partial^* D_i\cap \{\varphi_i \in (t^1_i,t^2_i)\}}\hspace{-0.3cm}g(x,\nu_{D_i})\, \mathrm{d}\mathcal{H}^{d-1} +\beta \sum_{l=1,2} \mathcal{H}^{d-1}(\partial^* D_i \cap \{\varphi_i=t^l_i \}).
\end{align*}
Combining \MMM this with \NNN \eqref{twotwoterms}--\eqref{estimate on the energies} \EEE and  \eqref{fleming rishel}  we obtain 
\begin{align}
\label{correction fund est 3}
  \QQQ \mathcal{J} \MMM (D_i,A' \cup B) \EEE &\leq  \MMM \mathcal{J}(E,A) +  \mathcal{J}(F,B) \EEE  + 2 \EEE {C_1}\big(\mathcal{E}(u,E,S_i)+\mathcal{E}(v,F,S_i)\big)+8 \EEE \beta\Vert \nabla\varphi_i\Vert_{L^\infty} \Vert \chi_{E}-\chi_{F}\Vert_{L^1(S_i)}.
\end{align}
We now come to the bulk part.  \EEE  Let us denote by \NNN $V^i_{\mathrm{good}}$ \EEE the union of all good cubes $Q \in \mathcal{Q}_\mathrm{good}^i$. We notice that  
\begin{align}
\label{fullenergywi-corrected}
    \mathcal{F}(w_i,D_i,A^\prime \cup B)&\leq \mathcal{F}(w_i,D_i,A^-_{i+1}\cup B) = \mathcal{F}(w_i,D_i,A_{i+1}^-)+\mathcal{F}(w_i,D_i, B \setminus A^-_{i+1}) \nonumber
    \\& \leq \mathcal{F}({u}_i,E_i,A)+\mathcal{F}({v}_i,F_i,B)+\mathcal{F}(w_i,D_i, \{\varphi_i \in (t^1_i,t_i^2)\}) \nonumber
    \\& \leq \mathcal{F}(u,E,A)+\mathcal{F}(v,F,B)+\mathcal{F}\big(w_i,D_i, V_{\mathrm{good}}^i\cap \{\varphi_i \in (t^1_i,t_i^2)\}\big),
\end{align}
where we used that $\mathcal{F}(u_i,E_i,A)\leq \mathcal{F}(u,E,A)$ and $\mathcal{F}(v_i,F_i,B)\leq \mathcal{F}(v,F,B)$ \MMM due to   $E\subset E_i$,  $F\subset F_i$, \eqref{ui and vi}, and $(f_1)$. \EEE \MMM Denote the last term on the right-hand side of \eqref{fullenergywi-corrected} by $\mathcal{F}_i^*$. We estimate  \EEE  
\begin{align*}
\MMM \mathcal{F}_i^* \EEE &\leq \beta \int_{V_{\mathrm{good}}^i}(1+\vert e(w_i)\vert^p)\, \mathrm{d}x\leq \beta \mathcal{L}^d(S_i)+\beta \int_{V_{\mathrm{good}}^i}\vert \psi_i e({u}_i-{v}_i)+\nabla \psi_i \odot ({u}_i - {v}_i)\vert^p\, \mathrm{d}x   
\\& \leq \beta \mathcal{L}^d(S_i)+\NNN 3^{p-1} \EEE \beta \int_{V_{\mathrm{good}}^i} \big(\vert e({u}) \vert^p + \vert e({v})\vert^p +  2^p \EEE\max_{i=1,\ldots,k}\Vert \nabla \varphi_i \Vert_{L^\infty(S_i)}^p\vert {u_i}-{v_i}\vert^p \big)\, \mathrm{d}x,
\end{align*}
where we used \MMM $(f_3)$ and the fact \EEE that $\frac{1}{t^2_i-t^1_i}\leq 2$ for every $i=1,\ldots,k$.  \NNN We further compute \EEE 
\begin{align}
\label{stimanormalp}
\int_{V_{\mathrm{good}}^i}\vert {u}_i-{v}_i\vert^p \, \mathrm{d}x &\leq
\sum_{Q \in \Q_{\mathrm{good}}^i}\int_{Q}\vert {u}_i-{v}_i\vert^p \, \mathrm{d}x
\leq 2^{p-1}\sum_{Q \in \Q_{\mathrm{good}}^i} \Big(\int_{Q}\vert {u}_i-{v}_i-{a}_Q\vert^p \, \mathrm{d}x
+\int_{Q}\vert a_Q\vert^p \, \mathrm{d}x \Big) \nonumber
\\& =  2^{p-1}\sum_{Q \in \Q_{\mathrm{good}}^i} \Big(\int_{Q\setminus \omega_Q}\vert {u}-{v}-{a}_Q\vert^p \, \mathrm{d}x
+ \MMM 2 \EEE \int_{Q}\vert a_Q\vert^p \, \mathrm{d}x \Big)\nonumber
\\&  \leq \NNN  4^{p-1} \EEE \overline{c}\rho^p (\Vert e(u)\Vert^p_{L^p(S_i)}+\Vert e(v)\Vert^p_{L^p(S_i)})+\NNN 2^{p} \EEE \mathcal{L}^d(S_i)\Lambda^*(u,v), 
\end{align}
where we used \eqref{eq1.15}, \eqref{claimofstep2}, \MMM \eqref{ui and vi},  and again the fact that all the cubes $Q \in \mathcal{Q}_{\mathrm{good}}^i$ are disjoint and contained in $S_i$. Combining the previous equations and using $(f_2)$, \eqref{defrho}(ii),  we have
\begin{align}
\label{controlloenergiaui}
\MMM \mathcal{F}_i^* \EEE&\leq C_2 \mathcal{L}^d(S_i)+C_2(\mathcal{E}(u,E,S_i)+\mathcal{E}(v,F,S_i)) +  16^p \beta \EEE \max_{i=1,\ldots,k}\Vert \nabla \varphi_i \MMM \Vert^p_{L^{\infty}(S_i)} \EEE \mathcal{L}^d(S_i)\Lambda^*(u,v),
\end{align}
where  \NNN $C_2 \ge 1 $    depends only on $p$, $\alpha$, $\beta$, and $d$.  Let $L=L(p,d,\alpha,\beta)  \defas \MMM 2C_1+  C_2$, \MMM where $C_1$ is \NNN given in \EEE \eqref{estimate on the energies}. \EEE  
Combining \eqref{correction fund est 3}, \eqref{fullenergywi-corrected},  and \eqref{controlloenergiaui}, we get
\begin{align}
\label{fullenergyestimate}
\mathcal{E}(w_i,D_i, A^\prime \cup B)&\leq \mathcal{E}(u,E,A)+\mathcal{E}(v,F,B)  +\mathrm{err}(u,v,E,F,S_i) +M\Lambda^*(u,v)+ \MMM C \EEE \Vert \chi_E - \chi_F\Vert_{L^1(\MMM A \setminus A'\EEE)},
\end{align}
where 
\begin{align}
\label{error}
    \mathrm{err}(u,v,E,F,S_i)   \defas L \mathcal{L}^d(S_i)+L(\mathcal{E}(u,E,S_i)+\mathcal{E}(v,F,S_i)),
\end{align}
and
\begin{align*}
    M \defas \MMM   16^p \EEE\beta \EEE\max_{i=1,\ldots,k}\Vert \nabla \varphi_i \Vert_{L^{\infty}(S_i)}^p \mathcal{L}^d(A \setminus A^\prime), \MMM  \quad \quad C \defas 8 \beta \max_{i=1,\ldots,k}\Vert \nabla \varphi_i \Vert_{L^\infty(S_i)}. \EEE
\end{align*}
\MMM Note that $C$  depends only on $\beta$, $A'$,  $A$, and $k$, and thus only on  $A'$,  $A$, $p$, $\alpha$, $\beta$,  $\eta$, \NNN and $d$, \EEE cf.\ \eqref{first equation of this page-we} below.  \EEE

\MMM \noindent \emph{Step 5 (Conclusion)}.  Let \EEE
\begin{equation}
\label{defLambda}
\Lambda(z_1,z_2)\defas  M \EEE 2^p \tau^p_\psi\Big(2\rho^{-d} \MMM \int_{A\setminus A^\prime } \EEE \psi(\vert z_1-z_2\vert)\, \mathrm{d}x\Big),
\end{equation}
whenever $\MMM \int_{A\setminus A^\prime } \EEE \psi(\vert z_1-z_2\vert)\, \mathrm{d}x\leq \frac{1}{2}$ and $\Lambda(z_1,z_2)=\infty$ else. \MMM Note that $\Lambda$ is lower semicontinuous by Fatou's lemma and the fact 
that $\tau^p_\psi$  is continuous and increasing. Then, by  \EEE \eqref{lambdatrick-2} and \eqref{Lambda*} we have
\begin{equation}\label{forlambda}
    \vert \Lambda(u,v)-M\Lambda^*(u,v)\vert \leq \frac{\eta}{2}.
\end{equation}
Notice that by definition $ \mathrm{err}(u,v,E,F,\cdot)$ is   additive   in the last variable and $\mathrm{err}(u,v,E,F,A\setminus A^\prime)<\infty$.
Recalling that the sets $S_i$ are contained in $A\setminus A^\prime$ and that they are pairwise disjoint, we can choose an $i_0 \in \{1,\ldots,k\}$ such that  \MMM
\begin{align}
\label{first equation of this page}
\mathrm{err}(u,v,E,F,S_{i_0}) \le \frac{1}{k} \mathrm{err}(u,v,E,F, A \setminus A^\prime \EEE ).
\end{align}
By taking $k\ge  \frac{2L}{\eta} \max\lbrace 1, \mathcal{L}^{d}(A \setminus A^\prime)\rbrace\EEE$ we have \EEE
\begin{align}
\label{first equation of this page-we}
 \frac{1}{k}\mathrm{err}(u,v,E,F,A\setminus A^\prime) &=   {L}\frac{(\mathcal{L}^d(A\setminus A^\prime)+\mathcal{E}(u,E,A)+\mathcal{E}(v,F,B))}{k} 
  \leq  \MMM \frac{\eta}{2} \EEE \big(1+ \mathcal{E}(u,E,A)+\mathcal{E}(v,F,B)\big) .  
\end{align} 
Thus, by applying the previous inequality together with \MMM \eqref{fullenergyestimate}, \EEE \eqref{forlambda}, and  \eqref{first equation of this page}  we get 
\begin{align}
\label{finalestimateenegy}
\mathcal{E}(w_{i_0},D_{i_0},A^\prime \cup B) \leq  (1+\eta)(\mathcal{E}(u,E,A)+\mathcal{E}(v,F,B)) +\Lambda(u,v)+C \Vert \chi_E-\chi_F\Vert_{L^1(A \setminus A^\prime)}+\eta.  
\end{align}
 We set $w=w_{i_0}$, $D=D_{i_0}$ and verify \eqref{eqfundamentalestimate}.   \EEE First.   \eqref{eqfundamentalestimate-1} follows \MMM from \eqref{finalestimateenegy}. \EEE \MMM The properties in  \eqref{eqfundamentalestimate-4} have been observed below \eqref{def Di} and below \eqref{def wi}, respectively. To \EEE confirm \eqref{eqfundamentalestimate-2}--\eqref{eqfundamentalestimate-3}, we notice that   \NNN
 \begin{equation*}
  \Vert\min \{ \psi(|w-u|), \psi(|w-v|) \} \Vert_{L^1(A' \cup B)}\leq \mathcal{L}^d(S_{i_0}), \quad \quad 
  \Vert\min \{ |\chi_D-\chi_E|, |\chi_D-\chi_F| \} \Vert_{L^1(A' \cup B)}\leq \mathcal{L}^d(S_{i_0}).
 \end{equation*} \EEE
Due to  \eqref{error}, \MMM  \eqref{first equation of this page},  \eqref{first equation of this page-we}, and $L \ge 1$, \EEE  we have
\begin{equation*}
\mathcal{L}^d(S_{i_0})\leq \mathrm{err}(u,v,E,F,S_{i_0}) \leq (1+ \mathcal{E}(u,E,A)+\mathcal{E}(v,F,B))\eta.
\end{equation*}
Thus, \eqref{eqfundamentalestimate-2}--\eqref{eqfundamentalestimate-3} follow.  
\end{proof}

\MMM We proceed with two consequences. 

\begin{remark}[Fundamental estimate for sets]\label{remark.on.fund.est}
Inspection of the proof shows that $w = 0$ if $u=v=0$. In this case, the fundamental estimate essentially reduces to a version of sets, see   \cite[Lemma 4.4]{AmbBra90} for a similar statement. 

In the special case that $A'$, $A$, and $A' \cup B$ are concentric balls, and $\partial E \cap A$ and $\partial F \cap B$ are polyhedral boundaries, we observe the following:  one can choose $D$ such that also $\partial D \cap (A'\cup B)$ is a  polyhedral boundary. In fact, this can be achieved by choosing piecewise affine Lipschitz functions $\varphi_i$ in \eqref{def Di} whose sublevel sets have polyhedral boundary.  
\end{remark}

\begin{corollary}\label{set.corollary}
In the setting of  Proposition \ref{fundamentalestimate}, suppose that $E$ and $F$ disconnect $A$ and $B$, respectively, in the sense that there are sets $S^\pm_A$ and $S^\pm_B$ such that $S^+_A,S^-_A,E$ is a partition of $A$ and $S^+_B,S^-_B,F$ is a partition of $B$ (up to $\mathcal{L}^d$-negligible sets) satisfying $\mathcal{H}^{d-1}(\partial^* S^+_A \cap \partial^* S^-_A) = 0$ and $\mathcal{H}^{d-1}(\partial^* S^+_B \cap \partial^* S^-_B) = 0$. We  further assume that 
\begin{align}\label{nonintersectionassumption}
\mathcal{L}^d(S^+_A \cap S^-_B) = \mathcal{L}^d(S^-_A \cap S^+_B) = 0.
\end{align}
 Then, $D$ disconnects $A' \cup B$ in the sense that there exists a partition $S^+_D,S^-_D,D$ of $A' \cup B$ with  $\mathcal{H}^{d-1}(\partial^* S^+_D \cap \partial^* S^-_D) = 0$.
 \end{corollary}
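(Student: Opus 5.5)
The plan is to define the partition explicitly from the construction in the proof of Proposition \ref{fundamentalestimate}. Recall that $D = D_{i_0}$ is given by \eqref{def Di}: it equals $E_{i_0}=E\cup U^{i_0}$ on $\{\varphi_{i_0}>t^2_{i_0}\}$, equals $E_{i_0}\cup F_{i_0}$ on $\{\varphi_{i_0}\in[t^1_{i_0},t^2_{i_0}]\}$, and equals $F_{i_0}=F\cup U^{i_0}$ on $\{\varphi_{i_0}<t^1_{i_0}\}$. Write $\varphi=\varphi_{i_0}$, $t^l=t^l_{i_0}$ and $U=U^{i_0}$. We set
\begin{align*}
S^\pm_D \defas \big( (S^\pm_A \cap \{\varphi>t^2\}) \cup (S^\pm_A\cap S^\pm_B \cap \{t^1\le\varphi\le t^2\}) \cup (S^\pm_B\cap\{\varphi<t^1\}) \big)\setminus D .
\end{align*}
On $\{\varphi>t^2\}\subset A$ we have $D\supset E$, and $A\setminus E = S^+_A\cup S^-_A$, so $\{\varphi>t^2\}\setminus D$ is covered by these two sets. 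The region $\{\varphi<t^1\}\cap(A'\cup B)\subset B$ is handled symmetrically. In the middle region, which lies in $S_{i_0}\subset A\cap B$, the complement of $E\cup F$ is $(S^+_A\cup S^-_A)\cap(S^+_B\cup S^-_B)$. By \eqref{nonintersectionassumption}, this coincides up to null sets with $(S^+_A\cap S^+_B)\cup(S^-_A\cap S^-_B)$. Hence $S^+_D,S^-_D,D$ form a partition of $A'\cup B$ up to $\mathcal{L}^d$-negligible sets, and the sets are pairwise disjoint since $S^+_A\cap S^-_A=S^+_B\cap S^-_B=\emptyset$.

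The main step is to show that $\mathcal{H}^{d-1}(\partial^*S^+_D\cap\partial^*S^-_D)=0$. We localize using the level sets and check each region separately.

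\emph{Open region $\{\varphi>t^2\}$.} Here $S^\pm_D=S^\pm_A\setminus U$. A point of $\partial^*S^+_D\cap\partial^*S^-_D$ has positive upper density for both $S^+_A$ and $S^-_A$. For a two-sided essential boundary point of two disjoint sets of finite perimeter, De Giorgi's structure theorem shows that, $\mathcal{H}^{d-1}$-a.e., it lies in $\partial^*S^+_A\cap\partial^*S^-_A$, which is $\mathcal{H}^{d-1}$-null. More precisely, we use the following fact. If $X,Y$ are disjoint sets of finite perimeter, then $\mathcal{H}^{d-1}$-a.e. point of $\partial^*(X\setminus U)\cap\partial^*(Y\setminus U)$ belongs to $\partial^*X\cap\partial^*Y$. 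This follows from the Federer theorem: $\mathcal{H}^{d-1}$-a.e. point has densities in $\{0,\tfrac12,1\}$ for each of the finitely many sets involved.

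\emph{Open region $\{\varphi<t^1\}$.} The same argument applies with $S^\pm_B$ in place of $S^\pm_A$.

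\emph{Middle region.} Here the points must lie in $\partial^*(S^+_A\cap S^+_B)\cap\partial^*(S^-_A\cap S^-_B)$. This set is contained, up to $\mathcal{H}^{d-1}$-null sets, in $(\partial^*S^+_A\cup\partial^*S^+_B)\cap(\partial^*S^-_A\cup\partial^*S^-_B)$. The diagonal terms $\partial^*S^+_A\cap\partial^*S^-_A$ and $\partial^*S^+_B\cap\partial^*S^-_B$ are null by hypothesis. For the cross terms, such as $\partial^*S^+_A\cap\partial^*S^-_B$, a density argument applies. At $\mathcal{H}^{d-1}$-a.e. such point, both $S^+_A\cap S^+_B$ and $S^-_A\cap S^-_B$ have density $\tfrac12$. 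They are disjoint, so together they fill the ball in density, and no room is left for $E\cup F$ there. Therefore $S^+_A$ must have density at least $\tfrac12$ on the $+$ side and $S^-_A$ density at least $\tfrac12$ on the other side, placing the point in $\partial^*S^+_A\cap\partial^*S^-_A$, which is null.

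\emph{Level sets $\{\varphi=t^l\}$.} The pieces glue across these level sets, and the sets on each side differ. For instance, $S^+_A$ on one side and $S^+_A\cap S^+_B$ on the other. A point where $S^+_D$ and $S^-_D$ meet across $\{\varphi=t^2\}$ would have to be, up to null sets, a boundary point of $S^+_A$ on one side and of $S^-_A$ on the other half-ball. Such a point again lies in $\partial^*S^+_A\cap\partial^*S^-_A$, using that $\{\varphi=t^l\}$ is chosen with $\mathcal{H}^{d-1}$-null intersection with the essential boundaries. If needed, we will also choose $t^l$ by Fleming--Rishel so that $\{\varphi=t^l\}$ meets $\partial^*S^\pm_A$ and $\partial^*S^\pm_B$ in $\mathcal{H}^{d-1}$-null sets; this costs nothing in the proof.

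I expect this interface analysis to be the main obstacle, namely the bookkeeping of densities at points of the level sets $\{\varphi=t^l\}$ and in the middle layer. The strategy throughout is to reduce every possible contact of $S^+_D$ and $S^-_D$ to a contact of $S^+_A$ with $S^-_A$, or of $S^+_B$ with $S^-_B$, or to a contact excluded by \eqref{nonintersectionassumption}.
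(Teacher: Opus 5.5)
Your proof is correct, and it takes a genuinely different route from the paper's.

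\emph{The paper's route.} The paper does not reuse the three layers of \eqref{def Di} as labels. It picks a single level $t\in(t^1_{i_0},t^2_{i_0})$ such that $K=\{\varphi_{i_0}>t\}$ satisfies $\mathcal{H}^{d-1}\big(\partial^*K\cap(\partial^*S^\pm_A\cup\partial^*S^\pm_B)\big)=0$. It then sets $S^\pm_D=((S^\pm_A\setminus D)\cap K)\cup((S^\pm_B\setminus D)\setminus K)$, so $A$-labels and $B$-labels meet along the single interface $\partial^*K$. On that interface a fine-properties argument reduces the possible contacts to points where $S^+_A$ faces $S^-_B$ or $S^-_A$ faces $S^+_B$. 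This is where \eqref{nonintersectionassumption} enters.

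\emph{Your route.} You use \eqref{nonintersectionassumption} elsewhere: to show that the diagonal intersections $S^+_A\cap S^+_B$ and $S^-_A\cap S^-_B$ exhaust the middle layer minus $E\cup F$, which is exactly where $D\supset E\cup F$. Once that is done, the separation is elementary and even pointwise, so most of your density bookkeeping can be dropped. The sets $\{\varphi>t^1\}$ and $\{\varphi<t^2\}\cap B$ are open and cover $A'\cup B$, since $t^1<t^2$ and $A'\subset\{\varphi=1\}$. On the first set your labels satisfy $S^\pm_D\subset S^\pm_A$, and on the second $S^\pm_D\subset S^\pm_B$. Now use the following fact: if $X\subset X'$ and $Y\subset Y'$ near $x$, with $X'\cap Y'$ negligible, then $x\in\partial^*X\cap\partial^*Y$ implies $x\in\partial^*X'\cap\partial^*Y'$. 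Indeed, both supersets have positive upper density at $x$, which rules out density $0$ and, by disjointness, density $1$. Hence $\partial^*S^+_D\cap\partial^*S^-_D\subset(\partial^*S^+_A\cap\partial^*S^-_A)\cup(\partial^*S^+_B\cap\partial^*S^-_B)$.

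\emph{What you can drop.} Consequently you need no cross-term analysis, no appeal to Federer's theorem, and no separate treatment of the level sets $\{\varphi=t^l\}$. These lie in $\{\varphi>t^1\}$ and $\{\varphi<t^2\}$ respectively. Your suggested re-choice of $t^l$ is therefore unnecessary. It would also be slightly awkward, because $D$ is the set already produced by Proposition \ref{fundamentalestimate}; the paper sidesteps this by choosing a fresh intermediate level $t$.

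\emph{Comparison.} Your construction needs no auxiliary level selection and no interface structure result, at the price of tying the labels to the exact three-layer structure of $D$. The paper's construction has a single, freely placed interface between $A$- and $B$-labels, at the price of a fine-properties argument on $\partial^*K$.
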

 
 \begin{proof}
We recall the definition of $D=D_{i_0}$ in \eqref{def Di}, and choose $t \in (t_{i_0}^1,t_{i_0}^2)$ such that $K \defas \lbrace \varphi_{i_0} > t \rbrace$ satisfies $\mathcal{H}^{d-1}((\partial^* S^\pm_A \cup  \partial^* S^\pm_B) \cap \partial^*K ) = 0$. We define the sets 
$$S_D^\pm = \big((S_A^\pm \setminus D) \cap K\big) \cup \big((S_B^\pm \setminus D) \setminus K\big).$$
Using the fact that $E_{i_0} \subset D_{i_0}$ on $K^1$ and $F_{i_0} \subset D_{i_0}$ on $B \cap K^0$, see \eqref{def Di}, it is elementary to check that $S^+_D$, $S^-_D$, and $D$ forms a partition of $A' \cup B$, up to \NNN a \EEE set of $\mathcal{L}^d$-negligible measure, and $\mathcal{H}^{d-1}(\partial^* S^+_D\cap \partial^* S^-_D\cap K^0)=\mathcal{H}^{d-1}(\partial^* S^+_D\cap \partial^* S^-_D\cap K^1)=0$.  Moreover, using \cite[Theorem~3.61]{ambrosio2000fbv} we find that
\begin{align}\label{xXXXX}
 \mathcal{H}^{d-1}(\partial^* S^+_D \cap \partial^* S^-_D)& =\mathcal{H}^{d-1}(\partial^* S^+_D \cap \partial^* S^-_D\cap \partial^*K) \nonumber
\\&  =\mathcal{H}^{d-1}  \big(\big\{ x\in \partial^*K \colon x \in (S^+_A)^1 \cap (S^-_B)^1 \text{ or } x\in  (S^+_B)^1 \cap (S^-_A)^1 \big\}\big),
\end{align} 
where for the last term we used that $\mathcal{H}^{d-1}((\partial^* S^\pm_A \cup  \partial^* S^\pm_B) \cap \partial^*K ) = 0$. Finally, we notice that the last term  has  negligible $\mathcal{H}^{d-1}$-measure by  \eqref{nonintersectionassumption}.
 \end{proof}
 In Figure \ref{figure10} we provide an example in which \eqref{nonintersectionassumption} is not satisfied and the void $D$, provided by the fundamental estimate, indeed does not separate $A^\prime \cup B$.  
\begin{center}
\begin{figure}[htp]

\includegraphics[width=150mm]{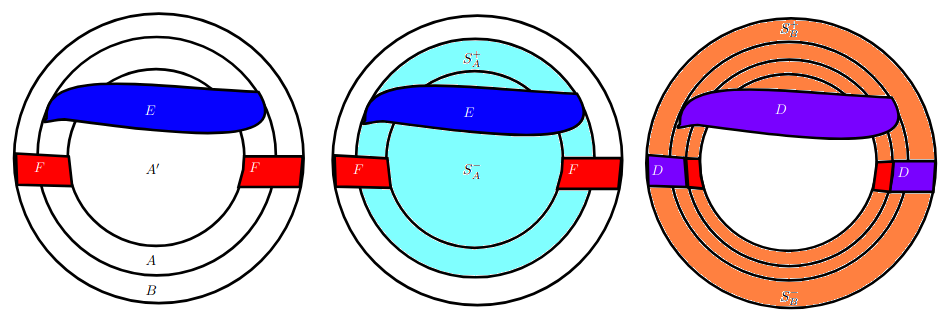}

\captionof{figure}{ Consider $A^\prime=B_{1-2\delta}$, $A=B_{1-\delta}$ and $B=B_1 \setminus B_{1-2\delta}$. In blue the void $E$, in red the void $F$, and in purple the void $D$. In light blue $S_A^+ \setminus F$ and $S^-_A \setminus F$, in orange $S^+_B\setminus E$ and $S^-_B\setminus E$. Notice in particular that $\mathcal{L}^2(S^-_A\cap S_B^+)>0$. }
\label{figure10}
\end{figure}
\end{center}
Following the lines of the proof of \cite[Lemma 4.3]{FriPerSol20a}, up to using Proposition \ref{fundamentalestimate} in place of \cite[Proposition 4.1]{FriPerSol20a}, \MMM we \EEE  can show the following \MMM lemma. \EEE 
\begin{lemma}[Properties of $\Gamma$-$\liminf\limits$ and $\Gamma$-$\limsup\limits$]
\label{propertiesofgammaliminflimsup}
Let $(\mathcal{E}_\varepsilon)_\eps$ be a sequence of energies of the \MMM form \EEE \eqref{newenergies}. Define, for every $A \in \mathcal{A} \MMM(\Omega)\EEE$, $E \in \mathcal{M}(\Omega)$, and $u  \MMM \in L^0(\Omega;\R^d) \EEE$, the \MMM $\Gamma$-$\liminf_{\varepsilon \to 0}\mathcal{E}_\varepsilon$ and $\Gamma$-$\limsup_{\varepsilon \to 0}\mathcal{E}_\varepsilon$ by \EEE
\begin{align*}
   \mathcal{E}^\prime(u,E,A)&\defas  \inf\Big\{ \liminf\limits_{\varepsilon \to 0}\mathcal{E}_\varepsilon(u_\varepsilon,E_\varepsilon,A)\colon u_\varepsilon\to u \:\: \text{in measure on}\:\: A, \:\: \chi_{E_\varepsilon}\to \chi_E \:\: \text{in}\:\: L^1(A)\Big\},    \\
   \mathcal{E}^{\prime \prime}(u,E,A)&\defas \inf\Big\{ \limsup\limits_{\varepsilon \to 0}\mathcal{E}_\varepsilon(u_\varepsilon,E_\varepsilon,A)\colon u_\varepsilon \to u \:\: \text{in measure on}\:\: A, \:\: \chi_{E_\varepsilon}\to \chi_E \:\: \text{in}\:\: L^1(A)\Big\},   
\end{align*}
\MMM respectively. \EEE  Then we have
\begin{subequations}
\label{e:lemma3.11}
\begin{align}
&  \mathcal{E}^\prime(u,E,A)\leq \mathcal{E}^\prime(u,E,B), \quad \mathcal{E}^{\prime \prime}(u,E,A)\leq \mathcal{E}^{\prime \prime}(u,E,B)\quad \text{whenever}\:\: A \subset B, \label{e:lemma3.11-1}
\\ & \alpha \mathcal{I}(u,E,A)\leq \mathcal{E}^\prime(u,E,A)\leq \mathcal{E}^{\prime \prime}(u,E,A)\leq \beta \mathcal{L}^d(A)+\beta \mathcal{I}(u,E,A), \label{e:lemma3.11-2}
\\ & \mathcal{E}^\prime(u,E,A)=\sup\nolimits_{B \subset \subset A}\mathcal{E}^\prime(u,\NNN E, \EEE B), \quad \mathcal{E}^{\prime \prime}(u,E,A)=\sup\nolimits_{B \subset \subset A}\mathcal{E}^{\prime \prime}(u,\NNN E, \EEE B)\quad \text{\NNN for}\:\: A \in \mathcal{A}(\Omega) \label{e:lemma3.11-3}
\\ &  \mathcal{E}^\prime(u,E,A\cup B)\leq \mathcal{E}^\prime(u,E,A)+\mathcal{E}^{\prime \prime}(u,E,B) \quad \text{whenever}\:\: A, B\in \mathcal{A}(\Omega)\label{e:lemma3.11-4}
\\& \mathcal{E}^{\prime \prime}(u,E,A\cup B)\leq \mathcal{E}^{\prime \prime}(u,E,A)+\mathcal{E}^{\prime \prime}(u,E,B)\quad \text{whenever}\:\: A, B\in \mathcal{A}(\Omega), \label{e:lemma3.11-5}
\end{align}
\end{subequations}
 where $\alpha$ and $\beta$ are the  constants of the growth conditions $(f_2)$--$(f_3)$, $(g_2)$--$(g_3)$, \MMM and where \EEE for brevity we write $\mathcal{I}(u,E,A)\defas \Vert e(u)\Vert_{L^p(A)}^p+\mathcal{H}^{d-1}(\partial^* E \cap A)+2\mathcal{H}^{d-1}(J_u \cap A)$.
\end{lemma}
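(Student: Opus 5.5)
The plan is to follow the scheme of \cite[Lemma 4.3]{FriPerSol20a} (and ultimately \cite[Chapters 14--18]{DalMaso:93}). The only new ingredient is that the fundamental estimate for pairs, Proposition \ref{fundamentalestimate}, replaces its $GSBD$ counterpart. We also have to keep track of the set variable, which converges in $L^1$, alongside the function variable, which converges in measure.

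\emph{Monotonicity and growth.} Property \eqref{e:lemma3.11-1} is immediate: $\mathcal{E}_\varepsilon(u,E,\cdot)$ is increasing in the set variable, and a sequence converging on $B$ also converges on $A\subset B$. For the lower bound in \eqref{e:lemma3.11-2}, we use $(f_2)$, $(g_2)$ and the lower semicontinuity of $\Vert e(\cdot)\Vert^p_{L^p}+\mathcal{H}^{d-1}(\partial^*\cdot)+2\mathcal{H}^{d-1}(J_\cdot)$ along sequences in $\mathcal{W}^{1,p}$. This combines $GSBD^p$ compactness, Theorem \ref{compactness in GSBD^p}, with lower semicontinuity of the perimeter. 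The factor $2$ for jumps outside $E^0$ should come from the standard collapsing argument of \cite{Crismale_2020}, in which $J_{u_\varepsilon}\subset\partial^*E_\varepsilon$ and the two sides of a collapsing void both contribute. The upper bound is obtained by density. For $(u,E)\in\mathcal{G}^p(A)$ we take approximations $(u_n,E_n)\in\mathcal{W}^{1,p}(A)$ with converging energy $\mathcal{I}$, as in \cite[Theorem 5.1]{Crismale_2020}. We then take the constant sequence in $\varepsilon$, use $(f_3)$, $(g_3)$, and finish with a diagonal argument.

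\emph{Inner regularity, the main obstacle.} The nontrivial items are \eqref{e:lemma3.11-3}--\eqref{e:lemma3.11-5}, and all of them rest on Proposition \ref{fundamentalestimate}. For \eqref{e:lemma3.11-4} we fix $A'\subset\subset A''\subset\subset A$. We take sequences $(u_\varepsilon,E_\varepsilon)$ for $\mathcal{E}'(u,E,A)$ and $(v_\varepsilon,F_\varepsilon)$ for $\mathcal{E}''(u,E,B)$, and join them via Proposition \ref{fundamentalestimate} with the triple $A',A'',B$, after shrinking so that $A''\setminus\overline{A'}\subset B$. Since $u_\varepsilon-v_\varepsilon\to0$ in measure and $\chi_{E_\varepsilon}-\chi_{F_\varepsilon}\to0$ in $L^1$ on $A''\setminus A'$, the terms $\Lambda$ and $C\Vert\chi_{E_\varepsilon}-\chi_{F_\varepsilon}\Vert_{L^1}$ vanish by \eqref{eq4.25}. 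By \eqref{eqfundamentalestimate-2}--\eqref{eqfundamentalestimate-3} the glued pair still converges to $(u,E)$ on $A'\cup B$. This gives $\mathcal{E}'(u,E,A'\cup B)\le(1+\eta)(\mathcal{E}'(u,E,A)+\mathcal{E}''(u,E,B))+\eta$. Letting $\eta\to0$ and then $A'\uparrow A$ requires \eqref{e:lemma3.11-3}, so we prove \eqref{e:lemma3.11-3} first. Inequality \eqref{e:lemma3.11-5} follows in the same way with $\limsup$ in place of $\liminf$.

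\emph{Proof of \eqref{e:lemma3.11-3}.} One inequality is monotonicity. For the other, we may assume $\mathcal{I}(u,E,A)<\infty$. Given $\delta>0$, we choose $K\subset\subset A$ such that $\beta\mathcal{L}^d(A\setminus K)+\beta\mathcal{I}(u,E,A\setminus K)<\delta$. We pick $B\subset\subset A$ containing $K$ and glue, via the fundamental estimate, a sequence optimal on $B$ with the recovery sequence from the upper bound in \eqref{e:lemma3.11-2}, which is used on $A\setminus K$. The gluing errors vanish as before, so $\mathcal{E}''(u,E,A)\le(1+\eta)(\mathcal{E}''(u,E,B)+\delta)+\eta$, and likewise for $\mathcal{E}'$. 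The delicate point, and where I expect the real work, is ensuring that both sequences converge on the overlap to the same limit $(u,E)$. Only then do $\Lambda\to0$ and the $L^1$ term vanish, which uses the lower semicontinuity of $\Lambda$ together with \eqref{eq4.25}. We must also check that the glued pair satisfies $J_w\subset\partial^*D$ and $w=0$ on $D$, which Proposition \ref{fundamentalestimate} already guarantees.
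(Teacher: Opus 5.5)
You follow the paper's route: the proof of \cite[Lemma 4.3]{FriPerSol20a} with Proposition~\ref{fundamentalestimate} as the gluing tool, and the Crismale--Friedrich relaxation for the lower bound. However, the gluing step on which \eqref{e:lemma3.11-3}--\eqref{e:lemma3.11-5} rest is misused. Estimates \eqref{eqfundamentalestimate-2}--\eqref{eqfundamentalestimate-3} do not make the glued pair converge to $(u,E)$. They only bound its distance from the two inputs by $\eta\,(1+\mathcal{E}_\eps(u_\eps,E_\eps,A)+\mathcal{E}_\eps(v_\eps,F_\eps,B))$, and this does not improve as $\eps\to0$. The reason is that on the layer $S_{i_0}$ the void $D$ contains the exceptional set $U^{i_0}$ (bad cubes and Korn sets $\omega_Q$), where $w=0$. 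The measure of $U^{i_0}$ is governed by $\rho=\rho(\eta)$ and by the perimeters of $E_\eps,F_\eps$ in $S_{i_0}$, not by $\eps$. Hence for fixed $\eta$ you cannot conclude $\mathcal{E}'(u,E,A'\cup B)\le(1+\eta)(\mathcal{E}'(u,E,A)+\mathcal{E}''(u,E,B))+\eta$, nor the analogous inequalities in your proof of \eqref{e:lemma3.11-3}. The missing step is a diagonal argument in $\eta$. Take $\eta_k\to0$ and use the energy bounds (along a subsequence realizing the $\liminf$, for $\mathcal{E}'$), the convergence of both inputs, and the subadditivity of $\psi$. This gives $\limsup_\eps d\big((w^k_\eps,D^k_\eps),(u,E)\big)\le C\eta_k$ for a metric $d$ of convergence in measure times $L^1$. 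The metric characterization of $\Gamma$-$\liminf$ then yields $\mathcal{E}'(u,E,A'\cup B)\le\liminf_k\liminf_\eps\mathcal{E}_\eps(w^k_\eps,D^k_\eps,A'\cup B)$, and likewise for $\mathcal{E}''$ with $\Gamma$-$\limsup$. The point you single out as delicate, that both sequences have the same limit on the overlap, is automatic.

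Two further steps fail as written.

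First, Proposition~\ref{fundamentalestimate} requires $A\setminus\overline{A'}\subset B$. This holds in your configuration for \eqref{e:lemma3.11-3}, with sets $K'\subset\subset B$ and $A\setminus\overline K$. For \eqref{e:lemma3.11-4}--\eqref{e:lemma3.11-5}, however, it cannot be produced by shrinking. Once $A'$ exhausts $A$, the collar $A''\setminus\overline{A'}$ meets $A\setminus\overline B$ near every point of $\partial A\setminus\overline B$ (e.g.\ $A=(0,2)$, $B=(1,3)$). For general $A,B$ you need either a fundamental estimate in which only the part of the transition layer inside $A'\cup B$ matters, or a different argument.

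Second, in \eqref{e:lemma3.11-2} the energy $\mathcal{E}_\eps$ controls $\alpha(\Vert e(u_\eps)\Vert^p_{L^p(A)}+\mathcal{H}^{d-1}(\partial^*E_\eps\cap A))$, not $\alpha\,\mathcal{I}(u_\eps,E_\eps,A)$. So lower semicontinuity of $\mathcal{I}$ is not what is needed. You need $\liminf_\eps(\Vert e(u_\eps)\Vert^p_{L^p(A)}+\mathcal{H}^{d-1}(\partial^*E_\eps\cap A))\ge\Vert e(u)\Vert^p_{L^p(A)}+\mathcal{H}^{d-1}(\partial^*E\cap A)+2\mathcal{H}^{d-1}(J_u\cap E^0\cap A)$, which is what \cite[Proposition 2.1]{Crismale_2020} supplies. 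The factor $2$ sits on $J_u\cap E^0$, not on jumps outside $E^0$. Moreover, $\mathcal{I}$ must be read with $J_u\cap E^0$ as in $(\mathrm{H4})$: with all of $J_u$ the lower bound already fails for $g_\eps\equiv\alpha$ and $u=e_1\chi_{A\setminus E}$.

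Symmetrically, the upper bound needs $(u_n,E_n)\in\mathcal{W}^{1,p}(A)$ with $\Vert e(u_n)\Vert^p_{L^p(A)}+\mathcal{H}^{d-1}(\partial^*E_n\cap A)\to\mathcal{I}(u,E,A)$, realized by thin voids around $J_u\cap E^0$. Convergence of $\mathcal{I}(u_n,E_n,A)$ itself is impossible once $\mathcal{H}^{d-1}(J_u\cap A)>0$. Those two terms alone already have $\liminf$ at least $\mathcal{I}(u,E,A)$, and $2\mathcal{H}^{d-1}(J_{u_n}\cap A)$ comes on top.
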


\begin{proof} \MMM
The proof is identical to the one of \cite[Lemma 4.3]{FriPerSol20a}, up to using the fundamental estimate for functionals defined  on pairs of function-set. Moreover, in the lower bound in \eqref{e:lemma3.11-2}, in place of \cite[Theorem~3.7]{FriPerSol20a}, we use the relaxation result  in \cite[Proposition 2.1]{Crismale_2020}. \EEE
\end{proof}
 
\MMM We close this section with the proof of Theorem \ref{first gamma convergence result}. \EEE
\begin{proof}[Proof of Theorem \ref{first gamma convergence result}]
 Compactness and integral representation are carried \MMM out \EEE by using the localization method of $\Gamma$-convergence (see \cite[Chapters 14--16, 18--20]{DalMaso:93}). In particular, the proof follows the same steps of \cite[Theorem 2.1]{FriPerSol20a} up to using Lemma \ref{propertiesofgammaliminflimsup} in place of \cite[Lemma 4.3]{FriPerSol20a} and Theorem \ref{Theorem2.1 CriFrieSol} in place of \cite[Theorem 2.1]{SolFriCri20}.   
\end{proof}

\section{\MMM Bulk and surface densities: \EEE Proof of Proposition \ref{limsupliminf}} \label{section proof of proposition limsupliminf}

\MMM In this section we prove Proposition \ref{limsupliminf}. \EEE The  identities   \eqref{f1 = f2} and \eqref{g1 = g2} \MMM for a suitable subsequence are well known in the literature and we simply \EEE refer to \cite[Proposition 2.3 and Section 3.2]{FriPerSol20a}.   (Note that the minimization problems \eqref{simplified_cellformula_voids} and \cite[(3.16)]{FriPerSol20a} can be naturally identified.)  

 From now on, we focus on \EEE    \eqref{h1 = h2} and \eqref{2g=h}. \MMM As observed already in Remark \ref{h-remark}, it suffices to prove
 \begin{align}\label{h-remark2}
h' \ge 2\hat{g}, \quad \quad  2\hat{g} \ge h'',
 \end{align}
 where $h'$  and $h''$ are defined in \eqref{h-remark3}, and \EEE we have chosen a   subsequence such that \eqref{g1 = g2} holds \MMM with the corresponding density denoted by  \EEE $\hat{g}$.

\MMM Recalling the notation in \eqref{thin layer},  \EEE for every $x \in \mathbb{R}^d$, $\zeta \in \mathbb{R}^{d}\setminus \{0\}$, and $\nu \in \mathbb{S}^{d-1}$,  we consider the function
\begin{equation}
\label{function thin layer}
    \eta_{x,\zeta,\varepsilon}^{\nu} \QQQ (y) \EEE \defas \begin{cases}
        \zeta \:\: \text{if}\:\: y \in \MMM B^{\nu,+}_1 \EEE (x) \setminus E^{x,\nu}_{\varepsilon}, \\
        0 \:\: \text{otherwise}.
    \end{cases}
\end{equation}
We \MMM note \EEE that $(\eta_{x,\zeta,\varepsilon}^{\nu},E^{x,\nu}_{\varepsilon}) \to (u^\nu_{x,\zeta},\emptyset)$ in measure on $\MMM B_{1} \EEE (x)$ as $\varepsilon \to 0$. \EEE   \MMM We proceed in two steps showing separately  the two inequalities in \eqref{h-remark2}. \EEE  We highlight that the  assumption that  \MMM  $g_{\varepsilon}$  is continuous for every $\varepsilon$ \EEE is only needed for the \MMM inequality $2\hat{g} \ge h''$. Fix $x \in \mathbb{R}^d$ and $\nu \in \mathbb{S}^{d-1}$.  \EEE

\emph{Step 1: $h^\prime (x,\nu) \geq 2\hat{g}(x,\nu)$}.
\MMM Fix $\rho>0$. \EEE Up to extracting a further subsequence (not relabeled), we can also assume that 
\begin{equation}
\label{h0>2g0 eq 1}
  \liminf\limits_{\varepsilon \to 0}m^\mathrm{jump}_{\mathcal{E}_\varepsilon}(u_{x,e_1}^\nu,B_\rho(x))=\lim\limits_{\varepsilon \to 0}m^\mathrm{jump}_{\mathcal{E}_\varepsilon}(u_{x,e_1}^\nu,B_\rho(x)).
\end{equation}
Let $(u_\varepsilon,E_\varepsilon)_\varepsilon$ be such that $(u_\varepsilon,E_\varepsilon)\in \mathcal{D}^{\nu}_{\varepsilon}(B_{\rho}(x))$ (recall \MMM \eqref{set of competitors} \NNN and see Figure~\ref{figure1} for an illustration) \EEE and
\begin{equation}
\label{h0>2g0 eq 2}
\lim\limits_{\varepsilon \to 0}\mathcal{E}_\varepsilon(u_\varepsilon,E_\varepsilon,B_\rho(x))=\lim\limits_{\varepsilon \to 0}m^\mathrm{jump}_{\mathcal{E}_\varepsilon}(u_{x,e_1}^\nu,B_\rho(x)).
\end{equation}
Let $S^+_\varepsilon$ and $S^-_\varepsilon$ denote the two regions disconnected by \MMM $E_\varepsilon$, \EEE and \MMM note that by definition \EEE $u_\varepsilon=e_1 \chi_{S^+_\varepsilon}$. 
Let $\delta \in (0,\MMM \rho^2)\EEE$. \MMM Recalling \eqref{function thin layer}, \EEE we extend $(u_\varepsilon,E_\varepsilon)$ \MMM to \EEE $B_{\rho+\delta}(x)$ by setting $(u_\varepsilon,{E_\varepsilon})=(\eta_{x,e_1,\varepsilon}^\nu,{E_\varepsilon^{x,\nu}})$ on $B_{\rho+\delta}(x) \MMM \setminus B_{\rho}(x) \EEE $.  \MMM Accordingly, we extend $S^\pm_\varepsilon$ by $ S^\pm_\varepsilon = B^{\nu,\pm}_{\rho+\delta} \NNN (x) \EEE \setminus  E_\varepsilon^{x,\nu} $ on $B_{\rho+\delta}(x)  \setminus B_{\rho}(x)  $. \EEE If 
\begin{align}\label{choici}
\int_{\partial^* S^+_\varepsilon \cap B_{\rho+\delta}(x)}g_\varepsilon( \QQQ y \EEE ,\nu_{S^+_\varepsilon})\, \mathrm{d}\mathcal{H}^{d-1}  \NNN (y) \EEE \leq \int_{\partial^* S^-_\varepsilon \cap B_{\rho+\delta}(x)}g_\varepsilon(\QQQ y \EEE ,\nu_{S^-_\varepsilon})\, \mathrm{d}\mathcal{H}^{d-1}  \NNN (y), \EEE
\end{align}
we define $\hat{E}_\varepsilon =  {E}_\varepsilon \cup S_\eps^-$ on $B_{\rho+\delta}(x)$. Otherwise, we let $\hat{E}_\varepsilon =   \MMM  S_\eps^- $. In both cases, notice that by and $(f_1)$,   $(g_1)$, $(g_3)$,  and \eqref{h0>2g0 eq 2}  it holds
\begin{equation}
\label{h0>2g0 eq 3}
\lim\limits_{\varepsilon \to 0}m^\mathrm{jump}_{\mathcal{E}_\varepsilon}(u_{x,e_1}^\nu,B_\rho(x))+\MMM c_d\delta \rho^{d-2} \EEE \geq \MMM \liminf_{\varepsilon \to 0} \EEE    \mathcal{E}_\varepsilon(u_\varepsilon,E_\varepsilon,B_{\rho+\delta}(x))\geq \MMM \liminf_{\varepsilon \to 0} \EEE  2\mathcal{E}_\varepsilon(\MMM 0, \EEE \hat{E}_\varepsilon,B_{\rho+\delta}(x)),
\end{equation}
where $c_d>0$ denotes a positive constant depending only on the dimension $d$ \MMM and on $\beta$. Here, in the last step we exploited the alternative choice in \eqref{choici} and the corresponding definition of $\hat{E}_{\varepsilon}$. By a compactness result for sets of finite perimeter,  $\chi_{\hat{E}_\varepsilon}$ converges  in $L^1(B_{\rho+\delta}(x))$. In particular, by construction, ${\hat{E}_\varepsilon}$ converges to ${\Pi^{\nu,-}_x}$ on $B_{\rho+\delta}(x)\setminus B_\rho(x)$. We apply Proposition \ref{fundamentalestimate} and Remark \ref{remark.on.fund.est} with $A=B_{\rho+\frac{\delta}{2}}(x)$, $A^\prime = B_\rho(x)$, $B=B_{\rho+\delta}(x)\setminus \overline{B_\rho(x)}$, $(\MMM 0, \EEE \hat{E}_\varepsilon)$ (in place of \MMM $(u,E)$), \EEE  $(\MMM 0, \EEE \Pi^{\nu,-}_x)$ (in place of \MMM $(v,F)$), \MMM and $\eta>0$. \EEE Thus, we find $D_\varepsilon \MMM \in \mathcal{P}(B_{\rho+\delta}(x)) \EEE $ such that $ D_\varepsilon= {\Pi^{\nu,-}_x}$  \MMM near  $\partial B_{\rho+\delta}(x)$ \EEE and
\begin{equation*}
\MMM \liminf_{\eps \to 0} \EEE \mathcal{E}_\varepsilon(\MMM 0, \EEE D_\varepsilon,B_{\rho+\delta}(x))\leq \MMM (1+\eta) \liminf_{\eps \to 0} \EEE  \mathcal{E}_\varepsilon(\MMM 0, \EEE \hat{E}_\varepsilon,B_{\rho+\delta}(x))+ \MMM c_d \delta \rho^{d-2}  + \eta \EEE.    
\end{equation*}
Recalling \eqref{simplified_cellformula_voids},  using the \MMM previous inequality along with    \eqref{h0>2g0 eq 3} and sending    $\eta \to 0$ \EEE we get
\begin{equation*}
   \MMM \lim\limits_{\varepsilon \to 0} \EEE m^\mathrm{jump}_{\mathcal{E}_\varepsilon}(u_{x,e_1}^\nu,B_\rho(x))+ \NNN 3 \EEE  c_d\delta \rho^{d-2} \EEE \geq \MMM \liminf_{\eps \to 0} \EEE  2\mathcal{E}_\varepsilon( \MMM 0, \EEE D_\varepsilon,B_{\rho+\delta}(x))\geq \NNN 2 \EEE  \liminf_{\eps \to 0} \EEE m^\mathrm{voids}_{\mathcal{E}_\varepsilon}(\Pi^{\nu,-}_x,B_{\rho+\delta}(x)).
\end{equation*}
By dividing both sides by $\gamma_{d-1}(\rho+\delta)^{d-1}$, recalling $\MMM\delta<\rho^2\EEE$, and by passing to the limit $\rho+\delta \to 0$,  we  \MMM get   $h^{\prime  }(x,\nu)\geq 2\hat{g}(x,\nu)$. \EEE

\noindent
\emph{Step 2: $2\hat{g}(x,\nu)\geq h^{\prime \prime}(x,\nu)$.} 
Fix $\rho>0$.  \NNN In view of \EEE \eqref{simplified_cellformula_voids}, let $E_{\varepsilon} \in \mathcal{P}(\Omega)$ be \EEE such that ${E_{\varepsilon}}={\Pi^{\nu,-}_{x}}$ on a neighborhood of $\partial B_{\rho}(x)$ \EEE and  
\begin{equation}
\label{equation 7.45 old}
 \MMM \limsup_{\eps \to 0} \EEE\mathcal{E}_\varepsilon( \MMM 0, \EEE E_{\varepsilon},B_{\rho}(x))\leq  \MMM \limsup_{\eps \to 0} \EEE m^{\mathrm{void}}_{\mathcal{E}_{\varepsilon}}(\Pi^{\nu,-}_{x},B_{\rho}(x)).
\end{equation}
\NNN Let $\delta \in (0,\MMM \rho^2)\EEE$. \EEE We extend $E_{\varepsilon}$ on $B_{\NNN \rho + \delta}(x)$ by setting $E_{\varepsilon}=\Pi^{\nu,-}_{x}$ on $B_{\NNN \rho + \delta}(x)\setminus B_{\rho}(x)$. Thus, by virtue of \eqref{equation 7.45 old} and $(g_3)$, we have
\begin{equation}
\label{equation 7.45}
\limsup\limits_{\varepsilon \to 0}\mathcal{E}_{\varepsilon}(0,E_{\varepsilon},B_{\NNN \rho + \delta}(x))\leq \limsup_{\eps \to 0} \EEE m^{\mathrm{void}}_{\mathcal{E}_{\varepsilon}}(\Pi^{\nu,-}_{x},B_{\rho}(x))+ c_d \delta \NNN \rho^{d-2} \EEE 
\end{equation}
for a constant $c_d>0$ depending only on the dimension $d$ \NNN and on $\beta$. \EEE
By  \cite[Theorem 2.1 and Corollary~2.4]{BraConGar16}, \MMM  the continuity of $g_\eps$,   and a suitable diagonal argument, we can approximate $E_\varepsilon$ with open sets $R_\varepsilon$ such that
\begin{subequations}
    \label{polyhedral approximation}
\begin{align}
   &  \vphantom{\int} R_\varepsilon\:\: \text{has polyhedral boundary},  \label{polyhedral approximation-1}
    \\ &  \MMM \int_{\partial^* R_\varepsilon \cap B_{\NNN \rho + \delta}(x)}g_\varepsilon(z,\nu_{R_\varepsilon })\, \mathrm{d}\mathcal{H}^{d-1}(z)\leq \int_{\partial^* E_\varepsilon \cap B_{\NNN \rho + \delta}(x)}g_\varepsilon(z,\nu_{E_\varepsilon })\, \mathrm{d}\mathcal{H}^{d-1}(z) +  \varepsilon,   \label{polyhedral approximation-2} \EEE 
    \\ & \vphantom{\int} 
   \Vert \chi_{R_\varepsilon}-\chi_{\Pi^{\nu,-}_{x}}\Vert_{L^1(B_{\NNN \rho + \delta}(x)\setminus B_{\rho}(x)}) \le \varepsilon.  \label{polyhedral approximation-3}
\end{align}
\end{subequations}
\NNN 
Below we apply the fundamental estimate on \EEE the sets  as $A= B_{\NNN \rho + \frac{\delta}{2}}(x)$, $A^\prime = B_{\rho}(x)$, and  $B=B_{\NNN \rho + \delta}(x)\setminus \overline{A'}$. \EEE  By applying $(g_3)$ and  \MMM Remark \ref{remark.on.fund.est}   \EEE to the sets $R_\varepsilon$ and $\Pi^{\nu,-}_x$,  we obtain a set of finite perimeter $P_\varepsilon$ such that $P_\varepsilon$ has still polyhedral boundary, ${P_\varepsilon}= {\Pi^{\nu,-}_x}$ \NNN on  $  B_{ \rho + \delta}(x) \setminus B_{ \rho + \frac{\delta}{2}}(x)$, \EEE and using  \eqref{polyhedral approximation-2}--\eqref{polyhedral approximation-3}
\begin{align}
\label{amb bra fund est}
&\limsup_{\eps \to 0}  \hspace{-0.1cm}  \int_{\partial^* P_\varepsilon \cap B_{\NNN \rho + \delta}(x)}g_\varepsilon(\cdot,\nu_{P_\varepsilon })\, \mathrm{d}\mathcal{H}^{d-1}  \leq \MMM \limsup_{\eps \to 0}   (1+ \eta) \Big(\EEE \int_{\partial^* E_\varepsilon \cap B_{\NNN \rho + \delta}(x)}g_\varepsilon(\cdot,\nu_{E_\varepsilon })\, \mathrm{d}\mathcal{H}^{d-1} \MMM   +   c_d  \NNN \rho^{d-2}   \delta \Big) \EEE   \MMM + \eta \EEE
\end{align}
\MMM for $\eta >0$. We now claim that, starting from $P_\varepsilon$, we can construct a void $D_\varepsilon$ disconnecting two separated regions $S_\varepsilon^+$ and $S_\varepsilon^-$ such that $(e_1\chi_{S_\varepsilon^+},D_\varepsilon)$ belongs to $\mathcal{D}^{\nu}_{\varepsilon}(B_{\NNN \rho + \delta}(x))$ and
\begin{equation}
\label{claim 2g=h}
   \limsup\limits_{\varepsilon \to 0}\int_{\partial^* D_\varepsilon \cap B_{\NNN \rho + \delta}(x)}g_\varepsilon(\NNN \cdot, \EEE \nu_{D_\varepsilon})\, \mathrm{d}\mathcal{H}^{d-1}\leq \limsup\limits_{\varepsilon \to 0}\int_{\partial^*  P_\varepsilon \cap B_{\NNN \rho + \delta}(x)} 2g_\varepsilon(\NNN \cdot, \EEE \nu_{  P_\varepsilon})\, \mathrm{d}\mathcal{H}^{d-1} +\NNN c_d  \delta\rho^{d-2}\EEE .
\end{equation}
Since $\int_{\partial^* D_\varepsilon \cap B_{\NNN \rho + \delta}(x)}g_\varepsilon(x,\nu_{D_\varepsilon})\, \mathrm{d}\mathcal{H}^{d-1}\geq m^{\mathrm{jump}}_{\mathcal{E}_\varepsilon}(u^\nu_{x,e_1},  B_{\NNN \rho + \delta} \EEE (x))$, \MMM this  concludes the proof  by \eqref{equation 7.45} \NNN and \eqref{amb bra fund est}, \EEE where as  \NNN in Step 1 we first pass to the limit $\eta \to 0$ and then after division by $ \gamma_{d-1} (\rho+\delta)^{d-1}$ we pass to the limsup in $\rho \to 0$ using $\delta < \rho^2$. \EEE

We now come to the proof of  \eqref{claim 2g=h}. \EEE
To simplify the notation, let us \MMM set \EEE $\Gamma_\varepsilon \defas \partial^* P_\varepsilon \cap B_{\NNN \rho + \delta}(x)$. Up to an \MMM arbitrarily \EEE small error \MMM in the energy, \EEE it is not restrictive to assume \MMM that \EEE $\vert \nu_{\Gamma_\varepsilon} \cdot \nu \vert \neq 0$ \MMM a.e., \EEE where we denote \MMM by \EEE $\nu_{\Gamma_{\varepsilon}}$ the normal to $\Gamma_{\varepsilon}$.   We denote by \MMM $\sigma_\varepsilon$ \EEE the modulus of continuity of $g_\varepsilon$ and by $M_\varepsilon $ the number of \MMM  $(d-2)$-dimensional edges \EEE of $\Gamma_\varepsilon$. Let $(\theta_\varepsilon)_\varepsilon$ be a sequence such that \MMM $\theta_\eps \le \eps$ and 
\begin{equation}
\label{good continuity}
{\rm (i)} \ \ 
    \limsup\limits_{\varepsilon \to 0}\sigma_\varepsilon(\theta_\varepsilon)=0\quad\text{and}\quad \quad {\rm (ii)} \ \  \limsup\limits_{\varepsilon \to 0}M_\varepsilon \theta_\varepsilon=0.
\end{equation}
We define 
\begin{equation}\label{eq: fdef}
    F_\varepsilon\defas \big\{ z \in B_{\NNN \rho + \delta}(x)\colon z=y+t\nu \:\: \text{for}\:\: y \in \Gamma_\varepsilon\:\: \text{and}\:\: t \in \NNN (-\theta_\eps,\theta_\varepsilon) \EEE \big\}.
\end{equation}
\begin{figure}
    \centering
    \begin{tikzpicture} 
\tikzset{x=1.3ex,y=1.3ex} 

\draw[line width=0.35pt, color=color0, ] (56.12, 46.60) -- (31.09, 46.60);
\draw[line width=0.35pt, color=color0, ] (56.12, 46.60) -- (31.09, 23.35);
\draw[line width=0.35pt, color=color0, ] (56.12, 23.35) -- (31.09, 23.35);
\draw[line width=0.35pt, color=color0, ] (56.12, 48.15) -- (31.09, 48.15);
\draw[line width=0.35pt, color=color0, ] (56.12, 48.15) -- (31.09, 24.90);
\draw[line width=0.35pt, color=color0, ] (56.12, 24.90) -- (31.09, 24.90);
\draw[line width=0.35pt, color=color2, ] (31.09, 46.60) -- (31.03, 48.18);
\draw[line width=0.35pt, color=color2, ] (56.12, 46.60) -- (56.14, 48.16);
\draw[line width=0.35pt, color=color2, ] (31.03, 25.02) -- (31.09, 23.35);
\draw[line width=0.35pt, color=color2, ] (56.12, 23.35) -- (56.15, 24.95);
\node[color=color2, anchor=north west, rotate=0.00, xscale=0.9, yscale=0.9, xslant=0.000, yslant=0.000] at (28.04, 25.08) {$2\theta_\eps$};

\node[color=color2, anchor=north west, rotate=0.00, xscale=0.9, yscale=0.9, xslant=0.000, yslant=0.000] at (28.03, 48.57) {$2\theta_\eps$};

\node[color=color2, anchor=north west, rotate=0.00, xscale=0.9, yscale=0.9, xslant=0.000, yslant=0.000] at (56.23, 48.59) {$2\theta_\eps$};

\node[color=color2, anchor=north west, rotate=0.00, xscale=0.9, yscale=0.9, xslant=0.000, yslant=0.000] at (56.23, 25.11) {$2\theta_\eps$};

\end{tikzpicture} 
 \captionof{figure}{An example in which shifting a curve by $\theta_\eps$ creates an additional perimeter of $\NNN 8 \EEE \theta_\eps$.}
    \label{figure2}
   \end{figure}
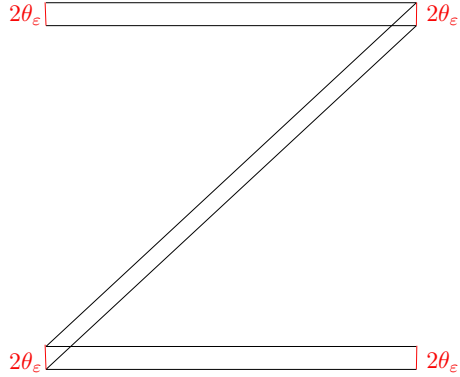
 Using the continuity of $g_\eps$ \NNN and $(g_1)$, \EEE we can check that 
\begin{align}
\label{last equation 2g=h000}
 \int_{\partial^* F_\varepsilon \cap B_{\NNN \rho + \delta}(x)}g_\varepsilon(z,\nu_{F_\varepsilon })\, \mathrm{d}\mathcal{H}^{d-1}&\leq 2\int_{\Gamma_{\varepsilon}}g_\varepsilon(z,\nu_{\Gamma_{\varepsilon} })\, \mathrm{d}\mathcal{H}^{d-1}   + \NNN 2 \EEE \sigma_\varepsilon(\theta_\varepsilon)\mathcal{H}^{d-1}(\Gamma_{\varepsilon}) + CM_\varepsilon \theta_\varepsilon,
\end{align} 
where \MMM $C$  \EEE is a positive constant depending only on the dimension \MMM and on $\rho$, cf.\ Figure \ref{figure2}. 

Let $T_\varepsilon^-=P_\varepsilon$ and $T_\varepsilon^+=B_{\NNN \rho + \delta}(x)\setminus (P_\varepsilon \cup F_\varepsilon)$. One can check that $(e_1 \chi_{T_\varepsilon^+},F_\varepsilon)$ belongs to $\NNN \mathcal{D}^{\nu}_{\theta_\varepsilon} \EEE (B_{\NNN \rho + \delta}(x))$. \MMM Here, we essentially exploit the fact that $\vert \nu_{\Gamma_\varepsilon} \cdot \nu \vert \neq 0$ almost everywhere. \EEE  In particular, using that $\theta_\eps \le \eps$, we obtain \EEE
$
    \Vert \chi_{F_\varepsilon}- \MMM   \chi_{ E^{x,\nu}_{\varepsilon}} \EEE \Vert_{L^1(B_{\NNN \rho + \delta}(x)\setminus B_{\rho}(x))}\leq c\varepsilon
$
\MMM for a universal constant $c>0$. \NNN Now we use the fundamental estimate on the sets  $\hat{A}= B_{\rho + \frac{3\delta}{4}}(x)$, $\hat{A}^\prime = B_{\rho+ \frac{\delta}{2}}(x)$, and  $\hat{B}=B_{\rho + \delta}(x)\setminus {\hat{A}'}$: \EEE  By applying Remark~\ref{remark.on.fund.est} and  Corollary~\ref{set.corollary} on   $F_\eps$ and $E^{x,\nu}_{\varepsilon}$     we modify $F_\varepsilon$ to obtain a void set $D_\varepsilon$ such that $D_\varepsilon$ \MMM disconnects $B_{\NNN \rho + \delta}(x)$,  $D_\eps = E^{x,\nu}_{\varepsilon}$ near $\partial B_{\NNN \rho + \delta}(x)$, \EEE and
\begin{align}
\label{last equation 2g=h}
\limsup_{\eps \to 0}  &\int_{\partial^* D_\varepsilon \cap B_{\NNN \rho + \delta}(x)}g_\varepsilon(\NNN \cdot , \EEE \nu_{D_\varepsilon })\, \mathrm{d}\mathcal{H}^{d-1} \nonumber
\\ \leq 
&(1+\eta) \limsup_{\eps \to 0}  \Big( \int_{\partial^* F_\varepsilon \cap B_{\NNN \rho + \delta}(x)}g_\varepsilon(\NNN \cdot , \EEE \nu_{F_\varepsilon })\, \mathrm{d}\mathcal{H}^{d-1} +\NNN c_d  \delta\rho^{d-2}\EEE  \Big)   + \eta,
\end{align} 
where \MMM as before we  use $(g_3)$ and recall that $c_d$ also depends on $\beta$.  Here, we particularly observe  that condition \eqref{nonintersectionassumption} is satisfied since   $F_\eps = \Pi^\nu_x + \nu ( \NNN -\theta_\eps, \EEE \theta_\eps)$ and $E^{x,\nu}_{\varepsilon} = \Pi^\nu_x + \nu (-\eps,\eps)$  \NNN on $B_{\rho+\delta}(x) \setminus B_{\rho+\frac{\delta}{2}}(x)$.   We  \MMM choose $S_\varepsilon^\pm$ such that $S^+_\eps$, $S^-_\eps$, and  $D_\eps$ form a partition of $B_{\NNN \rho + \delta}(x)$ up to a set of negligible \NNN $\mathcal{L}^{d}$-measure.  \EEE We observe that $(e_1 \MMM \chi_{S_\varepsilon^+}  , D_\varepsilon)\in \mathcal{D}^{\nu}_{\varepsilon}(B_{\NNN \rho + \delta}(x))$.  Eventually,  \MMM combining  \NNN   \eqref{good continuity} \EEE and \eqref{last equation 2g=h000},  sending $\eps \to 0$,  \NNN and using a suitable diagonal argument for $\eta \to 0 $, \EEE  we get \eqref{claim 2g=h}.    \eop
 
  \EEE

%
%

\MMM 
 \begin{remark}\label{periodic-homo-remark}
 In case that   $g$ is periodic of period $1$ with respect to the coordinates, and $g_\eps$ is defined as $g_\varepsilon(x,\nu)\defas g(\frac{x}{\varepsilon},\nu)$, Step 2 can be performed without the continuity assumption in the $x$-variable. Let us sketch the argument. Due to the continuity of $\hat{g}$ in $\nu$, \NNN see \cite[Theorem 5.11]{ambrosio2000fbv}, \EEE it suffices to show the inequality $2\hat{g}(x,\nu)\geq h^{\prime \prime}(x,\nu)$ for $x \in \Omega$ and $\nu \in \mathbb{S}^{d-1} \cap \mathbb{Q}^d$. \NNN Consider again a sequence of competitors $(E_\eps)_\eps$ as in \eqref{equation 7.45 old}. Due to the periodicity, in view of \cite[Proposition 6.1]{BraDefVit96}, we can assume that 
$E_\eps \subset  \lbrace x\in B_{\rho+\delta}(x) \colon  | x \cdot \nu | \le C\eps\rbrace$ for some $C>0$ large enough.   
  The definition of $F_\eps$ in \eqref{eq: fdef} is replaced by the set in $B_{\rho+\delta}(x)$ which is enclosed by the two boundaries
  $$
 \Gamma_\eps^+ \defas  K\eps \nu +  ( \partial^* E_\varepsilon \cap B_{\rho+\delta}(x)), \quad \quad  \Gamma_\eps^- \defas - K\eps \nu +  ( \partial^* E_\varepsilon \cap B_{\rho+\delta}(x)),
$$
where $K \in \N$ is chosen  such that $K \nu \in \mathbb{Z}^d$ \NNN and $K \gg C$.  Here, we note that $K \gg C$ ensures that $ \Gamma_\eps^+ \cap  \Gamma_\eps^- = \emptyset$ and thus $F_\eps$ is well defined. \EEE In particular, note that we start from the original boundary $E_\eps$ and do not perform a polyhedral approximation which would require continuity of $g$ in $x$. The periodicity of $g$ along with $K\nu \in \Z^d$ allows to show an inequality of the form \eqref{last equation 2g=h000} without assuming continuity for $g$ in the $x$-variable. \NNN Here, we also observe that,  due to different construction of $F_\eps$, the lateral contribution (term $CM_\eps \theta_\eps$ in \eqref{last equation 2g=h000}) does not appear. \EEE 
  \end{remark}
  \EEE

\section{\MMM Identification of the $\Gamma$-limit: Proof of Theorem \ref{Identification of Gamma-limit} }
\label{section proof 2}
In this section we present the proof of Theorem \ref{Identification of Gamma-limit}. We highlight that it is sufficient to show \MMM  \eqref{identification gamma limit volume}\EEE--\eqref{h0 pc}.  Indeed,   \eqref{identification gamma limit jump parts}   follows directly from \eqref{h0 pc} and \eqref{2g=h}. \MMM The identification of the bulk density in \eqref{identification gamma limit volume} is very similar to the corresponding estimates in  \cite[Section~6.1]{SolFriCri20}, again  regarding $u$ in the pair $(u,E)$ as an $GSBD^p$-function. For convenience of the reader, we \NNN give \EEE  the proof in Appendix \ref{otherproof-sec}  below, including all adaptations to the present setting. In this section, we address the identification of \NNN the \EEE surface densities \eqref{identification gamma limit voids}--\eqref{h0 pc} where nontrivial adaptations to \cite{SolFriCri20} are in order. We first prove an auxiliary approximation result and then come to the proof of Theorem \ref{Identification of Gamma-limit}. \NNN Eventually, we give the proof of the relaxation result stated in Theorem \ref{relaxation}. \EEE

\subsection{An approximation result for void sets}\label{approxi-void-sec}

We state   the following variant \QQQ of \MMM \cite[Lemma 5.2]{FriPerSol20a}. \MMM We highlight again that this is the only \QQQ reason \MMM  why in Theorem \ref{Identification of Gamma-limit}(iii),(iv) we need to restrict ourselves \NNN to $d = 2$ and $p \ge 2$.  \EEE 
\begin{lemma}
\label{Lemma 5.2}
Let $d = 2$ and $p\geq 2$. Let $\zeta \in \mathbb{R}^2 \setminus \{0\}$ and $\nu \in \mathbb{S}^1$. Let   $(u_n,E_n)_n \subset \mathcal{W}^{1,p}(\MMM B_1)\EEE$ \NNN satisfy \EEE 
\begin{subequations}
\label{hp-Lemma 5.1}
\begin{align}
    & \ \ \lim\limits_{n \to \infty}\Vert e(u_n)\Vert_{L^p(B_{1})}=0, \label{hp-Lemma 5.1-1}
    \\ & \ \  \sup_{n \in \mathbb{N}}\mathcal{H}^{1}( \partial^* E_n\cap B_1)<\infty, \label{hp-Lemma 5.1-2}
    \\ & \ \  u_n \to u^\nu_{0,\zeta}, \:\: \chi_{E_n}\to 0\:\: \text{in measure on}\:\: B_1 \:\: \text{as}\:\: n \to \infty. \label{hp-Lemma 5.1-3}
\end{align}
\end{subequations}
  Let $\theta \in (0,\frac{1}{2}]$ and let $N_{\theta}\defas B_{1}\setminus B_{1-\theta}$. \EEE 
Then, there exists   a  \NNN universal \EEE constant $C>0$,  \EEE a sequence of voids sets $ (D_{n,\theta})_n$, and pairwise disjoint sets $ S^+_{n,\theta}$ and $  S^{-}_{n,\theta}$ with $D_{n,\theta} \cap S^\pm_{n,\theta}=\emptyset$, $ \mathcal{L}^2((B_1 \setminus D_{n,\theta}) \setminus (S^+_{n,\theta}\cup S^-_{n,\theta}))=0$ for all $n \in \mathbb{N}$ such that 
\begin{subequations}
\label{statement lemma 5.2}
\begin{align}
&   \lim\limits_{n \to \infty}\mathcal{L}^2(S^\pm_{n,\theta}\triangle B^{\nu,\pm}_1)  \NNN \le C\theta, \EEE \label{statement lemma 5.2-1}
\\ &   (B^{\nu,\pm}_1 \setminus
 {D}_{n,\theta}) \cap  N_\theta  \NNN  =  S_{n,\theta}^\pm \cap N_\theta\EEE \:\: \text{for all}\:\: n \in \mathbb{N}, \label{statement lemma 5.2-2} 
\\  & \vphantom{\lim_{n}}  \sup_{n \in \mathbb{N}}\mathcal{H}^1(\partial^* D_{n,\theta} \setminus \partial^* E_n)\leq C\theta, \quad \MMM \lim\limits_{n \to \infty}\mathcal{L}^2(D_{n,\theta} ) = 0 , \EEE \label{statement lemma 5.2-3}
\\ &    \vphantom{\lim_{n}} D_{n,\theta} \:\: \text{disconnects}\:\: S^+_{n,\theta} \:\: \text{and}\:\: S^-_{n,\theta}, \:\: \text{i.e.,}\:\: \MMM \mathcal{H}^{1}(\partial^* S^+_{n,\theta} \cap \partial^* S^-_{n,\theta}) \EEE =0. \label{statement lemma 5.2-4}
\end{align}
\end{subequations}
\end{lemma}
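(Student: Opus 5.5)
The plan is to derive the lemma from its function-only counterpart \cite[Lemma 5.2]{FriPerSol20a} and then convert the resulting one-dimensional separating set into the boundary of a thin two-dimensional void.

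\emph{Step 1.} Since $J_{u_n}\subset\partial^*E_n$ and $u_n=0$ on $E_n$, each $u_n$ is an element of $GSBD^p(B_1)$ with $\mathcal{H}^1(J_{u_n})\le\mathcal{H}^1(\partial^*E_n\cap B_1)$, which is bounded by \eqref{hp-Lemma 5.1-2}. Together with \eqref{hp-Lemma 5.1-1} and \eqref{hp-Lemma 5.1-3}, the hypotheses of \cite[Lemma 5.2]{FriPerSol20a} are then met: this is the statement for $d=2$, $p\ge2$, based on the piecewise rigid approximation \cite[Theorem 4.1]{FriedrichSolombrino2018}. It provides closed one-dimensional sets $\Gamma_{n,\theta}$ and a partition $B_1\setminus\Gamma_{n,\theta}=S^+_{n,\theta}\cup S^-_{n,\theta}$ with the following properties.
\begin{itemize}
\item[(a)] $\Gamma_{n,\theta}\subset J_{u_n}\cup Z_{n,\theta}$, where $\mathcal{H}^1(Z_{n,\theta})\le C\theta$.
\item[(b)] $\mathcal{L}^2(S^\pm_{n,\theta}\triangle B_1^{\nu,\pm})\le C\theta$ asymptotically.
\item[(c)] In the annulus $N_\theta$, the set $\Gamma_{n,\theta}$ agrees with the diameter $\Pi^\nu_0$ up to a small error.
\end{itemize}
Heuristically, the construction works because $u_n$ is close to $0$ on one side and to $\zeta\neq0$ on the other. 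Cutting along $J_{u_n}$ and adding the extra pieces $Z_{n,\theta}$, chosen by a coarea and Fubini selection, separates the two regions. This is where the constant $C\theta$ originates.

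\emph{Step 2.} I would thicken $\Gamma_{n,\theta}\setminus J_{u_n}$ into a void of small area. A covering argument gives, for each $n$, finitely many balls $B_{r_j}(x_j)$ covering the compact part $Z_{n,\theta}$ with $\sum_j 2\pi r_j\le C\theta$ and $\sum_j r_j^2\to0$. The latter is achieved by taking radii $r_j\le 1/n$, which is possible because $\mathcal{H}^1$ is the limit of $\mathcal{H}^1_\delta$. Then set
\[
D_{n,\theta}\defas E_n\cup\bigcup_j B_{r_j}(x_j)\cup\big(\text{a thin neighbourhood of }\Pi^\nu_0\cap N_\theta\big),
\]
and replace $S^\pm_{n,\theta}$ by $S^\pm_{n,\theta}\setminus D_{n,\theta}$.

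These choices give the required properties as follows.
\begin{itemize}
\item The estimate $\mathcal{H}^1(\partial^*D_{n,\theta}\setminus\partial^*E_n)\le C\theta$ follows from the perimeter of the balls. The strip in $N_\theta$ has width tending to $0$ and length $2\theta$, so it costs at most $4\theta+o(1)$.
\item $\mathcal{L}^2(D_{n,\theta})\to0$ follows from $\chi_{E_n}\to0$ and the area bounds above.
\item \eqref{statement lemma 5.2-2} is obtained by fixing the strip in $N_\theta$ so that it exactly separates $B_1^{\nu,\pm}$ there. If needed, the Step 1 sets are modified inside $N_\theta$ at cost $C\theta$ to match.
\end{itemize}

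\emph{Step 3 (separation).} I need $\mathcal{H}^1(\partial^*S^+\cap\partial^*S^-)=0$. Any common boundary point of $S^\pm$ lies on $\Gamma_{n,\theta}$, outside $D_{n,\theta}$. The part of $\Gamma_{n,\theta}$ inside the open balls and the strip is removed. The remaining part lies in $J_{u_n}\subset\partial^*E_n$, and there one side has density of $E_n$ equal to $1/2$. Since $u_n=0$ on $E_n$, a point of $J_{u_n}$ with $E_n$ of density $0$ is impossible, so these points are not common boundary of $S^\pm$ (which exclude $E_n$) up to $\mathcal{H}^1$-null sets.

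I expect this density bookkeeping to be the main obstacle. Points of $J_{u_n}$ where $E_n$ sits on only one side must be handled by an argument of the type \cite[Theorem 3.61]{ambrosio2000fbv}: at $\mathcal{H}^1$-a.e.\ such point, $S^+$, $S^-$ and $E_n$ occupy half-discs, so $S^+$ and $S^-$ cannot both have density $1/2$. The covering of the non-jump part is routine by comparison.
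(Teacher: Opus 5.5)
Your plan is correct and is essentially the paper's proof. You obtain a one-dimensional separating interface from the planar piecewise-rigid approximation of \cite{FriPerSol20a}; the paper re-runs this construction from \cite[Proposition 3.4]{FriPerSol20a} rather than quoting Lemma 5.2 there. You then cover the part of the interface outside $\partial^*E_n$ by countably many small balls of total diameter $\lesssim\theta$ and add them to $E_n$. Finally you check separation by noting that at $\mathcal{H}^1$-a.e.\ point of $\partial^*S^+\cap\partial^*S^-$ the void would have density $0$, which is impossible on $\partial^*E_n$ and inside the balls.

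The one point you leave vague is property (c). Your ``modify inside $N_\theta$ at cost $C\theta$'' step needs the localization $S^\pm\subset B^{\nu,\pm}_1\cup\{|x\cdot\nu|\le\theta\}$, so that the change is confined to a strip of width $2\theta$. The paper gets this from Fubini cuts of the components $P^n_j$ at heights $s^n_j\in(0,\theta)$. With only agreement ``up to small error'', replacing the sets by $B^{\nu,\pm}_1$ on $N_\theta$ could create interface along $\partial B_{1-\theta}$ of non-small length.
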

\begin{proof}
The proof relies on an adaptation of \cite[Lemma 5.2]{FriPerSol20a}. For this reason, we do not provide all the details and we mostly highlight the \MMM necessary \EEE changes.

\noindent \emph{Step 1 (Construction with no voids)}. 
The goal of this step is to show that \QQQ $\partial^* E_n\cap B_1$ \EEE can be approximated with a 1-rectifiable set separating $B_1$ \MMM into \EEE two disjoint regions. \MMM By regarding $(u_n)_n$ as $GSBD^p$-functions and by \EEE applying \cite[Proposition 3.4]{FriPerSol20a} to the sequence $(u_n)_n$, with $\theta^2$ instead of $\theta$, we find  Caccioppoli partitions $\MMM \bigcup^{J_n}_{j=1} \EEE P^n_j\cup R_n=B_1$ and rigid motions \NNN $(a^n_j)^{J_n}_{j=1}$ \EEE such that
\begin{subequations}
\label{approximation with pr functions lemma}
\begin{align}
   &  \mathcal{H}^1\big((\partial^* R_n\cap B_1)\setminus \partial^* E_n\big)+\sum\nolimits^{J_n}_{j=1}\mathcal{H}^1\big((\partial^* P^n_j\cap B_1)\setminus \partial^* E_n\big)\leq C_0 \theta^2, \label{approximation with pr functions lemma-1}
   \\ & \mathcal{L}^2(R_n)\leq C^2_0 \theta^2, \:\: \mathcal{L}^2(P_j^n)\geq \theta^6 \:\: \text{for all}\:\: j=1,\ldots,J_n, \label{approximation with pr functions lemma-2}
   \\ &  \max_{1\leq j\leq J_n}\Vert u_n -a_j^n\Vert_{L^\infty(P_j^n)}\leq C_{\theta^2}\Vert e(u_n)\Vert_{L^2(B_1)}, \label{approximation with pr functions lemma-3}
\end{align}
\end{subequations}
 where $C_0\defas \sup_{n \in \mathbb{N}}\mathcal{H}^{1}(\partial^* E_n)+\mathcal{H}^1(\partial B_1)$ and $C_{\theta^2}>0$ depends only on $\theta$. 

Using \eqref{approximation with pr functions lemma-3} and arguing like in {Step 1} of the proof of \cite[Proposition 3.4]{FriPerSol20a} one can show that
\begin{equation}
\label{separation of components}
   \mathcal{L}^2( P_j^n \cap B^-_n)=0 \:\: \text{or}\:\: \mathcal{L}^2(P_j^n\cap B^+_n)=0,
\end{equation}
where $B^+_n\defas \{ x \in B^{\nu,+}_1 \colon \vert u_n(x)-\zeta \vert<\eta_n\}$ and $B_n^-\defas \{ x \in B_1^{\nu,-}\colon \vert u_n(x)\vert <\eta_n\}$, and $(\eta_n)_n \subset (0,\infty)$ is a suitable sequence such that $\eta_n \to 0$, and
\begin{equation}
\label{small volume halfplane}
    \mathcal{L}^2(B^{\nu,\pm}_1\setminus B^\pm_n)\leq \QQQ \frac{\theta^8}{4} \EEE
\end{equation}
for every $n \geq n_{\theta}$, with $n_{\theta}\in \mathbb{N}$ sufficiently large. Now we define
\begin{equation}
\label{theta stripe}
    V_{\theta}\defas \{ x \in B_1 \colon \vert x \cdot \nu\vert \leq \theta\}.
\end{equation}
Fix $P_j^n$. Because of \eqref{separation of components}, we can assume without restriction $\mathcal{L}^2(P_j^n\cap B_n^+)=0$. The case $\mathcal{L}^2(P_j^n\cap B_n^-)=0$ is analogous.  By Fubini's theorem, \eqref{small volume halfplane}, and $\theta\leq\frac{1}{2}$ we get
\begin{equation}
\label{fubini partition}
    \int^{\theta}_{0}\mathcal{H}^1(P^n_j \cap L(s))\, \mathrm{d}s\leq \int^{\frac{1}{2}}_0\mathcal{H}^1((B^{\nu,+}_1\setminus B^+_n)\cap L(s))\, \mathrm{d}s\leq \mathcal{L}^2(B^{\nu,+}_1 \setminus B_n^+)\leq \QQQ \frac{\theta^8}{4} \EEE ,
\end{equation}
where $L(s)=\{x \in B_1 \colon x \cdot \nu=s\}$. Thus, we can choose $s^n_j \in (0,\theta)$ such that for $P_j^{n,+}\defas P_j^n\cap \{ x \in B_1 \colon x \cdot \nu >s_j^n\}$ and $P_j^{n,-}\defas P_j^n\cap \{ x \in B_1 \colon x \cdot \nu <s_j^n\}$ we have 
\begin{equation}
\label{theta 7}
    \mathcal{H}^1(\partial^* P_j^{n,\pm}\setminus\partial^* P^n_j)=\mathcal{H}^1(P_j^n\cap L(s_j^n))\leq \theta^{-1}\frac{\theta^8}{4}\leq \theta^7.
\end{equation}
Notice that $P_j^{n,\pm}\subset B^{\nu,\pm}_1\cup V_{\theta}$. We repeat this construction for each $P_j^n$. Similarly, one can define $R^\pm_n$ in such a way that $R_n=R^+_n\cup R^-_n$, $R_n^\pm \subset B^{\nu,\pm}_1 \cup V_{\theta}$, and 
\begin{equation}
\label{difference perimeter rn}
    \mathcal{H}^1(\partial^* R^\pm_n \setminus \partial^* R_n)\leq \theta^{-1}\mathcal{L}^2(R_n)\leq C^2_0 \theta,
\end{equation}
where we used \eqref{approximation with pr functions lemma-2} in the last inequality. Because of \eqref{theta 7}, we also have
\begin{equation}
\label{difference perimeter pnj}
   \sum^{J_n}_{j=1} \mathcal{H}^1(\partial^* P_j^{n,\pm}\setminus \partial^* P_j^n)\leq J_n \theta^7\leq \MMM \gamma_2 \EEE \theta,
\end{equation}
where we used again \eqref{approximation with pr functions lemma-2}, in particular \MMM the fact \EEE that $\gamma_2\geq\mathcal{L}^2(\bigcup^{J_n}_{j=1}P_j^n)\geq J_n \theta^6$. Summarizing, we have shown
\begin{subequations}
    \label{hyperplanes}
\begin{align}
  &  R^\pm_n \cup \bigcup^{J_n}_{j=1}P_j^{n,\pm}  \subset B^{\nu,\pm}_1 \cup V_{\theta}, \label{hyperplanes-1}
  \\ & \sum^{J_n}_{j=1}\mathcal{H}^1(\partial^* P_j^{n,\pm}\setminus \partial^* P^n_j)+\mathcal{H}^1(\partial^* R^\pm_n\setminus \partial^* R_n)\leq \big(C^2_0+\MMM \gamma_2 \EEE \big)\theta, \label{hyperplanes-2}
\end{align}
\end{subequations}
where in the last inequality we applied \eqref{difference perimeter rn} and \eqref{difference perimeter pnj}. \NNN Next, we consider \EEE
\begin{equation*}
    \hat{T}^+_{n,\theta}\defas \bigcup^{J_n}_{j=1}P^{n,+}_j \cup R_n^+, \quad \hat{T}^-_{n,\theta}\defas \bigcup^{J_n}_{j=1}P_j^{n,-}\cup R^-_n.
\end{equation*}
By \eqref{approximation with pr functions lemma-1} and \eqref{hyperplanes-2} we have \NNN
\begin{align}
\label{void contained in T}
    \mathcal{H}^1\big((\partial^* \hat{T}_{n,\theta}^- \cap \partial^* \hat{T}_{n,\theta}^+)\setminus \partial^* E_n\big) & \leq \sum^{J_n}_{j=1}\big(\mathcal{H}^1(\partial^* P^{n,\pm}_{j}\setminus \partial^* P_j^n)  +\mathcal{H}^1( (\partial^* P^{n}_j \cap B_1 )  \setminus \partial^* E_n) \big)    
 \nonumber
    \\&
  \ \ \ +\mathcal{H}^1(\partial^* R_n^\pm \setminus \partial^* R_n)    
    + \mathcal{H}^1((\partial^* R_n\cap B_1)\setminus \partial^* E_n)
               \nonumber
    \\& \leq \big( \QQQ C^{2}_0+ \gamma_2 \EEE\big)\theta+ C_0 \theta^2\leq 2\big(\MMM C_0^2+ \gamma_2 \EEE\big)\theta.
\end{align}
\NNN Recalling  $N_{\theta}\defas B_{1}\setminus B_{1-\theta}$ \EEE we set
\begin{align*}
    T^+_{n,\theta} & \QQQ \defas \EEE \big(\hat{T}^+_{n,\theta}\cup (N_{\theta}\cap B^{\nu,+}_1\cap V_{\theta})\big)\setminus (N_{\theta}\cap B^{\nu,-}_1\cap V_{\theta}), \\
   T^-_{n,\theta} & \QQQ \defas \EEE \big(\hat{T}^-_{n,\theta}\cup (N_{\theta}\cap B^{\nu,-}_1\cap V_{\theta})\big)\setminus  (N_{\theta}\cap B^{\nu,+}_1\cap V_{\theta}).
\end{align*}
We notice  that, \MMM by \eqref{theta stripe} and \eqref{hyperplanes-1}, \EEE 
\begin{equation}
\label{result before vodification}
\mathcal{L}^2((T^+_{n,\theta}\cup T^-_{n,\theta})\setminus B_1)=0,\qquad  \mathcal{L}^2(B^{\nu,\pm}_{1}\triangle T^\pm_{n,\theta})\leq 2\theta, \qquad   N_{\theta}\cap B^{\nu,\pm}_1 \NNN = N_\theta \cap \EEE T^\pm_{n,\theta}.
\end{equation}
 In addition,
\begin{align}
\label{control perimeter on new sets}
\mathcal{H}^1((\partial^* T^+_{n,\theta}\cap \partial^* T^-_{n,\theta})\setminus \partial^* E_n)&\leq  \MMM c \theta \EEE  + \mathcal{H}^1\big((\partial^* \hat{T}^+_{n,\theta}\cap \partial^* \hat{T}^-_{n,\theta})\setminus \partial^* E_n\big)  \leq c\theta+ 2\big( \MMM C^2_0+ \gamma_2 \EEE \big)\theta
\end{align}
for some universal constant $c>0$, where we used \eqref{void contained in T} and 
\begin{equation*}
     \mathcal{H}^1\big((\partial^* T^+_{n,\theta}\cap \partial^* T^-_{n,\theta})\setminus (\partial^* \hat{T}^+_{n,\theta}\cap \partial^* \hat{T}^-_{n,\theta})\big) \leq \mathcal{H}^1\big(\partial(N_{\theta}\cap B^{\nu,-}_1\cap V_{\theta})\big)+\mathcal{H}^1\big(\partial(N_{\theta}\cap B^{\nu,+}_1\cap V_{\theta})\big)\leq c \theta.
\end{equation*}
\emph{Step 2 (Definition of $S^+_{n,\theta}$, $S_{n,\theta}^-$, $D_{n,\theta}$)}. \EEE  Now we replace the interface \MMM $\partial^* T^+_{n,\theta}\cap \partial^* T^-_{n,\theta}$ \EEE constructed in the previous step suitably by voids. \NNN We have that \EEE  $2\mathcal{H}^{1}((\partial^* {T}_{n,\theta}^- \cap \partial^* {T}_{n,\theta}^+)\setminus \partial^* E_n)\geq \mathcal{S}^{1}((\partial^* {T}_{n,\theta}^- \cap \partial^* {T}_{n,\theta}^+)\setminus \partial^* E_n)$, where $\mathcal{S}^{1}$ denotes the $1$-dimensional spherical Hausdorff measure (see \cite[Chapter 3]{alma991013725792303131}). For every $n\in \mathbb{N}$,  $\theta \in (0,\frac{1}{2}]$, \NNN and   $\delta>0$ sufficiently small \EEE   there exists a countable family of balls $(B_i)_{i\in \mathcal{I}}$, depending on $\delta$, $n$,  and $\theta$ \NNN such that \EEE
\begin{subequations}
    \label{consequence of Hausdorff def}
\begin{align}
& (\partial^* {T}_{n,\theta}^- \cap \partial^* {T}_{n,\theta}^+)\setminus \partial^* E_n \subset \bigcup_{i\in \mathcal{I}}B_i=: F_{n,\theta}^{\delta} \label{consequence of Hausdorff def-1}
\\ & \mathrm{diam}(B_i)\leq \delta ,  \label{consequence of Hausdorff def-2}
\\ &  \vphantom{\sum_{i=1}^{N}}   2 \mathcal{H}^{1}(\partial^* {T}_{n,\theta}^- \cap \partial^* {T}_{n,\theta}^+)\setminus \partial^* E_n) \NNN\geq \EEE \sum_{i \in \mathcal{I}}\mathrm{diam}(B_i)-\theta, \label{consequence of Hausdorff def-3}
\end{align}
\end{subequations}
where $\mathrm{diam}(B_i)$ denotes the diameter of $B_i$.  In particular, \eqref{control perimeter on new sets} and \eqref{consequence of Hausdorff def-2}--\eqref{consequence of Hausdorff def-3} imply that there exists a \NNN universal \EEE constant $C>0$  such that
\begin{align}
\label{delta control voidification: perimeter}
\mathcal{H}^{1}\big(\partial^* F^{\delta}_{n,\theta}\big) \le C\theta, \quad  \quad    \mathcal{L}^d(F^{\delta}_{n,\theta}) \le C\theta \delta.    
\end{align}
\EEE

We define
\begin{equation}
\label{def Dn}
   {D}^{\delta}_{n,\theta}\defas E_n \cup F^{\delta}_{n,\theta}.
\end{equation}
By virtue of \eqref{delta control voidification: perimeter}, we have 
\begin{equation}
\label{control voidification}
 \mathcal{H}^1\big(\partial^* {D}^{\delta}_{n,\theta}\setminus \partial^* E_{n}\big)\leq C \theta, \quad  \quad \mathcal{L}^2({D}^{\delta}_{n,\theta})\leq \mathcal{L}^2(E_n)+C\theta\delta.
\end{equation}
Now we define the sets $  S^\pm_{n,\theta}$. First, we let
\begin{equation*}
   {S}^{\delta,+}_{n,\theta}\defas {T}_{n,\theta}^+\setminus 
   D^\delta_{n,\theta}\EEE,\quad   {S}^{\delta,-}_{n,\theta}\defas {T}_{n,\theta}^-\setminus 
    D^\delta_{n,\theta}.
\end{equation*}
\NNN Due to \EEE  \eqref{result before vodification} and \eqref{control voidification}, the sets \QQQ $S^{\delta, +}_{n,\theta}$, $S_{n,\theta}^{\delta, -}$, $D_{n,\theta}^{\delta}$ \EEE satisfy
\begin{subequations}
\label{added equation voidification}
\begin{align}
&  \mathcal{L}^2\big((D^\delta_{n,\theta}\cup S^{\delta,+}_{n,\theta}\cup S^{\delta,-}_{n,\theta})\triangle B_1\big)=0,
\\ &   \mathcal{L}^2(S^{\delta,\pm}_{n,\theta}\triangle B^{\nu,\pm}_1)\leq   \NNN 2\theta + \EEE  C \theta\delta+\mathcal{L}^2(E_n),
\\ &  N_{\theta}\cap (B^{\nu,\pm}_{1}\setminus D^\delta_{n,\theta}) \NNN =   N_{\theta}\cap  S^{\delta, \pm}_{n,\theta} .    \EEE
\end{align}
\end{subequations}
In addition, exploiting the construction of $ F^\delta_{n,\theta}$,   \eqref{consequence of Hausdorff def-1}, and \eqref{def Dn}, \EEE we can check that  
\begin{equation}
\label{added equation voidification 2}
  \mathcal{H}^{1}(\partial^* {S}^{\delta,+}_{n,\theta}  \cap \partial^* {S}^{\delta,-}_{n,\theta}  )=\mathcal{H}^{1}\big(\partial^*  {T}^+_{n,\theta}  \cap \partial^*  {T}^-_{n,\theta}\cap ({D}^\delta_{n,\theta})^0\big)=0.
\end{equation}
In particular, $  {D}^\delta_{n,\theta}$ disconnects $   {S}_{n,\theta}^{\delta,+} \EEE $ and $  {S}^{\delta,-}_{n,\theta} \EEE $. Now, the definition of   $S^{\pm}_{n,\theta}$ and $D_{n,\theta}$, and the \EEE proof follow by a diagonal argument with a suitable sequence $  \delta_n \to 0$. Indeed,    \eqref{statement lemma 5.2-1}--\eqref{statement lemma 5.2-2} and \eqref{statement lemma 5.2-4} follow from \eqref{added equation voidification} and  \eqref{added equation voidification 2} \MMM while \EEE \eqref{statement lemma 5.2-3} follows from \eqref{control voidification}, \NNN where we also use $\mathcal{L}^d(E_n)\to 0$.
\end{proof}
 
\begin{center}
\begin{figure}
\begin{tikzpicture} 
\tikzset{x=1.7ex,y=1.7ex} 
\definecolor{color5}{HTML}{0801ff}
\definecolor{color4}{HTML}{ea01d9}
\draw[line width=0.34pt, color=color0, fill=color1, fill opacity=0.00] (37.82, 33.44) circle [radius=12.76];
\draw[line width=0.34pt, color=color5, ] (34.52, 38.13) .. controls (31.97, 39.02) and (33.00, 38.68) .. (25.15, 34.81);
\draw[line width=0.34pt, color=color5, ] (34.47, 38.15) .. controls (30.29, 31.85) and (28.19, 30.71) .. (25.21, 31.76);
\draw[line width=0.34pt, color=color5, ] (49.99, 29.67) .. controls (38.19, 41.52) and (37.82, 40.44) .. (49.07, 39.50);
\draw[line width=0.34pt, color=color5, ] (40.88, 39.43) -- (34.48, 38.13);
\draw[line width=0.34pt, color=color4, fill=color1, fill opacity=0.00] (34.78, 38.13) circle [radius=0.29];
\draw[line width=0.34pt, color=color4, fill=color1, fill opacity=0.00] (35.16, 38.28) circle [radius=0.26];
\draw[line width=0.34pt, color=color4, fill=color1, fill opacity=0.00] (35.50, 38.37) circle [radius=0.19];
\draw[line width=0.34pt, color=color4, fill=color1, fill opacity=0.00] (35.88, 38.54) circle [radius=0.26];
\draw[line width=0.34pt, color=color4, fill=color1, fill opacity=0.00] (36.28, 38.53) circle [radius=0.14];
\draw[line width=0.34pt, color=color4, fill=color1, fill opacity=0.00] (36.67, 38.58) circle [radius=0.25];
\draw[line width=0.34pt, color=color4, fill=color1, fill opacity=0.00] (37.14, 38.67) circle [radius=0.17];
\draw[line width=0.34pt, color=color4, fill=color1, fill opacity=0.00] (37.47, 38.71) circle [radius=0.31];
\draw[line width=0.34pt, color=color4, fill=color1, fill opacity=0.00] (38.05, 38.80) circle [radius=0.51];
\draw[line width=0.34pt, color=color4, fill=color1, fill opacity=0.00] (38.49, 39.13) circle [radius=0.56];
\draw[line width=0.34pt, color=color4, fill=color1, fill opacity=0.00] (38.96, 39.07) circle [radius=0.14];
\draw[line width=0.34pt, color=color4, fill=color1, fill opacity=0.00] (39.23, 39.10) circle [radius=0.12];
\draw[line width=0.34pt, color=color4, fill=color1, fill opacity=0.00] (39.59, 39.18) circle [radius=0.15];
\draw[line width=0.34pt, color=color4, fill=color1, fill opacity=0.00] (39.90, 39.16) circle [radius=0.29];
\draw[line width=0.34pt, color=color4, fill=color1, fill opacity=0.00] (39.37, 39.13) circle [radius=0.09];
\draw[line width=0.34pt, color=color4, fill=color1, fill opacity=0.00] (40.49, 39.35) circle [radius=0.23];
\draw[line width=0.34pt, color=color4, fill=color1, fill opacity=0.00] (40.19, 39.26) circle [radius=0.30];
\draw[line width=0.34pt, color=color4, fill=color1, fill opacity=0.00] (40.62, 39.43) circle [radius=0.29];
\node[color=color5, anchor=north west, scale=1.00, rotate=0.00, text width=0.58cm] at (27.73, 35.55) { $E_n$};
\node[color=color5, anchor=north west, scale=1.00, rotate=0.00, text width=0.58cm] at (44.80, 37.54) {  $E_n$};
\node[color=color4, anchor=north west, scale=1.00, rotate=0.00, text width=0.68cm] at (35.68, 37.16) {$F_{n,\theta}^{\delta}$};
\node[color=color0, anchor=north west, scale=1.00, rotate=0.00, text width=0.68cm] at (35.68, 43.08) {$S^{\delta,+}_{n,\theta}$};
\node[color=color0, anchor=north west, scale=1.00, rotate=0.00, text width=0.68cm] at (35.68, 31.09) {$S^{\delta,-}_{n,\theta}$};
\end{tikzpicture} 
    \label{figure4}
    \captionof{figure}{Example of \NNN the \EEE construction of the void $D_{n,\theta}^\delta$. }
\end{figure}
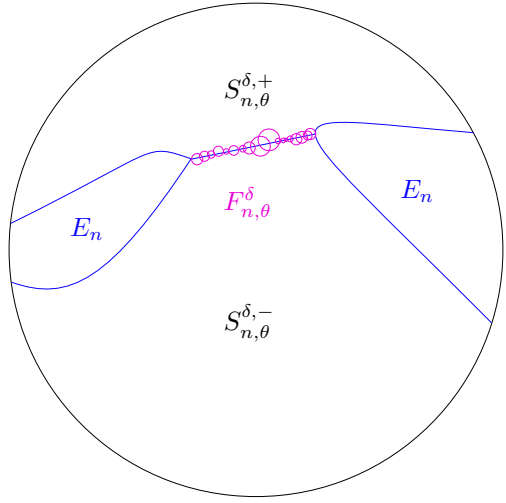    
\end{center}
\subsection{Surface part: Proof of \eqref{identification gamma limit voids} and \eqref{h0 pc}}
Before we come to the proof, we state the following general fact. 
\begin{remark}
\label{remark recovery sequences}
We will use this general property of recovery sequences: if $(u_\varepsilon,E_{\varepsilon})_{\varepsilon}$ is a recovery sequence for $(u,E)$ with respect to $\mathcal{E}_{\varepsilon}(\cdot,\cdot,\Omega)$, then $(u_{\varepsilon}, E_{\varepsilon})_{\varepsilon}$ is still a recovery sequence for $(u,E)$ with respect to $\mathcal{E}_{\varepsilon}(\cdot,\cdot,A)$, \EEE for every $A \in \mathcal{A}(\Omega)$ such that $\mathcal{E}_0(u,E,\partial A)=0$. This follows from the fact that $\mathcal{E}_0(u,E,\cdot)$ is a Radon measure for each \NNN $(u, E) \in \mathcal{G}^p(\Omega)$. \EEE
\end{remark}

\begin{proof}[Proof of Theorem \ref{Identification of Gamma-limit}(ii).]
\emph{Step 1: $g_0(x,\nu_{E}\MMM (x) \EEE )\leq \hat{g}(x,\nu_{E}\MMM (x) \EEE )$ \MMM for  $\mathcal{H}^{d-1}$-a.e.\ $\NNN x \in \EEE \partial^*E\cap \Omega$}.  \MMM  Recalling \eqref{infimumproblem1} and \eqref{simplified_cellformula_voids} (for $\mathcal{E}_0$ in place of $\mathcal{E}_\eps$),  we clearly have $m_{\mathcal{E}_0}(  0 ,  \Pi^{\nu_E(x),-}_{x},B_{\rho}(x)) \le m_{\mathcal{E}_0}^{\mathrm{voids}}(\Pi_x^{\nu_E(x),-}, B_{\rho}(x))$. Arguing like in \cite[Proposition 3.14]{FriPerSol20a} one can show that \MMM the density $\hat{g}$ given in \eqref{g1 = g2} satisfies \EEE
\begin{equation*}
    \hat{g}(x,\nu)=\limsup\limits_{\rho \to 0}\frac{\MMM m_{\mathcal{E}_0}^{\mathrm{voids}}(\Pi_x^{\nu,-}, B_{\rho}(x))}{\MMM \gamma_{d-1} \EEE \rho^{d-1}}\:\: \text{for all}\:\: \MMM x \in \Omega, \EEE \nu \in \mathbb{S}^{d-1}.
\end{equation*}
\MMM Now, the inequality follows from \EEE \eqref{def g_0}.

\noindent
\emph{Step 2: $g_0(x,\nu_{E}\MMM (x) \EEE )\ge \hat{g}(x,\nu_{E}\MMM (x) \EEE )$ \MMM for  $\mathcal{H}^{d-1}$-a.e.\ $ \NNN x \in \EEE \partial^*E \cap \Omega$}.  Now we prove the converse inequality.   \MMM By \eqref{the e-0 energy} applied on the pair $(0,E)$ and the Radon-Nikod\'ym Theorem we have  \EEE
 that
\begin{equation}\label{new equation g hat0}
 g_0(x,\nu_E(x))=\lim\limits_{\rho \to 0}\frac{\mathcal{E}_0(0,E,B_{\rho}(x))}{\gamma_{d-1}\rho^{d-1}}\:\: \text{for $\mathcal{H}^{d-1}$-a.e.}\:\: x \in \partial^*E\cap \Omega.     
\end{equation}
\MMM  We can assume that  $x \in \partial^*E\cap \Omega$ is chosen with this property and \EEE  as  an approximate jump point of $\chi_{E}$. \MMM Indeed,  recall \EEE that the set of jump points of $\chi_{E}$ has full $\mathcal{H}^{d-1}\llcorner (\partial^* E\cap \Omega)$-measure. \MMM For notational simplicity, we write $\nu$ in place of $\nu_E(x)$ in the sequel. \EEE

Let $(u_{\varepsilon},E_\varepsilon)_\varepsilon$ be a recovery sequence for $(0,E)$. Since $u_{\varepsilon}\to 0$, \MMM in view of $(f_1)$, \EEE also $(0,E_{\varepsilon})_{\varepsilon}$ defines a recovery sequence \NNN for \EEE $(0,E)$. 
In view of \eqref{g1 = g2}, we can find a sequence $(\rho_n)_n$ with $\rho_n \searrow 0$ such that  $\mathcal{E}_0(0,E,\partial B_{\rho_n}(x))=0$ and \EEE
\begin{equation}
\label{new equation g hat}
    \hat{g}(x,\nu)=\lim\limits_{n \to \infty}\limsup\limits_{\varepsilon \to 0}\frac{m^{\mathrm{voids}}_{\mathcal{E}_{\varepsilon}}(\Pi^{\nu,-}_x,B_{\rho_n}(x))}{\MMM \gamma_{d-1}\rho^{d-1}_n\EEE}.
\end{equation}
Then, since $(0,E_{\varepsilon})_{\varepsilon}$ is a recovery sequence for $(0,E)$ on $\Omega$ and $\mathcal{E}_0(0,E,\partial B_{\rho_n}(x))=0$, for every $n \in \mathbb{N}$ there exists $\varepsilon_n$ such that for $\varepsilon \leq \varepsilon_n$ it holds
\begin{equation}
\label{PC inequality 0}
\frac{1}{\gamma_{d-1}\rho^{d-1}_n}\mathcal{E}(0,E,B_{\rho_n}(x))\geq \frac{1}{\gamma_{d-1}\rho^{d-1}_n}\mathcal{E}_{\varepsilon}(0,E_{\varepsilon},B_{\rho_n}(x)) -\frac{1}{n},
\end{equation}
\MMM see Remark \ref{remark recovery sequences}. \EEE Because of \eqref{new equation g hat}, one can choose the sequence $(\varepsilon_n)_n$ to further satisfy
\begin{equation}\label{alsotouse}
    \hat{g}(x,\nu)=\lim\limits_{n \to \infty}\frac{m^{\mathrm{voids}}_{\mathcal{E}_{\varepsilon_n}}(\Pi^{\nu,-}_x,B_{\rho_n}(x))}{\gamma_{d-1}\rho^{d-1}_{n}}.
\end{equation}
We define $\tilde{E}^n_{\varepsilon}=\frac{E_{\varepsilon}-x}{\rho_n}$ and $\tilde{E}^n=\frac{E-x}{\rho_n}$. Notice that $\chi_{\tilde{E}^n_\varepsilon} \to \chi_{\tilde{E}^n}$ on $B_1$ \MMM as $\eps \to 0$ \EEE since $\chi_{E_\varepsilon}\to \chi_E$ in $L^1(\Omega)$. Since $x \in \partial^* E \cap \Omega$ is an approximate jump point of $\chi_E$, we find \MMM (up to performing a reflection) \EEE $\MMM \chi_{\tilde{E}^n} \EEE \to \chi_{\MMM \Pi^{\nu,-}_0 \EEE }$ in $L^1(B_1)$ for $n \to \infty$. Thus,  using a diagonal argument, up to passing to smaller $\varepsilon_n$, we can assume \MMM that \EEE
$\varepsilon_n \to 0$ as $n \to \infty$ and
\begin{equation}
\label{eq convergence rescaled set}
    \chi_{F_n} \to \chi_{\Pi_0^{\nu,-}}\:\: \text{in $L^1(B_1)$},
\end{equation}
where  $F_n\NNN \defas \EEE \tilde{E}^n_{\varepsilon_n}$. By a change of \MMM variables and \eqref{PC inequality 0} \EEE we get that 
\begin{align}
\label{converse inequality 1 voids}
    \frac{1}{\gamma_{d-1}}\int_{\partial^* F_n\cap B_1}g_{\varepsilon_n}(x+\rho_n y, \nu_{F_n})\, \mathrm{d}\mathcal{H}^{d-1}(y)&=\frac{1}{\gamma_{d-1}\rho^{d-1}_n}\int_{\partial^* \MMM E_{\varepsilon_n} \EEE \cap B_{\rho_n}(x)}g_{\varepsilon_n}(z,\nu_{E_{\varepsilon_n}})\, \mathrm{d}\mathcal{H}^{d-1}(z) \nonumber
    \\& \leq \frac{\mathcal{E}_0(0,E,B_{\rho_n}(x))}{\gamma_{d-1}\rho^{d-1}_n}+\frac{1}{n}.
\end{align}
Let $\eta>0$ and let $N_0 \subset B_1$ be a neighborhood of $\partial B_1$ such that $\beta \mathcal{H}^{d-1}(\Pi^{\nu}_0\cap N_0)\leq \eta$. We apply the fundamental estimate  (Proposition \ref{fundamentalestimate} \MMM and   Remark \ref{remark.on.fund.est}) \EEE  with $(0,F_n)$ \MMM (in place of $(u,E)$), \EEE $(\MMM 0, \EEE \Pi^{\nu,-}_{0})$ \MMM (in place of $(v,F)$), \EEE $A \subset \MMM B_1 \EEE$, $A^\prime \subset \subset A$ such that $B_1 \setminus \overline{A^\prime} \subset N_0$, and $B=B_{1} \setminus \overline{A^\prime}$ to the functional 
\begin{equation}\label{sametildeE}
    \tilde{\mathcal{E}}_{\varepsilon}  (u,E,  U ) \EEE =\int_{ U  }f_{\varepsilon}\big(x+\rho_{\varepsilon}y,e(u)(y)\big)\, \mathrm{d}y + \int_{\partial^* E \cap  U  }g_{\varepsilon}(x+\rho_{\varepsilon}y, \MMM \nu_E)  \, \mathrm{d}\mathcal{H}^{d-1} \NNN (y). 
\end{equation}
Indeed, notice that this functional satisfies the same growth conditions \MMM as \EEE $\mathcal{E}_{\varepsilon}$. In this way, due to \eqref{eq convergence rescaled set}, we can find voids sets $(D_n)_n$ such that ${D_n}={\Pi^{\nu,-}_0}$ near $\partial B_1$ and
\begin{equation*}
  \MMM     \limsup\limits_{n\to \infty} \EEE  \int_{\partial^* D_n \cap B_1}g_{\varepsilon_n}(x+\rho_n y,\nu_{D_n}  )\, \mathrm{d}\mathcal{H}^{d-1}\MMM (y) \EEE\leq    \MMM     \limsup\limits_{n\to \infty} \EEE \int_{\partial^* F_n\cap B_1}g_{\varepsilon_n}(x+\rho_n y, \nu_{F_n} )\, \mathrm{d}\mathcal{H}^{d-1}(y)+\NNN 2\eta. \EEE
\end{equation*}
By setting $\tilde{D}_n \defas x+\rho_n D_n$, again using a change of variables and \eqref{converse inequality 1 voids}, we get
\begin{align*}
\label{eq XYZ}
    \limsup\limits_{n\to \infty}\frac{1}{\gamma_{d-1}\rho_n^{d-1}}\int\limits_{\partial^* \tilde{D}_n \MMM \cap B_{\rho_n}(x)} \hspace{-0.4cm}g_{\varepsilon_n}(\NNN \cdot , \EEE \nu_{\tilde{D}_n})\, \mathrm{d}\mathcal{H}^{d-1}&\leq \limsup\limits_{n \to \infty}\frac{1}{\gamma_{d-1}\rho^{d-1}_n}\int\limits_{\partial^* E_{\varepsilon_n}\cap B_{\rho_n}(x)} \hspace{-0.4cm} g_{\varepsilon_n}(\NNN \cdot, \EEE \nu_{E_{\varepsilon_n}})\, \mathrm{d}\mathcal{H}^{d-1} +\NNN\frac{ 2  \eta}{\gamma_{d-1}} \EEE \\&
 \leq \limsup\limits_{n \to \infty}\frac{\mathcal{E}(0,E,B_{\rho_n}(x))}{\gamma_{d-1}\rho^{d-1}_n}+ \NNN\frac{ 2  \eta}{\gamma_{d-1}}. \EEE
\end{align*}
Since ${\tilde{D}_n}={\Pi^{\nu,-}_x}$ near $\partial B_{\rho_n}(x)$, by applying \MMM $(f_1)$,  \eqref{new equation g hat0}, \eqref{alsotouse},  \EEE  and the arbitrariness of $\eta>0$, \EEE it follows $\hat{g}(x,\nu)\leq g_0(x,\nu)$.
\end{proof}

\begin{proof}[Proof of Theorem \ref{Identification of Gamma-limit}(iii).]
\MMM From \EEE now on we assume $d=2$ and $p \geq 2$. To simplify the notation, we set $\zeta=[u](x)$ and $\nu=\nu_u(x)$. This proof is an adaptation of the proof in \cite[Theorem 2.4, (2.21)]{FriPerSol20a}.  

\noindent \emph{Step 1: $h_0(x,\zeta,\nu)\leq \hat{h}(x,\nu)$ \MMM for $\mathcal{H}^{1}$-a.e.\ $x \in J_u\cap \NNN E^0 $\EEE}. 

\MMM Recall the definitions in \eqref{set of competitors}--\eqref{simpilfied_cellformula_jump}. Let $(\hat{u}_{\varepsilon},E_{\varepsilon})\in \mathcal{D}^{\nu}_{\varepsilon}(B_{\rho}(x))$ \MMM be \EEE such that
\begin{equation*}
    \mathcal{E}_{\varepsilon}(\MMM \hat{u}_{\varepsilon}, \EEE E_{\varepsilon},B_{\rho}(x))\leq m^{\mathrm{jump}}_{\mathcal{E}_{\varepsilon}}(u^{\nu}_{x,\MMM e_1},B_{\rho}(x))+\varepsilon.
\end{equation*}
\MMM  Denote by $S^\pm_\eps$ the sets corresponding to the void $E_\eps$, as given in \eqref{set of competitors}. We define ${u}_{\varepsilon} = \hat{u}_{\varepsilon} + (\zeta-e_1)\chi_{S^+_\eps}$. We note that $ {u}_{\varepsilon} = \eta^\nu_{x,\zeta,\eps}$  near $\partial B_{\rho}(x)$ (recall  \eqref{function thin layer}) and that 
\begin{equation}\label{nehmermit0}
    \mathcal{E}_{\varepsilon}(u_{\varepsilon},E_{\varepsilon},B_{\rho}(x)) =    \mathcal{E}_{\varepsilon}(\hat{u}_{\varepsilon},E_{\varepsilon},B_{\rho}(x))\leq m^{\mathrm{jump}}_{\mathcal{E}_{\varepsilon}}(u^{\nu}_{x,\MMM e_1},B_{\rho}(x))+\varepsilon.
\end{equation} \EEE 
Let $\delta=\delta(\rho)>0$ with $0<\delta<\rho^2$.
Extend $(u_{\varepsilon},E_{\varepsilon})$ on $B_{\rho+\delta}(x)$ by setting it equal to $(\MMM \eta^\nu_{x,\zeta,\eps}, \EEE E^{x,\nu}_{\varepsilon})$ (recall \eqref{thin layer}) outside \MMM of \EEE $B_{\rho}(x)$. Notice that by construction and \MMM $(g_3)$ \EEE it holds that
\begin{equation}\label{nehmermit}
    \mathcal{E}_{\varepsilon}(u_{\varepsilon},E_{\varepsilon},B_{\rho+\delta}(x))\leq \mathcal{E}_{\varepsilon}(u_{\varepsilon},E_{\varepsilon},B_{\rho}(x))+4\beta \delta.
\end{equation}
\MMM By a compactness argument for sets of finite perimeter, \EEE up to extracting a subsequence (not relabeled), $(u_{\varepsilon},E_{\varepsilon})\to (v,E)$  \MMM for some $(v,E) \in \mathcal{G}^p(B_{\rho+\delta}(x))$ with  $v(B_{\rho+\delta}(x)) = \{0,\zeta\}$ \EEE and $v=u^\nu_{x,\zeta}$, ${E}=\emptyset$ on $B_{\rho+\delta}(x)\setminus B_{\rho}(x)$. Hence, \MMM using the $\Gamma$-convergence of $(\mathcal{E}_\eps)_\eps$ to $\mathcal{E}_0$ and \eqref{nehmermit0}--\eqref{nehmermit}, \EEE we have
\begin{align*}
    m_{\mathcal{E}_0}(u^\nu_{x,\zeta},\emptyset,B_{\rho+\delta}(x))\leq \mathcal{E}_0({v},E,B_{\rho+\delta}(x))&\leq \liminf\limits_{\varepsilon \to 0}\mathcal{E}_{\varepsilon}(u_{\varepsilon},E_{\varepsilon},B_{\rho}(x))+4\beta\delta
    \\& \leq\liminf\limits_{\varepsilon \to 0}m^{\mathrm{jump}}_{\mathcal{E}_{\varepsilon}} ( \MMM u^{\nu}_{x, e_1}, \EEE B_{\rho}(x))+4\beta\delta.
\end{align*}
Dividing  both sides \MMM by \EEE $\gamma_{1}(\rho+\delta)$, sending $\rho \to 0$, and using \eqref{def h_0}, \MMM \eqref{h1 = h2}, \EEE $\delta<\rho^2$, we get  $h_0(x,\zeta,\nu)\leq \hat{h}(x,\nu)$.

\noindent \emph{Step 2: $\hat{h}(x,\nu)\leq h_0(x,\zeta,\nu)$ \MMM for $\mathcal{H}^{1}$-a.e.\ $x \in J_u\cap \NNN E^0$\EEE}.  \MMM By \eqref{the e-0 energy} applied on the pair $(u,\emptyset)$ and the Radon-Nikod\'ym Theorem we have  \EEE
\begin{equation}
\label{the equation after 7.51}
 h_0(x,\zeta,\nu)=\lim\limits_{\rho \to 0}\frac{\mathcal{E}_0(u,\emptyset,B_{\rho}(x))}{\gamma_{1}\rho}\:\: \text{for \MMM $\mathcal{H}^{1}$\EEE-a.e.}\:\: x \in J_u \cap \NNN E^0.  \EEE     
\end{equation}
\MMM In view of \eqref{h1 = h2}, \EEE we can find a sequence $\rho_n\to 0$ such that $\mathcal{E}_0(u,\emptyset,\partial B_{\rho_n}(x))=0$ and
\begin{equation}
\label{hat h subsequence}
    \hat{h}(x,\nu)=\lim\limits_{n \to \infty} \limsup_{\varepsilon \to 0} \frac{m^{\mathrm{jump}}_{\mathcal{E}_{\varepsilon}}(u^{\nu}_{x,\MMM e_1 \EEE },B_{\rho_n}(x))}{\gamma_{1}\rho_{n}}.
\end{equation}
\MMM In what follows, we can without restriction fix $x \in J_u$ satisfying \eqref{the equation after 7.51}--\eqref{hat h subsequence} as these properties hold up to an $\mathcal{H}^1$-negligible set in $J_u$. \EEE

Let $(u_\varepsilon,E_\varepsilon)_\varepsilon$ be a recovery sequence for $(u,\emptyset)$ on $\Omega$. Then, \MMM by \EEE $\mathcal{E}_0(u,\emptyset,\partial B_{\rho_n}(x))=0$ \MMM and Remark~\ref{remark recovery sequences}, \EEE for every $n \in \mathbb{N}$ there exists $\varepsilon_n$ such that for $\varepsilon \leq \varepsilon_n$ it holds
\begin{equation}
\label{PC inequality voids 0}
\frac{1}{\gamma_{1}\rho_n}\mathcal{E}_0(u,\emptyset,B_{\rho_n}(x))\geq \frac{1}{\gamma_1\rho_n}\mathcal{E}_{\varepsilon}(u_{\varepsilon},E_{\varepsilon},B_{\rho_n}(x)) -\frac{1}{n}.
\end{equation}
By virtue of \eqref{hat h subsequence}, we can choose the sequence $(\varepsilon_n)_n$ such that $\varepsilon_n \to 0$ and \MMM additionally \EEE
\begin{equation}
 \label{hat h only one limit}
 \hat{h}(x,\nu)=\lim\limits_{n \to \infty}\frac{m^{\mathrm{jump}}_{\mathcal{E}_{\varepsilon_n}}(u^{\nu}_{x,\MMM e_1},B_{\rho_n}(x))}{\gamma_{1}\rho_{n}}.
\end{equation}
We define $\tilde{E}^n_{\varepsilon}=\frac{E_{\varepsilon}-x}{\rho_n}$, $\tilde{u}^n_{\varepsilon}(y)=u_{\varepsilon}(x+\rho_ny)$,  and $\MMM \tilde{u}^n(y) = u \EEE (x+\rho_ny)$ for $y\in B_1$. Notice that $\tilde{u}_{\varepsilon}^n \to \tilde{u}^n$, $\chi_{\tilde{E}^n_{\varepsilon}}\to 0$ in measure on $B_1$ \MMM as $\eps \to 0$ \EEE since $u_{\varepsilon}\to u$ in measure and $\chi_{E_{\varepsilon}}\to 0$ on $B_{\rho}(x)$. \MMM Since \EEE $x \in J_u$ is an approximate jump point, \NNN $\tilde{u}^n\to \overline{u}^{\nu}_{0,u^-(x),u^+(x)}$ \EEE in measure for $n \to \infty$. \EEE Define $v_n=\tilde{u}^n_{\varepsilon_n}$. Up to passing to a smaller $\varepsilon_n$, \MMM by a diagonal argument, \EEE  we can assume that
\begin{equation}\label{5Y}
    v_n \to  \overline{u}^{\nu}_{0,u^-(x),u^+(x)}  \:\: \text{in measure on}\:\: B_1.
\end{equation} 
Similarly, we define $F_n=\tilde{E}^n_{\varepsilon_n}$ and, up to further passing to a smaller $\varepsilon_n$, we can assume that
\begin{equation}\label{6Y}
    \chi_{F_n}\to 0 \:\: \QQQ \text{in} \EEE \:\: L^1(B_1).
\end{equation}
By a change of \MMM variables  \EEE    and \MMM \eqref{PC inequality voids 0} \EEE we get that   \EEE
\begin{align}
\label{converse inequality 1}
    \frac{1}{\gamma_1}\int_{\NNN \partial^* \EEE F_n\cap B_1}\hspace{-0.2cm}g_{\varepsilon_n}(x+\rho_n y, \nu_{F_n})\, \mathrm{d} \MMM \mathcal{H}^{1} \EEE (y) & =\frac{1}{\gamma_1\rho_n}\int_{\NNN \partial^* \EEE E_{\varepsilon_n}\cap B_{\rho_n}(x)}\hspace{-0.2cm}g_{\varepsilon_n}(z,\nu_{E_{\varepsilon_n}})\, \mathrm{d} \MMM \mathcal{H}^{1} \EEE(z)
    \\
    &
    \nonumber\leq \frac{\mathcal{E}_0(u,\emptyset,B_{\rho_n}(x))}{\gamma_1\rho_n}+\frac{1}{n}.
\end{align}
We also define $\tilde{v}_n \MMM (y) \EEE = v_n(y)-u^-(x)(1-\chi_{F_n})$ for all $y \in B_1$. We \MMM now \EEE check that $(\tilde{v}_n,F_n)$ satisfies the assumptions of Lemma \ref{Lemma 5.2}. First, we observe that, by   \MMM  \eqref{the equation after 7.51}, \eqref{converse inequality 1}, and $(g_2)$, \EEE 
\begin{equation*}
    \sup_{n \in \N} \MMM \mathcal{H}^{1} \EEE (\partial^* F_{n}\cap B_1)\leq \frac{\MMM \gamma_1}{\alpha}\sup_{n \in \N}\Big(\frac{1}{\gamma_{1}\rho_n}\mathcal{E}_0(u,\emptyset,B_{\rho_n}(x))+\frac{1}{n}\Big)<\infty.
\end{equation*}
Next, we show $\Vert e(\tilde{v}_n)\Vert_{L^p(B_1)}\to 0$. Indeed, using \NNN  $(f_2)$ \EEE and a change of variables we have
\begin{equation*}
    \int_{B_1}\vert e(\tilde{v}_n(y))\vert^p \, \mathrm{d}y \leq \frac{\rho^p_n}{\rho^2_n}\int_{B_{\rho_n}(x)}\vert e(u_{\varepsilon_n}\MMM (z) \EEE )\vert^p \, \mathrm{d}z\leq \frac{\gamma_1\rho^{p-1}_n}{\alpha}\frac{1}{\gamma_1\rho_n} \MMM \mathcal{E}_{\eps_n} \EEE \big(u_{\varepsilon_n}, \MMM E_{\eps_n}, \EEE B_{\rho_n}(x)\big)\to 0,
\end{equation*}
where we also use  the \MMM uniform energy bound provided by \eqref{the equation after 7.51} and \eqref{PC inequality voids 0}. \EEE   
Finally, $\tilde{v}_n \to u_{0,\zeta}^{\nu}$  \MMM and  $\chi_{F_n}\to 0$  \NNN in measure on $B_1$ \EEE by the definition of $\tilde{v}_n$ and \eqref{5Y}--\eqref{6Y}. \EEE   Let $\theta\in (0,\frac{1}{2})$. Thanks \EEE to Lemma \ref{Lemma 5.2}, we can define a sequence of voids $  D_{n,\theta}$ disconnecting $B_1$ in two disjoint regions $S^+_{n,\theta}$ and $S^-_{n,\theta}$. \NNN In particular, \EEE because of \QQQ \eqref{statement lemma 5.2-2} and~\eqref{statement lemma 5.2-3},   we have \EEE  
\begin{equation}
\label{it makes non intersection true}
 S^\pm_{n,\theta}\cap (B_1 \setminus B_{1-\theta})=(B^\pm_{1}\setminus  D_{n,\theta})\cap (B_1 \setminus B_{1-\theta})    
\end{equation}
and
\begin{equation}\label{no volume}
  \mathcal{H}^{1} \big(  \partial^* D_n \setminus \partial^* F_n \big) \leq C\theta, \quad \quad  \NNN \lim\limits_{n \to \infty}\mathcal{L}^2(D_{n,\theta} ) = 0. \EEE
\end{equation} 
Hence, \MMM using also $(g_3)$ \EEE we have
\begin{align}
\label{PC inequality 1}
 \int_{\partial^* D_{n,\theta}\cap B_1}g_{\varepsilon_n}(x+\rho_n y,\nu_{D_{n,\theta}})\, \mathrm{d}   \mathcal{H}^{1} (y) \EEE \leq   \int_{\partial^* F_n\cap B_1}g_{\varepsilon_n}(x+\rho_n y,\nu_{F_n}) \, \mathrm{d}  \mathcal{H}^{1}  (y)+C\beta\theta. 
\end{align}
Let $\eta>0$. We apply Proposition \ref{fundamentalestimate} to the rescaled functional $\tilde{\mathcal{E}}_{\varepsilon}$ defined in \eqref{sametildeE},   for $\eta>0$, \EEE the sets  \NNN $A=B_{1-\theta/2}$, \EEE $A^\prime=B_{1-\theta}$, $B= \NNN B_{1} \EEE  \setminus B_{1-\theta}$, and $(0,D_{n,\theta})$ (in place $(u,E)$) and $(0,E_{\varepsilon_n}^{0,\nu})$ (in place of $(v,F)$). We obtain a set of finite perimeter $\NNN \hat{D}_{n,\theta} \EEE$ such that   $\NNN \hat{D}_{n,\theta} \EEE =E_{\varepsilon_n}^{0,\nu}$ near $\partial B_{\NNN 1}$ and
\begin{align}
\label{pc inequality 2}
   \int_{B_{\NNN 1}\cap \partial^* \hat{D}_{n,\theta}}g_{\varepsilon_n}(x+\rho_{n}y,\nu_{\hat{D}_{n,\theta}})\, \mathrm{d}\mathcal{H}^{1} \NNN(y) \EEE &\leq (1+\eta) \Big(\int_{B_1\cap \partial^* D_{n,\theta}}g_{\varepsilon_n}(x+\rho_{n}y,\nu_{D_{n,\theta}})\, \mathrm{d}\mathcal{H}^{1} \NNN(y) \EEE +\NNN 2 \EEE \beta \theta \Big) \nonumber
 \\& \ \ \ +C_{\theta,\eta}\Vert \chi_{D_{n,\theta}}-\chi_{E_{\varepsilon_n}^{0,\nu}}\Vert_{L^1( \NNN B_{1-\theta/2} \EEE \setminus B_{1-\theta})} \NNN + \eta \EEE
\end{align}
for a suitable constant $C_{\theta,\eta}>0$ depending on $\theta$ and $\eta$.  In addition, we observe that, by virtue of \eqref{it makes non intersection true}, $D_{n,\theta}$ and $E_{\varepsilon_n}^{0,\nu}\cap B$ satisfy \eqref{nonintersectionassumption}. Thus, Corollary \ref{set.corollary} implies that $\hat{D}_{n,\theta}$ separates $B_{1}$ in two disjoint regions \NNN $\hat{S}^+_{n,\theta}$ and $\hat{S}^-_{n,\theta}$,  i.e., $(e_1\chi_{\hat{S}^+_{n,\theta}},\hat{D}_{n,\theta})\in \mathcal{D}^\nu_{\varepsilon_n}(B_{1})$. 

Notice that $\Vert \chi_{D_{n,\theta}}-\chi_{E_{\varepsilon_n}^{0,\nu}}\Vert_{L^1(B_1)}\to 0$ \NNN as $n \to \infty$ \MMM by \eqref{thin layer} and \eqref{statement lemma 5.2}.  Hence, letting $n \to \infty$ in  \NNN \eqref{pc inequality 2} \EEE   and using \eqref{the equation after 7.51} together with \eqref{converse inequality 1}, \NNN  \eqref{no volume}, and a change of variables, \EEE  we obtain
$$\lim\limits_{n \to \infty}\frac{m^{\mathrm{jump}}_{\mathcal{E}_{\varepsilon_n}}(u^{\nu}_{x,\MMM e_1},B_{\rho_n}(x))}{\gamma_{1}\rho_{n}} \EEE \le (1+\eta)\Big(h_0(x,  \zeta ,\nu) +\frac{C}{\gamma_1}\beta\theta \Big) \NNN + \frac{\eta}{\gamma_1}. \EEE  $$
\EEE Finally, $\hat{h}(x,\nu)\leq h_0(x,\zeta,\nu)$ follows by sending $\eta \to 0$ and $\theta \to 0$, \NNN and recalling \eqref{hat h only one limit}. \EEE
\end{proof}
\NNN We recall again that   we do not need to prove \eqref{identification gamma limit jump parts}  as it   follows directly from \eqref{h0 pc} and \eqref{2g=h}.

\subsection{Proof of Theorem \ref{relaxation}} We close with the proof of the relaxation result.  \EEE

 \begin{proof}[Proof of Theorem \ref{relaxation}]
\NNN Applying  Proposition \ref{limsupliminf} on the constant sequences $f_\eps= f$ and $g_\eps= g$ for all $\eps>0$, we get that   \EEE  \eqref{f1 = f2}--\eqref{h1 = h2} hold. Theorem \ref{first gamma convergence result} and Theorem \ref{Identification of Gamma-limit} imply that $\overline{\mathcal{E}}$ admits an integral representation with densities  \NNN $f_0$, $g_0$, and ${h}_0$, where $f_0  =\hat{f}$ and  $g_0  =\hat{g}$.  By applying \cite[Theorem 9.8]{Dacorogna2008} on the bulk density, we find that $\hat{f} = f^{\rm qc}$. Moreover, it is elementary to see that  $\hat{g} = g^{\rm BV}$. Indeed, \EEE   denoting by $\sigma$ the modulus of continuity of $g$, we have that, for every $x \in \Omega$ and $E \in \mathcal{P}(B_1)$ such that $\mathcal{H}^{d-1}( \partial^* E \cap B_1 )\leq \frac{\beta}{\alpha}\gamma_{d-1}$, it holds
\begin{equation}
\label{spatial continuity of g}
    \Big\vert\int_{\partial^* E\cap B_1}g(x+\rho y,\nu_E)\, \mathrm{d}\mathcal{H}^{d-1}(y)-\int_{\partial^* E\cap B_1}g(x,\nu_E)\,\mathrm{d}\mathcal{H}^{d-1}(y)\Big\vert\leq \frac{\beta}{\alpha}\gamma_{d-1}\sigma(\rho).
\end{equation}
Thus,  $\hat{g}= {g}^{\rm BV}$ by \eqref{g1 = g2} and \eqref{relaxationxxxxxxxx},  \NNN a change of variables,  and the fact that, due to \EEE $(g_2)$ and $(g_3)$, it is not restrictive to consider only competitors $E$ satisfying $\mathcal{H}^{d-1}(\partial^* E \cap B_1)\leq \frac{\beta}{\alpha}\gamma_{d-1}$.

 Hence, to conclude the proof, we need to show $h_0=2 {g}^{\rm BV}$. Again, we use the notation $\zeta = [u] (x)$ and $\nu = \nu_{u} (x) $ for $x \in J_{u}$.

\noindent \emph{Step 1:} $h_0(x,\zeta,\nu)\leq 2g^{\rm BV}(x,\nu)$ for $\mathcal{H}^{d-1}$-a.e.\ $x \in J_u\cap E^0$. \NNN  Repeating verbatim Step 1 in the proof of Theorem~\ref{Identification of Gamma-limit}(iii) (here, $d=2$ is not required), \EEE  we get $h_0(x,\zeta,\nu)\leq \hat{h}(x,\nu)$. Then, we recall that by Proposition~\ref{limsupliminf} we have $\hat{h}(x,\nu)=2\hat{g}(x,\nu) = 2 g^{\rm BV}(x,\nu)$.

\noindent \emph{Step 2:} $  2 \NNN {g}^{\rm BV} \EEE(x,\nu)\leq h_0(x,\zeta,\nu)$ for $\mathcal{H}^{d-1}$-a.e. $x \in J_u\cap \Omega$. We first follow the same lines of {Step 2} in the proof of Theorem \ref{Identification of Gamma-limit}(iii). In particular, we may suppose that  $x \in J_u$ \NNN satisfies \EEE 
\begin{equation}
\label{h0 relaxation}
    h_0(x,\zeta,\nu)=\lim\limits_{\rho \to 0}\frac{\overline{\mathcal{E}}(u,\emptyset,B_\rho(x))}{\gamma_{d-1}\rho^{d-1}}.
\end{equation}
\NNN 
Let $(u_n,{E}_n)_n$ be an optimal sequence for $(u,\emptyset)$, see \eqref{recovii}. \EEE In a similar fashion to \NNN \eqref{PC inequality voids 0}, \EEE \eqref{5Y}, and \eqref{6Y} we can find \NNN a sequence  \EEE  $(\rho_n)_n$, with $\rho_n \to 0$   as $n \to \infty$, such that
\begin{align}
\label{rec seq relaxation}
&  \frac{\overline{\mathcal{E}}(u,\emptyset,B_{\rho_n}(x))}{\gamma_{d-1}\rho_n^{d-1}}\geq  \frac{\NNN {\mathcal{E}}(   u_{{n}}, \EEE E_{n}   ,B_{\rho_n}(x))}{\gamma_{d-1}\rho_n^{d-1}}-\frac{1}{n},\\
\label{5Y relaxation}
& \vphantom{\int_{\Omega}} {v}_n \to \overline{u}^\nu_{0,u^-(x),u^+(x)}\:\: \text{in measure on}\:\: B_1,\\
\label{6y relaxation}
& \vphantom{\int_{\Omega}} \chi_{{F}_n}\to 0 \:\: \text{in}\:\: L^1(B_1),
\end{align}
where ${v}_n(y)=u_{ n}(x+\rho_ny)$ and $F_n =\frac{E_{n}-x}{\rho_n}$. By \eqref{rec seq relaxation} and a change of \NNN  variables \EEE we get
\begin{align*}
\frac{1}{\gamma_{d-1}\rho_n^{d-1}}\mathcal{\overline{E}}(u,\emptyset,B_{\rho_n}(x))&\geq \frac{1}{\gamma_{d-1}\rho_n^{d-1}} \int_{\partial^* E_{n} \cap B_{\rho_n}(x)}g(y,\nu_{E_{n}})\, \mathrm{d}\mathcal{H}^{d-1}(y)-\frac{1}{n}  
\\& =  \NNN \frac{1}{\gamma_{d-1} } \EEE \int_{\partial^* F_n\cap B_1}g(x+\rho_n y,\nu_{F_n})\, \mathrm{d}\mathcal{H}^{d-1}(y)-\frac{1}{n}.
\end{align*}
By continuity of $g$ we can find a sequence $\eta_n \to 0$ such that
\begin{equation*}
\frac{1}{\NNN\rho_n^{d-1}}\mathcal{\overline{E}}(u,\emptyset,B_{\rho_n}(x)) \geq \int_{\partial^* F_n\cap B_1}g(x,\nu_{F_n})\, \mathrm{d}\mathcal{H}^{d-1}(y)-\eta_n\geq \int_{\partial^* F_n\cap B_1}g^{\rm BV}(x,\nu_{F_n})\, \mathrm{d}\mathcal{H}^{d-1}(y)-\eta_n.
\end{equation*}
In the last inequality we used \NNN $g \ge g^{\rm BV}$  which immediately follows from the definition in \eqref{relaxationxxxxxxxx}. \EEE 
By virtue of \NNN \cite[Theorem 5.11]{ambrosio2000fbv} \EEE  we have that $\nu \mapsto g^{\rm BV}(y,\nu)$ is a norm, 
\NNN see also (the proof of)     \cite[Corollary~3.17]{FriPerSol20a}. \EEE Finally, we apply \cite[Theorem 5.1]{Crismale_2020} to the norm $\nu \mapsto g^{\rm BV}(\NNN x \EEE ,\nu)$ and the sequence $({v}_n,F_n)$  to obtain
\begin{equation*}
    h_0(x,\zeta,\nu)\geq \liminf\limits_{n \to \infty}  \NNN \frac{1}{\gamma_{d-1} } \EEE \int\limits_{\partial^* F_n\cap B_1} \NNN \hspace{-0.1cm} g^{\rm BV} \EEE (x,\nu_{F_n})\, \mathrm{d}\mathcal{H}^{d-1}(y)\geq  \NNN \frac{1}{\gamma_{d-1} } \EEE  \int\limits_{\Pi^\nu_0\cap B_1} \hspace{-0.1cm} 2g^{\rm BV}(x,\nu)\,\mathrm{d}\mathcal{H}^{d-1}(y)= 2g^{\rm BV}(x,\nu),
\end{equation*}
where we also used  \NNN \eqref{h0 relaxation}, \EEE \eqref{5Y relaxation}, and \eqref{6y relaxation}.  
\end{proof}

\MMM

\section*{Acknowledgements} 
This work was supported by the RTG 2339 “Interfaces, Complex Structures, and Singular Limits”
of the German Science Foundation (DFG).

\EEE

 \appendix

 \section{Remaining proofs for the bulk density}\label{otherproof-sec}

 In this section we prove \NNN Lemma \ref{LEMMA 4.2 CRIFRIESOL} \EEE and Theorem \ref{Identification of Gamma-limit}(i).   We start with   Lemma \ref{LEMMA 4.2 CRIFRIESOL} and formulate an \EEE  adaptation of \cite[Lemma 5.1, (5.5)]{SolFriCri20}.
\begin{lemma}[Blow up at points with approximate gradient]
\label{lemma5.1CRIFRIESOL}
Let $ \MMM (u,E) \EEE \in \mathcal{G}^p(\Omega)$ and let $\theta \in (0,1)$. For $\mathcal{L}^d$-a.e.\ $x \in \Omega \MMM \setminus E \EEE$ there exists a family $(u_{\varepsilon},E_{\varepsilon}) \in \mathcal{G}^p(B_{\varepsilon}(x))$ such that 
\begin{subequations}
\label{eqlemma5.1CRIFRIESOL}
\begin{align}
& \,  u_{\varepsilon}=u \MMM \text{  and } E_{\varepsilon}= E\EEE  \:\: \text{in a neighborhood of}\:\: \partial B_{\varepsilon}(x), \:\:  \label{eqlemma5.1CRIFRIESOL-1}
\\ 
 &  \MMM \lim\limits_{\varepsilon \to 0}\varepsilon^{-1-d}\mathcal{L}^d(\{ u_{\varepsilon}\neq u\})=0, \quad  \lim\limits_{\varepsilon \to 0}\varepsilon^{-1-d}\mathcal{L}^d(E\cap B_{\varepsilon}(x))=0, \label{eqlemma5.1CRIFRIESOL-2} \\  
 &  \lim\limits_{\varepsilon \to 0}\varepsilon^{-(d+p)}\int_{B_{(1-\theta)\varepsilon}(x)}\vert u_{\varepsilon}(y)-u(x)-\nabla u(x)(y-x)\vert^p\, \mathrm{d}y=0, \label{eqlemma5.1CRIFRIESOL-3}
\\ &  \lim\limits_{\varepsilon\to 0}\varepsilon^{-d} \int_{B_{\varepsilon}(x)}\vert e(u_{\varepsilon})(y)-e(u)(x)\vert^p \, \mathrm{d}y=0, \label{eqlemma5.1CRIFRIESOL-4} 
\\ & \lim\limits_{\varepsilon \to 0}\varepsilon^{-d}\mathcal{H}^{d-1}(J_{u_{\varepsilon}} \cup (\partial^*E_\eps\cap B_\varepsilon(x)) \EEE )=0, \label{eqlemma5.1CRIFRIESOL-5}
\\ & E_{\varepsilon}= \emptyset  \:\: \text{on}\:\: B_{(1-\theta)\varepsilon}(x). \label{eqlemma5.1CRIFRIESOL-6}\EEE
\end{align}
\end{subequations}
\end{lemma}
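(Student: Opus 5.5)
The plan is to reduce to the corresponding blow-up construction for $GSBD^p$-functions in \cite[Lemma 5.1, (5.5)]{SolFriCri20}, regarding $u$ in the pair $(u,E)$ as a $GSBD^p$-function. I then handle the void set by simply cutting it off inside a slightly smaller ball, in the same spirit as the case $x\in J_u\cap E^0$ in the proof of Lemma \ref{LEMMA 6.1 CRI FRIE SOL}.

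\emph{Choice of good points.} I restrict to $x\in E^0$ with three properties. First, $x$ is a point where \cite[Lemma 5.1]{SolFriCri20} applies to $u$. Second, $\lim_{\varepsilon\to0}\varepsilon^{-d}\mathcal{H}^{d-1}(J_u\cap B_\varepsilon(x))=0$. Third, $\lim_{\varepsilon\to0}\varepsilon^{-d}\mathcal{H}^{d-1}(\partial^*E\cap B_\varepsilon(x))=0$. The last two hold $\mathcal{L}^d$-a.e., since the measures $\mathcal{H}^{d-1}\llcorner J_u$ and $\mathcal{H}^{d-1}\llcorner\partial^*E$ are singular with respect to $\mathcal{L}^d$, so their $d$-dimensional densities vanish $\mathcal{L}^d$-a.e.

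The decay $\varepsilon^{-1-d}\mathcal{L}^d(E\cap B_\varepsilon(x))\to0$ in \eqref{eqlemma5.1CRIFRIESOL-2} then follows from the relative isoperimetric inequality on balls. Since $x\in E^0$, the set $E$ occupies the minority of $B_\varepsilon(x)$, and hence
\[
\mathcal{L}^d(E\cap B_\varepsilon(x))\le C\,\mathcal{H}^{d-1}(\partial^*E\cap B_\varepsilon(x))^{\frac{d}{d-1}}=o\big(\varepsilon^{\frac{d^2}{d-1}}\big)=o(\varepsilon^{d+1}),
\]
because $\frac{d^2}{d-1}>d+1$.

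\emph{Construction.} Let $\hat u_\varepsilon$ be the family from \cite[Lemma 5.1]{SolFriCri20}. It is a Sobolev-type replacement of $u$ in a ball $B_{s_\varepsilon}(x)$, equals $u$ outside this ball, and satisfies the analogues of \eqref{eqlemma5.1CRIFRIESOL-1}, \eqref{eqlemma5.1CRIFRIESOL-3}, \eqref{eqlemma5.1CRIFRIESOL-4} and the estimate on $\{\hat u_\varepsilon\neq u\}$. Exactly as in \eqref{fub1}--\eqref{fub3}, I choose the radius $s_\varepsilon\in\big((1-\theta)\varepsilon,(1-\tfrac\theta2)\varepsilon\big)$ by Fubini so that, in addition, $\mathcal{H}^{d-1}\big((E\cup J_u)\cap\partial B_{s_\varepsilon}(x)\big)$ is bounded by $\frac{4}{\theta\varepsilon}\mathcal{L}^d(E\cap B_\varepsilon(x))=o(\varepsilon^d)$, and so that $\partial B_{s_\varepsilon}$ does not charge $J_u\cup\partial^*E$.

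I then set
\[
E_\varepsilon\defas E\cap\big(B_\varepsilon(x)\setminus B_{s_\varepsilon}(x)\big),\qquad u_\varepsilon\defas \hat u_\varepsilon\chi_{B_{s_\varepsilon}(x)}+u\chi_{B_\varepsilon(x)\setminus B_{s_\varepsilon}(x)}.
\]
Since $u=u\chi_{E^0}$, we have $u_\varepsilon=0$ on $E_\varepsilon$. Moreover $u_\varepsilon=u_\varepsilon\chi_{E_\varepsilon^0}$, so $(u_\varepsilon,E_\varepsilon)\in\mathcal{G}^p(B_\varepsilon(x))$.

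\emph{Verification.} Properties \eqref{eqlemma5.1CRIFRIESOL-1} and \eqref{eqlemma5.1CRIFRIESOL-6} hold by construction. Property \eqref{eqlemma5.1CRIFRIESOL-2} follows from the estimate on $\{\hat u_\varepsilon\neq u\}$ together with the decay established above. Properties \eqref{eqlemma5.1CRIFRIESOL-3} and \eqref{eqlemma5.1CRIFRIESOL-4} are inherited directly from $\hat u_\varepsilon$. For \eqref{eqlemma5.1CRIFRIESOL-5}, note that $\partial^*E_\varepsilon\cap B_\varepsilon(x)$ is contained in $(\partial^*E\cap B_\varepsilon(x))\cup(E^1\cap\partial B_{s_\varepsilon}(x))$, up to $\mathcal{H}^{d-1}$-null sets. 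Likewise $J_{u_\varepsilon}$ is contained in $J_{\hat u_\varepsilon}\cup J_u\cup\big(\{\hat u_\varepsilon\ne u\}\cap\partial B_{s_\varepsilon}(x)\big)$, and each piece is $o(\varepsilon^d)$ by the choice of $x$ and of $s_\varepsilon$.

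\emph{Main obstacle.} I expect the delicate step to be the compatibility of the radius $s_\varepsilon$ used inside the construction of \cite{SolFriCri20} with the additional Fubini conditions on $E$ and $\{\hat u_\varepsilon\ne u\}$. This requires reopening that construction enough to see that its cut radius can be selected from a set of positive proportion in $\big((1-\theta)\varepsilon,(1-\tfrac\theta2)\varepsilon\big)$, which I expect to be the case as in \eqref{fub1}--\eqref{fub2}. I also need to confirm that the jump created on $\partial B_{s_\varepsilon}(x)$ where $\hat u_\varepsilon\ne u$ is controlled at order $o(\varepsilon^d)$.
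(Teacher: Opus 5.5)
Your proposal is correct and follows the paper's proof essentially verbatim. Both arguments choose points of vanishing $d$-density of $\partial^*E$, use the relative isoperimetric inequality to get $\varepsilon^{-1-d}\mathcal{L}^d(E\cap B_\varepsilon(x))\to0$, pick $s_\varepsilon\in((1-\theta)\varepsilon,(1-\frac{\theta}{2})\varepsilon)$ by Fubini, and set $E_\varepsilon=E\cap(B_\varepsilon(x)\setminus B_{s_\varepsilon}(x))$, with $u_\varepsilon=u$ outside $B_{s_\varepsilon}(x)$ and a replacement inside (the paper uses the Sobolev replacement $v_\varepsilon+\bar u^{\rm bulk}_x$ from \cite{SolFriCri20} there). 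Your ``main obstacle'' does not arise, because you cut at your own radius: the bound $\varepsilon^{-1-d}\mathcal{L}^d(\{\hat u_\varepsilon\neq u\})\to0$ from \cite{SolFriCri20} already lets you impose the Fubini condition on $\{\hat u_\varepsilon\neq u\}\cap\partial B_{s_\varepsilon}(x)$ jointly with the one on $E$, without reopening that construction; on the annulus, \eqref{eqlemma5.1CRIFRIESOL-4} follows from $x$ being a Lebesgue point of $e(u)$.
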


\begin{proof}
\MMM The statement essentially follows from  \cite[Lemma 5.1]{SolFriCri20}, we only indicate briefly the construction of the void set. We can choose $x$ with the additional property that $\lim_{\eps \to 0} \eps^{-d} \mathcal{H}^{d-1}(\partial^*E \cap B_\eps(x)) = 0$, as this holds for $\mathcal{L}^d$-a.e.\ $x \in \Omega \setminus E$. By the relative isoperimetric inequality this shows $\lim_{\eps \to 0} \eps^{- d^2/(d-1)}  \mathcal{L}^d(E \cap B_\eps(x)) \geq \lim_{\eps \to 0} \eps^{-d-1}  \mathcal{L}^d(E \cap B_\eps(x))=0$. Thus, by Fubini's theorem, we can choose $s_\eps \in (1-\theta, 1- \frac{\theta}{2})\eps$ such that $\lim_{\varepsilon \to 0}\varepsilon^{-d} \mathcal{H}^{d-1}(E \cap\partial B_{s_\eps}(x)) = 0$. We the define $E_{\varepsilon}=E \cap (B_{\varepsilon}(x)\setminus B_{s_\varepsilon}(x))$, and $u_\eps$ slightly differently compared to \cite[(5.5)]{SolFriCri20}, namely (in the notation therein)
$$u_\eps = u\chi_{B_\eps(x) \setminus B_{s_\eps}(x)} + (v_\eps +\bar{u}^{\rm bulk}_x )\chi_{B_{s_\eps}(x)}. $$
We note that all desired properties hold, in particular \eqref{eqlemma5.1CRIFRIESOL-1}, \eqref{eqlemma5.1CRIFRIESOL-2}, \eqref{eqlemma5.1CRIFRIESOL-5}, and \eqref{eqlemma5.1CRIFRIESOL-6} for the void set. \EEE
\end{proof}

\begin{proof}[Proof of Lemma \ref{LEMMA 4.2 CRIFRIESOL}] The proof follows the same lines of \cite[Lemma 4.2]{SolFriCri20},  up to a slight adjustment \MMM due to void set and the fundamental estimate \EEE in  Proposition \ref{fundestE0}. \EEE  It is sufficient to show \eqref{volumeequation} only for those $x \in \Omega \MMM \setminus E \EEE$ for which Lemma \ref{LEMMA 4.1 CRIFRIESOL}, Lemma \ref{lemma5.1CRIFRIESOL}, and $\lim_{\varepsilon \to 0}\varepsilon^{-d}\mu(B_{\varepsilon}(x))=\gamma_d$ hold. In fact, this is true for $\mathcal{L}^d$-a.e.\ $x \in \Omega \NNN \setminus E\EEE$.   \MMM  Then also $\lim_{\varepsilon\to 0}\varepsilon^{-d}m_{\mathcal{E}_0}(u,E,B_{\varepsilon}(x))\in \mathbb{R}$  exists,  see Lemma~\ref{LEMMA 4.1 CRIFRIESOL}. \EEE

\emph{Step 1 (Inequality ``$\leq$" in \eqref{volumeequation})}: We fix $\eta>0$ and $\theta>0$. Choose $(z_{\varepsilon},F_\varepsilon) \in\mathcal{G}^p(B_{(1-3\theta)\varepsilon}(x))$ with  \MMM  $(z_{\varepsilon},F_\varepsilon)=(\overline{u}_{x}^{\mathrm{bulk}}, \emptyset)$  \EEE in a neighborhood of $\partial B_{(1-3\theta)\varepsilon}(x)$ and
\begin{equation}
\label{volumeinequality1}
\mathcal{E}_0(z_{\varepsilon}, F_\varepsilon, B_{(1-3\theta)\varepsilon}(x))\leq m_{\mathcal{E}_0}(\overline{u}_{x}^{\mathrm{bulk}},\emptyset, B_{(1-3\theta)\varepsilon}(x))+\varepsilon^{d+1}.
\end{equation}
We extend the function $z_{\varepsilon}$ to \MMM $B_{\varepsilon}(x)$ \EEE  by setting $z_{\varepsilon}=\overline{u}_{x}^{\mathrm{bulk}}$ outside \MMM of \EEE $B_{(1-3\theta)\varepsilon}(x)$. Similarly, we extend $F_{\varepsilon}$ by setting \MMM $F_{\varepsilon} = \emptyset$ outside of \EEE $B_{(1-3\theta)\varepsilon}$. Let $(u_{\varepsilon},E_\varepsilon)_{\varepsilon}$ be the family \MMM given in \EEE Lemma \ref{lemma5.1CRIFRIESOL}. We apply Proposition \ref{fundestE0} on $(z_{\varepsilon},F_\varepsilon)$ (instead of $(u,E$)), $(u_{\varepsilon},E_\varepsilon)$ (instead of $(v,F)$), for $\eta >0$ and the sets \NNN $    A^\prime = B_{1-2\theta }(x)$, $A=B_{1-\theta }(x)$, $B=B_{1}(x)\setminus \overline{B_{1-4\theta }(x)}$, \MMM respectively their rescalings, see \eqref{sets fund est}.  \EEE Hence, we find    $\MMM (w_{\varepsilon}, D_{\varepsilon}) \EEE \in \mathcal{G}^p(\NNN B_{\varepsilon}(x)) \EEE $ such that $(w_{\varepsilon},D_\varepsilon)=(u_{\varepsilon},\MMM E_\eps)\EEE$  on $B_{\varepsilon}(x)\setminus B_{(1-\theta)\varepsilon}(x)$  and
\begin{align}
\label{applicationfundestE0-1}
\mathcal{E}_0(w_{\varepsilon},D_{\varepsilon},B_{\varepsilon}(x))\leq &(1+\eta)\big(\mathcal{E}_0(z_{\varepsilon},F_\varepsilon,A_{x,\eps})+\mathcal{E}_0(u_{\varepsilon},E_\varepsilon, B_{x,\eps})\big)   +\frac{M^p}{\varepsilon^p}\Vert z_{\varepsilon}-u_{\varepsilon}\Vert_{L^p(A_{x,\eps}\setminus A_{x,\eps}')}^p+\MMM \eps^d \EEE \eta,
\end{align}
where we used ${E_{\varepsilon}}={F_{\varepsilon}}=\MMM \emptyset \EEE$ on $A_{x,\eps}\setminus A_{x,\eps}'$, \MMM see \eqref{eqlemma5.1CRIFRIESOL-6}, \EEE and we recall that $M$ is a positive constant independent of $\varepsilon$ \MMM and $\theta$. \EEE Notice also that we have $w_{\varepsilon}=u_{\varepsilon}=u$ and ${D_{\varepsilon}}\MMM  = E_\eps \EEE =E$ in a neighborhood of $\partial B_{\varepsilon}$ by \eqref{eqlemma5.1CRIFRIESOL-1}. By \eqref{eqlemma5.1CRIFRIESOL-3} and the fact that $z_{\varepsilon}=\overline{u}_{x}^{\mathrm{bulk}}$ outside \MMM of \EEE $B_{(1-3\theta)\varepsilon}(x)$ we find
\begin{equation}
\label{applicationfundestE0-2}
\lim\limits_{\varepsilon \to 0}\varepsilon^{-(d+p)}\Vert z_{\varepsilon}-u_{\varepsilon}\Vert_{L^p(A_{x,\eps}\setminus A_{x,\eps}')}^p=\lim\limits_{\varepsilon \to 0}\varepsilon^{-(d+p)}\Vert u_{\varepsilon}-\overline{u}_{x}^{\mathrm{bulk}}\Vert^p_{L^p(A_{x,\eps})}=0.
\end{equation}
This together with \eqref{applicationfundestE0-1} shows that there exists a sequence $(\rho_{\varepsilon})_{\varepsilon} \subset (0,\infty)$ such that $\rho_{\varepsilon}\to 0$ and
\begin{align}
\label{applicationfundestE0-3}
\mathcal{E}_0(w_{\varepsilon},D_{\varepsilon},B_{\varepsilon}(x))\leq &(1+\eta)\big(\mathcal{E}_0(z_{\varepsilon},F_\varepsilon,A_{x,\eps})+\mathcal{E}_0(u_{\varepsilon},E_\varepsilon, B_{x,\eps})\big) + \varepsilon^d \rho_{\varepsilon}+ \MMM  \varepsilon^d\eta. \EEE
\end{align}
By using that $(z_{\varepsilon}, F_\varepsilon)=(\overline{u}_{x}^\mathrm{bulk}, \emptyset)$ on $B_{\varepsilon}(x)\setminus B_{(1-3\theta)\varepsilon}(x)\subset B_{x,\varepsilon}$, $(\mathrm{H1})$, $(\mathrm{H4})$, and \eqref{volumeinequality1} we have
\begin{align}
\label{applicationfundestE0-4}
\limsup\limits_{\varepsilon \to 0}\frac{\mathcal{E}_0(z_{\varepsilon},F_{\varepsilon},A_{x,\eps})}{\gamma_d\varepsilon^d}&\leq \limsup\limits_{\varepsilon \to 0}\frac{\mathcal{E}_0(z_{\varepsilon},F_\varepsilon,B_{(1-3\theta)\varepsilon}(x))}{\gamma_d\varepsilon^d}+\limsup\limits_{\varepsilon \to 0}\frac{\mathcal{E}_0(\overline{u}^{\mathrm{bulk}}_{x},\emptyset,\MMM B_{x,\varepsilon} \EEE)}{\gamma_d\varepsilon^d} \nonumber
\\ & \leq \limsup\limits_{\varepsilon \to 0}\frac{m_{\mathcal{E}_0}(\overline{u}_{x}^\mathrm{bulk},\emptyset,B_{(1-3\theta)\varepsilon}(x))}{\gamma_d\varepsilon^d}+\MMM \beta (1-(1-4\theta)^d) \EEE (1+\vert e(u)(x)\vert^p) 
\\& \leq (1-3\theta)^d \limsup\limits_{\varepsilon^\prime \to 0}\frac{m_{\mathcal{E}_0}(\overline{u}_{x}^\mathrm{bulk},\emptyset,B_{\varepsilon^\prime}(x))}{\gamma_d(\varepsilon^\prime)^d}+\MMM \beta (1-(1-4\theta)^d) \EEE (1+\vert e(u)(x)\vert^p),  \nonumber
\end{align}
where in the last step we substituted $(1-3\theta)\varepsilon$ with $\varepsilon^\prime$. By $(\mathrm{H4})$ we also find
\begin{align*}
\mathcal{E}_0(u_{\varepsilon},E_\varepsilon, B_{x,\varepsilon} )&\leq \MMM  \beta \int_{B_{x,\varepsilon}} \big( 1+|e(u_\eps)|^p \big) \, {\rm d}x  +  \beta \mathcal{H}^{d-1}(\partial^* E_\eps \cap  B_{x,\varepsilon}) + 2\beta \mathcal{H}^{d-1}(J_{u_\eps} \cap  B_{x,\varepsilon}) \EEE 
\\ & \leq  \gamma_d \varepsilon^d \beta (1-(1-4\theta)^d)(1+2^{p-1}\vert e(u)(x)\vert^p)+2^{p-1}\beta \Vert e(u_{\varepsilon})-e(u)(x)\Vert_{L^p(B_{\varepsilon}(x))}^p \\ & \ \ \ +2\beta \mathcal{H}^{d-1}(J_{u_{\varepsilon}} \MMM \cup (\partial^* E_\eps \cap B_\eps(x) )). \EEE
\end{align*}
By \eqref{eqlemma5.1CRIFRIESOL-4} and \eqref{eqlemma5.1CRIFRIESOL-5} this implies
\begin{equation}
\label{applicationfundestE0-5}
    \limsup\limits_{\varepsilon \to 0} \frac{\mathcal{E}_0(u_{\varepsilon}, E_\varepsilon,B_{x,\varepsilon})}{\gamma_d\varepsilon^d}\leq \beta (1-(1-4\theta)^d)(1+2^{p-1}\vert e(u)(x)\vert^p).
\end{equation}
Recall that $(w_{\varepsilon},D_\varepsilon)=(u,E)$ in a neighborhood of $\partial B_{\varepsilon}(x)$. This along with \eqref{applicationfundestE0-3}--\eqref{applicationfundestE0-5} and $\rho_{\varepsilon}\to 0$ yields 
\begin{align*}
    \lim\limits_{\varepsilon \to 0}\frac{m_{\mathcal{E}_0}(u,E,B_{\varepsilon}(x))}{\gamma_d \varepsilon^d}&\leq \limsup\limits_{\varepsilon \to 0}\frac{\mathcal{E}_0(w_\varepsilon,D_{\varepsilon},B_{\varepsilon}(x))}{\gamma_d \varepsilon^d}
    \\& \leq (1+\eta)(1-3\theta)^d \limsup\limits_{\varepsilon \to 0}\frac{m_{\mathcal{E}_0}(\overline{u}_{x}^{\mathrm{bulk}}, \emptyset,B_{\varepsilon}(x))}{\gamma_d \varepsilon^d}
    \\& \ \ \ + 2(1+\eta)\beta (1-(1-4\theta)^d)(1+2^{p-1}\vert e(u)(x)\vert^p)+ \NNN \frac{\eta}{\gamma_d }. \EEE
\end{align*}
Passing to $\eta,\theta \to 0$ we obtain the inequality ``$\leq$" in \eqref{volumeequation}.

\emph{Step  (Inequality ``$\geq$" in \eqref{volumeequation})}: We fix $\eta$, $\theta>0$ and let $(u_{\varepsilon},E_\varepsilon)_{\varepsilon}$ be the family \MMM given in \EEE Lemma \ref{lemma5.1CRIFRIESOL}. By \eqref{eqlemma5.1CRIFRIESOL-2} and Fubini's Theorem, for each $\varepsilon>0$ we can find $s_{\varepsilon}\in (1-4\theta,1-3\theta)\varepsilon$ such that 
\begin{subequations}
    \label{eq 5.15 CRI FRIE SOL}
\begin{align}
     & \lim\limits_{\varepsilon \to 0} \MMM \eps^{-d} \EEE \mathcal{H}^{d-1}( (\{ u_\varepsilon \neq u\} \MMM \cup  E\EEE )  \cap \partial B_{s_\varepsilon}\MMM (x) \EEE  )=0, \label{eq 5.15 CRI FRIE SOL-1}
    \\ & \mathcal{H}^{d-1}\big((J_u \cup J_{u_{\varepsilon}} \MMM \cup \partial^* E) \EEE \cap \partial B_{s_{\varepsilon}}\MMM (x) \EEE \big)=0\:\: \text{for all}\:\: \varepsilon>0. \label{eq 5.15 CRI FRIE SOL-2}
\end{align}
\end{subequations}
We consider $(z_{\varepsilon},F_\varepsilon) \in \mathcal{G}^p(B_{s_{\varepsilon}}(x))$ with  \MMM  $(z_{\varepsilon},F_\varepsilon)=(u, E)$    in a neighborhood of $\partial B_{s_{\varepsilon}}(x)$  \EEE such that
\begin{equation}
\label{eq 5.16 CRIE FRIE SOL}
\mathcal{E}_0(z_\varepsilon,F_\varepsilon,B_{s_\varepsilon}(x))\leq m_{\mathcal{E}_0}(u,E,B_{s_\varepsilon}(x))+\varepsilon^{d+1}.
\end{equation}
We extend $z_\varepsilon$ and \NNN $F_\eps$ \MMM to \EEE $B_\varepsilon(x)$ by setting 
\begin{equation}
\label{eq 5.17 CRIFRIESOL}
z_\varepsilon = u_{\varepsilon}    \:\: \text{on}\:\: B_\varepsilon (x)\setminus B_{s_\varepsilon}(x), \quad \MMM F_\eps =  E_{\varepsilon}   \:\: \text{on}\:\: B_\varepsilon (x)\setminus B_{s_\varepsilon}(x). \EEE
\end{equation}
In particular, \MMM since $(u_{\varepsilon},E_{\varepsilon}) \in \mathcal{G}^p(B_{\varepsilon}(x))$, we have \EEE $\MMM (z_{\varepsilon},F_{\varepsilon}) \EEE \in \mathcal{G}^p(B_{\varepsilon}(x))$. We apply Proposition \ref{fundestE0} on $(z_\varepsilon, F_\varepsilon)$ (in place of $(u, E)$) and $(\overline{u}_{x}^\mathrm{bulk},\emptyset)$ (in place of $(v, F)$) for \MMM $\eta >0$ and \EEE the sets indicated in \eqref{sets fund est}. Hence, there exists   $(w_\varepsilon, D_{\varepsilon}) \in \mathcal{G}^p(B_\varepsilon(x))$ such that $(w_\varepsilon,D_{\varepsilon}) = (\overline{u}_{x}^{\mathrm{bulk}},\emptyset)$  on $B_\varepsilon (x)\setminus B_{(1-\theta)\varepsilon}(x)$ and, \MMM similarly as in \eqref{applicationfundestE0-1}, \EEE
\begin{align}
\label{applicationfundestE0-6}
\mathcal{E}_0(w_{\varepsilon},D_{\varepsilon},B_{\varepsilon}(x))\leq &(1+\eta)\big(\mathcal{E}_0(z_{\varepsilon},F_{\varepsilon},A_{x,\eps} )+\mathcal{E}_0(\overline{u}_{x}^\mathrm{bulk},\emptyset, B_{x,\eps})\big)  +\frac{M^p}{\varepsilon^p}\Vert z_{\varepsilon}-\overline{u}^\mathrm{bulk}_{x}\Vert_{L^p(A_{x,\eps} \setminus A_{x,\eps}')}^p+\MMM \eps^d \EEE \eta,
\end{align}
where we used  that ${F_{\varepsilon}}={E_{\varepsilon}}=\emptyset$ on $A_{x,\eps} \setminus A_{x,\eps}'$, and we recall that $M$ depends only on $\theta$ \MMM and \EEE  $\eta>0$.
By \MMM  \eqref{eq 5.17 CRIFRIESOL}  \EEE and the choice of $s_\varepsilon$ we get $z_\varepsilon = u_\varepsilon$ outside \MMM of \EEE $B_{(1-3\theta)\varepsilon}(x)$. Thus, similarly to  {Step 1}, \MMM using \eqref{eqlemma5.1CRIFRIESOL-3} \EEE we find a sequence $(\rho_{\varepsilon})_{\varepsilon} \subset (0,\infty)$ with $\rho_\varepsilon \to 0$ such that
\begin{align}
\label{applicationfundestE0-7}
\mathcal{E}_0(w_{\varepsilon},D_{\varepsilon},B_{\varepsilon}(x))\leq &(1+\eta)\big(\mathcal{E}_0(z_{\varepsilon},F_{\varepsilon},A_{x,\eps} )+\mathcal{E}_0(\overline{u}_{x}^\mathrm{bulk},\emptyset, B_{x,\eps})\big)  +\varepsilon^d \rho_{\varepsilon} \MMM +  \varepsilon^d\eta. \EEE
\end{align}
Let us estimate the terms in \eqref{applicationfundestE0-7}. We get by $(\mathrm{H1})$, \MMM $(\mathrm{H4})$, \EEE \eqref{eq 5.16 CRIE FRIE SOL}, \eqref{eq 5.17 CRIFRIESOL}, and the choice of $s_\varepsilon$ that
\begin{align*}
 \mathcal{E}_0(z_\varepsilon,F_\varepsilon, A_{\varepsilon,x} )&\leq m_{\mathcal{E}_0}(u,E,B_{s_\varepsilon}(x))+\varepsilon^{d+1} \MMM +\beta \mathcal{H}^{d-1}\big(( E\EEE \cup \partial^* E)\cap \partial B_{s_\varepsilon}(x)\big) \nonumber
 \\&  \ \ \ + 2\beta \mathcal{H}^{d-1}\big((\{u \neq u_\varepsilon\}\cup J_{u_\varepsilon} \cup J_u)\cap \partial B_{s_\varepsilon}(x)\big)+\mathcal{E}_0(u_\varepsilon,E_\varepsilon,B_{\varepsilon,x}), \EEE
\end{align*}
\MMM where we also used that  $E_{\varepsilon}= \emptyset$ on $B_{(1-\theta)\varepsilon}(x)$, see \eqref{eqlemma5.1CRIFRIESOL-6}. \EEE
Therefore, by \eqref{applicationfundestE0-5}, \MMM \eqref{eq 5.15 CRI FRIE SOL}, and \EEE the fact that $s_\varepsilon \leq (1-3\theta)\varepsilon$ we derive 
\begin{align}
\label{eq 5.20 CRI FRIE SOL}
\limsup\limits_{\varepsilon \to 0}\frac{\mathcal{E}_0(z_{\varepsilon},F_{\varepsilon},A_{\varepsilon,x})}{\gamma_d\varepsilon^d}&\leq \MMM (1-3\theta)^d \EEE \limsup\limits_{\varepsilon \to 0}\frac{m_{\mathcal{E}_0}(u,E,B_{s_\varepsilon}(x))}{\gamma_d s^d_\varepsilon}+ \MMM \beta \EEE (1-(1-4\theta)^d)(1+2^{p-1}\vert e(u)(x)\vert^p) \nonumber
\\& \leq (1-3\theta)^d \limsup\limits_{\varepsilon \to 0}\frac{m_{\mathcal{E}_0}(u,E,\NNN B_{\varepsilon} \EEE (x))}{\gamma_d\varepsilon^d}+\MMM \beta \EEE (1-(1-4\theta)^d)(1+2^{p-1}\vert e(u)(x)\vert^p).
\end{align}
Estimating $\mathcal{E}_0(\overline{u}^\mathrm{bulk}_{x},\emptyset, B_{x,\eps})$ as in \eqref{applicationfundestE0-4}, with \eqref{applicationfundestE0-7}--\eqref{eq 5.20 CRI FRIE SOL} and $\rho_\varepsilon \to 0$  we then obtain
\begin{align*}
    \limsup\limits_{\varepsilon \to 0}\frac{\mathcal{E}_0(w_\varepsilon,D_\varepsilon,B_\varepsilon(x))}{\gamma_d\varepsilon^d}\leq &(1+\eta)(1-3\theta)^d \limsup\limits_{\varepsilon \to 0}\frac{m_{\mathcal{E}_0}(u,E,B_\varepsilon(x))}{\gamma_d\varepsilon^d}
    \\& +2(1+\eta) \MMM \beta \EEE [1-(1-4\theta)^d](1+2^{p-1}\vert e(u)(x)\vert^p)+ \NNN \frac{\eta}{\gamma_d}. \EEE
\end{align*}
Passing to $\eta$, $\theta \to 0$ and recalling $(w_\varepsilon,D_\varepsilon) = (\overline{u}_{x}^\mathrm{bulk}, \emptyset)$   in a neighborhood of $\partial B_\varepsilon(x)$ we derive
\begin{equation*}
    \limsup\limits_{\varepsilon \to 0}\frac{m_{\mathcal{E}_0}(\overline{u}_{x}^\mathrm{bulk},\emptyset,B_\varepsilon(x))}{\gamma_d \varepsilon^d}\leq \limsup\limits_{\varepsilon \to 0}\frac{\mathcal{E}_0(w_\varepsilon,D_{\varepsilon},B_\varepsilon(x))}{\gamma_d \varepsilon^d}\leq  \lim\limits_{\varepsilon \to 0}\frac{m_{\mathcal{E}_0}(u,E,B_\varepsilon(x))}{\gamma_d \varepsilon^d}.
\end{equation*}
This shows the inequality ''$\geq$" and concludes the proof of \eqref{volumeequation}.
\end{proof}

 \begin{proof}[Proof of Theorem \ref{Identification of Gamma-limit}(i)]
 The proof follows the same steps of \cite[Theorem 2.4]{FriPerSol20a}.
 
\noindent \emph{Step 1:} $f_0(x,e(u)(x))\leq \hat{f}(x,e(u)(x))$ for $\mathcal{L}^d$-a.e.\ $x \in \Omega$. 
\MMM Recalling \eqref{infimumproblem1} and \eqref{simplified_cellformula_bulk} (for $\mathcal{E}_0$ in place of $\mathcal{E}_\eps$), \EEE  we clearly get   $m_{\mathcal{E}_0}({l}_\xi,\emptyset,B_\rho(x))\leq m^{1,p}_{\mathcal{E}_0}({l}_\xi,B_\rho(x))$ \MMM for all $\xi \in \R^{d \times d}$. \EEE In addition, from \eqref{simplified_cellformula_bulk},   \eqref{f1 = f2}, \EEE and \cite[Proposition 3.13]{FriPerSol20a} we get
\begin{equation*}
    \hat{f}(x, \xi  ) \MMM =   \hat{f}(x,  {\rm sym}(\xi)  ) \EEE =\limsup\limits_{\rho \to 0}\frac{m^{1,p}_{\mathcal{E}_0}({l}_\xi,B_\rho(x))}{\gamma_d \rho^d} \quad \quad \MMM \text{ for all $\xi \in \R^{d \times d}$, $x \in \Omega$}.\EEE 
\end{equation*}
Thus, view of \eqref{def f_0}, the first inequality holds, \MMM where we also use $  {f}_0(x, \xi  )  =  {f}_0(x,  {\rm sym}(\xi)  )$ due to the fact that $\mathcal{E}_0$ satisfies $(\mathrm{H5})$. \EEE

\noindent 
\emph{Step 2:} $\hat{f}(x,e(u)(x))\leq {f}_0(x,e(u)(x))$ for $\mathcal{L}^d$-a.e.\ $x \in \Omega$.  \MMM By \eqref{the e-0 energy} applied on the pair $(u,\emptyset)$ and the Radon-Nikod\'ym Theorem we have  for $\mathcal{L}^d$-a.e.\ $x \in \Omega$ that
\begin{equation}
\label{eq7.8}
    f_0(x,e(u)(x))=   \lim\limits_{\rho \to 0}\frac{\mathcal{E}_0(u,\emptyset,B_\rho(x))}{\gamma_d\rho^d}<\infty.
\end{equation} \EEE
Let $(u_\varepsilon,E_\varepsilon)_\varepsilon$ be a recovery sequence for $(u,\emptyset)$. This along with the growth condition $(g_2)$ yields  that $(\mathcal{H}^{d-1}(\partial E_\varepsilon))_\varepsilon$ \MMM is \EEE uniformly bounded. Thus, up to \NNN a \EEE subsequence (not relabeled) there exists a Radon measure $\mu_0$ such that
\begin{equation}
\label{eq7.9}
\mu_\varepsilon \defas \mathcal{H}^{d-1}\llcorner \partial^* E_\varepsilon  \overset{\ast}{\rightharpoonup} \mu_0 \:\: \text{weakly* in the sense of measures.}  
\end{equation}
Let us notice that for $\mathcal{L}^d$-a.e.\ $x \in \Omega$ we have that
\begin{equation}
\label{eq7.10}
    \limsup\limits_{\rho \to 0}\frac{\mu_0\big(\MMM \overline{B_\rho(x)} \big) \EEE }{\gamma_{d-1}\rho^{d-1}}=0.
\end{equation}
Indeed, by contradiction we suppose that there exists a Borel set \NNN $F$ \EEE with $\mathcal{L}^d(F)>0$ and $t>0$ such that
\begin{equation}
    \limsup\limits_{\rho \to 0}\frac{\mu_0\big(\MMM \overline{B_\rho(x)} \big) \EEE }{\gamma_{d-1}\rho^{d-1}}>t
\end{equation}
for all $x \in F$. Then, as a consequence of \cite[Theorem 2.56]{ambrosio2000fbv} we would get $\mu_0 \llcorner F \geq t\mathcal{H}^{d-1}\llcorner F$ implying $\mu_0(F)=\infty$. But this is a contradiction since $\mu_0$ is finite. Due to Lemma \ref{approximate gradient} and the fact that $u \in GSBD^p(\Omega)$, the approximate gradient of $u$ exists for $\mathcal{L}^d$-a.e.\ $x \in \Omega$. \EEE Hence, it is not restrictive to take $x \in \Omega $ such that $\nabla u(x)$ exists,  and \eqref{eq7.8}, \eqref{eq7.10} hold. Since $\mathcal{E}_0(u,\emptyset,\cdot)$ is a Radon measure, there exists a subsequence $(\rho_n)_n \subset (0,\infty)$ with $\rho_n \searrow 0$ as $n \to \infty$ such that $\mathcal{E}_0(u,\emptyset,\partial B_{\rho_n}(x))=0$ for every $n \in \mathbb{N}$ and  \eqref{f1 = f2}  holds along the sequence $(\rho_n)_n$ i.e.,
\begin{equation}
\label{eq7.12}
\hat{f}(x,e(u)(x))=\lim\limits_{n \to \infty}\limsup\limits_{\varepsilon \to 0}\frac{m_{\mathcal{E}_{\varepsilon}}^{1,p}({l}_{\nabla u(x)},B_{\rho_n}(x))}{\gamma_d \rho^d_n}.
\end{equation}
\MMM (Here, we use again that $\hat{f}(x,\xi) = \hat{f}(x,{\rm sym}(\xi))$ for all $\xi \in \R^{d \times d}$, see \cite[Proposition 3.13]{FriPerSol20a}.) \EEE By Remark~\ref{remark recovery sequences}, for every $n \in \mathbb{N}$ there exists $\varepsilon_n$ such that for every $\varepsilon \leq \varepsilon_n$ it holds
\begin{equation}
\label{eq7.13}
\frac{\mathcal{E}_0(u,\emptyset,B_{\rho_n}(x))}{\gamma_d \rho^d_n}\geq \frac{\mathcal{E}_{\varepsilon}(u_{\varepsilon},E_{\varepsilon},B_{\rho_n}(x))}{\gamma_d \rho^d_n}-\frac{1}{n}.
\end{equation}
We define the functions
\begin{equation*}
 \tilde{u}_{\varepsilon}^n(y)\defas \frac{u_{\varepsilon}(x+\rho_n y)-u_{\varepsilon}(x)}{\rho_n}  
 \quad \text{and}\quad \tilde{u}^n(y)\defas \frac{u(x+\rho_n y)-u(x)}{\rho_n}  \quad \MMM \text{for } y \in B_1, \EEE
\end{equation*}
and note that $\tilde{u}^n_{\varepsilon}$ converges to $\tilde{u}^n$ in measure on $B_1$ as $\varepsilon \to  0$ since $u_{\varepsilon}$ converges in measure to $u$. Notice that $(\tilde{u}_{\varepsilon}^n, \tilde{E}^n_{\varepsilon}) \in \mathcal{W}^{1,p}(\MMM B_{1} \EEE )$, where $\tilde{E}^n_{\varepsilon}\defas \frac{E_{\varepsilon}-x}{\rho_n}$, \MMM and that \EEE  $J_{\tilde{u}_{\varepsilon}^n}\subset \MMM \partial^* \EEE \tilde{E}^n_{\varepsilon}\cap B_1$ for every $n \in \mathbb{N}$ and $\varepsilon>0$. We now show by a diagonal argument that, up to passing to a smaller $\varepsilon_n$, the sequence \MMM $v_n \defas \tilde{u}_{\varepsilon_n}^n$ \EEE satisfies
\begin{equation}
\label{eq7.14}
    v_n \to {l}_{\nabla u(x)} \:\: \text{in measure on}\:\: B_1,
\end{equation}
and 
\begin{equation}
\label{eq7.15}
  \lim\limits_{n \to \infty} \mathcal{H}^{d-1}(J_{v_n}) \MMM = \EEE  0.
\end{equation}
Since $u$ is approximately differentiable in $x$, we have $\tilde{u}^n\to {l}_{\nabla u(x)}$ in measure on $B_1$ as $n \to \infty$. Consequently, \eqref{eq7.14} can be achieved. Moreover, by a change of  \MMM variables \EEE and by recalling \eqref{eq7.9} we have
\begin{equation}
\limsup\limits_{\varepsilon \to 0}\mathcal{H}^{d-1}\big(\partial^* \tilde{E}_{\varepsilon}^n \MMM \cap B_1 \EEE \big)=\limsup\limits_{\varepsilon \to 0} \frac{\mathcal{H}^{d-1}(\partial^* E_{\varepsilon}\cap \MMM B_{\rho_n} \EEE (x))}{ \MMM  \rho^{d-1}_n}\leq \limsup\limits_{\varepsilon \to 0}\frac{\mu_{\varepsilon}(\overline{B_{\rho_n}(x)})}{\MMM \rho^{d-1}_n}\leq \frac{ \MMM \mu_0 \EEE (\overline{B_{\rho_n}(x)})}{\MMM \rho^{d-1}_n}.
\end{equation}
Thus, by \eqref{eq7.10} we get
\begin{equation}
\MMM \limsup\limits_{n \to \infty} \EEE \limsup\limits_{\varepsilon \to 0} \mathcal{H}^{d-1}(J_{\Tilde{u}^n_{\varepsilon}})\leq \MMM \limsup\limits_{n \to \infty} \EEE \limsup\limits_{\varepsilon \to 0} \mathcal{H}^{d-1}\big(\partial^* \tilde{E}_{\varepsilon}^n \MMM \cap B_1 \EEE \big)=0.   
\end{equation}
Thus, \eqref{eq7.15} can be ensured with a diagonal argument. By \eqref{eq7.12} we can choose $\varepsilon_n$ with the additional property
\begin{equation}
\label{eq7.18}
\hat{f}(x,e(u)(x))= \lim\limits_{n \to \infty}\frac{m^{1,p}_{\mathcal{E}_{\varepsilon_n}}({l}_{\nabla u(x)}, B_{\rho_n}(x))}{\gamma_d\rho^d_n}.
\end{equation}
By \eqref{eq7.13} and \MMM the \EEE change of variables $\MMM z \EEE =x+\rho_n y$ we get
\begin{equation}
\int_{B_1}f_{\varepsilon_n}\big(x+\rho_n y,e(v_n)(y)\big)\, \mathrm{d}y=\frac{\int_{B_{\rho_n}(x)}f_{\varepsilon_n}(z,e(u_{\varepsilon_n})(z))\, \mathrm{d}z}{\MMM \rho^{d}_n}\leq \frac{\mathcal{E}_0(u,\emptyset,B_{\rho_n}(x))}{\MMM \rho^{d}_n}+\frac{\MMM \gamma_d}{n}.
\end{equation}
In addition, taking into consideration \eqref{eq7.8} we get
\begin{equation}
\label{eq7.20}
    \limsup\limits_{n \to \infty}\frac{1}{\gamma_d}\int_{B_1}f_{\varepsilon_n}\big(x+\rho_n y,e(v_n)(y)\big)\, \mathrm{d}y \leq \lim\limits_{n \to \infty}\frac{\mathcal{E}_0(u,\emptyset, B_{\rho_n}(x))}{\gamma_d\rho^d_n}=f_0(x,e(u)(x)).
\end{equation}
\MMM By $(f_2)$  we also get $\sup_{n \in \N} \Vert e(v_n) \Vert_{L^2(B_1)} < \infty$. This along with  \eqref{eq7.14} and \eqref{eq7.15} allows us to apply \EEE  \cite[Lemma 5.1]{FriPerSol20a} to the sequence $(v_n)_n$. We find a sequence $(w_n)_n \subset W^{1,p}(B_1;\mathbb{R}^d)$ such that $(\vert \nabla w_n \vert^p)_n$ is equiintegrable, $\lim\limits_{n \to \infty}\Vert w_n - {l}_{\nabla u(x)}\Vert_{L^p(B_1)}= 0$ and $\lim\limits_{n \to \infty}\mathcal{L}^d(\{ w_n \neq v_n\} \cup \{e(v_n) \neq e(w_n)\} )=0$. As a consequence of these properties \NNN and $(f_3)$, \EEE it holds
\begin{equation}
\label{eq7.21}
 \limsup\limits_{n \to \infty}\int_{B_1}f_{\varepsilon_n}\big(x+\rho_n y, e(v_n)(y)\big)\, \mathrm{d}y= \limsup\limits_{n \to \infty}\int_{B_1}f_{\varepsilon_n}\big(x+\rho_n y, e(w_n)(y)\big)\, \mathrm{d}y .
\end{equation}
We now modify $(w_n)_n$ in such a way \MMM that \EEE they attain the boundary datum ${l}_{\nabla u(x)}$ in a neighborhood of $\partial B_1$ and in such a way that the energy does not increase asymptotically. This follows from a standard application of the fundamental estimate in \cite[Chapter 18]{DalMaso:93} to the functional
\begin{equation*}
    \tilde{\mathcal{F}}_n(z)\defas  \MMM \frac{1}{\gamma_d} \EEE \int_{B_1}f_{\varepsilon_n}(x+\rho_n y, e(z)(y))\, \mathrm{d}´\MMM y, \EEE \quad z \in W^{1,p}(B_1;\mathbb{R}^d),
\end{equation*}
and the fact that $\lim\limits_{n \to \infty}\Vert w_n - {l}_{\nabla u(x)}\Vert_{L^p(\Omega)}= 0$. Precisely, for any $\eta \in (0,1)$ we can find a sequence $(z^\eta_n)_n$ such that $z^\eta_n={l}_{\nabla u(x)}$ \MMM near  $\partial B_1 $ \EEE and  
\begin{equation}
\label{fund est dal maso}
\limsup\limits_{n \to \infty}\tilde{\mathcal{F}}_n(z^\eta_n)\leq (1+\eta)\limsup\limits_{n \to \infty}\tilde{\mathcal{F}}_n(w_n)+\mathrm{err}(\eta),
\end{equation}
with ${\rm err}(\eta)\to 0$ as $\eta \to 0$. From \eqref{eq7.20} \MMM and \EEE \eqref{eq7.21} we get
\begin{equation}
\label{eq7.23}
\limsup\limits_{n \to \infty}\tilde{\mathcal{F}}_n(z^\eta_n)\leq (1+\eta)f_0(x,e(u)(x))+\mathrm{err}(\eta).
\end{equation}
On the other hand, with a change of variables, we get
\begin{equation}
\label{eq7.24}
\tilde{\mathcal{F}}_n(z^\eta_n)=  \MMM \frac{1}{\gamma_d} \EEE \int_{B_1} f_{\varepsilon_n}\big(x+\rho_n y^\prime, e(z_n^\eta)(y^\prime)\big)\, \mathrm{d}y^\prime = \frac{1}{ \MMM \gamma_d \EEE \rho_n^d}\int_{B_{\rho_n}(x)}f_{\varepsilon_n}\big(y,e(\overline{z}_i^\eta)(y)\big)\, \mathrm{d}y,
\end{equation}
where $\overline{z}_n^\eta(y)\defas \rho_n z^\eta_n((y-x)/\rho_n)+{l}_{\nabla u(x)}x$ for $y \in B_{\rho_n}(x)$. In particular, observe that, since $z^\eta_n={l}_{\nabla u(x)}$ in a neighborhood of $\partial B_1$, $\overline{z}_n^\eta={l}_{\nabla u(x)}$ in a neighborhood of $\partial B_{\rho}(x)$. Hence, \eqref{eq7.18}, \eqref{eq7.23}, and \eqref{eq7.24} imply
\begin{equation*}
\hat{f}(x,e(u)(x))\leq (1+\eta)f_0(z,e(u)(x))+\mathrm{err}(\eta),
\end{equation*}
and so the desired inequality follows after sending $\eta \to 0$. 
 \end{proof}

     \MMM
\section{Definition and properties of $GBD$-functions} \EEE
\label{section preliminaries}

\subsection{G(S)BD functions}
We \NNN recall  \EEE the space of generalized functions of bounded deformation and \NNN its \EEE main properties. For further details we refer to \cite{DALMASO_GBD}.  In the following, $\Omega$ always \MMM denotes a set in $\mathcal{A}_0$.  Given a Borel set $B \subset \mathbb{R}^d$,  \EEE  for every $\xi \in \mathbb{S}^{d-1}$ \MMM and \EEE $y \in \Pi^\xi_0$ we denote by $B^\xi_y$ the set $
      B_y^{\xi}=\{ t \in \mathbb{R}: y+t\xi \in B \}.$
\begin{definition}[Generalized functions of bounded deformation]
\label{def3}
 The space $GBD(\Omega)$ of generalized functions of bounded deformation consists of \MMM all \EEE  $\mathcal{L}^d$-measurable functions $u$ such that  there exists a bounded Radon measure $\lambda_u$ satisfying the following property: for $\mathcal{H}^{d-1}$-a.e.\ $y \in \Pi_0^{\xi}$ the function $\hat{u}^{\xi}_y(t)\defas u(y+\xi t) \cdot \xi$ belongs to $BV_{\mathrm{loc}}(\Omega_y^{\xi})$ and
\begin{equation}
\label{eq2.2}
\int_{\Pi_0^{\xi}}{\big(|D\hat{u}^{\xi}_y|(B_y^{\xi}\setminus J_{\hat{u}^{\xi}_y}^1)+\mathcal{H}^0(B_y^{\xi}\cap J_{\hat{u}^{\xi}_y}^1)\big)}\, \mathrm{d}\mathcal{H}^{d-1}(y)\leq \lambda_u(B) 
\end{equation}
for every Borel set $B \subset \Omega$, where  $J_{\hat{u}^{\xi}_y}^1=\{ t \in J_{\hat{u}^{\xi}_y}: |(\hat{u}_y^{\xi})^+(t)-(\hat{u}_y^{\xi})^-(t)|\geq 1 \}$.
\end{definition}
\begin{definition}[Generalized special functions of bounded deformation]
\label{def5}
The space $GSBD(\Omega)$ of generalized special functions of bounded deformation is the space of \MMM all \EEE functions $u \in GBD(\Omega)$ such that for every $\xi \in \mathbb{S}^{d-1}$  and for $\mathcal{H}^{d-1}$-a.e.\ $y \in \Pi_0^{\xi}$ the function $\hat{u}_y^{\xi}$ belongs to $SBV_{\mathrm{loc}}(\Omega^{\xi}_{y})$.
\end{definition}
\begin{lemma}[Weak approximate symmetric differentiability]
\label{theorem4}

Let $u \in GBD(\Omega)$. Then there exists a function $e(u) \in L^1(\Omega; \mathbb{R}_{\rm sym}^{d\times d})$ such that for $\mathcal{L}^d$-a.e.\ $x \in \Omega$ it holds that
\begin{equation*}
\limsup\limits_{\varepsilon \to 0}\varepsilon^{-d}{\mathcal{L}^d\Big(\Big\{y \in B_{\varepsilon}(x): \frac{|(u(y)-u(x)-e(u)(x)(y-x))\cdot (y-x)|}{|y-x|^2}>\rho \Big\}\Big)}=0 \:\: \text{for all}\:\: \rho>0.
\end{equation*}
In addition, for every $\xi \in \mathbb{S}^{d-1}$ and for $\mathcal{H}^{d-1}$-a.e.\ $y \in \Pi_0^{\xi}$ we have
\begin{equation*}
e(u)(y+t\xi)\xi \cdot \xi = \nabla {\hat{u}}_y^{\xi}(t) \:\: \text{for  \NNN $\mathcal{L}^1$-a.e.}\:\: t \in \Omega_y^{\xi}.
\end{equation*}
\end{lemma}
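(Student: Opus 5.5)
This result is the classical weak approximate symmetric differentiability for $GBD$ functions, and I would follow the slicing strategy of \cite{DALMASO_GBD}. The first step is to show that for each fixed direction $\xi \in \mathbb{S}^{d-1}$ the slices $\hat{u}^\xi_y$ are $BV_{\rm loc}$ with approximate derivatives $\nabla \hat u^\xi_y(t)$. Then I would prove that the function $x \mapsto \nabla \hat u^\xi_{\pi_\xi(x)}(x\cdot\xi)$ is measurable on $\Omega$. Next I would show that it belongs to $L^1(\Omega)$, with a bound by $\lambda_u(\Omega)$ coming from \eqref{eq2.2}. Its integrability follows from Fubini, since $\int_{\Pi^\xi_0}|D\hat u^\xi_y|(\Omega^\xi_y\setminus J^1)\,{\rm d}\mathcal{H}^{d-1}\le\lambda_u(\Omega)$. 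Denote this function by $e_\xi(u)$.

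The core step is to show that $\xi\mapsto e_\xi(u)(x)$ is, for a.e.\ $x$, a quadratic form, i.e.\ $e_\xi(u)(x)=E(x)\xi\cdot\xi$ for a symmetric matrix $E(x)$. I would test against $\varphi\in C^\infty_c(\Omega)$ after truncation. Consider the truncations $\tau(u\cdot\xi)$ with $\tau$ a bounded smooth increasing function, for instance $\tau=\arctan$. The chain rule on slices gives $D_\xi[\tau(u\cdot\xi)]=\tau'(u\cdot\xi)\,e_\xi(u)$ plus jump and Cantor parts. These extra parts are controlled by $\lambda_u$, so in the sense of distributions $\int \tau(u\cdot\xi)\,\partial_\xi\varphi$ relates to $e_\xi(u)$. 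I expect this to be the main obstacle. One cannot take $D(u\cdot\xi)$ directly, because $u$ need not be locally integrable and the jumps of size $<1$ interfere. The standard remedy, following \cite[Section 9]{DALMASO_GBD}, is to establish the approximate differentiability along lines first. For a.e.\ $x$ and each $\xi$ in a countable dense set, I would show the one-dimensional blow-up $\varepsilon^{-1}(u(x+\varepsilon z)-u(x))\cdot\xi$ converges in measure to a limit linear in $z\cdot\xi$ along $\xi$-lines. From this I would deduce, via polarization over the directions $\xi,\eta,\xi+\eta$ using the identity $(u\cdot(\xi+\eta))$ on lines, that $e_{\xi+\eta}-e_\xi-e_\eta$ is bilinear. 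A density and continuity argument in $\xi$ then extends the quadratic form to all directions. This yields $E(x)=:e(u)(x)\in L^1(\Omega;\R^{d\times d}_{\rm sym})$, and the second identity in the statement holds by construction.

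Finally, to get the blow-up statement, I would argue by contradiction. Fix $\rho>0$ and suppose the density of the bad set is positive on a set of positive measure. Write $y=x+\varepsilon s\xi$ in polar coordinates. Then $\frac{(u(y)-u(x)-e(u)(x)(y-x))\cdot(y-x)}{|y-x|^2}$ equals the slice difference quotient $\frac{\hat u^\xi(s\varepsilon)-\hat u^\xi(0)}{s\varepsilon}-e(u)(x)\xi\cdot\xi$ along the line through $x$. For a.e.\ $x$ and a.e.\ $\xi$, the one-dimensional approximate differentiability of $BV$ slices makes this small except on a set of small one-dimensional density. Integrating in $\xi$ over $\mathbb{S}^{d-1}$ and using Fubini in polar coordinates around $x$ then gives the $\varepsilon^{-d}$-density estimate. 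The delicate point is making the "a.e.\ $x$" uniform in $\xi$. I would handle it by applying Fubini on $\Omega\times\mathbb{S}^{d-1}$ and using the maximal-function bound supplied by $\lambda_u$ to control the jump contributions of size $\ge1$.
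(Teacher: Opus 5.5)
The paper does not prove this lemma; it only recalls it in the appendix from \cite{DALMASO_GBD}. Your argument therefore has to stand on its own.

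\textbf{What works.} Your first step is fine: you define $e_\xi(u)$ slice by slice and get $\int_B|e_\xi(u)|\,{\rm d}x\le\lambda_u(B)$ from \eqref{eq2.2}. Your last step is also fine: polar coordinates around $x$, Fubini on $\Omega\times\mathbb{S}^{d-1}$, approximate differentiability of the $BV$ slice through $x$, and dominated convergence in $\xi$. The maximal-function control of large jumps is not even needed there.

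\textbf{The gap.} The gap is the middle step, which is the real content of the lemma: that $\xi\mapsto e_\xi(u)(x)$ is a quadratic form for \emph{every} $\xi$. Your mechanism does not supply this, for three reasons.
\begin{itemize}
\item Convergence in measure, in $z\in B_1$, of $\varepsilon^{-1}(u(x+\varepsilon z)-u(x))\cdot\xi$ does not follow from \eqref{eq2.2}. Slicing in direction $\xi$ controls the variation of $u\cdot\xi$ only along $\xi$-lines, not across them. So these $d$-dimensional blow-ups have no reason to converge, and you give no argument that they do. If you only mean the single line through $x$, the claim is trivial but relates no two directions.
\item The polarization step fails for the same reason. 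Differentiating $u\cdot(\xi+\eta)$ along $\xi+\eta$ produces the mixed terms $\partial_\eta(u\cdot\xi)+\partial_\xi(u\cdot\eta)$, which $GBD$ does not control. Bilinearity of $e_{\xi+\eta}-e_\xi-e_\eta$ is exactly what must be proved, not a consequence.
\item No continuity of $\xi\mapsto e_\xi(u)$ is available to pass from a dense set of directions to all of them. Each $e_\xi(u)$ comes from a different slicing and is defined only up to a $\xi$-dependent null set.
\end{itemize}

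\textbf{One way to close it.} Work only with increments that $GBD$ controls. Put $S(a,b):=(u(b)-u(a))\cdot(b-a)$, and take $a_1=x$, $a_2=x+r\xi$, $a_3=x+r(\xi+\eta)$, $a_4=x+r\eta$. For an arbitrary function $u$ one has the algebraic identity $S(a_1,a_3)+S(a_2,a_4)=S(a_1,a_2)+S(a_2,a_3)+S(a_3,a_4)+S(a_4,a_1)$.
\begin{itemize}
\item Each term $r^{-2}S(a,a+r\zeta)$ is a difference quotient of the slice in direction $\zeta/|\zeta|$ through $a$. By one-dimensional $BV$ theory and Fubini, it converges locally in measure, as a function of $a$, to $q(\zeta):=|\zeta|^2e_{\zeta/|\zeta|}(u)$ as $r\to0$.
\item Terms based at $x+r\xi$ or $x+r\eta$ are handled by translation invariance of $\mathcal{L}^d$ and $L^1$-continuity of translations of the limit.
\item Hence $q(\xi+\eta)+q(\xi-\eta)=2q(\xi)+2q(\eta)$ a.e.\ for each pair (note that $q$ is even).
\item Moreover, \eqref{eq2.2} gives $|e_\xi(u)|\,\mathcal{L}^d\le\lambda_u$, that is, $|q(\zeta)|\le|\zeta|^2\frac{{\rm d}\lambda_u}{{\rm d}\mathcal{L}^d}$, with a bound independent of $\zeta$.
\item Apply the parallelogram law on the countable $\mathbb{Q}$-vector space $\mathbb{Q}^d+\mathbb{Q}\xi$. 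For a.e.\ $x$ this gives a $\mathbb{Q}$-quadratic form bounded by $C(x)|\zeta|^2$. It is therefore the restriction of a real form $E(x)\zeta\cdot\zeta$, with $E(x)$ determined by $\mathbb{Q}^d$ alone.
\item This yields $e_\xi(u)=E\xi\cdot\xi$ a.e.\ for every $\xi\in\mathbb{S}^{d-1}$. Your final step then gives the density statement.
\end{itemize}
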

Moreover, \MMM for every \EEE  function $u \in GBD(\Omega)$ the approximate jump set $J_u$ (\cite[Definition 4.30]{ambrosio2000fbv}) is $\mathcal{H}^{d-1}$-rectifiable.
We recall that $GSBD^p(\Omega)\defas \{ u \in GSBD(\Omega)\colon e(u) \in L^p(\Omega;\mathbb{R}_{\mathrm{sym}}^{d \times d}), \MMM \mathcal{H}^{d-1}(J_u) < \infty  \EEE  \}$.
\begin{lemma}[Approximate gradient, \cite{CagChaSca20, doi:10.1137/17M1129982}] 
\label{approximate gradient}
\MMM Let \EEE  $1<p<\infty$ and $u \in GSBD^p(\Omega)$. Then \MMM for \EEE $\mathcal{L}^d$-a.e.\ $x \in \Omega$ there exists a matrix in $\mathbb{R}^{d \times d}$, denoted by $\nabla u (x)$, such that
\begin{equation*}
    \lim\limits_{\varepsilon \to 0}\varepsilon^{-d} \mathcal{L}^d\Big(\Big\{x \in B_{\varepsilon}(x) \colon \frac{\vert u(y)-u(x)-\nabla u(x)(y-x)\vert}{\vert y-x \vert}>\rho\Big\}\Big)=0\:\: \text{for all}\:\: \rho>0.
\end{equation*}
\end{lemma}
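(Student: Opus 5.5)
The plan is to reduce to the approximate differentiability of Sobolev functions, using the Korn inequality for functions with small jump set (Theorem~\ref{Korn inequality for functions with small jump set}, in the form of \cite{CagChaSca20}). I use it in the following form: there are constants $\overline{c},\eta_0>0$ such that for every cube $Q$ of sidelength $r$ and every $u\in GSBD^p(Q)$ with $\mathcal{H}^{d-1}(J_u\cap Q)\le \eta_0 r^{d-1}$, there exist a set $\omega\subset Q$ and $v\in W^{1,p}(Q;\mathbb{R}^d)$ such that
\begin{itemize}
\item[(a)] $v=u$ on $Q\setminus\omega$;
\item[(b)] $\mathcal{L}^d(\omega)\le \overline{c}\,\mathcal{H}^{d-1}(J_u\cap Q)^{d/(d-1)}$;
\item[(c)] $\Vert e(v)\Vert_{L^p(Q)}\le \overline{c}\,\Vert e(u)\Vert_{L^p(Q)}$ (scaling as in Remark~\ref{scalinginvarianceoncubes}).
\end{itemize}
Since $v\in W^{1,p}$, the classical theory gives that $v$ is approximately differentiable $\mathcal{L}^d$-a.e.\ in $Q$, with approximate gradient $\nabla v$.

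First I fix $\eta\in(0,\eta_0]$ and $r>0$, and take the grid of cubes $Q$ of sidelength $r$ contained in $\Omega$. A cube is \emph{bad} if $\mathcal{H}^{d-1}(J_u\cap Q)>\eta r^{d-1}$; otherwise it is \emph{good}. On each good cube I apply the result above and obtain $\omega_Q$ and $v_Q$. The exceptional set $G_{\eta,r}$ is the union of
\begin{itemize}
\item the bad cubes,
\item the sets $\omega_Q$ over good cubes $Q$,
\item the part of $\Omega$ not covered by grid cubes,
\item the cube boundaries.
\end{itemize}
Its measure is estimated as follows.
\begin{itemize}
\item The number of bad cubes is at most $\mathcal{H}^{d-1}(J_u)/(\eta r^{d-1})$, so their total measure is at most $r\,\mathcal{H}^{d-1}(J_u)/\eta$.
\item By (b), $\sum_Q\mathcal{L}^d(\omega_Q)\le \overline{c}\sum_Q(\eta r^{d-1})^{1/(d-1)}\mathcal{H}^{d-1}(J_u\cap Q)\le \overline{c}\,\eta^{1/(d-1)} r\,\mathcal{H}^{d-1}(J_u)$. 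The cruder bound $\overline{c}\,\eta^{d/(d-1)}\mathcal{L}^d(\Omega)$ also suffices.
\item The uncovered boundary layer has measure $o(1)$ as $r\to0$, since $\Omega$ is Lipschitz.
\end{itemize}
Choosing $\eta=\eta_k\to0$ first and then $r=r_k\to0$ small enough, I get $\mathcal{L}^d(G_k)\le 2^{-k}$, using $\mathcal{H}^{d-1}(J_u)<\infty$.

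Next, fix $k$ and a point $x\in\Omega\setminus G_k$. Then $x$ lies in the interior of a good cube $Q$, and $u=v_Q$ on $Q\setminus\omega_Q$. I restrict to points $x$ that satisfy two further conditions:
\begin{itemize}
\item $x$ is a Lebesgue density point of $Q\setminus\omega_Q$;
\item $v_Q$ is approximately differentiable at $x$ with $u(x)=v_Q(x)$. Here $u(x)$ is understood as the approximate limit, which exists a.e.\ for measurable $u$; otherwise I work with the precise representative.
\end{itemize}
Both conditions hold for a.e.\ $x\in Q\setminus\omega_Q$. Since $\mathcal{L}^d\big(B_\varepsilon(x)\cap\omega_Q\big)=o(\varepsilon^d)$, the set in the statement computed for $u$ differs from the one for $v_Q$ only by a set of measure $o(\varepsilon^d)$. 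Hence $\nabla u(x)\defas\nabla v_Q(x)$ satisfies the claimed limit.

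The set of points where $\nabla u(x)$ fails to exist is contained in $G_k$ up to a null set, for every $k$. Since $\mathcal{L}^d(G_k)\le2^{-k}$, it is $\mathcal{L}^d$-negligible. Well-definedness is automatic, because approximate gradients are unique wherever they exist. As a consistency check, the symmetric part agrees a.e.\ with $e(u)$ from Lemma~\ref{theorem4}, by comparing the two defining limits.

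I expect the main obstacle to be the precise input from \cite{CagChaSca20}. Theorem~\ref{Korn inequality for functions with small jump set}, as used in the fundamental estimate, only gives $L^p$ control of $u-a$ off $\omega$ with $a$ rigid; what is needed here is a genuine $W^{1,p}$ function coinciding with $u$ off $\omega$. If only the weaker statement is available, I would first try to manufacture such a $v$ via the piecewise-affine/extension construction of \cite{CagChaSca20}. As a fallback, I would use the inclusion $e(u)\in L^p$ together with a Korn inequality on the complement of $\omega$ to obtain $\nabla u\in L^p$ locally off $\omega$, and then run a Calder\'on--Zygmund-type argument for approximate differentiability. The latter is essentially the route of \cite{doi:10.1137/17M1129982}.
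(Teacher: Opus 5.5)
Your argument is correct and follows essentially the route behind the citation: the paper gives no proof of its own and defers to \cite{CagChaSca20, doi:10.1137/17M1129982}, where a.e.\ approximate differentiability is deduced from a Korn-type inequality that provides a $W^{1,p}$ function agreeing with $u$ off a small exceptional set, and your grid-of-cubes plus density-point argument carries out exactly this deduction. The obstacle you anticipate does not arise, because Theorem~\ref{Korn inequality for functions with small jump set} as stated in the paper already includes the Sobolev replacement $\overline{u}$ with $\overline{u}=u$ on $\Omega\setminus\omega$ (see \eqref{eq3.0}), so no fallback is needed; also, sending $\eta\to0$ is unnecessary, since for fixed $\eta$ the bad cubes and the sets $\omega_Q$ already have total measure $O(r)$.
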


\begin{theorem}[\NNN Closure \EEE in $GSBD^p$, \cite{DALMASO_GBD}]
\label{compactness in GSBD^p}
Let $(u_n)_n$ be a sequence in $GSBD^p(\Omega)$ such that
\begin{equation}
    \sup_{n \in \mathbb{N}} \big(\Vert e(u_n)\Vert^p_{L^p(\Omega)}+\mathcal{H}^{d-1}(J_{u_n})\big)<\infty,
\end{equation}
\NNN and $u_n \to u$ in measure for some $u \in L^0(\Omega;\R^d)$. \EEE 
\NNN Then, \EEE  $u \in GSBD^p(\Omega)$, \NNN and \EEE
\begin{align*}
{\rm (i)}  \ \     & u_n \to u \:\: \text{in}\:\: L^0(\Omega ;\mathbb{R}^d),\\
{\rm (ii)}  \ \     & e(u_n) \rightharpoonup e(u)\:\: \text{weakly in}\:\: L^p(\Omega  ;\mathbb{R}_{\mathrm{sym}}^{d \times d}), \\
{\rm (iii)}  \ \     & \liminf\limits_{n \to \infty}\mathcal{H}^{d-1}(J_{u_n})\geq \mathcal{H}^{d-1}(J_u ).
\end{align*}
\end{theorem}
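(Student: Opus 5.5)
The plan is to argue by slicing, reducing everything to the one-dimensional closure theorem in $SBV^p$ and then reassembling the information through Fubini and the integral-geometric description of $GSBD$. Since $u_n\to u$ in measure is assumed, (i) is immediate, and we may pass to subsequences freely, provided at the end that every identified limit is independent of the subsequence. Fix $\xi\in\mathbb{S}^{d-1}$. By Fubini, convergence in measure on $\Omega$ gives, along a subsequence (diagonal over a countable dense set of directions), $(\hat u_n)^\xi_y\to\hat u^\xi_y$ in measure on $\Omega^\xi_y$ for $\mathcal{H}^{d-1}$-a.e.\ $y\in\Pi^\xi_0$. The slicing identities for $GSBD^p$ (Lemma \ref{theorem4} together with the structure of $J_{u_n}$ on slices) give
\begin{equation*}
\int_{\Pi^\xi_0}\Big(\int_{\Omega^\xi_y}|\nabla(\hat u_n)^\xi_y|^p\,\mathrm{d}t+\mathcal{H}^0\big(J_{(\hat u_n)^\xi_y}\big)\Big)\,\mathrm{d}\mathcal{H}^{d-1}(y)\le \|e(u_n)\|^p_{L^p(\Omega)}+\int_{J_{u_n}}|\nu_{u_n}\cdot\xi|\,\mathrm{d}\mathcal{H}^{d-1},
\end{equation*}
and the right-hand side is bounded uniformly in $n$. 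By Fatou's lemma, for a.e.\ $y$ the $\liminf_n$ of the inner quantity is finite.

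Along a further $y$-dependent subsequence realizing this liminf, the one-dimensional $SBV^p$ closure theorem (Ambrosio) applies. It yields $\hat u^\xi_y\in SBV_{\rm loc}(\Omega^\xi_y)$, weak $L^p$ convergence of the absolutely continuous derivatives, and lower semicontinuity of both $\int|\nabla\cdot|^p$ and $\mathcal{H}^0(J)$. Integrating in $y$ produces a finite measure $\lambda_u$ controlling \eqref{eq2.2}; this can be built from the weak-* limit of the measures $|e(u_n)|^p\mathcal{L}^d+\mathcal{H}^{d-1}\llcorner J_{u_n}+\mathcal{L}^d$, so that the inequality holds on every Borel set $B$. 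Hence $u\in GSBD(\Omega)$.

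For (ii), $(e(u_n))_n$ is bounded in $L^p$, so any subsequence has a further subsequence converging weakly to some $e$. To identify $e$, test against $\varphi\in C_c^\infty(\Omega)$ and use Fubini, writing $\int e(u_n)\xi\cdot\xi\,\varphi$ as an integral over slices of $\int\nabla(\hat u_n)^\xi_y\varphi$. The difficulty is that the one-dimensional weak convergence holds only along $y$-dependent subsequences. I would handle this with the truncation trick of Dal Maso: apply the argument to $\tau(u_n\cdot\xi)$ with bounded $\tau$, so that the distributional derivatives on slices converge, namely $D\tau(\hat u^\xi_{n,y})\to D\tau(\hat u^\xi_y)$. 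The jump parts vanish in the limit when tested against $\tau'$-type cut-offs, and dominated convergence in $y$ then gives $e\xi\cdot\xi=e(u)\xi\cdot\xi$ a.e.\ for every $\xi$. Polarization yields $e=e(u)$, and since the limit is independent of the subsequence, the whole sequence converges. This also gives $e(u)\in L^p$.

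For (iii), Fatou together with the slice lower semicontinuity yields, for every open $A\subset\Omega$ and every $\xi$,
\begin{equation*}
\int_{J_u\cap A}|\nu_u\cdot\xi|\,\mathrm{d}\mathcal{H}^{d-1}\le\liminf_n\int_{J_{u_n}\cap A}|\nu_{u_n}\cdot\xi|\,\mathrm{d}\mathcal{H}^{d-1}\le\liminf_n\mathcal{H}^{d-1}(J_{u_n}\cap A).
\end{equation*}
Here the first inequality uses that, for a.e.\ $y$, the slice of $J_u$ coincides with $J_{\hat u^\xi_y}$ up to the directions where $\nu_u\cdot\xi=0$. Define $\mu(A)=\liminf_n\mathcal{H}^{d-1}(J_{u_n}\cap A)$, which is superadditive on disjoint open sets. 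The supremum-of-measures lemma (Ambrosio--Fusco--Pallara, Lemma 1.68), applied with densities $|\nu_u\cdot\xi_k|$ for a countable dense family $(\xi_k)$, then gives
\begin{equation*}
\mu(\Omega)\ge\int_{J_u}\sup_k|\nu_u\cdot\xi_k|\,\mathrm{d}\mathcal{H}^{d-1}=\mathcal{H}^{d-1}(J_u),
\end{equation*}
which is (iii) and shows $u\in GSBD^p(\Omega)$.

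I expect the main obstacle to be this last step, namely matching slices of $J_u$ with the jump sets of the limit slices and passing from directional estimates to the full $\mathcal{H}^{d-1}$ bound. The identification of $e$ under only subsequential slice convergence is the secondary technical point.
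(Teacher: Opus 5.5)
Your route (slice, apply the one-dimensional $SBV^p$ closure theorem on a.e.\ line, reassemble by Fubini) is the standard one. The paper gives no proof and simply quotes the result from \cite{DALMASO_GBD}, where the argument is also by slicing. As written, however, two of your steps fail, and for the same reason: the one-dimensional information holds only along $y$-dependent subsequences.

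In (iii), Fatou's lemma needs, for a.e.\ $y$, the bound $\mathcal{H}^0(J_{\hat u^\xi_y}\cap A^\xi_y)\le\liminf_n\mathcal{H}^0(J_{(\hat u_n)^\xi_y}\cap A^\xi_y)$ along the \emph{whole} sequence, and this bound is not available. The closure theorem controls the jump count only along subsequences on which $\int|\nabla(\hat u_n)^\xi_y|^p\,\mathrm{d}t$ is also bounded. A slice may alternate between the exact jump $\chi_{\{t>0\}}$ and the ever steeper ramps $\min\{\max\{nt,0\},1\}$; then the liminf of the jump count is $0$ while the limit jumps. The missing device is to use the lower semicontinuity of the joint quantity $\varepsilon\int_{A^\xi_y}|\nabla(\hat u_n)^\xi_y|^p\,\mathrm{d}t+\mathcal{H}^0(J_{(\hat u_n)^\xi_y}\cap A^\xi_y)$, which does hold along the full sequence. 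Integrating in $y$ gives
\[
\int_{J_u\cap A}|\nu_u\cdot\xi|\,\chi_{\{[u]\cdot\xi\neq 0\}}\,\mathrm{d}\mathcal{H}^{d-1}\le\liminf_{n\to\infty}\mathcal{H}^{d-1}(J_{u_n}\cap A)+\varepsilon\sup_{n}\Vert e(u_n)\Vert^p_{L^p(\Omega)},
\]
and then you let $\varepsilon\to0$. Note the indicator. For a.e.\ $y$, $J_{\hat u^\xi_y}$ is the slice of $\{x\in J_u\colon [u](x)\cdot\xi\neq0\}$, not of $J_u$ minus the points where $\nu_u\cdot\xi=0$. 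This is harmless for your supremum step: for each $x\in J_u$ the admissible directions form an open dense subset of $\mathbb{S}^{d-1}$, so the supremum over a dense countable family is still $1$.

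In (ii), the jump parts of $D\tau((\hat u_n)^\xi_y)$ do not vanish for a fixed bounded $\tau$. Each jump carries mass up to $2\Vert\tau\Vert_\infty$. The jump counts are only bounded in $L^1(\Pi^\xi_0)$ and may concentrate on small sets of $y$, so no dominated convergence in $y$ is available; the weak* limit $\lambda$ of $\mathcal{H}^{d-1}\llcorner J_{u_n}$ may even have an absolutely continuous part.

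A working version of your idea uses small-amplitude truncations $\tau_\delta(s)=\delta\sin(s/\delta)$ and $\delta\cos(s/\delta)$. Pass to the limit in $D_\xi\tau_\delta(u_n\cdot\xi)=\tau_\delta'(u_n\cdot\xi)\,e(u_n)\xi\cdot\xi\,\mathcal{L}^d+(\text{jump part})$, whose jump part has total variation at most $2\delta\,\mathcal{H}^{d-1}\llcorner J_{u_n}$. Comparing absolutely continuous parts with the same decomposition for $u$ gives $|\tau_\delta'(u\cdot\xi)|\,|(e-e(u))\xi\cdot\xi|\le 2\delta\,\mathrm{d}\lambda^a/\mathrm{d}\mathcal{L}^d$. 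Adding the sine and cosine versions and letting $\delta\to0$ yields $e\xi\cdot\xi=e(u)\xi\cdot\xi$.

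Alternatively, avoid truncation altogether. Split $\Pi^\xi_0$ according to whether the slice energy of $u_n$ (bulk plus jump count) exceeds $M$. On the good part every subsequence satisfies $\nabla(\hat u_n)^\xi_y\rightharpoonup\nabla\hat u^\xi_y$, so the tested slice integrals converge boundedly in $y$. The bad part has measure $O(1/M)$ and contributes $O(M^{-(p-1)/p})$ by H\"older, since $p>1$.
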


\subsection{Korn's inequality \MMM in $GSBD$\EEE}
We say that $a \colon \Omega \to \mathbb{R}^d$ is a \emph{rigid motion} if $a(x)=Ax+b$ with $A \in \mathbb{R}_{\mathrm{skew}}^{d \times d}$   and $b \in \mathbb{R}^d$. \EEE We  recall a rigidity result for $GSBD^p$-functions due to {\sc Cagnetti, Chambolle, and Scardia} \cite[Theorem 1.1]{CagChaSca20}. 
\begin{theorem}[Korn's inequality for functions with small jump set]
\label{Korn inequality for functions with small jump set}
\MMM Let \EEE $1<p<\infty$. There exists a constant $c=c(p,d,\Omega)>0$ such that for all $u \in GSBD^p(\Omega)$ there exists a set of finite perimeter $\omega \subset \Omega$ and a rigid motion $a \colon \Omega \to \mathbb{R}^d$ such that 
\begin{equation*}
\mathcal{H}^{d-1}(\partial^* \omega) \leq c\mathcal{H}^{d-1}(J_u), \quad \quad  \mathcal{L}^d(\omega)\leq c(\mathcal{H}^{d-1}(J_u))^{\frac{d}{d-1}}   , 
\end{equation*}
\begin{equation}
\label{rigidmotioninequality}
\Vert u-a \Vert_{L^p(\Omega \setminus \omega)} + \Vert \nabla u - \nabla a \Vert_{L^p(\Omega \setminus \omega)}\leq c\Vert e(u)\Vert_{L^p(\Omega)}.  
\end{equation}
Moreover, there exists $\overline{u} \in W^{1,p}(\Omega; \mathbb{R}^d)$ such that $\overline{u}=u$ in $\Omega \setminus \omega$ and
\begin{equation}
\label{eq3.0}
\Vert e(\overline{u})\Vert_{L^p(\Omega)}\leq c \Vert e(u)\Vert_{L^p(\Omega)}.  
\end{equation}
In particular, the rigid motion $a$ can be chosen in such a way that \eqref{rigidmotioninequality} holds and
\begin{equation}
\label{sobolevreplacement}
\Vert \overline{u}-a \Vert_{L^p(\Omega)} + \Vert \nabla \overline{u} - \nabla a \Vert_{L^p(\Omega)}\leq c\Vert e(u)\Vert_{L^p(\Omega)}.
\end{equation}
\end{theorem}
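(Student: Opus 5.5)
The plan is to reduce the statement to a purely local, small-jump version and then glue local rigid approximations via a Whitney-type construction, in the spirit of \cite{CagChaSca20}.

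\textbf{Reduction to small jump sets.} First dispose of the case of a large jump set. Fix a threshold $\eta_0=\eta_0(d,\Omega)>0$. If $\mathcal{H}^{d-1}(J_u)\ge\eta_0$, take $\omega=\Omega$, $a=0$, $\overline{u}=0$. Then
\[
\mathcal{H}^{d-1}(\partial^*\omega)=\mathcal{H}^{d-1}(\partial\Omega)\le c\,\mathcal{H}^{d-1}(J_u),\qquad \mathcal{L}^d(\omega)\le c\,\mathcal{H}^{d-1}(J_u)^{d/(d-1)}
\]
for $c$ large depending on $\Omega$ and $\eta_0$, and \eqref{rigidmotioninequality}, \eqref{eq3.0}, \eqref{sobolevreplacement} hold trivially. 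So it suffices to treat $\mathcal{H}^{d-1}(J_u)<\eta_0$.

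\textbf{Local step and bad set.} Use a Whitney decomposition of $\Omega$ into dyadic cubes $Q$ whose size is comparable to their distance from $\partial\Omega$; the Lipschitz regularity of $\Omega$ controls the overlap of the enlarged cubes. Call a cube \emph{bad} if $\mathcal{H}^{d-1}(J_u\cap \widetilde Q)\ge \eta_1\,\ell(Q)^{d-1}$, where $\widetilde Q$ is a slight enlargement of $Q$, and \emph{good} otherwise. Let $\omega_0$ be the union of the bad cubes. A Besicovitch/counting argument gives
\[
\mathcal{H}^{d-1}(\partial^*\omega_0)\lesssim \mathcal{H}^{d-1}(J_u).
\]
The volume bound then follows from the isoperimetric inequality, which is legitimate here because smallness of $J_u$ forces $\omega_0$ to stay away from a fixed interior region.

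The core local statement I would aim for is this. On a good cube, $u$ agrees with a rigid motion $a_Q$ outside an exceptional set $\omega_Q$ satisfying
\[
\mathcal{H}^{d-1}(\partial^*\omega_Q)\le c\,\mathcal{H}^{d-1}(J_u\cap\widetilde Q),
\]
with the Korn--Poincaré estimate
\[
\|u-a_Q\|_{L^p(Q\setminus\omega_Q)}+\ell(Q)\,\|\nabla u-\nabla a_Q\|_{L^p(Q\setminus\omega_Q)}\le c\,\ell(Q)\,\|e(u)\|_{L^p(\widetilde Q)}.
\]
I would prove this via the piecewise Korn/slicing technique of Chambolle--Conti--Francfort. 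Since the jump is small relative to the cube, most one-dimensional slices in finitely many directions miss $J_u$. On those slices $u\cdot\xi$ is Sobolev, and its difference quotients are controlled by $e(u)$. Combining enough directions recovers an approximation of the full $u$ by an affine map with skew gradient on a large subset. The exceptional set is built from the slices that do hit $J_u$, and its perimeter is estimated by the slice counting.

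\textbf{Gluing and global estimate.} Define $\overline{u}=\sum_Q\phi_Q\,a_Q$ on the good region, where $(\phi_Q)$ is a partition of unity subordinate to the Whitney cubes, and set $\omega=\omega_0\cup\bigcup_Q\omega_Q$. The key control is that rigid motions of neighbouring good cubes are close: compare them on the overlap $Q\cap Q'$, which has positive measure outside $\omega_Q\cup\omega_{Q'}$ once $\eta_1$ is small. Since rigid motions form a finite-dimensional space, this gives
\[
\|a_Q-a_{Q'}\|_{W^{1,\infty}(Q)}\,\ell(Q)^{d/p}\lesssim \ell(Q)\,\|e(u)\|_{L^p(\widetilde Q\cup\widetilde Q')}.
\]
This yields \eqref{eq3.0}, up to modifying $\overline{u}$ so that $\overline{u}=u$ off $\omega$; that modification is a standard extension/replacement argument on the good cubes. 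The global Korn inequality for $W^{1,p}$ on the Lipschitz domain $\Omega$ then produces $a$ with \eqref{sobolevreplacement}, and \eqref{rigidmotioninequality} follows since $\overline{u}=u$ on $\Omega\setminus\omega$.

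I expect the main obstacle to be the local Korn--Poincaré estimate on good cubes. There the information available is only one-dimensional, through slices of $u\cdot\xi$, and the set where slices hit $J_u$ must be converted into a set of controlled perimeter rather than merely controlled measure.
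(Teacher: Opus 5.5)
The paper gives no proof of this theorem; it quotes it from \cite[Theorem~1.1]{CagChaSca20}. So your outline has to stand on its own. The large-jump reduction and the final step (classical Korn for $\overline{u}$ giving \eqref{sobolevreplacement}, hence \eqref{rigidmotioninequality}) are fine, but the two steps that carry the content are not supplied.

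\emph{The local step.} Your ``core local statement'' is the theorem itself on a cube with small jump set, and the version you state is weaker than what your gluing needs. To get $\overline{u}=u$ on $\Omega\setminus\omega$ with $\overline{u}\in W^{1,p}$, you need a Sobolev replacement on each good cube. That requires two things:
\begin{itemize}
\item $J_u\cap Q$ must be essentially contained in $\omega_Q$;
\item the traces of $u$ on $\partial^*\omega_Q$ must be matched by a filling with controlled symmetric gradient.
\end{itemize}
A Korn--Poincar\'e bound on $Q\setminus\omega_Q$ gives neither. The function $\sum_Q\phi_Q a_Q$ is Sobolev but agrees with $u$ nowhere. ``Modifying it so that $\overline{u}=u$ off $\omega$'' is not a standard extension step, because $\Omega\setminus\omega$ is an arbitrary set of finite perimeter with no extension operator controlling $e(\cdot)$. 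Likewise, the bound on $\nabla u-\nabla a_Q$ cannot come from slicing, since slices of $u\cdot\xi$ only see $e(u)\xi\cdot\xi$. It follows only once a Sobolev replacement is available, via the classical Korn inequality. Finally, as you acknowledge, slicing in the style of Chambolle--Conti--Francfort controls only $\mathcal{L}^d(\omega_Q)$. Upgrading this to $\mathcal{H}^{d-1}(\partial^*\omega_Q)\le c\,\mathcal{H}^{d-1}(J_u\cap\widetilde Q)$ is the genuinely hard part of the result, and naming it as the main obstacle leaves it unproved.

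\emph{The global step.} Your construction also fails independently of the local step. Badness is tested only at the Whitney scale, so regions that $J_u$ cuts off against $\partial\Omega$ are never detected. Take $\Omega=(0,1)^d$ and $P=(\tfrac12-L,\tfrac12+L)^{d-1}\times(0,\delta)$ with $0<\delta\ll L\ll 1$. Let $u=b\chi_P$ with $b\in\mathbb{R}^d\setminus\{0\}$.
\begin{itemize}
\item Then $e(u)=0$ and $\mathcal{H}^{d-1}(J_u)\le 2(2L)^{d-1}$ is small.
\item By \eqref{rigidmotioninequality}, any admissible $(\omega,a)$ has $u=a$ a.e.\ on $\Omega\setminus\omega$.
\item Since $\mathcal{L}^d(\omega)$ is small, $a$ vanishes on a set of positive measure, so $a=0$. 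Hence $P\subset\omega$ up to a null set.
\end{itemize}
Now consider a Whitney cube inside $P$ at height $t\ll\delta$, horizontally near the centre. Its side is comparable to $t$ and its enlargement misses $J_u$. So it is good, with $a_Q=b$ and $\omega_Q=\emptyset$: the local data is the constant $b$, and your perimeter bound has right-hand side $0$. Thus your $\omega$ misses a subset of $P$ of positive measure.

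Consistently, your gluing only compares rigid motions across good--good interfaces. Nothing relates $a_Q=b$ inside $P$ to $a_Q=0$ outside it across the bad layer. Any $\overline{u}$ equal to $u$ off your $\omega$ is therefore not a single rigid motion, so $e(\overline{u})\neq 0$, contradicting \eqref{eq3.0}, whose right-hand side vanishes.

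To repair this, the jump density must be tested at all scales, including scales larger than the distance to $\partial\Omega$ (boundary-adapted cells $Q\cap\Omega$ in the Lipschitz charts). The exceptional set must then contain all bad cells across scales; a Vitali covering still gives the perimeter bound. After that, every point outside $\omega$ has all its small cells good. This is the kind of property you need to pass from the local rigid approximations back to $u$ without creating a jump on $\partial^*\omega$.
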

\begin{remark}[Scaling invariance]
\label{scalinginvarianceoncubes} 
By standard rescaling arguments one can show that, if $\Omega=Q_{\rho}$ for $\rho>0$, then we find $\omega \subset Q_\rho$, a constant $\overline{c}>0$, and a rigid motion $a$ such that
\begin{equation*}
\mathcal{H}^{d-1}(\partial^* \omega) \leq \overline{c}\mathcal{H}^{d-1}(J_u), \:\: \mathcal{L}^d(\omega)\leq \overline{c}(\mathcal{H}^{d-1}(J_u))^{\frac{d}{d-1}}      
\end{equation*}
and
\begin{equation*}
\Vert u-a\Vert_{L^p(Q_{\rho} \setminus \omega)}^p \leq \overline{c}\rho^p \Vert e(u)\Vert _{L^p(Q_{\rho})}^p,
\end{equation*}
where $\overline{c}=\overline{c}(p,d)$ is independent of the sidelength $\rho$. 
\end{remark}

\MMM In connection with the application of Theorem \ref{Korn inequality for functions with small jump set}, we will make use of the following  elementary lemma for affine functions,  \NNN see  \EEE \cite[Lemma 3.4]{Friedrich_Solombrino}.   \EEE
\begin{lemma}
\label{lemmamatrix}
Let $G \in \mathbb{R}^{d\times d}$ and $b \in \mathbb{R}^d$. Let $\delta>0, R>0$ and let $\psi\colon \MMM  [0,\infty) \EEE \to \MMM  [0,\infty) \EEE$ be a continuous, strictly increasing function with $\psi(0)=0$. Consider a measurable, bounded set $E \subset \mathbb{R}^d$ with $E \subset \MMM B_R \EEE$ and $\mathcal{L}^d(E)\geq \delta$. Then there exists a continuous, strictly increasing function $\tau_{\psi}\colon \psi(\MMM  [0,\infty) \EEE)\to \MMM  [0,\infty) \EEE$, with $\tau_{\psi}(0)=0$, only depending on $\delta$, $R$, and $\psi$ such that
\begin{equation*}
\vert G\vert+\vert b\vert\leq \tau_{\psi}\Big(\frac{1}{\mathcal{L}^d(E)}\int_{E}{\psi(\vert Gx+b \vert) \,  \MMM{\rm d}x \EEE  }\Big).
\end{equation*}
If $\psi(t)=t^p$, $p \in [1,\infty)$, then $\tau_{\psi}$ can be chosen as $\tau_{\psi}(t)=c t^{\frac{1}{p}}$ with $c=c(p,\delta,R)>0$. Moreover, there exists a constant $c_0>0$, depending only on $p$, $\delta$ and $R$, such that
\begin{equation}
\label{eq3.2} 
\Vert Gx+b\Vert_{L^{\infty}(B_R)}\leq c_0 \Vert Gx+b \Vert_{L^1(E)}.
\end{equation}
\end{lemma}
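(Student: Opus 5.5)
The plan is to reduce everything to a quantitative bound on the measure of sublevel sets of nonzero affine maps restricted to $B_R$. We then invert the resulting lower bound on the average of $\psi(|Gx+b|)$. Throughout, set $s \defas |G|+|b|$; the case $s=0$ is trivial.

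\emph{Step 1 (sublevel sets of affine maps).} We show there is a constant $C=C(d,R)$ such that
\[
\mathcal{L}^d\big(\{x\in B_R\colon |Gx+b|\le rs\}\big)\le C r \quad\text{for all } r>0 .
\]
First, pick a row $i$ of the system with $|g_i|+|b_i|\ge s/c_d$, where $g_i$ is the $i$-th row of $G$ and $c_d$ depends only on the dimension; note that $|Gx+b|\ge |g_i\cdot x+b_i|$. We then distinguish two cases.
\begin{itemize}
\item If $|g_i|\le s/(4c_d(R+1))$, then $|b_i|\ge s/(2c_d)$. Hence $|g_i\cdot x+b_i|\ge s/(4c_d)$ on $B_R$, and the sublevel set is empty whenever $r<1/(4c_d)$. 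For larger $r$ the bound holds trivially once $C\ge 4c_d\,\gamma_d R^d$.
\item Otherwise $|g_i| \gtrsim s/(R+1)$. The sublevel set is then contained in a slab of width $2rs/|g_i|\lesssim r(R+1)$ intersected with $B_R$, so its measure is at most $C R^{d-1}(R+1)\,r$.
\end{itemize}

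\emph{Step 2 (lower bound and inversion).} On $E$ minus this sublevel set we have $\psi(|Gx+b|)\ge\psi(rs)$. The removed part has measure at most $Cr$, so for every $r>0$
\[
\frac{1}{\mathcal{L}^d(E)}\int_E\psi(|Gx+b|)\,\mathrm{d}x\ \ge\ \Big(1-\frac{Cr}{\delta}\Big)_+\psi(rs).
\]
We define $\varphi(s)\defas\sup_{r>0}(1-Cr/\delta)_+\psi(rs)$, which depends only on $\delta$, $R$, $d$ and $\psi$. This function is continuous, strictly increasing (because $\psi$ is), and satisfies $\varphi(0)=0$. Letting $r\to0$ slowly as $s\to\infty$ shows $\varphi(s)\to\sup\psi$. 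Moreover, the average of $\psi(|Gx+b|)$ over $E$ is always strictly below $\sup\psi$, so it lies in the range of $\varphi$. We therefore set $\tau_\psi\defas\varphi^{-1}$ on $\psi([0,\infty))$, extended continuously and increasingly where needed; the desired inequality $s\le\tau_\psi(\text{average})$ then follows.

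The main obstacle is precisely this inversion when $\psi$ is bounded, for instance $\psi(t)=\frac{t}{1+t}$ as used in Proposition \ref{fundamentalestimate}. There one needs the $r$-optimized form of $\varphi$ rather than a fixed $r_0$, so that $\varphi$ exhausts the full range of the average.

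\emph{Step 3 (the power case and \eqref{eq3.2}).} For $\psi(t)=t^p$, homogeneity applies: choosing a fixed $r=\delta/(2C)$ gives $\text{average}\ge \tfrac12 r^p s^p$, hence $\tau_\psi(t)=ct^{1/p}$. For \eqref{eq3.2}, we apply this with $p=1$ and use $\mathcal{L}^d(E)\ge\delta$ to get
\[
s\le \frac{c}{\delta}\,\Vert Gx+b\Vert_{L^1(E)}.
\]
Finally, $\Vert Gx+b\Vert_{L^\infty(B_R)}\le(R+1)s$ yields \eqref{eq3.2} with $c_0=c_0(p,\delta,R)$.
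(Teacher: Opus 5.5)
Your argument is correct and complete. The paper gives no proof of this lemma; it quotes it from \cite[Lemma 3.4]{Friedrich_Solombrino}. Your proposal is therefore a self-contained replacement rather than a variant of an argument in the text.

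Your route is a quantitative slab estimate $\mathcal{L}^d(\{x\in B_R\colon |Gx+b|\le rs\})\le C(d,R)\,r$, followed by optimizing over $r$. This is exactly what the statement requires when $\psi$ is bounded, as in Proposition~\ref{fundamentalestimate} with $\psi(t)=\frac{t}{1+t}$. A single fixed $r$ would only give an inverse defined on $[0,(1-Cr/\delta)\sup\psi)$. Your profile $\varphi(s)=\max_{r\in[0,\delta/C]}(1-Cr/\delta)\psi(rs)$ is instead a homeomorphism of $[0,\infty)$ onto $[0,\sup\psi)=\psi([0,\infty))$. So $\tau_\psi=\varphi^{-1}$ is defined on precisely the domain in the statement, and the ``extension where needed'' in your Step 2 is unnecessary.

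Two small points deserve one line each:
\begin{itemize}
\item \emph{Continuity of $\varphi$.} It is the maximum of a jointly continuous function of $(r,s)$ over the compact interval $[0,\delta/C]$.
\item \emph{Strict monotonicity of $\varphi$.} For $s>0$ the maximum is attained at some $r^*\in(0,\delta/C)$, because the integrand vanishes at both endpoints. Hence for $s<s'$ you get $\varphi(s)=(1-Cr^*/\delta)\psi(r^*s)<(1-Cr^*/\delta)\psi(r^*s')\le\varphi(s')$. The same attainment shows $\varphi(s)<\sup\psi$.
\end{itemize}

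Your constant in \eqref{eq3.2} comes out independent of $p$, which is harmless.
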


\addcontentsline{toc}{chapter}{Bibliography}
\bibliographystyle{plain}
\bibliography{bibliography.bib}

\end{document}